\theoremstyle{plain}
\newtheorem{thm}{Theorem}[section]
\newtheorem{lem}[thm]{Lemma}
\newtheorem{prop}[thm]{Proposition}
\newtheorem{cor}[thm]{Corollary}
\newtheorem{conjecture}[thm]{Conjecture}
\theoremstyle{definition}
\newtheorem{rem}[thm]{Remark}
\numberwithin{equation}{section}
\newcommand{\lda}{\lambda}
\newcommand{\om}{\Omega}                \newcommand{\pa}{\partial}
\newcommand{\va}{\varepsilon}            \newcommand{\ud}{\mathrm{d}}
\newcommand{\be}{\begin{equation}}      \newcommand{\ee}{\end{equation}}
\newcommand{\Lda}{\Lambda}              
\newcommand{\R}{\mathbb{R}}              
\renewcommand{\H}{\mathbb{H}}
\begin{document}

\title[The Mass-Angular Momentum Inequality]
{The Mass-Angular Momentum Inequality \\
for Multiple Black Holes}

\author[Han]{Qing Han}
\address{Department of Mathematics\\
University of Notre Dame\\
Notre Dame, IN 46556, USA} \email{qhan@nd.edu}

\author[Khuri]{Marcus Khuri}
\address{Department of Mathematics\\
Stony Brook University\\
Stony Brook, NY 11794, USA}
\email{marcus.khuri@stonybrook.edu}

\author[Weinstein]{Gilbert Weinstein}
\address{Physics Department and Department of Mathematics\\
Ariel University\\
Ariel, Israel 40700}
\email{gilbertw@ariel.ac.il}

\author[Xiong]{Jingang Xiong}
\address{School of Mathematical Sciences\\
Laboratory of Mathematics and Complex Systems, MOE\\
Beijing Normal University\\
Beijing 100875, China}
\email{jx@bnu.edu.cn}

\thanks{Q. Han acknowledges the support of NSF Grant DMS-2305038. M. Khuri acknowledges the support of NSF Grants DMS-2104229 and DMS-2405045. J. Xiong acknowledges the partial support of NSFC Grants 12325104 and 12271028.}

\begin{abstract}
This is the second in a series of two papers to establish the conjectured mass-angular momentum inequality for multiple black holes, modulo the extreme black hole `no hair theorem'. More precisely it is shown that either there is a counterexample to black hole uniqueness, in the form of a regular axisymmetric stationary vacuum spacetime with an asymptotically flat end and multiple degenerate horizons which is `ADM minimizing', or the following statement holds. Complete, simply connected, maximal initial data sets for the Einstein equations with multiple ends that are either asymptotically flat or asymptotically cylindrical, admit an ADM mass lower bound given by the square root of total angular momentum, under the assumption of nonnegative energy density and axisymmetry. Moreover, equality is achieved in the mass lower bound only for a constant time slice of an extreme Kerr spacetime. The proof is based on a novel flow of singular harmonic maps with hyperbolic plane target, under which the renormalized harmonic map energy is monotonically nonincreasing. Relevant properties of the flow are achieved through a refined asymptotic analysis of solutions to the harmonic map equations and their linearization.
\end{abstract}


\maketitle

\section{Introduction}
\label{sec1} \setcounter{equation}{0}
\setcounter{section}{1}

In \cite{Penrose1} Penrose outlined heuristic arguments that give rise to the Penrose inequality \cite{Bray,HuiskenIlmanen},
as well as a conjectured lower bound \cite[Section 8]{Mars} for the ADM mass/energy $m$ of a spacetime in terms of ADM angular momentum $\mathcal{J}$, which takes the form
\begin{equation}\label{1}
m\geq\sqrt{|\mathcal{J}|}.
\end{equation}
The reasoning that leads to \eqref{1} requires conservation of angular momentum, and to achieve this it is typically assumed that the spacetime is axisymmetric and satisfies certain conditions on matter fields. In fact, counterexamples have been constructed by Huang-Schoen-Wang \cite{HuangSchoenWang} when the axisymmetric assumption is removed. The original motivation for the Penrose inequality holds equally well for the mass-angular momentum inequality, namely they both serve as necessary conditions for the weak cosmic censorship \cite{Penrose,Penrose2} and the final state \cite{Klainerman} conjectures. Therefore, while a counterexample to the inequality would be detrimental for at least one of these latter two conjectures, confirmation of \eqref{1} only adds to the prevailing belief in their generic validity. Furthermore, inequality \eqref{1} may be interpreted as a refinement of the positive mass theorem \cite{ScYa,W}, in which a precise contribution to the total mass is expressed via the rotation of black holes. This manuscript is the second in a series of two papers to establish the mass-angular momentum inequality for multiple black holes; the first paper in this series is \cite{HKWX}.

The appropriate mathematical setting in which to study this inequality is that of an initial data set $(M,g,k)$ for the Einstein equations, consisting of a smooth connected 3-manifold $M$ with Riemannian metric $g$ and a symmetric 2-tensor $k$ representing the second fundamental form of an embedding into spacetime. These quantities satisfy the constraint equations
\begin{equation}\label{const}
16\pi\mu = R+(\mathrm{Tr}_{g}k)^{2}-|k|_{g}^{2},\quad\quad\quad
8\pi J = \operatorname{div}_{g}(k-(\mathrm{Tr}_{g}k)g),
\end{equation}
where $R$ is the scalar curvature of $g$, and $\mu$, $J$ denote the matter energy and momentum densities respectively. A natural hypothesis related to the dominant energy condition is the assumption of nonnegative energy density $\mu\geq 0$, which when combined with the \textit{maximal slice} condition $\mathrm{Tr}_g k=0$ guarantees nonnegative scalar curvature.
Moreover, it will be assumed throughout that the data are \textit{axially symmetric}. This means that a subgroup isomorphic to $U(1)$ is present within the isometry group of the Riemannian manifold, and that the extrinsic curvature is invariant under the $U(1)$ action. In particular, if $\eta$ is the associated Killing field generating the symmetry and $\mathfrak{L}_{\eta}$ denotes Lie differentiation, then
\begin{equation}\label{sym}
\mathfrak{L}_{\eta}g=\mathfrak{L}_{\eta}k=0.
\end{equation}

The manifold $M$ will be \textit{asymptotically flat with multiple ends}, in that there exists a compact set $\mathcal{C}$ and an integer $N \geq 2$ such that $M\setminus\mathcal{C}=\cup_{i=0}^{N} M_{i}$, where the pairwise disjoint ends $M_i$ are either asymptotically flat or asymptotically cylindrical, with $i=0$ designating an asymptotically flat end. The additional ends for $1\leq i\leq N$ represent individual black holes. Recall that an $i$th end is referred to as asymptotically flat if it is diffeomorphic to $\mathbb{R}^{3}\setminus B_1$, and in the resulting Cartesian coordinates with $\pmb{\delta}$ denoting the Euclidean metric we have
\begin{equation}\label{asymptotflat}
|\partial^\ell (g-\pmb{\delta})|_{\pmb{\delta}}=O(r^{-q-\ell}),\quad \ell\leq 6,\quad\quad\quad
|k|_{\pmb{\delta}}=O(r^{-q-1}),
\end{equation}
for some $q>\tfrac{1}{2}$ where $r$ is the Euclidean radial coordinate; additionally the scalar curvature should be integrable $R\in L^1(M_i)$. We note that the condition $\ell\leq 6$ is stronger than the typical requirement, and that this is due to the need for Brill coordinates \cite[Remark 3.2]{Chrusciel1}, (see also \cite{JaraczKhuriSokolowsky,Sokolowsky}).
Each asymptotically flat end comes equipped with the ADM energy, which is well-defined \cite{Bartnik,Chrusciel} and given by
\begin{equation}
m=\lim_{r\rightarrow\infty}\frac{1}{16\pi}\int_{S_{r}}\star_{\pmb{\delta}}\left(\mathrm{div}_{\pmb{\delta}}g-d\mathrm{Tr}_{\pmb{\delta}}g\right),
\end{equation}
where $\star_{\pmb{\delta}}$ is the Euclidean Hodge star operator, and $S_r$ is the coordinate sphere of radius $r$. Furthermore, axial symmetry implies that the ADM angular momentum of the end is aligned along the symmetry axis and thus may be described by a single number
\begin{equation}\label{AMdef}
\mathcal{J}=\lim_{r\rightarrow\infty}\frac{1}{8\pi}\int_{S_{r}}\left(k-(\mathrm{Tr}_{g} k)g\right)(\eta,\nu) dA,
\end{equation}
where $\nu$ and $dA$ are the unit outer normal and area element of $S_r$. Notice that the integral is invariant over any surface homologous to a coordinate sphere when $J(\eta)=0$. This latter condition will be assumed in order to obtain a twist potential, and therefore make contact with harmonic maps. Moreover, it also shows that the total angular momentum is a sum of the individual angular momenta $\mathcal{J}_i$ evaluated at the auxiliary ends, that is $\mathcal{J}=\sum_{i=1}^{N}\mathcal{J}_i$. 

Consider the conformally cylindrical geometry $(\mathbb{R}_{+}\times S^2 ,\bar{g})$ with model metric
$\bar{g}=h\left(d\bar{t}^2+ \bar{g}_{S^2}\right)$, where $h$ is a positive smooth function independent of the radial coordinate $\bar{t}$, and $\bar{g}_{S^2}$ is a metric on the 2-sphere. It is assumed further that $\bar{g}$ admits a Killing vector field that is tangent to $S^2$ and has periodic orbits. An end $(M_i,g)$ is then referred to as \textit{asymptotically cylindrical} if it is diffeomorphic to $\mathbb{R}_+\times S^2$, and in the coordinates provided by the diffeomorphism
\begin{equation}
|\bar{\nabla}^\ell(g-\bar{g})|_{\bar{g}}=O(e^{-q\bar{t}}), \quad \ell\leq 7,
\end{equation}
for some $q>0$ where $\bar{\nabla}$ denotes covariant differentiation with respect to $\bar{g}$. Again the large number of derivatives $\ell\leq 7$ is included for the existence of Brill coordinates \cite{JaraczKhuriSokolowsky,Sokolowsky}.  We note that extreme Kerr black holes possess an asymptotically cylindrical end in the sense described above.

In this setting, with the additional hypotheses of completeness, simple connectivity, and the absence of asymptotically cylindrical ends, the mass-angular momentum inequality \eqref{1} was initial studied by Dain and established for a single black hole through the combined work of Chru\'{s}ciel, Costa, Dain, Schoen, and Zhou in \cite{Chrusciel1,ChruscielCosta,Costa,Dain,SchoenZhou}. The survey \cite{DainGabachClement} details many of these developments, and \cite{BrydenKhuriSokolowsky} addresses situations where completeness and simple connectivity may be removed. The case of multiple black holes was taken up by Chru\'{s}ciel-Li-Weinstein \cite{CLW} (see also Khuri-Weinstein \cite{KhuriWeinstein} for the inclusion of charge) who proved the lower bound
\begin{equation}\label{initialmb}
m\geq \mathcal{F}(\mathcal{J}_{1},\ldots,\mathcal{J}_{N},z_1,\ldots,z_N),
\end{equation}
where the function $\mathcal{F}$ is proportional to a renormalized harmonic map energy depending on the angular momenta $\mathcal{J}_{i}$ and positions $z_i$ associated with the $N$ black holes. Our main result shows that this function is bounded below by the desired quantity $\sqrt{|\mathcal{J}|}$, or else there is a counterexample to a form of extreme black hole uniqueness. Recall that a version of the \textit{extreme black hole uniqueness conjecture} states that there do not exist regular asymptotically flat multiple degenerate black hole solutions to the axisymmetric stationary vacuum Einstein equations; see Conjecture \ref{bhu} below for an equivalent PDE statement. We will refer to a regular axisymmetric stationary vacuum spacetime with an asymptotically flat end and multiple degenerate horizons as \textit{ADM minimizing}, if it achieves the infimum of the ADM mass for solutions in this class which preserve the angular momentum of each black hole. The proof is based on a novel flow of singular harmonic maps with hyperbolic plane target, under which the renormalized harmonic map energy is monotonically nonincreasing. The relevant properties of the flow are achieved through a refined asymptotic analysis of solutions to the harmonic map equations and their linearization.

\begin{thm}\label{maintheorem}
Let $(M, g, k)$ be a complete, simply connected, axially symmetric, maximal initial data set for the Einstein equations with one designated asymptotically flat end and finitely many other asymptotically flat or asymptotically cylindrical ends. Assume further that the nonnegative energy density condition is satisfied $\mu\geq 0$, and the momentum density vanishes in the direction of rotation $J(\eta)=0$. Then either there exists an ADM minimizing counterexample to the extreme black hole uniqueness conjecture, or $m\geq\sqrt{|\mathcal{J}|}$ for all such initial data with equality if and only if the data arise from an extreme Kerr black hole.
\end{thm}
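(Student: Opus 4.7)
The plan is to combine the mass lower bound \eqref{initialmb} established in the first paper of the series \cite{HKWX} with a monotone flow on the space of singular harmonic maps into $\H^2$. Since $m\geq\mathcal{F}(\mathcal{J}_1,\ldots,\mathcal{J}_N,z_1,\ldots,z_N)$ is already available, the theorem reduces to the geometric inequality $\mathcal{F}\geq\sqrt{|\mathcal{J}|}$ together with its rigidity statement. I would approach this by constructing an evolution $(u_t,z_{1,t},\ldots,z_{N,t})$ starting from the singular harmonic map associated with the given initial data, in which the map $u_t:\R^3\setminus\{z_{1,t},\ldots,z_{N,t}\}\to\H^2$ remains harmonic away from its moving singular points, each individual angular momentum $\mathcal{J}_i$ is preserved, and the renormalized energy $\mathcal{F}(\mathcal{J}_1,\ldots,\mathcal{J}_N,z_{1,t},\ldots,z_{N,t})$ is monotonically nonincreasing. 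The natural candidate is a gradient-type system in the position variables $z_{i,t}$, coupled to an implicit heat-type evolution of the map that enforces harmonicity at every time; making sense of the variation of $\mathcal{F}$ in $z_i$ and inverting the linearized harmonic map operator on weighted spaces adapted to the prescribed singular behavior is exactly where the refined asymptotic analysis advertised in the abstract enters.

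The core argument is then a dichotomy based on the long-time behavior of the flow. By the monotonicity of $\mathcal{F}$ and its a priori boundedness from below, the flow sub-converges modulo the positions $z_{i,t}$. If the $z_{i,t}$ coalesce into a single pole in the limit, the limiting object is a one-end axially symmetric singular harmonic map into $\H^2$ carrying total angular momentum $\mathcal{J}$, which by the single black hole rigidity from \cite{Chrusciel1,ChruscielCosta,Costa,Dain,SchoenZhou} must be the extreme Kerr harmonic map, achieving $\mathcal{F}=\sqrt{|\mathcal{J}|}$; monotonicity then yields $m\geq\mathcal{F}_{\mathrm{initial}}\geq\sqrt{|\mathcal{J}|}$. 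If instead the positions remain separated for all time, the flow sub-converges to a critical point of $\mathcal{F}$ subject to the angular momentum constraint, i.e., a multi-pole singular harmonic map that is stationary in both the map and the position variables. Through the Weyl--Papapetrou/Ernst correspondence, such a critical map reconstructs a regular axisymmetric stationary vacuum spacetime with an asymptotically flat end and multiple degenerate horizons whose ADM mass is the infimum over its class preserving the $\mathcal{J}_i$, i.e., an \emph{ADM minimizing} counterexample to the extreme black hole uniqueness conjecture (Conjecture \ref{bhu}).

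The rigidity clause then follows at once: if $m=\sqrt{|\mathcal{J}|}$, inequality \eqref{initialmb} and the preceding dichotomy force $\mathcal{F}_{\mathrm{initial}}=\sqrt{|\mathcal{J}|}$, so the flow is stationary from time zero, and strict monotonicity combined with the single-pole characterization identifies the initial singular harmonic map with that of extreme Kerr; harmonic-map reconstruction of the initial data (as in \cite{SchoenZhou}) then realizes $(M,g,k)$ as a constant time slice of extreme Kerr. I expect the principal obstacle to lie in the construction and long-time analysis of the flow on singular harmonic maps with degenerating singular sets. One must specify a parabolic-type evolution whose linearization at each time is invertible in a weighted space that simultaneously encodes the prescribed asymptotics at each $z_i$, the asymptotic flatness at infinity, and any asymptotically cylindrical ends; one must then show that the positions $z_{i,t}$ evolve by a finite-dimensional ODE controlled by a precise expansion of $\mathcal{F}$ in the positional variables; and finally one must rule out degenerate limits in the coalescing scenario so that the limit is exactly the extreme Kerr map rather than an object of strictly smaller energy. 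The refined asymptotic analysis of the harmonic map equation and its linearization, which the abstract identifies as the technical heart of the paper, is what underpins each of these steps.
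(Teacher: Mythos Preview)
Your overall strategy is close to the paper's, but the dichotomy you propose is too coarse and leaves real gaps.

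First, you only consider two outcomes for the flow: all punctures coalesce to a single point, or they remain separated for all time. The paper identifies a third phenomenon that must be handled separately, namely \emph{scattering}, where the distance between some adjacent punctures tends to infinity (Section~\ref{sec5}). In that case the limiting object is not a single harmonic map but a disjoint collection of separation configuration maps $\Phi_*^i$, each with strictly fewer punctures, and one needs the inequality $\sum_i\mathcal{E}(\Phi_*^i)\le\lim_{t\to T}\mathcal{E}(\Phi_t)$ (Proposition~\ref{prop:scattering1}) together with the elementary fact $\sum_i\sqrt{|\mathcal{J}_i^*|}\ge\sqrt{|\mathcal{J}|}$. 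Similarly, collisions need not merge all punctures into one: a partial collision produces a configuration with $n<N$ punctures, not a single extreme Kerr, and the collision energy inequality (Proposition~\ref{prop:collision}) is needed. The paper organizes all of this through an \emph{induction on the number of punctures} $N$, restarting the flow at each collision or scattering configuration; your proposal has no such inductive structure, so the ``coalesce to one pole'' branch is not actually reached in a single pass.

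Second, your claim that a stagnation limit is automatically an ADM minimizing counterexample is not justified. Stagnation only tells you that the limiting map has all tangent map parameters $b_i=0$, hence yields \emph{some} regular multi-degenerate stationary vacuum solution with mass $m^*=\tfrac{1}{8\pi}\mathcal{E}(\Phi^*)$. This need not achieve the infimum $\underline{m}^*$ over the class. The paper closes this gap by taking a separate minimizing sequence $\Phi_j^*$ with $m_j^*\to\underline{m}^*$: either this sequence undergoes collision or scattering, and induction gives $\underline{m}^*\ge\sqrt{|\mathcal{J}|}$, or it subconverges to a genuine ADM minimizing counterexample, which is the excluded alternative. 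Your rigidity argument inherits the same issue: equality $m=\sqrt{|\mathcal{J}|}$ does not by itself force the flow to be stationary from time zero; the paper instead uses the gap estimate of \cite[Theorem~4.1]{KhuriWeinstein} to identify $\bar{\Phi}=\Phi$, reconstructs the spacetime, and then invokes the no-minimizing-counterexample hypothesis to force $N=1$.

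Finally, the paper's flow is not a coupled heat-type evolution of the map: at each time the map is simply the \emph{unique} singular harmonic map with the current puncture locations and fixed potential constants, and only the finite-dimensional puncture vector evolves by the explicit ODE $\dot z_i=-b_i$ (equation~\eqref{flowdef}), with monotonicity following from the first variation formula~\eqref{gonaoinoinihjjj}.
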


\begin{rem}
This result could be rephrased in the following alternative way, which emphasizes the role of potentially exotic multi-black hole solutions to the stationary vacuum Einstein equations. If there is a counterexample to the mass-angular momentum inequality, then there exists an (ADM minimizing) asymptotically flat multiple degenerate axisymmetric stationary vacuum black hole solution in equilibrium.
\end{rem}



As mentioned above, the proof is based on a flow of singular harmonic maps. More precisely, we observe that the first variation of renormalized harmonic energy under perturbations of the black hole singularities within the maps, is determined by the conical singularities along the axis in the associated stationary vacuum spacetimes. From this we are able to define a flow of the singular harmonic maps, along with the corresponding stationary vacuum black holes solutions, which moves the black holes guided by the conical singularities and results in a monotone nonincreasing renormalized energy. It is shown that the flow exists up to a maximal time, at which point three different phenomena can occur: the black holes either collide, scatter to infinity, or stagnate. In the first two cases, the limiting renormalized energy exists and is shown to be greater than or equal to the renormalized energy of the collision and scattering configurations. Thus, the flow may be restarted at the new collision/scattering configurations while preserving monotonicity through the singular time. In the case of a stagnating flow, we find that the limiting harmonic map gives rise to a regular multi-Kerr spacetime, and then appeal to the black hole uniqueness statement to show that its renormalized energy must not be less than the square root of total angular momentum. With these observations, an induction argument on the number of black holes is then used to establish the main result.

This paper is organized as follows. In the next section, background material and the set up relating a preliminary
mass lower bound to the harmonic map energy are described. A flow of harmonic maps via movement of singular points is introduced in Section \ref{secflow}, and statements of the co-main results concerning families of singular harmonic maps and their linearization are also discussed. The flow is studied in more detail in Section \ref{monotonicity}, where it is shown that the renormalized energy is monotonically nonincreasing along the evolution. Moreover, collisions and scatterings of the singular points along the flow are analyzed in Sections \ref{collision} and \ref{sec5} respectively, where it is proven that energy monotonicity is preserved in these situations. The proof of the main theorem is provided in Section \ref{sec6}. Section \ref{sec:linear-hm} is dedicated to an asymptotic analysis of the linearized local harmonic map system at horizons. In Section \ref{sec:diff-dp} differentiability properties of a multi-parameter family of singular harmonic maps are established. Finally, an appendix is included to collect miscellaneous auxiliary lemmas.

\subsection*{Acknowledgements}
The authors would like to thank the American Institute of Mathematics for its hospitality.

\section{Background and Set Up}
\label{sec2} \setcounter{equation}{0}
\setcounter{section}{2}

Let $(M,g)$ be a complete, simply connected, axially symmetric Riemannian 3-manifold which is asymptotically flat with $N+1$ ends. Then it is shown in \cite[Theorem 2.9, Remark 3.2]{Chrusciel} (only asymptotically flat ends) and \cite[Theorem 1.0.5]{Sokolowsky} (including asymptotically cylindrical ends) that $M\cong\mathbb{R}^{3}\setminus\{p_{1},\dots,p_N\}$, and that there exists a global (cylindrical)
Brill coordinate system $(\rho,z,\phi)$ on $M$ with $\rho\geq 0$ and $z\in\mathbb{R}$ parameterizing a half-plane orbit space, and where $\phi\in[0,2\pi)$ is associated with the rotational Killing field so that $\eta=\partial_{\phi}$. Moreover, the points $p_{i}$ (referred to as punctures) represent individual black holes and all lie on the $z$-axis $\Gamma$, where $|\eta|=0$. In these coordinates the metric may be expressed to exhibit the Riemannian submersion structure
\begin{equation}\label{brill}
g=e^{-2\mathbf{U}+2\sigma}(d\rho^{2}+dz^{2})+\rho^{2}e^{-2\mathbf{U}}(d\phi+A_{\rho}d\rho+A_{z}dz)^{2},
\end{equation}
where all coefficient functions are independent of $\phi$ and $\rho e^{-\mathbf{U}}(d\phi+A_{\rho}d\rho+A_{z}dz)$ is the dual 1-form to fiber directions $|\eta|^{-1}\eta$. In the designated asymptotically flat end $M_0$, as Euclidean distance
$r=\sqrt{\rho^{2}+z^{2}}\rightarrow\infty$ the coefficient functions satisfy the decay\footnote{The notation
$\mathbf{h}=O_{\ell}(r^{-q})$ asserts that $|\partial_{r}^{l}\mathbf{h}|\leq Cr^{-q-l}$ for all $l\leq \ell$.}
\begin{equation}\label{asymdecaycoef}
\mathbf{U}=O_{\ell-3}(r^{-q}),\text{ }\text{ }\text{ }\text{ }\sigma=O_{\ell-4}(r^{-q}),\text{ }\text{ }\text{
}\text{ }A_{\rho}=\rho O_{\ell-3}(r^{-q-2}),\text{ }\text{ }\text{ }\text{ }A_{z}=O_{\ell-3}(r^{-q-1}).
\end{equation}
In the remaining ends associated with punctures we have $r_{i}\rightarrow 0$, where $r_{i}$ is the Euclidean distance to $p_{i}$, and the coefficients satisfy the following decay in the asymptotically flat case
\begin{equation}\label{punctureasymf}
\mathbf{U}=2\log r_{i}+O_{\ell-4}(r_{i}^{q}),\text{ }\text{ }\text{ }\text{ }\sigma=O_{\ell-4}(r_{i}^{q}),\text{
}\text{ }\text{ }\text{ }A_{\rho}=\rho O_{\ell-3}(r_{i}^{q}),\text{ }\text{ }\text{ }\text{
}A_{z}=O_{\ell-3}(r_{i}^{q+1}),
\end{equation}
whereas in the asymptotically cylindrical case
\begin{equation}\label{punctureasymc}
\mathbf{U}=\log r_{i}\!+\!\hat{\mathbf{U}}(\theta_i)\!+\!O_{\ell-4}(r_i^q),\text{ }\text{ }\sigma=\hat{\sigma}(\theta_i)\!+\!O_{\ell-4}(r_i^q),\text{
}\text{ }A_{\rho}=\rho O_{\ell-3}(r_{i}^{q}),\text{ }\text{
}A_{z}=O_{\ell-3}(r_{i}^{q+1}),
\end{equation}
for some functions $\hat{\mathbf{U}}$ and $\hat{\sigma}$ of the polar angle $\theta_i$ centered at $p_i$.

To obtain a twist potential observe that a calculation produces
\begin{equation}
d\star \left(k(\eta)\wedge \eta\right)=-\iota_{\eta}\star J(\eta)=0,
\end{equation}
where $\star$ is with respect to $(M,g)$, interior product is denoted by $\iota$, and we are using the same notation $\eta$ for both the vector field and its dual 1-form. Since $M$ is simply connected, there then exists a smooth potential function $\mathbf{v}$ such that
\begin{equation}
d\mathbf{v}=\star \left(k(\eta)\wedge \eta\right).
\end{equation}
Notice that the $z$-axis without punctures may be decomposed into a set of pairwise disjoint open intervals $\Gamma\setminus \{p_{1},\dots,p_N\}=\cup_{j=1}^{N+1}\Gamma_j$,
for which $\mathbf{v}=c_j$ is constant on each $\Gamma_j$. The intervals are called \textit{axis rods}, whereas the corresponding values of $\mathbf{v}$ are referred to as \textit{potential constants}, and they determine the angular momentum of each black hole. More precisely in the maximal case
\begin{equation}\label{angpotconst}
\mathcal{J}_{i}=\lim_{r_i \rightarrow 0}\frac{1}{8\pi}\int_{S_{r_i}}k(\eta,\nu)dA=\frac{1}{4}(c_{i+1} -c_{i}),
\end{equation}
where $S_{r_i}$ is a coordinate sphere centered at $p_i$ and $\nu$ is the unit normal pointing towards the designated asymptotically flat end. Here the labeling of intervals is such that $\Gamma_{i+1}$, $\Gamma_{i}$ lie directly above and below $p_i$, respectively. Furthermore, using the orthonormal frame
\begin{equation}
e_{1}=e^{\mathbf{U}-\sigma}(\partial_{\rho}-A_{\rho}\partial_{\phi}),
\text{ }\text{ }\text{ }\text{ } e_{2}=e^{\mathbf{U}-\sigma}(\partial_{z}-A_{z}\partial_{\phi}),
\text{ }\text{ }\text{ }\text{ } e_{3}=\rho^{-1} e^{\mathbf{U}}\partial_{\phi},
\end{equation}
we find
\begin{equation}
k(e_{1},e_{3})
=-|\eta|^{-2}e_2(\mathbf{v}),\quad\quad
k(e_{2},e_{3})=|\eta|^{-2}e_1(\mathbf{v}).
\end{equation}
It follows that
\begin{equation}\label{aoinfoianioh}
|k|_{g}^{2}\geq 2\left(k(e_{1},e_{3})^{2}
+k(e_{2},e_{3})^{2}\right)
=2\frac{e^{6\mathbf{U}-2\sigma}}{\rho^{4}}|\nabla \mathbf{v}|^2,
\end{equation}
where the norm $|\cdot|$ is with respect to the Euclidean metric.

\subsection{Relation between mass and harmonic map energy.} Recall that in Brill coordinates the scalar curvature may be expressed (\cite{Brill}, \cite{Dain}) as
\begin{equation}
2e^{-2\mathbf{U}+2\sigma}R=8\Delta \mathbf{U}-4\Delta_{\rho,z}\sigma-4|\nabla \mathbf{U}|^{2}-\rho^{2}e^{-2\sigma}
\left(A_{\rho,z}-A_{z,\rho}\right)^{2},
\end{equation}
where $\Delta$ is the Euclidean Laplacian on $\mathbb{R}^{3}$ and $\Delta_{\rho,z}=\partial_{\rho}^{2}
+\partial_{z}^{2}$. Integrating this formula by parts yields the following mass formula \cite[(3.9)]{Chrusciel1} (\cite[(4.5.15)]{Sokolowsky} including asymptotically cylindrical ends)
\begin{equation}
m=\frac{1}{32\pi}\int_{\mathbb{R}^{3}}\left(2e^{-2\mathbf{U}+2\sigma}R+4|\nabla \mathbf{U}|^{2}
+\rho^{2}e^{-2\sigma}(A_{\rho,z}-A_{z,\rho})^{2}\right)d\mathbf{x}.
\end{equation}
Observe that the nonnegative energy density and maximality conditions together with \eqref{aoinfoianioh} imply that
\begin{equation}
R=16\pi\mu+|k|_{g}^{2}\geq 2\frac{e^{6\mathbf{U}-2\sigma}}{\rho^{4}}|\nabla \mathbf{v}|^2,
\end{equation}
and therefore
\begin{equation}\label{masslowerbound1}
m\geq  \frac{1}{8\pi}\int_{\mathbb{R}^{3}}\left(|\nabla \mathbf{U}|^{2}
+\frac{e^{4\mathbf{U}}}{\rho^{4}}|\nabla \mathbf{v}|^2\right)d\mathbf{x} =: \frac{1}{8\pi}\mathcal{E}(\bar{\Phi}).
\end{equation}
Notice that with the substitutions $\mathbf{U}=\mathbf{u}+\ln\rho$, and using that $\ln\rho$ is harmonic on $\mathbb{R}^3 \setminus \Gamma$, the functional on the right-hand side is up to boundary terms the harmonic map energy of a map $\bar{\Phi}=(\mathbf{u},\mathbf{v}):\mathbb{R}^{3}\setminus\Gamma\rightarrow
\mathbb{H}^{2}$, where the metric $d\mathbf{u}^2 +e^{4\mathbf{u}}d\mathbf{v}^2$ of the hyperbolic plane (with curvature $-4$) is expressed in horospherical coordinates. We refer to $\mathcal{E}(\bar{\Phi})$ as the \textit{renormalized energy} of the map $\bar{\Phi}$, since the infinite terms involving $\nabla\ln\rho$ have been removed.

In \cite[Proposition 2.1]{CLW} (see also \cite[Corollary 3.2]{KhuriWeinstein}) it is shown that there exists a unique singular harmonic map $\Phi=(u,v):\mathbb{R}^3 \setminus \Gamma\rightarrow\mathbb{H}^2$ having the same potential constants as $\bar{\Phi}$, which is asymptotic to the relevant extreme Kerr harmonic map near each puncture $p_i$ as well as at infinity, and for which $u= -\ln\rho +O(1)$ upon approach to the rods $\Gamma_i$. The harmonic map equations are given by
\begin{equation}\label{equationaklfahg1}
\Delta u-2e^{4u}|\nabla v|^2=0,\quad\quad \Delta v+4\nabla u\cdot\nabla v=0.
\end{equation}
Furthermore, it is proven in \cite{CLW} and \cite[Theorem 4.1]{KhuriWeinstein} that this map minimizes\footnote{The assumption \cite[(1.5)]{KhuriWeinstein} indicates derivative decay of $k$, however this is not used. Therefore \eqref{asymptotflat} is sufficient.} the renormalized energy so that $\mathcal{E}(\bar{\Phi})\geq\mathcal{E}(\Phi)$, and hence
\begin{equation}\label{masslowerb2}
m\geq \frac{1}{8\pi}\mathcal{E}(\Phi)=:\mathcal{F}(\mathcal{J}_{1},\ldots,\mathcal{J}_{N},z_1,\ldots,z_N).
\end{equation}
This relies on the observation that asymptotically cylindrical ends carry less energy than asymptotically flat ends.

The remainder of this manuscript is dedicated to proving that the function $\mathcal{F}$ is bounded below by $\sqrt{|\mathcal{J}|}$ modulo extreme black hole uniqueness, in order to establish Theorem \ref{maintheorem}. This entails two separate analyses to obtain asymptotics near punctures and other regimes, namely one for the nonlinear harmonic map system \eqref{equationaklfahg1} carried out in \cite{HKWX} and another for the linearized equations carried out in Section \ref{sec:diff-dp} below. Moreover, a flow of harmonic maps $\Phi_t$, for $t\in\mathbb{R}$, is developed based on the movement of punctures $p_{i}(t)$ in the direction of conical singularity angle defects of an associated stationary vacuum spacetime. The renormalized energy $\mathcal{E}(\Phi_t)$ will be shown to monotonically nonincrease along the flow and to converge, after possible jumps when punctures collide or scatter, to a value not less than the desired lower bound of the main theorem.

\section{The Flow of Punctures and Further Harmonic Map Analysis}
\label{secflow} \setcounter{equation}{0}
\setcounter{section}{3}

Fix potential constants $c_1,\dots,c_{N+1}$ that give rise to the nonzero angular momenta $\mathcal{J}_i$ as given by \eqref{angpotconst}, and let $\mathbf{z}=(z_1,\dots,z_N)$ be the $z$-coordinates for $N$ distinct punctures
$p_i$ on the $z$-axis $\Gamma\subset \R^3$. As discussed above, for each such data set there is a unique singular harmonic map $\Phi_{\mathbf{z}}=(u_{\mathbf{z}},v_{\mathbf{z}})\colon\R^3\setminus\Gamma\to\H^2$ with prescribed blow-up at punctures designated by $\mathbf{z}$, and having these given potential constants. The quantity $\mathbf{z}$ may be viewed as a multivariate parameter for the harmonic maps $\Phi_{\mathbf{z}}$. Let $\mathbf{z}_0=(z_{0,1},\dots, z_{0,N})$ with $z_{0,1}<z_{0,2} <\dots<z_{0,N}$, and consider a ball $B_{\varepsilon}^N(\mathbf{z}_0)\subset\mathbb{R}^N$ with this center, having radius $\varepsilon>0$ small enough to ensure that the components of each point in the ball remain distinct. We will denote the punctures associated with $\mathbf{z}_0$ by $p_i^0=(0,0,z_{0,i})\in\mathbb{R}^3$. According to \cite[Theorems 2.1 and 2.2]{HKWX} we have the following expansions for the harmonic maps $\Phi_{\mathbf{z}}$ in a neighborhood of each puncture with $z$-coordinate $z_i$, using polar coordinates $(r_i,\theta_i,\phi)$ centered at this puncture, namely
\begin{equation}\label{3.1}
U_{\mathbf{z}}  =u_{\mathbf{z}} +\ln\rho=\ln r_i + \bar{U}_{\mathbf{z},i}(\theta_i) +\tilde{U}_{\mathbf{z},i} ,\quad\quad
v_{\mathbf{z}} =\bar{v}_{\mathbf{z},i}(\theta_i) + \tilde{v}_{\mathbf{z},i},
\end{equation}
where there exist constants $b_i\in(-1,1)$ and $a_i=2\mathcal{J}_i >0$ such that the barred functions yield the renormalized tangent maps
\begin{align} \label{eq:tange-map}
\begin{split}
\bar{U}_{\mathbf{z},i} &=\bar{U}(\theta_i,b_i) = -\frac{1}{2}\ln\left(\frac{2a_i \sqrt{1-b_i^2}}{1+\cos^2 \theta_i +2b_i \cos\theta_i}\right),\\
\bar{v}_{\mathbf{z},i} &=\bar{v}(\theta_i,b_i)= a_i\left(\frac{b_i +b_i \cos^2 \theta_i +2\cos\theta_i}{1+\cos^2 \theta_i +2b_i\cos\theta_i} \right).
\end{split}
\end{align}
Although $a_i$ and the angular momentum are assumed positive here, no generality is lost as the generic case of these local expansions may be obtained from this one by replacing $v_{\mathbf{z}}$ with $\pm v_{\mathbf{z}}+c$, for some appropriately chosen constant $c$; this transformation yields an isometry of the target hyperbolic space, and hence does not affect the harmonic map equations. Notice that each $(\bar{u}_{\mathbf{z},i}=\bar{U}_{\mathbf{z},i}-\ln\sin \theta_i, \bar{v}_{\mathbf{z},i})$ is harmonic from $\mathbb{S}^2\setminus \{N,S\} \to \mathbb{H}^2$ where $N$, $S$ represent north and south poles, and thus provides the tangent map at the puncture. The constants $b_i$ will be referred to as \textit{tangent map parameters}. Furthermore the error terms satisfy
\begin{equation}\label{3.3}
\begin{split}
|\tilde{U}_{\mathbf{z},i}|+r_i|\partial_{r_i} \tilde{U}_{\mathbf{z},i}|+|\partial_{\theta_i}\tilde{U}_{\mathbf{z},i}| &=O(r_i^{\beta_i}),\\ (\sin\theta_i)^{-(3+\varsigma)} \left(|\tilde{v}_{\mathbf{z},i}|+r_i |\partial_{r_i} \tilde{v}_{\mathbf{z},i}|\right)+(\sin\theta_i)^{-(2+\varsigma)}|\partial_{\theta_i}\tilde{v}_{\mathbf{z},i}|&=O(r_{i}^{\beta_i}),
\end{split}
\end{equation}
for some $\beta_i \in(0,1)$ where $\varsigma\in (0,1)$ is arbitrary. Similar estimates hold for higher derivatives up to and including order three. It should be pointed out that while $\theta_i$ and $b_i$ are functions of $\mathbf{z}$, the $a_i$ are independent of $\mathbf{z}$. Furthermore, it follows from \cite[Theorem 2.3]{HKWX} that the relevant expansions near infinity (as $r\rightarrow\infty$) are given by
\begin{equation}\label{asyminfin}
u_{\mathbf{z}}=-\ln\rho +\frac{c_1}{r}+O(r^{-2}),\quad\quad
v_{\mathbf{z}}=a_1 \cos\theta(3-\cos^{2}\theta)+a_2 +O\left(\frac{(\sin\theta)^{3+\varsigma}}{r}\right),
\end{equation}
for some constants $c_1$, $a_1$, $a_2$ with corresponding fall-off for derivatives. Note that there is no constant term in the expansion of $u_{\mathbf{z}}$ due to asymptotic flatness.

The following result addresses the smooth dependence problem for the singular harmonic maps and their corresponding tangent maps. The notation $\partial_{z_i}$ will be used to denote differentiation with respect to the components of the parameter $\mathbf{z}$, which should be distinguished from $\partial_z$ which represents domain differentiation with respect to the $z$-coordinate of $\mathbb{R}^3$.

\begin{thm}\label{thm:smoothness} For any $\varsigma\in(0,1)$ the map $B_{\varepsilon}^N(\mathbf{z}_0)\rightarrow C^{3,\varsigma}_{loc}(\mathbb{R}^3 \setminus  \{p_1,\dots, p_N\},\mathbb{R}^2)$ given by $\mathbf{z}\mapsto (U_{\mathbf{z}},v_{\mathbf{z}})$ is smooth, and the map $B_{\varepsilon}^N(\mathbf{z}_0)\rightarrow C^{3,\varsigma}_{loc}(\mathbb{S}^2,\mathbb{R}^2)$ given by $\mathbf{z}\mapsto (\bar{U}_{\mathbf{z},i},\bar{v}_{\mathbf{z},i})$ is smooth for each $i=1,\dots,N$. Moreover, for any $i,j\in \{1,\dots,N\}$ and $l+\kappa \leq 3$ the following estimates hold
\begin{align} \label{eq:thm3.1-1}
\begin{split}
|\nabla^{\kappa}( \partial_{z_j} u_{\mathbf{z}} + \pmb{\delta}_{ij}\pa_{z} u_{\mathbf{z}})| + (\sin \theta_i )^{\kappa-3-\varsigma } |\nabla ^\kappa ( \partial_{z_j} v_{\mathbf{z}} +\pmb{\delta}_{ij} \pa_{z} v_{\mathbf{z}})|
&\le C r_i^{-\kappa} \quad \mbox{in }B_{\va/2}(p_i)\setminus\Gamma,\\[2mm]
|\nabla_{\mathbb S^2}^\kappa \partial_{z_j}\bar{u}_{\mathbf{z},i} |+ (\sin\theta_i)^{\kappa-3-\varsigma } |\nabla_{\mathbb S^2}^\kappa \partial_{z_j}\bar{v}_{\mathbf{z},i} |
&\le C  \quad\text{on }\mathbb S^2 \setminus\{N,S\}, \\[2mm]
|\nabla^{\kappa}( \partial_{z_j} u_{\mathbf{z}} )| + (\sin \theta )^{\kappa-3-\varsigma } |\nabla ^\kappa ( \partial_{z_j} v_{\mathbf{z}} )| \le C r^{-(\kappa+1)} \quad &\text{in }\mathbb{R}^3\setminus \left(\Gamma\cup_{i=1}^N B_{\va/2}(p_i)\right),
\end{split}
\end{align}
and
\begin{align}\label{eq:thm3.1-2}
\begin{split}
&|(r_i \partial_{r_i})^l \nabla_{\mathbb{S}^2}^\kappa [(\partial_{z_j} u_{\mathbf{z}} + \pmb{\delta}_{ij}\pa_{z} u_{\mathbf{z}}) -\partial_{z_j}\bar{u}_{\mathbf{z},i}]|\\
&\qquad+e^{(3+\varsigma-\kappa)u_{\mathbf{z}}}|(r_i \partial_{r_i})^l \nabla_{\mathbb{S}^2}^\kappa [( \partial_{z_j} v_{\mathbf{z}} + \pmb{\delta}_{ij}\pa_{z} v_{\mathbf{z}})-\partial_{z_j}\bar{v}_{\mathbf{z},i}]|
\le C r_i^{\bar\beta} \quad \mbox{in }B_{\va/2}(p_i)\setminus\Gamma,
\end{split}
\end{align}
for some constants $C$ and $\bar\beta>0$ depending on $\varepsilon$, $c_1,\dots, c_{N+1}$, and $\sum_{i=1}^N|z_i|$, with $C$ depending also on $\varsigma$. 
\end{thm}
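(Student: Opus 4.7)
My plan is to apply the implicit function theorem after pulling back the $\mathbf{z}$-dependent family $\Phi_{\mathbf{z}}$ to a fixed domain. For each $\mathbf{z}\in B_{\va}^N(\mathbf{z}_0)$ I would choose a smooth axially symmetric diffeomorphism $\Psi_{\mathbf{z}}\colon\R^3\to\R^3$ which is the pure $z$-translation $x\mapsto x+(z_i-z_{0,i})\hat z$ on $B_{\va/4}(p_i^0)$, equals the identity outside $\bigcup_i B_{\va/2}(p_i^0)$, and reduces to $\mathrm{Id}$ at $\mathbf{z}=\mathbf{z}_0$. Then $\tilde\Phi_{\mathbf{z}}:=\Phi_{\mathbf{z}}\circ\Psi_{\mathbf{z}}$ is a family of maps on the fixed domain $\R^3\setminus\{p_1^0,\dots,p_N^0\}$ solving a transformed harmonic map system whose coefficients depend smoothly on $\mathbf{z}$ through $D\Psi_{\mathbf{z}}$ and $D^2\Psi_{\mathbf{z}}$. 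I would then subtract an explicit singular model $S_{\mathbf{z}}$ built from the reference tangent maps $\bar U(\theta_i^0,b_i),\bar v(\theta_i^0,b_i)$ cut off near each $p_i^0$, together with the matching piece from \eqref{asyminfin} cut off near infinity, and treat the tangent map parameters $\mathbf{b}=(b_i)\in\R^N$ as additional unknowns. The remainder $w_{\mathbf{z}}:=\tilde\Phi_{\mathbf{z}}-S_{\mathbf{z}}(\mathbf{b})$ would live in a weighted Hölder space $\mathcal X$ that enforces $O(r_i^{\bar\beta})$ decay near each puncture in the $u$-component, $(\sin\theta_i)^{3+\varsigma}$-type axis vanishing in the $v$-component, and the appropriate fall-off at infinity consistent with \eqref{asyminfin}.

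\textbf{Implicit function theorem.} Writing the harmonic map system as $F(\mathbf{z},(w,\mathbf{b}))=0$ on $\mathcal X\oplus\R^N$, the nonlinearity is polynomial in $(w,\nabla w)$ and $S_{\mathbf{z}}(\mathbf{b})$ depends smoothly on $(\mathbf{z},\mathbf{b})$, so $F$ is smooth in all arguments. The decisive step is invertibility of $D_{(w,\mathbf{b})}F$ at the actual solution $(w_{\mathbf{z}},\mathbf{b}(\mathbf{z}))$. Injectivity/coercivity modulo the finite-dimensional $\mathbf{b}$ direction follows from negative curvature of $\H^2$ via a standard energy argument. For surjectivity on weighted spaces I would invoke the refined asymptotic analysis of the linearized system from Section \ref{sec:linear-hm} to classify admissible leading behaviors of linearized solutions at each puncture, and match these to the infinitesimal tangent-map variations $\pa_{b_i}(\bar U,\bar v)$ supplied by the $\R^N$ factor. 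Standard Fredholm theory in weighted Hölder spaces then yields the isomorphism, and IFT produces smoothness of $\mathbf{z}\mapsto(w_{\mathbf{z}},\mathbf{b}(\mathbf{z}))$, hence of $\mathbf{z}\mapsto\tilde\Phi_{\mathbf{z}}$ in the requisite topology; smoothness of $(\bar U_{\mathbf{z},i},\bar v_{\mathbf{z},i})$ is then immediate from the smoothness of $b_i(\mathbf{z})$ and the explicit formulas \eqref{eq:tange-map}.

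\textbf{Back to $\Phi_{\mathbf{z}}$ and the estimates.} Differentiating the identity $\Phi_{\mathbf{z}}=\tilde\Phi_{\mathbf{z}}\circ\Psi_{\mathbf{z}}^{-1}$ at a point near $p_i^0$ gives
$$\pa_{z_j}\Phi_{\mathbf{z}}=(\pa_{z_j}\tilde\Phi_{\mathbf{z}})\circ\Psi_{\mathbf{z}}^{-1}-\pmb{\delta}_{ij}\pa_z\Phi_{\mathbf{z}},$$
since $\Psi_{\mathbf{z}}$ is pure $z$-translation there, $\pa_{z_j}\Psi_{\mathbf{z}}^{-1}=-\pmb{\delta}_{ij}\hat z$, and $(\pa_z\tilde\Phi_{\mathbf{z}})\circ\Psi_{\mathbf{z}}^{-1}=\pa_z\Phi_{\mathbf{z}}$ because $\Psi_{\mathbf{z}}^{-1}$ is translation. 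This explains why the precise combination $\pa_{z_j}u_{\mathbf{z}}+\pmb{\delta}_{ij}\pa_z u_{\mathbf{z}}$ appears in \eqref{eq:thm3.1-1}: it is the pullback of $\pa_{z_j}\tilde\Phi_{\mathbf{z}}$, whose weighted norm is supplied by the IFT. Reading off the built-in $\mathcal X$-weights yields the $r_i^{-\kappa}$ and $(\sin\theta_i)^{3+\varsigma-\kappa}r_i^{-\kappa}$ blow-up rates for $\pa_{z_j}u_{\mathbf{z}}+\pmb{\delta}_{ij}\pa_z u_{\mathbf{z}}$ and $\pa_{z_j}v_{\mathbf{z}}+\pmb{\delta}_{ij}\pa_z v_{\mathbf{z}}$ respectively. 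The third line of \eqref{eq:thm3.1-1} follows from the same decomposition in the region where $\Psi_{\mathbf{z}}=\mathrm{Id}$, combined with differentiation of \eqref{asyminfin} in $\mathbf{z}$. For the sharper estimate \eqref{eq:thm3.1-2}, one further subtracts the tangent-map derivative $\pa_{z_j}\bar U_{\mathbf{z},i}$, $\pa_{z_j}\bar v_{\mathbf{z},i}$ (i.e., the $S_{\mathbf{z}}$-component of $\pa_{z_j}\tilde\Phi_{\mathbf{z}}$), leaving precisely $\pa_{z_j}w_{\mathbf{z}}$, whose $\mathcal X$-norm carries the $r_i^{\bar\beta}$ gain.

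\textbf{Main obstacle.} The principal technical hurdle is verifying the isomorphism property of the linearization on the weighted space $\mathcal X\oplus\R^N$. The harmonic map system degenerates simultaneously at the punctures and on the axis, so a priori linearized solutions admit many possible leading behaviors; without the precise asymptotic classification of Section \ref{sec:linear-hm} identifying these with the tangent-map variations $\pa_{b_i}\bar U$, $\pa_{b_i}\bar v$, one cannot trim the kernel and cokernel down to the correct finite-dimensional pieces needed to pair with the $\R^N$ factor. Once this matching is established, Fredholm theory in the weighted spaces and the IFT together deliver all conclusions of the theorem.
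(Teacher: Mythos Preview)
Your pull-back construction and the explanation of why the combination $\partial_{z_j}u_{\mathbf{z}}+\pmb{\delta}_{ij}\partial_z u_{\mathbf{z}}$ appears are exactly right, and match the paper's setup in Section~\ref{sec:diff-dp} (with your $\Psi_{\mathbf{z}}$ playing the role of the paper's $\Pi_{\mathbf{z}}$). Beyond that, however, the paper takes a genuinely different route from your implicit function theorem scheme.

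Rather than establishing an isomorphism of the linearization on a weighted H\"older space and invoking IFT, the paper proceeds by a \emph{bootstrap on the modulus of continuity in $\mathbf{z}$}. The starting point is the convexity estimate of Schoen--Zhou and Khuri--Weinstein, which gives $\|d_{\H^2}(\Phi(s_1),\Phi(s_2))\|_{L^6}\le C|s_1-s_2|^{1/2}$ directly (Proposition~\ref{prop:integral-continuity}); this is upgraded to pointwise $C^{1/2}$ via local estimates and the maximum principle for $\sqrt{1+d_{\H^2}^2}$. One then writes $L_s(\Phi(s+\tau)-\Phi(s))=P+Q$ with $Q$ quadratic in the difference, and uses the already-established $C^{1/2}$ control on $Q$ together with an $L^6$-coercivity of the bilinear form $\mathcal B_s$ (Proposition~\ref{prop:coercive-1}, again a consequence of target curvature) to bootstrap to Lipschitz (Proposition~\ref{prop:lip}). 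Differentiability is then obtained by showing the difference quotients $D_\tau\Phi$ form a Cauchy family, and the asymptotic analysis of Section~\ref{sec:linear-hm} is used \emph{a posteriori} on the linearized equation satisfied by $\partial_s\Phi$ to extract the tangent-map derivative and the $r_i^{\bar\beta}$ decay. Higher derivatives follow by induction.

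The tradeoff: your IFT route, if it goes through, delivers smoothness and all the estimates in one stroke, and the weighted-space framework is conceptually clean. But the step you flag as the main obstacle --- the isomorphism of $D_{(w,\mathbf b)}F$ on $\mathcal X\oplus\R^N$ --- is genuinely not supplied by the paper's toolkit. Proposition~\ref{prop:coercive-1} gives coercivity only in the $\mathcal H\hookrightarrow L^6$ sense, and Propositions~\ref{prop:linear-asymptotic}--\ref{prop:linear-boundness} describe asymptotics of \emph{given} bounded solutions rather than solvability for general right-hand sides in a weighted H\"older class; assembling these into a full Fredholm/isomorphism statement on a space that simultaneously encodes puncture, axis, and infinity weights would require substantial additional work (parametrix near each regime, closed range, index). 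The paper's bootstrap avoids this entirely: it never needs surjectivity of the linearization, only the $L^6$-coercivity bound for solutions that are already known to exist (namely differences and difference quotients of the harmonic maps themselves).
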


The proof of this result is presented in Section \ref{sec:diff-dp}, and relies on two primary elements. These include
convexity estimates of the renormalized energy, derived by Schoen-Zhou \cite{SchoenZhou} and Khuri-Weinstein \cite{KhuriWeinstein}, which provide the starting point $C^{1/2}$ regularity of $\mathcal{E}(\Phi_{\mathbf{z}})$. Additionally, a refined asymptotic analysis based on \cite{HKWX} is employed, which is detailed in Section \ref{sec:linear-hm}.

Let $\varepsilon_0 >0$ be sufficiently small and consider a smooth curve of punctures $\mathbf{z}(t) \subset B_{\varepsilon}^N(\mathbf{z}_0)$ for $t\in (-\va_0,\va_0)$, with $\mathbf{z}(0)=\mathbf{z}_0$. Then Theorem \ref{thm:smoothness} implies that the family of harmonic maps $\Phi_t=(u_t,v_t):=(u_{\mathbf{z}(t)},v_{\mathbf{z}(t)})$ is smooth in $t$ away from the axis. We emphasize that $\theta_i$ and $b_i$ are functions of $t$, whereas $a_i$ are independent of $t$. The next result essentially follows from Theorem \ref{thm:smoothness}, and its proof is also given in Section \ref{sec:diff-dp}.

\begin{cor}\label{cor:linest}
Let $(\dot U_t,\dot v_t)$ be the derivative with respect to $t$ of the renormalized family of harmonic maps.
In a neighborhood of each puncture $p_i(t)$, as $r_i(t) \rightarrow 0$, we have the expansions
\begin{align}\label{abcd}
\begin{split}
\dot{U}_t 
&=\left(-\frac{z-z_i}{r_i^2} + \frac{\rho}{r_i^2}\partial_{\theta_i}\bar{U}\right)\dot{z}_i+\dot{b}_i\partial_{b_i}\bar{U}
+O(r_i^{\beta_i -1}),\\
\dot{v}_t 
&=\frac{\rho}{r_i^2}(\partial_{\theta_i}\bar{v})\dot{z}_i+\dot{b}_i\partial_{b_i}\bar{v} +O(r_i^{\beta_i -1} \sin\theta_i).
\end{split}
\end{align}
Furthermore, there exists a constant $C$ such that
\begin{equation}\label{estrho}
|\dot  U_t| + \frac{e^{2U_t}}{\rho^{2}}|\dot v_t| \leq C    \text{ }\text{ as $\rho\rightarrow 0$ away from punctures},
\end{equation}
and
\begin{equation}\label{est}
|\dot  U_t| + \frac{e^{2U_t}}{\rho^{2}}|\dot v_t| \leq Cr^{-1} \text{ }
\text{ as $r\to\infty$}.
\end{equation}
\end{cor}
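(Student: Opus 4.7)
The plan is to differentiate the explicit decomposition \eqref{3.1} in $t$ at fixed $(\rho,z)$ and use Theorem \ref{thm:smoothness} to control the remainder. Writing $\Phi_t=\Phi_{\mathbf{z}(t)}$, the chain rule gives $(\dot U_t,\dot v_t)=\sum_j\dot z_j(\partial_{z_j}U_{\mathbf{z}},\partial_{z_j}v_{\mathbf{z}})|_{\mathbf{z}=\mathbf{z}(t)}$, so the three lines of \eqref{eq:thm3.1-1} and the refined estimate \eqref{eq:thm3.1-2} translate directly into pointwise bounds on $(\dot U_t,\dot v_t)$ in each relevant region.

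For the expansion \eqref{abcd} near $p_i(t)$, I would differentiate $U_t=\ln r_i(t)+\bar U(\theta_i(t),b_i(t))+\tilde U_{\mathbf{z}(t),i}$ at fixed $(\rho,z)$. From $r_i^2=\rho^2+(z-z_i(t))^2$ and $\cos\theta_i=(z-z_i)/r_i$ one computes $\partial_t\ln r_i=-(z-z_i)\dot z_i/r_i^2$ and $\dot\theta_i=\rho\dot z_i/r_i^2$, which produce the two explicit leading terms of \eqref{abcd}. To bound the remainder $\partial_t\tilde U_{\mathbf{z}(t),i}=\sum_j\dot z_j\partial_{z_j}\tilde U_{\mathbf{z},i}$, I would invoke \eqref{eq:thm3.1-2} with $l=\kappa=0$, interpreting $\partial_{z_j}\bar u_{\mathbf{z},i}$ as the derivative of the sphere function at a fixed sphere point (so that it equals $\partial_{b_i}\bar U\cdot\partial_{z_j}b_i$). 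Combined with the decomposition $\partial_z u_{\mathbf{z}}=\partial_z\bar u_{\mathbf{z},i}+\partial_z\tilde U_{\mathbf{z},i}$ and the control $|\partial_z\tilde U_{\mathbf{z},i}|=O(r_i^{\beta_i-1})$ from \eqref{3.3}, this forces the potentially large contributions to cancel and leaves $\partial_t\tilde U_{\mathbf{z}(t),i}=O(r_i^{\beta_i-1})$. The derivation for $\dot v_t$ is parallel, once the $e^{(3+\varsigma)u_{\mathbf{z}}}$ weight in \eqref{eq:thm3.1-2} (which near the puncture behaves like $(\sin\theta_i)^{-3-\varsigma}$) is paired with the $\sin\theta_i$-weighted tangent-map error from \eqref{3.3}; this produces the extra $\sin\theta_i$ factor in the remainder.

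The two uniform bounds \eqref{estrho} and \eqref{est} then follow from the remaining lines of \eqref{eq:thm3.1-1}. For \eqref{estrho}, away from all punctures $U_t$ is smooth and bounded, $\partial_z u_{\mathbf{z}}$ is bounded, and $\partial_z v_{\mathbf{z}}=O(\rho^2)$ since $v$ is constant on each axis rod; the first line of \eqref{eq:thm3.1-1} with $\kappa=0$, together with $\sin\theta_i\leq C\rho$, then yields $|\dot U_t|\leq C$ and $|\dot v_t|\leq C\rho^{3+\varsigma}$, which combine with the bounded factor $e^{2U_t}$ to give \eqref{estrho}. For \eqref{est}, the third line of \eqref{eq:thm3.1-1} directly produces $|\dot U_t|\leq Cr^{-1}$ and $|\dot v_t|\leq C(\sin\theta)^{3+\varsigma}/r$; the expansion \eqref{asyminfin} gives $U_t=O(r^{-1})$, so $e^{2U_t}$ is bounded at infinity and the stated decay follows.

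The principal technical subtlety, and what I expect to be the main obstacle, is the careful interpretation of $\partial_{z_j}\bar u_{\mathbf{z},i}$ in \eqref{eq:thm3.1-2}: read correctly as a derivative of an abstract sphere function at a fixed sphere point, it captures only the $b_i$-variation of the tangent map and not the coordinate shift of $\theta_i$ caused by the moving puncture. This distinction is precisely what prevents the explicit leading contributions in $\partial_t\ln r_i$ and $\partial_t\bar U$ from being double-counted against the remainder terms, and is what yields the clean expansion in \eqref{abcd}.
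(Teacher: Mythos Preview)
Your proposal is correct and follows essentially the same approach as the paper: both apply the chain rule $(\dot U_t,\dot v_t)=\sum_j\dot z_j\,\partial_{z_j}(U_{\mathbf z},v_{\mathbf z})$, invoke Theorem~\ref{thm:smoothness} (specifically \eqref{eq:thm3.1-1}--\eqref{eq:thm3.1-2}), and extract the explicit leading terms from the expansion \eqref{3.1}--\eqref{3.3}; the paper merely orders the near-puncture algebra in reverse, first using \eqref{eq:thm3.1-2} to write $\partial_{z_j}(U_{\mathbf z},v_{\mathbf z})=-\pmb{\delta}_{ij}\,\partial_{x_3}(U_{\mathbf z},v_{\mathbf z})+\partial_{z_j}(\bar U_{\mathbf z,i},\bar v_{\mathbf z,i})+O(r_i^{\bar\beta})$ and then expanding the spatial derivative $\partial_{x_3}$ via \eqref{3.1}--\eqref{3.3}, which is algebraically equivalent to your direct differentiation of the decomposition, and your interpretation of $\partial_{z_j}\bar u_{\mathbf z,i}$ as capturing only the $b_i$-variation is exactly right. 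One small correction: for \eqref{estrho} you want the \emph{third} line of \eqref{eq:thm3.1-1} (valid in the complement of the puncture-balls, with weight $\sin\theta$ rather than $\sin\theta_i$), not the first, which only holds inside $B_{\varepsilon/2}(p_i)$.
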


The desired curve of punctures arises from a flow guided by the tangent map parameters, where the potential constants are held fixed according to the prescribed angular momenta. More precisely, we
require the curve to satisfy the autonomous system
\begin{equation}\label{flowdef}
\frac{dz_i}{dt}=-b_i(z_1,\ldots,z_N), \quad i=1,\ldots,N,
\end{equation}
which we will refer to as the \textit{puncture flow}. In the next section it is shown that when an initial condition is given, there is a unique $C^2$ solution for a maximal time interval. Furthermore, it will also be established that the reduced energy is monotone along the flow, in fact when smooth the flow implies
\begin{equation}\label{gonaoinoinihjjj}
\frac{d}{dt}\mathcal{E}(\Phi_t)=- \sum_{i=1}^{N}f(b_i)b_i \leq 0
\end{equation}
for some increasing function $f$ with $f(0)=0$. Monotonicity in the presence of collisions and scatterings is proven in Sections \ref{collision} and \ref{sec5} respectively. The relation to extreme black hole uniqueness arises in the case that all $b_i$ vanish. More precisely, with a harmonic map $\Phi=(u,v)$ as described, one may build a stationary vacuum spacetime on $\mathbb{R}\times \left(\mathbb{R}^3 \setminus \{p_1,\ldots,p_N\}\right)$ with metric in Weyl coordinates given by
(see \cite[Section 2]{HKWX})
\begin{equation}
\mathbf{g}=-e^{2U}d\tau^2 +\rho^{2}e^{-2U}(d\phi +w d\tau)^2 +e^{-2U+2\boldsymbol{\alpha}}(d\rho^2 +dz^2),
\end{equation}
where $w$ is obtained from $dw=2e^{4u}\iota_{\eta}\star_{\pmb{\delta}} dv$ and $\boldsymbol{\alpha}$ satisfies
\begin{equation}\label{alphaeq}
\partial_{\rho}\boldsymbol{\alpha}=\rho\left[(\partial_{\rho}U)^2 \!-\!(\partial_z U)^2+e^{4u}((\partial_{\rho}v)^2 \!-\!(\partial_z v)^2)\right],\quad
\partial_z \boldsymbol{\alpha} =2\rho\left(\partial_{\rho} U \partial_z U +e^{4u} \partial_{\rho} v \partial_z v \right).
\end{equation}
According to \cite[Theorem 2.4]{HKWX}, the tangent map parameter at $p_i$ is related to the logarithmic angle defects $\mathbf{b}_{i+1}$, $\mathbf{b}_i$ of the neighboring axis rods for the constructed spacetime, via the formula
\begin{equation}\label{fffoanoinilnhj}
\mathbf{b}_{i+1}-\mathbf{b}_{i}=\ln\left(\frac{1+b_i}{1-b_i}\right).
\end{equation}
Thus if all $b_i=0$, which indicates a stationary point for the flow of punctures, then the associated spacetime would be devoid of conical singularities. This leads to a regular stationary vacuum solution with multiple degenerate black holes, which would be a counterexample to the extreme black hole uniqueness conjecture. Here we state a PDE version of the conjecture.

\begin{conjecture}\label{bhu}
Let $\Phi_{\mathbf{z}}:\mathbb{R}^3 \setminus\Gamma \rightarrow\mathbb{H}^2$ be a singular harmonic map associated with 
a collection of $N>1$ distinct punctures located on the $z$-axis at $\mathbf{z}=(z_1,\dots,z_N)$, and having potential constants giving rise to nonzero angular momenta $\mathcal{J}_i$ at each such location. Then at least one of the tangent map parameters $b_i$, $i=1,\dots,N$ must be nonzero. 
\end{conjecture}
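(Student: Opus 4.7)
The plan is to argue by contradiction: suppose that $b_i=0$ for every $i=1,\dots,N$, and show that this forces the existence of a regular, asymptotically flat, axisymmetric stationary vacuum spacetime with $N>1$ degenerate axisymmetric Killing horizons carrying nonzero angular momenta. This would contradict the classification of extreme stationary black holes in the $N=1$ case and, for general $N$, what is expected from the Mazur-Bunting-Weinstein rigidity framework applied to singular harmonic maps into $\mathbb{H}^2$.

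First I would exploit the conical-defect identity \eqref{fffoanoinilnhj}. The vanishing of every $b_i$ forces $\mathbf{b}_1=\mathbf{b}_2=\cdots=\mathbf{b}_{N+1}$, and a constant rescaling of the azimuthal coordinate $\phi$ normalizes this common defect to zero on every axis rod. Combined with the spacetime construction preceding \eqref{alphaeq}, this would yield a metric $\mathbf{g}$ extending smoothly across each regular segment of the symmetry axis. The puncture expansions \eqref{3.1}--\eqref{eq:tange-map} specialized to $b_i=0$ coincide with the near-horizon geometry of extreme Kerr, so each $p_i$ becomes a degenerate axisymmetric horizon of nonzero angular momentum $\mathcal{J}_i$, while the expansion \eqref{asyminfin} ensures the exterior region is asymptotically flat. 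Thus the putative stationary point of the puncture flow produces precisely an equilibrium multi-extreme-Kerr configuration in the sense described in the remark following Theorem \ref{maintheorem}.

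Next I would attempt to rule out such a configuration. For $N=1$ one may invoke the uniqueness results of Chru\'{s}ciel-Costa and Amsel-Horowitz-Marolf-Roberts, which identify the resulting spacetime with extreme Kerr, contradicting the hypothesis $N>1$. For general $N$ the problem reduces to showing that the system $b_i(z_1,\dots,z_N)=0$, $i=1,\dots,N$, admits no solution once the potential constants $c_1,\dots,c_{N+1}$ are fixed. One could try to derive an obstruction by combining Theorem \ref{thm:smoothness} with the monotonicity formula \eqref{gonaoinoinihjjj}, arguing that any interior critical point of $\mathcal{E}(\Phi_{\mathbf{z}})$ must be incompatible with the collision and scattering limits analyzed in Sections \ref{collision} and \ref{sec5}, or alternatively to count configurations via a degree-theoretic argument in the parameter space $B_\varepsilon^N(\mathbf{z}_0)$ and rule out a zero by boundary behavior.

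The main obstacle, which I believe is essentially insurmountable with current techniques, is that this nonexistence statement is equivalent to the full extreme black hole uniqueness conjecture for multiple horizons. The analogous result for sub-extreme multi-black hole stationary vacua was settled by Neugebauer-Hennig through a delicate Ernst-potential analysis showing that conical struts cannot be eliminated, but their argument relies crucially on sub-extremality and does not transfer to the degenerate case. A genuine proof would require either a new integrability identity for the Ernst equation adapted to degenerate horizons or a global geometric rigidity specific to extreme axisymmetric vacua; the present paper wisely sidesteps this by treating Conjecture \ref{bhu} as a hypothesis and extracting a conditional statement of the mass-angular momentum inequality.
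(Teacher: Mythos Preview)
Your assessment is essentially correct, but the framing needs adjustment: the statement in question is a \emph{conjecture}, and the paper does not prove it. There is no ``paper's own proof'' to compare against. Conjecture~\ref{bhu} is introduced precisely as the PDE formulation of the extreme black hole uniqueness conjecture, and the main result (Theorem~\ref{maintheorem}) is explicitly conditional on it: either there exists an ADM minimizing counterexample to this conjecture, or the mass--angular momentum inequality holds.

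Your final paragraph captures the situation accurately. The contradiction scheme you outline in the first two paragraphs---showing that $b_i=0$ for all $i$ yields a regular asymptotically flat stationary vacuum spacetime with $N>1$ degenerate horizons via \eqref{fffoanoinilnhj} and the construction around \eqref{alphaeq}---is exactly the mechanism the paper uses in Section~\ref{sec6} to connect stationary points of the puncture flow to counterexamples of Conjecture~\ref{bhu}. But this only establishes the equivalence between the PDE statement and the spacetime formulation; it does not prove either. The approaches you sketch for ruling out such configurations (degree theory on $B_\varepsilon^N(\mathbf{z}_0)$, monotonicity combined with collision/scattering limits) are not pursued in the paper and, as you correctly note, would amount to resolving the full extreme multi-black-hole uniqueness problem. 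The paper makes no claim to do so.
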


\section{Energy Monotonicity}
\label{monotonicity}
\setcounter{equation}{0}
\setcounter{section}{4}

In this section we will show that the renormalized harmonic map energy of harmonic maps moving according to the flow \eqref{flowdef} is monotonically nonincreasing, whenever the flow is smooth. The first step is to establish a short time existence result in the next proposition. This follows from standard ODE theory, as the functions $b_i$ used in the definition of the flow are well-behaved. In particular, they are smooth with respect to movement of punctures due to the tangent map statement of Theorem \ref{thm:smoothness}. We will also establish a version of this statement in the following result using a different method.

\begin{prop}\label{blipschitz}
The tangent map parameters $b_i(\mathbf{z})$, $i=1,\ldots,N$ are continuously differentiable in $\mathbf{z}$ as long as the $z_i$ remain pairwise distinct. Consequently, given an initial condition $\mathbf{z}(0)$ with $z_i(0)<z_{i+1}(0)$ there exists a maximal time $T>0$ (possibly infinite) and a unique $C^2\left([0,T),\mathbb{R}^N \right)$ solution $\mathbf{z}(t)$ of the initial value problem corresponding to the puncture flow \eqref{flowdef}. The solution satisfies $z_i(t)<z_{i+1}(t)$ for all $i$ and $t\in[0,T)$, and the maximal time if finite is characterized by the property that either
\begin{equation}\label{faoijofinoihyhj}
\limsup_{t\rightarrow T}(z_{j+1}(t)-z_j(t))=\infty,\quad\text{ or }\quad \liminf_{t\rightarrow T}(z_{j+1}(t)-z_j(t))=0,
\end{equation}
for some $j\in\{1,\ldots,N-1\}$.
\end{prop}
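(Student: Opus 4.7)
The plan is to deduce continuous differentiability of $b_i(\mathbf{z})$ directly from Theorem \ref{thm:smoothness}, and then derive the stated local ODE theory from standard Picard--Lindel\"of arguments applied on the open set of parameter space with pairwise distinct $z$-coordinates.

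First I would extract $b_i(\mathbf{z})$ from the tangent map data in a smooth manner. Evaluating the explicit formula for $\bar{v}(\theta_i,b_i)$ in \eqref{eq:tange-map} at $\theta_i=\pi/2$ gives the identity $\bar{v}_{\mathbf{z},i}(\pi/2)=a_i b_i$, and since $a_i$ is independent of $\mathbf{z}$, this inverts to $b_i(\mathbf{z})=\bar{v}_{\mathbf{z},i}(\pi/2)/a_i$. Theorem \ref{thm:smoothness} asserts that $\mathbf{z}\mapsto\bar{v}_{\mathbf{z},i}$ is smooth into $C^{3,\varsigma}(\mathbb{S}^2,\mathbb{R})$, and pointwise evaluation is a bounded linear functional on this space, so the composition $\mathbf{z}\mapsto b_i(\mathbf{z})$ is smooth, and in particular $C^1$, on the open set $\Omega\subset\mathbb{R}^N$ consisting of parameters with all coordinates distinct.

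Next I would apply standard ODE theory to the autonomous system \eqref{flowdef} on $\Omega$. Since the right-hand side is $C^1$, the Cauchy--Lipschitz theorem produces a unique $C^1$ solution $\mathbf{z}(t)$ on a maximal forward interval $[0,T)$; differentiating \eqref{flowdef} once more via the chain rule yields $\ddot{z}_i=-\sum_j (\partial_{z_j}b_i)(\mathbf{z})\,\dot{z}_j$, which is continuous in $t$, hence $\mathbf{z}\in C^2([0,T),\mathbb{R}^N)$. Order preservation $z_i(t)<z_{i+1}(t)$ on $[0,T)$ is automatic: any failure would force $z_i(t_0)=z_{i+1}(t_0)$ at some $t_0<T$ by continuity, placing $\mathbf{z}(t_0)$ outside the domain $\Omega$ of the vector field and contradicting that the solution exists up to $t_0$.

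For the characterization at finite $T$, I would invoke the standard continuation criterion: if $T<\infty$, then the trajectory must leave every compact subset of $\Omega$ as $t\to T$. Compact subsets of $\Omega$ are precisely those on which $\max_i|z_i|$ is bounded and $\min_i(z_{i+1}-z_i)$ is bounded below by a positive constant. Since $b_i\in(-1,1)$ implies $|\dot{z}_i|<1$, each component $|z_i(t)|$ is controlled on any finite interval by its initial value plus $T$; hence the boundedness of $\max_i|z_i|$ cannot be the obstruction when $T$ is finite, leaving the collision alternative $\liminf_{t\to T}(z_{j+1}(t)-z_j(t))=0$ as the only mechanism. The scattering alternative $\limsup(z_{j+1}-z_j)=\infty$ is stated for completeness, as it is the relevant mechanism in the infinite-time case. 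The main obstacle in carrying out this plan has already been resolved upstream in Theorem \ref{thm:smoothness}; once smoothness of the tangent map parameters in $\mathbf{z}$ is in hand, what remains is a direct application of the Cauchy--Lipschitz framework on $\Omega$.
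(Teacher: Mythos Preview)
Your argument is correct. The route you take for the $C^1$ regularity of $b_i(\mathbf{z})$ is genuinely different from the paper's, and in fact simpler. You read off $b_i$ directly from the tangent map via $\bar{v}_{\mathbf{z},i}(\pi/2)=a_ib_i$ (or, in the generic case, an affine image thereof with sign and shift determined by the fixed potential constants), and then invoke the tangent-map smoothness statement of Theorem~\ref{thm:smoothness} together with continuity of point evaluation on $C^{3,\varsigma}(\mathbb S^2)$. The paper acknowledges this direct route at the start of Section~\ref{monotonicity} but chooses instead to express $b_i$ through the logarithmic angle defects $\mathbf{b}_{i+1}-\mathbf{b}_i=\int_{\gamma_i}d\boldsymbol{\alpha}_{\mathbf{z}}$ over a semi-circle avoiding the puncture, and then differentiates under the integral sign using the derivative estimates of Theorem~\ref{thm:smoothness} on the harmonic map itself. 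Your approach is shorter and yields smoothness immediately; the paper's approach has the virtue of producing an explicit integral formula for $\partial_{z_j}b_i$ in terms of $(U_{\mathbf{z}},v_{\mathbf{z}})$ and their $\mathbf{z}$-derivatives, and of giving an independent verification that does not rely on the tangent-map portion of Theorem~\ref{thm:smoothness}.

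For the ODE portion your argument matches the paper's: Picard--Lindel\"of on the open set $\Omega$ of distinct coordinates, $C^2$ by the chain rule, and the continuation criterion. Your additional observation that $|b_i|<1$ forces $|z_i(t)|\le|z_i(0)|+T$ on a finite interval, so that only the collision alternative can actually arise at finite $T$, is correct and slightly sharper than what the proposition asserts.
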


\begin{proof}
Consider the harmonic map $\Phi_{\mathbf{z}}=(u_{\mathbf{z}},v_{\mathbf{z}})$ with punctures $p_i$ designated by $\mathbf{z}$, and having fixed potential constants. We may view $\mathbf{z}$ as a multivariate parameter for the harmonic maps $\Phi_{\mathbf{z}}$. Let $\mathbf{z}(0)=\mathbf{z}_0$ have distinct components, and take a ball $B_{\varepsilon}^N(\mathbf{z}_0)\subset\mathbb{R}^N$ where $\varepsilon>0$ is small enough to ensure that the components of each point remain distinct. According to Theorem \ref{thm:smoothness}, the map $B_{\varepsilon}^N(\mathbf{z}_0)\rightarrow C^{3,\varsigma}(\mathbb{R}^3 \setminus \Gamma_{\varepsilon},\mathbb{R}^2)$ given by $\mathbf{z}\mapsto (U_{\mathbf{z}},v_{\mathbf{z}})$ is continuously differentiable, where $U_{\mathbf{z}}=u_{\mathbf{z}}+\ln\rho$ and $\Gamma_{\varepsilon}=\Gamma \cap \left(\cup_{i=1}^{N} B_{\varepsilon}(p_i^0)\right)$.

By \cite[Theorem 2.4]{HKWX} we have the following relation between the tangent map parameter at the $i$th puncture and the difference of logarithmic angle defects associated with the two neighboring axis rods
\begin{equation}\label{9jnhininkankhhj}
b_i(\mathbf{z})= \tanh\left(\frac{\mathbf{b}_{i+1}(\mathbf{z})-\mathbf{b}_i(\mathbf{z})}{2}\right),
\end{equation}
which follows from \eqref{fffoanoinilnhj}. Moreover, \cite[(2.12)]{HKWX} shows that the logarithmic angle defect of an axis rod agrees with the spacetime metric coefficient $\boldsymbol{\alpha}_{\mathbf{z}}$ of \eqref{alphaeq} restricted to the rod; this value is constant along the rod. Thus, if $\gamma_i$ is a semi-circle in the $\rho z$-half plane centered at puncture $p_i^0$ of radius $2\varepsilon$ and connecting points $\tilde{p}_i$, $\tilde{p}_{i+1}$ of the neighboring axes $\Gamma_i$, $\Gamma_{i+1}$, then
\begin{equation}\label{poniaonoinhh}
\mathbf{b}_{i+1}(\mathbf{z})-\mathbf{b}_i(\mathbf{z}) =\boldsymbol{\alpha}_{\mathbf{z}}(\tilde{p}_{i+1})-\boldsymbol{\alpha}_{\mathbf{z}}(\tilde{p}_i)=\int_{\gamma_i} d\boldsymbol{\alpha}_{\mathbf{z}}=:\mathcal{I}_i(\mathbf{z}).
\end{equation}
Note that $\varepsilon$ may be chosen small enough so that the curves $\gamma_i$ enclose no other initial puncture except $p_i^0$.
It suffices then to show that $\mathcal{I}_i(\mathbf{z})$ is continuously differentiable for $\mathbf{z}\in B_{\varepsilon}^N(\mathbf{z}_0)$.

In what follows the dependence on $\mathbf{z}$ will be suppressed for clarity,
and a dot will be used to denote partial differentiation with respect to the $\mathbf{z}$-parameters. Observe that by formally differentiating the expression \eqref{poniaonoinhh} we obtain
\begin{align}\label{fnoinainoignoinqh}
\begin{split}
\frac{1}{2}\dot{\mathcal{I}}_i(\mathbf{z})&=\int_{\gamma_i}\!\rho\left[\partial_{\rho}U\partial_{\rho}\dot{U}\!-\!\partial_z U\partial_z \dot{U}
\!+\!2\dot{U} e^{4u}\left((\partial_{\rho}v)^2 \!-\!(\partial_z v)^2 \right)\!+\!e^{4u}\left(\partial_{\rho} v\partial_{\rho}\dot{v}
\!-\!\partial_z v\partial_{z}\dot{v}\right)\right]d\rho\\
&+\int_{\gamma_i}\rho\left[\partial_{\rho} \dot{U} \partial_z U +\partial_{\rho}U \partial_z \dot{U}
+4\dot{U} e^{4u}\partial_{\rho}v \partial_{z}v +e^{4u}\left(\partial_{\rho}\dot{v} \partial_z v
+\partial_{\rho} v \partial_z \dot{v}\right)\right]dz,
\end{split}
\end{align}
with the help of \eqref{alphaeq}. Although $e^{4u}$ blows-up upon approach to the axis $\Gamma$, this formula is valid. To see this, let $s\in[0,1]$ parameterize the curve $\gamma_i$ and denote by $h_{\mathbf{z}}(s)$ the integrand of $\mathcal{I}_i(\mathbf{z})$, then the following properties are satisfied. First, it is clear that $h_{\mathbf{z}}$ is an integrable function of $s$ for each $\mathbf{z}\in B_{\varepsilon}^N(p_0)$. Secondly, for almost all $s\in[0,1]$ the partial derivative $\dot{h}_{\mathbf{z}}$ exists for all $\mathbf{z}\in B_{\varepsilon}^N(\mathbf{z}_0)$, by virtue of Theorem \ref{thm:smoothness}; in fact, this holds for all $s$ except at the end points $s=0,1$ and the expression for $\dot{h}_{\mathbf{z}}$ is as in \eqref{fnoinainoignoinqh}.
Thirdly, there is an integrable function $\bar{h}$ of $s$ such that $|\dot{h}_{\mathbf{z}}(s)|\leq \bar{h}(s)$ for all $\mathbf{z}\in B_{\varepsilon}^N(\mathbf{z}_0)$ and $s\in(0,1)$. The function $\bar{h}$ may be taken to be an appropriate constant away from the endpoints, and near $s=0,1$ it suffices to choose $\bar{h}(s)\sim \rho(s)^{-1/2}$. The reason such an $\bar{h}$ dominates is due to Theorem \ref{thm:smoothness} which implies that $\dot{U}$, $\nabla \dot{U}$, and $\nabla\dot{v}$ are continuous on $[0,1] \times B_{\varepsilon}^{N}(\mathbf{z}_0)$, as well as \cite[Theorem 1.1]{LT} which yields the same for $U$, $\nabla U$, and $\nabla v$ in addition to the asymptotics $v=c_i +O_1(\rho^{7/2})$ as $\rho\rightarrow 0$ near $\tilde{p}_i$ (similarly for $\tilde{p}_{i+1}$). In particular, the only terms within \eqref{fnoinainoignoinqh} which potentially blow-up upon approach to $\Gamma$, admit the estimate
\begin{equation}
\rho e^{4u}\left|\partial_{\rho} v\partial_{\rho}\dot{v}
-\partial_z v\partial_{z}\dot{v}\right|\leq C\rho^{-1/2}
\end{equation}
where the constant $C$ is independent of $\mathbf{z}\in B_{\varepsilon}^{N}(\mathbf{z}_0)$, showing that $\bar{h}$ bounds the integrand as desired. We may now 
differentiate under the integral and establish \eqref{fnoinainoignoinqh}. Furthermore, similar arguments show that $\dot{\mathcal{I}}_i(\mathbf{z})$ is continuous on the $\varepsilon$-ball. It then follows from \eqref{9jnhininkankhhj} and \eqref{poniaonoinhh} that $b_i(\mathbf{z})$ is continuously differentiable as long as the components of $\mathbf{z}$ remain distinct.

Standard ODE theory now yields local in time existence and uniqueness of a $C^2$ solution $\mathbf{z}(t)$, in which the components remain pairwise distinct. If $T\in (0,\infty)$ denotes the maximal time of existence, and neither of the behaviors in \eqref{faoijofinoihyhj} occur, then the functions $b_i(\mathbf{z}(t))$ are uniformly bounded on $[0,T]$. It follows that the solution may be smoothly extended to $t=T$, and local existence may be applied once more to contradict the maximality of $T$. Thus, we conclude that either the solution exists for all time or the maximal time of existence may be characterized by \eqref{faoijofinoihyhj}.
\end{proof}

We will now proceed to show that the renormalized energy $\mathcal{E}(\Phi_t)$ is monotonic along the flow \eqref{flowdef}. This will require several preliminary lemmas. Let $\epsilon,\delta,\lambda>0$ be parameters with $\delta<<\epsilon$ and define domains of the form
\begin{equation}
\Omega_{\epsilon,\delta,\lambda}^t=\{(\rho,z,\phi)\in \mathbb{R}^3 \mid \rho>\delta, \, r_i>\epsilon, \, i=1,\dots,N\}\cap B_{\lambda},
\end{equation}
where $r_i$ is the Euclidean distance to the puncture $p_i(t)$. This domain is the region inside the ball of radius $r=\lambda$ centered at the origin, outside the cylinder of radius $\delta$ around the $z$-axis, and outside the balls of radius $\epsilon$ centered at the punctures $p_i(t)$. The boundary of this region is the union of a large sphere with discs removed at the north and south poles $S_{\lambda,\delta}$, a collection of cylinders $C_{\delta,\epsilon,\lambda,i}^t$, $i=1,\dots,N+1$ in one-to-one correspondence with axis rods, and small spheres with discs removed at the north and south poles $S_{\epsilon,\delta,i}^t$, $i=1,\dots,N$. Figure~\ref{domain} provides a pictorial representation that is projected onto the $\rho z$-half plane and reflected across the axis. Note that $C_{\delta,\epsilon,\lambda,i}^t$ is independent of $\lambda$ when $i=2,\dots,N$. Consider the renormalized harmonic map energy restricted to these domains, denoted $\mathcal{E}_{\Omega^t_{\epsilon,\delta,\lambda}}(\Phi_t)$. The strategy will be to differentiate this quantity with respect to $t$, and then integrate by parts thereby reducing the problem to an analysis of boundary terms. Although a special class of exhaustion domains is used for this purpose, which is chosen to simplify computations, the main result (Theorem \ref{decreaselaw}) does not depend on this choice since the renormalized energy density is integrable on $\mathbb{R}^3$.

The movement of $\Omega_{\epsilon,\delta,\lambda}^t$ in time may be obtained from the action of a 1-parameter family of diffeomorphisms applied to a fixed initial domain, such that the associated flow vector field takes the form $Z=h\partial_z$ where $h$ is a smooth function on $\mathbb{R}^3$ which vanishes except near the punctures $p_i(t)$, and in a sufficiently small neighborhood of these points $h=\dot{z}_i(t)$. According to Corollary \ref{cor:linest} and Proposition \ref{blipschitz}, the 1-parameter family of harmonic maps $\Phi_t =(u_t,v_t)$ is continuously differentiable in $t$ away from the axis, and $\dot{\Phi}_t \in C^2(\Omega^t_{\epsilon,\delta,\lambda},\mathbb{R}^2)$. Since the domains are bounded and removed from the axis, we may differentiate under the integral using Leibniz's rule and utilize the harmonic map equations to find
\begin{align}\label{4.7}
\begin{split}
\frac{d}{dt}\mathcal E_{\Omega^t_{\epsilon,\delta,\lambda}}(\Phi_t)
&= \int_{\Omega^t_{\epsilon,\delta,\lambda}} \left(2\nabla \dot{U}_t \cdot\nabla U_t + 4 \frac{e^{4U_t}}{\rho^4}\dot{U}_t |\nabla v_t|^2  +2\frac{e^{4U_t}}{\rho^4}\nabla \dot{v}_t \cdot\nabla v_t\right)d\mathbf{x}\\
&\quad +\int_{\partial\Omega^t_{\epsilon,\delta,\lambda}}\left(|\nabla U_t|^2 +\frac{e^{4U_t}}{\rho^4}|\nabla v_t|^2\right)
\iota_{Z}d\mathbf{x}\\
&= \int_{\partial\Omega^t_{\epsilon,\delta,\lambda}} \!\!\!\!\!\!\!2\left(\dot U_t\,\nabla_\nu U_t
+ \frac{e^{4U_t}}{\rho^{4}} \dot v_t \nabla_\nu v_t \right)dA
+\int_{\partial\Omega^t_{\epsilon,\delta,\lambda}}\!\!\!\!\left(|\nabla U_t|^2 +\frac{e^{4U_t}}{\rho^4}|\nabla v_t|^2\right)
\iota_{Z}d\mathbf{x}\\
&= \underbrace{\int_{S_{\lambda,\delta}}}_{\mathbf{I}_1(\lambda,\delta)} + \sum_{i=1}^{N+1}\underbrace{ \int_{C^t_{\delta,\epsilon,\lambda,i}}}_{\mathbf{I}^t_2(\delta,\epsilon,\lambda,i)}
+ \sum_{i=1}^N\underbrace{\int_{S^t_{\epsilon,\delta,i}}}_{\mathbf{I}^t_3(\epsilon,\delta,i)} 2\left(\dot U_t\,\nabla_\nu U_t
+ \frac{e^{4U_t}}{\rho^{4}} \dot v_t \nabla_\nu v_t \right) dA\\
&\quad +\sum_{i=1}^N\underbrace{\int_{S^t_{\epsilon,\delta,i}}}_{\mathbf{I}^t_4(\epsilon,\delta,i)} \left(|\nabla U_t|^2 +\frac{e^{4U_t}}{\rho^4}|\nabla v_t|^2\right)\iota_{Z}d\mathbf{x},
\end{split}
\end{align}
where $\nu$ is the unit outer normal to the boundary, and $\iota_{Z}$ denotes interior product. In the next four lemmas, exhaustion limits for each of the integrals $\mathbf{I}_1$, $\mathbf{I}^t_2$, $\mathbf{I}^t_3$, and $\mathbf{I}_4^t$ will be evaluated.

\begin{figure}
\includegraphics[width=8.5cm]{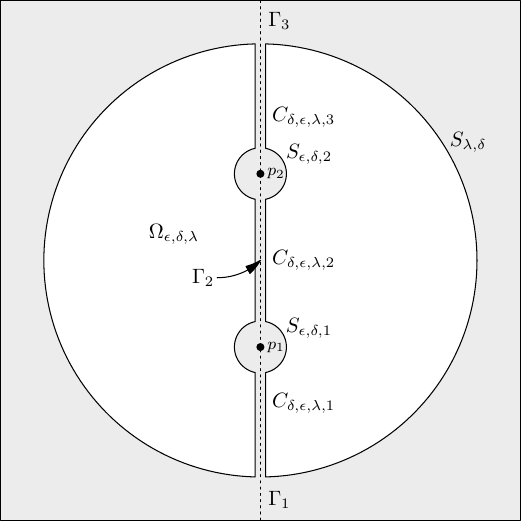}
\caption{Domain of integration and boundary components.}\label{domain}
\end{figure}

\begin{lem} \label{I1}
For each $\delta>0$ we have $\lim_{\lambda\rightarrow\infty} \mathbf{I}_1(\lambda,\delta)=0$.
\end{lem}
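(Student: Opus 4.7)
The idea is purely asymptotic: every integrand on the large punctured sphere $S_{\lambda,\delta}$ is controlled by the expansions \eqref{asyminfin} of $(U_{\mathbf{z}},v_{\mathbf{z}})$ at infinity, combined with the linearized bound \eqref{est} from Corollary~\ref{cor:linest}. I will estimate the two pieces of the integrand separately and show that each, when integrated over the sphere of radius $\lambda$, is $O(\lambda^{-1})$.

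For the $U$-piece, the expansion $U_t = c_1/r + O(r^{-2})$ in \eqref{asyminfin} yields $|\nabla_\nu U_t| = |\partial_r U_t| \le Cr^{-2}$, while Corollary~\ref{cor:linest} gives $|\dot U_t| \le Cr^{-1}$. Hence
\begin{equation*}
\int_{S_{\lambda,\delta}} |\dot U_t\,\nabla_\nu U_t|\,dA
\le C\lambda^{-3}\cdot \lambda^2 \cdot 4\pi = O(\lambda^{-1}).
\end{equation*}
For the $v$-piece, the identity $\frac{e^{4U_t}}{\rho^4}=e^{4u_t}$ together with $u_t=-\ln\rho+O(r^{-1})$ gives $\frac{e^{4U_t}}{\rho^4}\le C\rho^{-4}$ for $r$ large. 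The radial derivative of $v_t$ from \eqref{asyminfin} is controlled by $|\partial_r v_t|\le C\,(\sin\theta)^{3+\varsigma}/r^2$, since the leading angular profile $a_1\cos\theta(3-\cos^2\theta)+a_2$ is $r$-independent. Rewriting the bound \eqref{est} as $|\dot v_t|\le C\rho^{2}r^{-1}e^{-2(U_t-u_t)}\le C\rho^2 r^{-1}$ at infinity, and using $\rho=r\sin\theta$, I obtain
\begin{equation*}
\frac{e^{4U_t}}{\rho^{4}}\,|\dot v_t|\,|\nabla_\nu v_t|
\le C\,\rho^{-4}\cdot \frac{\rho^{2}}{r}\cdot\frac{(\sin\theta)^{3+\varsigma}}{r^{2}}
= C\,\frac{(\sin\theta)^{1+\varsigma}}{r^{5}}.
\end{equation*}
Integrating against $dA = r^2\sin\theta\,d\theta\,d\phi$ and using $\int_0^{\pi}(\sin\theta)^{2+\varsigma}d\theta<\infty$ produces an $O(\lambda^{-3})$ contribution. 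Adding the two pieces completes the proof.

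The mildly subtle point, and the only place where care is needed, is the $v$-piece: the pointwise integrand is \emph{not} summable on $S_{\lambda,\delta}$ by crude size alone, because $e^{4U_t}/\rho^4$ would generically fail to absorb the area element uniformly in $\theta$. What rescues the bound is that the leading term of $v_t$ at infinity is independent of $r$, so the radial derivative $\partial_r v_t$ inherits from \eqref{asyminfin} the extra vanishing factor $(\sin\theta)^{3+\varsigma}$, which is exactly enough to cancel the $\rho^{-4}$ singularity on the axis caps once combined with $|\dot v_t|\le C\rho^{2}/r$. Note that the role of $\delta>0$ in the argument is merely to stay within the regime where both \eqref{asyminfin} and \eqref{est} are valid; the final bound $O(\lambda^{-1})$ is in fact uniform in $\delta$, which will be useful in the subsequent lemmas. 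No obstacle of substance arises beyond this bookkeeping.
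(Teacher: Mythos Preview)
Your proof is correct and follows the same asymptotic strategy as the paper, though the paper handles the $v$-piece more directly: writing $\frac{e^{4U_t}}{\rho^4}|\dot v_t||\nabla_\nu v_t|=e^{4U_t}\cdot\frac{|\dot v_t|}{\rho^2}\cdot\frac{|\nabla v_t|}{\rho^2}$ and using $|U_t|\le C$ together with $|\nabla v_t|/\rho^2\le C/r^2$ (from \cite[Theorem~2.3]{HKWX}) and \eqref{est} gives the $O(r^{-3})$ bound immediately, so the ``mildly subtle point'' you flag---needing the extra $(\sin\theta)^{3+\varsigma}$ vanishing from $\partial_r v_t$---never actually arises. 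One typo to fix: in your intermediate step $|\dot v_t|\le C\rho^{2}r^{-1}e^{-2(U_t-u_t)}$, note that $U_t-u_t=\ln\rho$ so that exponential equals $\rho^{-2}$; you mean $e^{-2U_t}$, which is indeed bounded since $|U_t|\le C$.
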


\begin{proof}
It suffices to show that the integrand is $o(r^{-2})$. By~\cite[Theorem~2.3]{HKWX} and \eqref{est} of Corollary \ref{cor:linest}, there exists a constant $C$ such that for all $r$ large enough
\begin{equation}
|U_t|\leq C,\qquad |\nabla U_t|+ \frac{|\nabla v_t|}{\rho^{2}} \leq \frac{C}{r^{2}},\qquad |\dot{U}_t|+\frac{|\dot{v}_t|}{\rho^{2}} \leq \frac{C}{r},
\end{equation}
from which the desired result follows.
\end{proof}

\begin{lem}\label{I2}
For each $\epsilon>0$ and $i=1,\dots, N+1$ we have
\begin{equation}
\lim_{\delta\rightarrow 0}\lim_{\lambda\rightarrow\infty} \mathbf{I}^t_2(\delta,\epsilon,\lambda,i)=0.
\end{equation}
\end{lem}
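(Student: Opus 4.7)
The plan is to parameterize the cylinder $C^t_{\delta,\epsilon,\lambda,i}$ by $(z,\phi)$ at fixed $\rho=\delta$, with outer normal $\nu=-\partial_\rho$ and area element $dA=\delta\,d\phi\,dz$, and then rewrite the integrand using the identity $e^{4U_t}/\rho^4 = e^{4u_t}$ where $u_t=U_t-\ln\rho$. This reduces the claim to showing
\begin{equation*}
\mathbf{I}_2^t(\delta,\epsilon,\lambda,i)\;=\;-2\int_{C^t_i}\bigl(\dot U_t\,\partial_\rho U_t + e^{4u_t}\dot v_t\,\partial_\rho v_t\bigr)\,\delta\,d\phi\,dz\;\longrightarrow\;0.
\end{equation*}
I will bound the two summands separately and take the iterated limits $\lambda\to\infty$ (relevant only for the semi-infinite rods $\Gamma_1,\Gamma_{N+1}$) and $\delta\to 0$ in that order.

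For the $U$-summand I would use that the Brill coordinate function $U_t$ extends smoothly across the interior of each rod away from punctures (cf.\ \cite{Chrusciel,Sokolowsky}), so axial symmetry yields $\partial_\rho U_t = O(\rho)$ near $\Gamma_i$, while \eqref{estrho} in Corollary \ref{cor:linest} gives $|\dot U_t| = O(1)$. The contribution to the integrand, including the $\delta$-area factor, is thus $O(\delta^2)$ per unit $z$-length on compact portions of the rod. For $\Gamma_1,\Gamma_{N+1}$, the asymptotics \eqref{asyminfin} and \eqref{est} supply $|\partial_\rho U_t|=O(r^{-2})$ and $|\dot U_t|=O(r^{-1})$, giving $r^{-3}$ decay at infinity that renders the $\lambda\to\infty$ integral finite and of order $O(\delta)$; the $\delta\to 0$ limit is then zero.

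For the $v$-summand I need to overcome the blow-up $e^{4u_t}\sim\rho^{-4}$. I would invoke (i) the Li--Tian boundary regularity $v_t = c_i + O_1(\rho^{7/2})$ from \cite{LT} (the same input used in the proof of Proposition \ref{blipschitz}), yielding $|\partial_\rho v_t|=O(\rho^{5/2})$, and (ii) the linearized bound \eqref{estrho} rewritten as $e^{2u_t}|\dot v_t|\le C$, which combined with $e^{-2u_t}=O(\rho^2)$ off punctures gives $|\dot v_t|=O(\rho^2)$. Multiplied out with the $\delta$-area factor, the integrand satisfies $|e^{4u_t}\dot v_t\,\partial_\rho v_t|\,\delta = O(\delta^{3/2})$ on compact pieces, and the analogous refinements from \eqref{asyminfin}--\eqref{est} at infinity provide enough $r$-decay for the $\lambda\to\infty$ limit on the semi-infinite rods before sending $\delta\to 0$.

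The main obstacle is precisely this second estimate: the singular weight $\rho^{-4}$ is only barely defeated by the combined axis-vanishing orders $\rho^{5/2}$ and $\rho^2$ of $\partial_\rho v_t$ and $\dot v_t$, so both the sharp boundary rate from \cite{LT} and the linearized control of Corollary \ref{cor:linest} are essential ingredients. Once these are in hand, the remainder is routine bookkeeping with the asymptotic expansions.
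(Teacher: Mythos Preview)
Your argument is correct, but it works harder than necessary. The paper's proof is more direct: rather than tracking separate powers of $\rho$ for $\partial_\rho v_t$ and $\dot v_t$ against the singular weight $\rho^{-4}$, it simply groups the weight with the $v$-factors by writing
\[
\frac{e^{4U_t}}{\rho^4}\,\dot v_t\,\nabla_\nu v_t \;=\; e^{4U_t}\cdot\frac{\dot v_t}{\rho^2}\cdot\frac{\nabla_\nu v_t}{\rho^2},
\]
and then cites the bounds $|U_t|\le C$, $|\nabla U_t|+\rho^{-2}|\nabla v_t|\le C$ (from \cite[Theorem~2.1]{HKWX} and \cite[Theorem~1.1]{LT}), and $|\dot U_t|+\rho^{-2}|\dot v_t|\le C$ (which is exactly \eqref{estrho} after noting $e^{2U_t}$ is bounded above and below). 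This makes the entire integrand $O(1)$ on the finite cylinders $i=2,\dots,N$, so the $O(\delta)$ area factor alone suffices. For the semi-infinite cylinders $i=1,N+1$ the same grouping, with the $r^{-2}$ and $r^{-1}$ decay rates from \cite[Theorem~2.3]{HKWX} and \eqref{est}, gives an $O(r^{-2})$ integrand, yielding a $z$-integrable bound whose $\lambda\to\infty$ limit is $O(\delta)$.

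Your route---invoking the sharper axis rate $v_t=c_i+O_1(\rho^{7/2})$ from Li--Tian and the $\partial_\rho U_t=O(\rho)$ consequence of axisymmetric smoothness---produces the better power $O(\delta^{3/2})$ for the $v$-summand and $O(\delta^2)$ for the $U$-summand, but these refinements are not needed for the conclusion. The paper's grouping avoids the ``barely defeated'' tension you identify as the main obstacle: once $\rho^{-2}|\nabla v_t|$ and $\rho^{-2}|\dot v_t|$ are each bounded, there is no delicate balance to check.
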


\begin{proof}
First consider the case $i=2,\dots,N$, in which $\mathbf{I}^t_2(\delta,\epsilon,\lambda,i)=\mathbf{I}^t_2(\delta,\epsilon,i)$ is independent of $\lambda$.
Since the areas of the relevant cylinders $C^t_{\delta,\epsilon,\lambda,i}$ are $O(\delta)$ as $\delta\rightarrow 0$, it suffices to show that the integrands are $O(1)$. For fixed $\epsilon>0$, observe that \cite[Theorem~2.1]{HKWX}, \cite[Theorem 1.1]{LT}, and \eqref{estrho} of Corollary \ref{cor:linest} imply the existence of a constant $C$ such that
\begin{equation}
|U_t|\leq C,\qquad |\nabla U_t| + \frac{|\nabla v_t|}{\rho^2}\leq C,\qquad |\dot{U}_t|+\frac{|\dot{v}_t|}{\rho^2}\leq C,
\end{equation}
yielding the desired result.

Consider now the case when $i=1,N+1$. Since the areas of the relevant cylinders $C^t_{\delta,\epsilon,\lambda,i}$ are $O(\delta \lambda)$, it suffices to show that the integrands are $O(r^{-2})$. For fixed $\epsilon>0$, observe that \cite[Theorem~2.3]{HKWX} and \eqref{est} of Corollary \ref{cor:linest} imply that for all $r$ large enough
\begin{equation}
|U_t|\leq C,\qquad |\nabla U_t| + \frac{|\nabla v_t|}{\rho^2}\leq \frac{C}{r^2},\qquad |\dot{U}_t|+\frac{|\dot{v}_t|}{\rho^2}\leq \frac{C}{r},
\end{equation}
where $C$ is independent of $\delta$. The desired result now follows.
\end{proof}

\begin{lem}\label{I3}
For each $i=1,\dots, N$ we have
\begin{equation}
\lim_{\epsilon\to0}\lim_{\delta\rightarrow 0} \mathbf{I}^t_3(\epsilon,\delta,i) = -f_3(b_i)b_i,
\end{equation}
where $f_3(b)$ is defined by~\eqref{f(b)}.
\end{lem}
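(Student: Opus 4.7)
The strategy is to pass to polar coordinates centered at $p_i(t)$, parametrizing $S^t_{\epsilon,\delta,i}$ by $(\theta_i,\phi)$ with $r_i=\epsilon$, $\theta_i\in(\arcsin(\delta/\epsilon),\pi-\arcsin(\delta/\epsilon))$, outer normal $\nu=-\partial_{r_i}$ and area element $dA=\epsilon^2\sin\theta_i\,d\theta_i\,d\phi$. I would then substitute the expansions~\eqref{3.1}--\eqref{3.3} and those of Corollary~\ref{cor:linest} into the integrand, send $\delta\to 0$ first so that the $\theta_i$-integration extends over $(0,\pi)$, and finally $\epsilon\to 0$. The computation splits naturally into a $U$-piece and a $v$-piece.

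For the $v$-piece $\tfrac{e^{4U_t}}{\rho^4}\dot v_t\nabla_\nu v_t$, the factor $\tfrac{e^{4U_t}}{\rho^4}=\tfrac{e^{4\bar U}}{\sin^4\theta_i}(1+O(r_i^{\beta_i}))$ is offset by the strong vanishing of $\tilde{v}_{\mathbf{z},i}$ at the poles. Indeed \eqref{3.3} gives $|\nabla_\nu v_t|=|\partial_{r_i}\tilde v|=O(\epsilon^{\beta_i-1}\sin^{3+\varsigma}\theta_i)$ while \eqref{abcd} gives $|\dot v_t|\le C(\epsilon^{-1}\sin\theta_i+1)$. Combining these, multiplying by $dA$, and using that $\sin^{\varsigma}\theta_i$ is integrable on $(0,\pi)$, the resulting contribution is $O(\epsilon^{\beta_i})\to 0$.

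For the $U$-piece $\dot U_t\nabla_\nu U_t$, \eqref{3.1}--\eqref{3.3} yield $\nabla_\nu U_t=-\epsilon^{-1}+O(\epsilon^{\beta_i-1})$ and \eqref{abcd} gives
\[
\dot U_t=\frac{1}{\epsilon}\bigl(-\cos\theta_i+\sin\theta_i\,\partial_{\theta_i}\bar U\bigr)\dot z_i+\dot b_i\,\partial_{b_i}\bar U+O(\epsilon^{\beta_i-1}).
\]
Multiplying and combining with $dA=\epsilon^2\sin\theta_i\,d\theta_i\,d\phi$, the $\dot b_i$ cross term contributes $O(\epsilon)$ and the remainders contribute $O(\epsilon^{\beta_i})$, leaving only
\[
2\dot z_i\int_0^{2\pi}\!\!\int_0^\pi\bigl(\cos\theta_i-\sin\theta_i\,\partial_{\theta_i}\bar U(\theta_i,b_i)\bigr)\sin\theta_i\,d\theta_i\,d\phi.
\]
The $\cos\theta_i\sin\theta_i$ summand integrates to zero by antisymmetry on $(0,\pi)$; for the other, integrating by parts in $\theta_i$ with vanishing boundary term (since $\sin^2\theta_i|_{0,\pi}=0$) produces $2\int_0^\pi\sin\theta_i\cos\theta_i\,\bar U(\theta_i,b_i)\,d\theta_i$. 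Together with the flow equation $\dot z_i=-b_i$ from \eqref{flowdef}, the limit becomes $-8\pi b_i\int_0^\pi\sin\theta_i\cos\theta_i\,\bar U(\theta_i,b_i)\,d\theta_i$, which is exactly $-f_3(b_i)\,b_i$ for $f_3$ as defined in~\eqref{f(b)}.

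The main obstacle is bookkeeping in the $v$-piece, where the factor $\sin^{-4}\theta_i$ coming from $e^{4U_t}/\rho^4$ competes with the high-order vanishing $\sin^{3+\varsigma}\theta_i$ of the tangent-map error terms in~\eqref{3.3}; the flexibility to take $\varsigma\in(0,1)$ provides just enough margin at both endpoints $\theta_i=0,\pi$ for the estimates to close. The remainder of the argument is a clean identification of the two surviving rescaled terms at $r_i=\epsilon$ and the final integration by parts in $\theta_i$.
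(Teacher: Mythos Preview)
Your argument is correct and follows essentially the same route as the paper: set up in polar coordinates around $p_i(t)$, substitute the expansions \eqref{3.1}--\eqref{3.3} together with \eqref{abcd}, show the $v$-piece is $O(\epsilon^{\beta_i})$ by balancing $\sin^{-4}\theta_i$ against $\sin^{3+\varsigma}\theta_i$, and reduce the $U$-piece to $-4\pi\dot z_i\int_0^\pi\sin^2\theta_i\,\partial_{\theta_i}\bar U\,d\theta_i$ after the $\cos\theta_i\sin\theta_i$ term vanishes by symmetry.

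The one place where your write-up diverges from the paper is the final identification. The paper does \emph{not} integrate by parts; it substitutes the explicit formula \eqref{poijaoinfoianoinionhhh} for $\partial_{\theta_i}\bar U$, which turns $-4\pi\dot z_i\int_0^\pi\sin^2\theta_i\,\partial_{\theta_i}\bar U\,d\theta_i$ directly into $\dot z_i\,f_3(b_i)=-b_i f_3(b_i)$ by the definition \eqref{f(b)}. Your integration by parts produces $-8\pi b_i\int_0^\pi\sin\theta_i\cos\theta_i\,\bar U\,d\theta_i$, which is not the expression appearing in \eqref{f(b)}; to close the argument you would still have to verify the identity
\[
8\pi\int_0^\pi\sin\theta\cos\theta\,\bar U(\theta,b)\,d\theta \;=\; 4\pi\int_0^\pi\frac{\sin^3\theta(\cos\theta+b)}{1+\cos^2\theta+2b\cos\theta}\,d\theta,
\]
which is exactly undoing the integration by parts using \eqref{poijaoinfoianoinionhhh}. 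So the detour is harmless but circular; the cleaner route is to skip the integration by parts and match $f_3$ directly via the explicit $\partial_\theta\bar U$.
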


\begin{proof}
According to \cite[Theorem 2.1 and 2.2]{HKWX}, expansions for the harmonic map $\Phi_t$ are given by \eqref{3.1}-\eqref{3.3} in a neighborhood of each puncture.
Next observe that by Proposition \ref{blipschitz} we have $\dot{b}_i \partial_{b_i}\bar{U}_t =O(1)$ and $\dot{b}_i\partial_{b_i}\bar{v}_t =O(1)$ as $r_i\rightarrow 0$. Thus, \eqref{abcd} of Corollary \ref{cor:linest} implies that on $S^t_{\epsilon,\delta,i}$ the following asymptotic profiles hold
\begin{align}\label{oniaoinoinhjj}
\begin{split}
\dot{U}_t \partial_{r_i}U_t &= \left[\left(-\frac{(z-z_i)}{r_i^2} + \frac{\rho}{r_i^2}\partial_{\theta_i}\bar{U}_t\right)\dot{z}_i+\dot{b}_i\partial_{b_i}\bar{U}_t
+O(r_i^{\beta_i -1})\right]\left[\frac{1}{r_i}+ O(r_{i}^{\beta_i -1}) \right]\\
&= -\frac{(z-z_i)}{r_{i}^3}\dot{z}_i+\frac{\rho}{r_i^3}(\partial_{\theta_i}\bar{U}_t)\dot{z}_i +O(r_{i}^{\beta_i -2}),
\end{split}
\end{align}
\begin{align}
\begin{split}
\frac{e^{4U_t}}{\rho^4} \dot{v}_t \partial_{r_i}v_t &=
\frac{1}{\sin^4\theta_{i}}\left[\left(\frac{1+\cos^{2}\theta_i +2b_i \cos\theta_i}{2a_i \sqrt{1-b_i^2}}\right)^2 +O(r_i^{\beta_i})\right]\\
&\cdot\left[\frac{\sin\theta_i}{r_i} (\partial_{\theta_i} \bar{v}_t)\dot{z}_i+\dot{b}_i \partial_{{b}_i}\bar{v}_t +O(r_{i}^{\beta_i -1}\sin\theta_{i})\right]
\underbrace{O(r_{i}^{\beta_i -1} (\sin\theta_i)^{7/2} )}_{\partial_{r_i} v_t}\\
&=O((\sin\theta_{i})^{-1/2}r_{i}^{\beta_i -2}).
\end{split}
\end{align}
We now let $\delta\rightarrow 0$ and integrate over the spheres $S^t_{\epsilon,i}$ on which $r_i=\epsilon$, and note that the first term on the right-hand side of \eqref{oniaoinoinhjj} integrates to zero due to symmetry. Since
\begin{equation}\label{poijaoinfoianoinionhhh}
\partial_{\theta_i}\bar{U}=-\frac{\sin\theta_i\, (\cos\theta_i + b_i)}{1+\cos^2\theta_i + 2b_i\cos\theta_i}
\end{equation}
by \eqref{eq:tange-map}, it follows that
\begin{align}
\begin{split}
\lim_{\delta\rightarrow 0}\int_{S^t_{\epsilon,\delta,i}} 2\left(\dot{U}_t \nabla_{\nu}U_t +\frac{e^{4U_t}}{\rho^4} \dot{v}_t \nabla_{\nu}v_t\right)dA&=-4\pi \dot{z}_i \int_{0}^{\pi}(\partial_{\theta_i}\bar{U}) \sin^2 \theta_i d\theta_i +O(\epsilon^{\beta_i})\\
&=-f_3(b_i)b_i+O(\epsilon^{\beta_i}),
\end{split}
\end{align}
where $f_3$ is defined in Lemma \ref{flemma} and we have used \eqref{flowdef}. Letting $\epsilon\rightarrow 0$ now produces the desired result.
\end{proof}

\begin{lem}\label{I4}
For each $i=1,\dots, N$ we have
\begin{equation}
\lim_{\epsilon\to0}\lim_{\delta\rightarrow 0} \mathbf{I}^t_4(\epsilon,\delta,i) = -f_4(b_i)b_i,
\end{equation}
where $f_4(b)$ is defined by~\eqref{f(b)}.
\end{lem}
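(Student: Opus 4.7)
The plan is to reduce $\mathbf{I}^t_4$ to a surface integral on the shrinking sphere $S^t_{\epsilon,i}=\{r_i=\epsilon\}$ and then extract its leading behavior from the local asymptotics of $\Phi_t$. Since $Z=h\partial_z$ with $h=\dot z_i(t)$ in a neighborhood of $p_i(t)$, the standard identity $\iota_Z d\mathbf{x}|_{\partial\Omega}=(Z\cdot\nu)\,dA$ together with $\partial_z\cdot\partial_{r_i}=\cos\theta_i$ on the sphere gives
\begin{equation*}
\iota_Z d\mathbf{x}\big|_{S^t_{\epsilon,\delta,i}}=\dot z_i\cos\theta_i\,dA=-b_i\cos\theta_i\,dA,
\end{equation*}
after invoking the flow equation \eqref{flowdef}. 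Sending $\delta\to0$ merely restores the two polar caps of the sphere, and the axis-suppression factors in \eqref{3.3} ensure integrability on the full sphere, so dominated convergence handles that limit cleanly.

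Next I would insert the expansions \eqref{3.1}--\eqref{3.3} into the energy density. With $\partial_{r_i}U_t=r_i^{-1}+O(r_i^{\beta_i-1})$, $\partial_{\theta_i}U_t=\partial_{\theta_i}\bar U + O(r_i^{\beta_i})$, $\partial_{r_i}v_t=O(r_i^{\beta_i-1}\sin^{3+\varsigma}\theta_i)$, $\partial_{\theta_i}v_t=\partial_{\theta_i}\bar v + O(r_i^{\beta_i}\sin^{2+\varsigma}\theta_i)$, and $e^{4U_t}/\rho^4=e^{4\bar U}/\sin^4\theta_i\cdot(1+O(r_i^{\beta_i}))$, one obtains on $S^t_{\epsilon,i}$
\begin{equation*}
|\nabla U_t|^2 + \frac{e^{4U_t}}{\rho^4}|\nabla v_t|^2=\frac{1}{\epsilon^2}\Big[\,1+(\partial_{\theta_i}\bar U)^2 + \frac{e^{4\bar U}}{\sin^4\theta_i}(\partial_{\theta_i}\bar v)^2\Big] + O\!\left(\epsilon^{\beta_i-2}\right),
\end{equation*}
with the error integrable in $\theta_i$ after incorporating $dA$. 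The isotropic ``$1$'' piece is then eliminated by parity since $\int_0^\pi\cos\theta\sin\theta\,d\theta=0$, and the $O(\epsilon^{\beta_i-2})$ remainder contributes $O(\epsilon^{\beta_i})\to 0$ after multiplication by the $\epsilon^2$ area factor.

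The final step is a direct calculation: using \eqref{eq:tange-map} and \eqref{poijaoinfoianoinionhhh} one finds
\begin{equation*}
\partial_{\theta_i}\bar v=-\frac{2a_i(1-b_i^2)\sin^3\theta_i}{(1+\cos^2\theta_i+2b_i\cos\theta_i)^2},
\end{equation*}
and the bracketed angular density collapses to the clean form $\sin^2\theta_i/(1+\cos^2\theta_i+2b_i\cos\theta_i)$. Multiplying by $-2\pi b_i$ from the $\phi$-integration and the weight, then integrating against $\cos\theta_i\sin\theta_i\,d\theta_i$ on $[0,\pi]$, reproduces exactly $-f_4(b_i)b_i$ with $f_4$ as defined in \eqref{f(b)}.

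The one step requiring genuine care, and hence the main obstacle, is controlling the cross and error terms uniformly up to the axis before taking $\delta\to 0$. This is precisely where the sharp $\sin^{3+\varsigma}\theta_i$ decay of $\partial_{r_i}\tilde v_t$ and $\sin^{2+\varsigma}\theta_i$ decay of $\partial_{\theta_i}\tilde v_t$ in \eqref{3.3} are essential: they dominate the $\sin^{-4}\theta_i$ blowup of $e^{4U_t}/\rho^4$ with room to spare, guaranteeing that the $\delta\to 0$ limit exists and that all error contributions vanish as $\epsilon\to 0$.
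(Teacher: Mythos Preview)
Your approach is essentially the paper's: expand the energy density on $S^t_{\epsilon,i}$ using \eqref{3.1}--\eqref{3.3}, extract the leading $r_i^{-2}$ coefficient, and integrate against $\cos\theta_i\sin\theta_i$. Your algebraic simplification $(\partial_{\theta_i}\bar U)^2+e^{4\bar u}(\partial_{\theta_i}\bar v)^2=\sin^2\theta_i/(1+\cos^2\theta_i+2b_i\cos\theta_i)$ is correct and equivalent to the paper's identity $1+(\partial_{\theta_i}\bar U)^2+e^{4\bar u}(\partial_{\theta_i}\bar v)^2=2(1+b_i\cos\theta_i)/(1+\cos^2\theta_i+2b_i\cos\theta_i)$; dropping the ``$1$'' by parity first is a legitimate shortcut.

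However, there is a sign slip. The sphere $S^t_{\epsilon,\delta,i}$ is an \emph{inner} boundary of $\Omega^t_{\epsilon,\delta,\lambda}$, so the outward unit normal to $\Omega$ is $\nu=-\partial_{r_i}$, not $+\partial_{r_i}$. Hence $(Z\cdot\nu)=-\dot z_i\cos\theta_i$, and your displayed formula should read $\iota_Z d\mathbf{x}|_{S^t_{\epsilon,\delta,i}}=-\dot z_i\cos\theta_i\,dA=+b_i\cos\theta_i\,dA$. Tracing this through your computation gives $+2\pi b_i\int_0^\pi\frac{\sin^3\theta\cos\theta}{1+\cos^2\theta+2b_i\cos\theta}\,d\theta$, which is indeed $-f_4(b_i)b_i$ (one checks from \eqref{f(b)} that $f_4(b)=-2\pi\int_0^\pi\frac{\sin^3\theta\cos\theta}{1+\cos^2\theta+2b\cos\theta}\,d\theta$ after using the same parity cancellation). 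As written, your sign conventions produce $+f_4(b_i)b_i$. The paper avoids this pitfall by computing $\iota_Z d\mathbf{x}=\dot z_i\,dx\wedge dy$ directly as a $2$-form and then integrating with the induced boundary orientation, which inserts the compensating minus sign.
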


\begin{proof}
Observe that on $S^t_{\epsilon,\delta,i}$ a direct computation produces
\begin{equation}
\iota_{Z}d\mathbf{x}=\dot{z}_i dx\wedge dy=-\dot{z}_i r_i^2 \sin\theta_i \cos\theta_i d\phi_i \wedge d\theta_i.
\end{equation}
Furthermore, \eqref{3.1}-\eqref{3.3} imply that in a neighborhood of the $i$th puncture
\begin{align}
\begin{split}
|\nabla U_t|^2 &=(\partial_{r_i} U_t)^2 +r_{i}^{-2} (\partial_{\theta_i}U_t)^2\\
&=\left(r_{i}^{-1}+O(r_{i}^{\beta_i -1})\right)^2 +r_i^{-2}\left(\partial_{\theta_i}\bar{U}_t +O(r_i^{\beta_i})\right)^2\\
&=r_i^{-2}\left(1+(\partial_{\theta_i}\bar{U}_t)^2\right)+O(r_i^{\beta_i -2}),
\end{split}
\end{align}
\begin{align}
\begin{split}
\frac{e^{4U_t}}{\rho^4}|\nabla v_t|^2 &=e^{4u_t}\left((\partial_{r_i} v_t)^2 +r_i^{-2}(\partial_{\theta_i}v_t)^2 \right)\\
&=\left[e^{4\bar{u}_t}+O((\sin\theta_i)^{-4}r_i^{\beta_i})\right]\\
&\quad \cdot\left[O((\sin\theta_i)^{3+\varsigma}r_i^{\beta_i -1})^2 +r_i^{-2}\left(\partial_{\theta_i}\bar{v}_t 
+O((\sin\theta_i)^{2+\varsigma}r_i^{\beta_i})\right)^2 \right]\\
&=r_i^{-2}e^{4\bar{u}_t}(\partial_{\theta_i}\bar{v}_t)^2 +O(r_i^{\beta_i -2}).
\end{split}
\end{align}
Therefore, integrating over the surface with respect to the induced boundary orientation yields
\begin{align}
\begin{split}
&\lim_{\delta\rightarrow 0}\int_{S^t_{\epsilon,\delta,i}}\left(|\nabla U_t|^2 +\frac{e^{4U_t}}{\rho^4}|\nabla v_t|^2\right)\iota_{Z}d\mathbf{x}\\
=&-2\pi \dot{z}_i \int_{0}^{\pi}\left(1+(\partial_{\theta_i}\bar{U})^2 +e^{4\bar{u}}(\partial_{\theta_i}\bar{v})^2 \right)\sin\theta_i \cos\theta_i d\theta_i +O(\epsilon^{\beta_i})\\
=&- f_4(b_i) b_i +O(\epsilon^{\beta_i}).
\end{split}
\end{align}
In the last step we used \eqref{poijaoinfoianoinionhhh}, together with the relation $e^{4\bar{u}}\partial_{\theta_i}\bar{v}=-(2a_i \sin\theta_i)^{-1}$ from \cite[(4.5)]{HKWX}, to obtain
\begin{equation}
1+(\partial_{\theta_i}\bar{U})^2 +e^{4\bar{u}}(\partial_{\theta_i}\bar{v})^2=\frac{2(1+b_i \cos\theta_i)}{1+\cos^2\theta_i +2b_i \cos\theta_i}.
\end{equation}
Letting $\epsilon\rightarrow 0$ now produces the desired result.
\end{proof}

\begin{rem} \label{uniform}
We point out that the estimates in the proofs of the preceding three lemmas are uniform in $t$, for $t$ in a sufficiently small neighborhood of any given value $t_0$ for which $\mathbf{z}(t_0)$ has pairwise distinct components.
\end{rem}

\begin{lem}\label{flemma}
Let
\begin{equation} \label{f(b)}
f_3(b) = 4\pi \int_0^\pi \frac{\sin^3\theta\, (\cos\theta + b)}{1+\cos^2\theta + 2b\cos\theta} d\theta, \quad\quad
f_{4}(b)=-4\pi\int_{0}^{\pi}\frac{(1+b\cos\theta)\sin\theta\cos\theta}{1+\cos^2 \theta+2b\cos\theta} d\theta.
\end{equation}
Then $f_3(0)=f_{4}(0)=0$ and $f'_3(b),f'_{4}(b)>0$ for all $-1<b<1$.
\end{lem}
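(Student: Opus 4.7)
The plan is to verify both claims by direct computation: the vanishing at $b=0$ follows from an odd-symmetry argument after the substitution $u=\cos\theta$, while positivity of the derivatives is obtained by differentiating under the integral sign and observing that the numerators simplify dramatically to manifestly nonnegative expressions.

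First, for the values at $b=0$, substitute $u=\cos\theta$, $du=-\sin\theta\,d\theta$. Then
\begin{equation}
f_3(0) = 4\pi\int_0^\pi \frac{\sin^3\theta\,\cos\theta}{1+\cos^2\theta}\,d\theta = 4\pi\int_{-1}^{1}\frac{u(1-u^2)}{1+u^2}\,du = 0,
\end{equation}
and similarly
\begin{equation}
f_4(0) = -4\pi\int_0^\pi \frac{\sin\theta\,\cos\theta}{1+\cos^2\theta}\,d\theta = -4\pi\int_{-1}^{1}\frac{u}{1+u^2}\,du = 0,
\end{equation}
since both integrands on the right are odd in $u$ on a symmetric interval.

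For the derivatives, I would differentiate under the integral sign, which is justified since the denominator $1+\cos^2\theta+2b\cos\theta = (1+b\cos\theta)^2 + \sin^2\theta(1-b^2)$ is bounded below by a positive constant uniformly for $b$ in any compact subset of $(-1,1)$. For $f_3$, the numerator of $\partial_b$ applied to the integrand equals
\begin{equation}
\sin^3\theta\bigl[(1+\cos^2\theta+2b\cos\theta) - 2\cos\theta(\cos\theta+b)\bigr] = \sin^3\theta(1-\cos^2\theta) = \sin^5\theta,
\end{equation}
so that
\begin{equation}
f_3'(b) = 4\pi\int_0^\pi \frac{\sin^5\theta}{(1+\cos^2\theta+2b\cos\theta)^2}\,d\theta > 0.
\end{equation}
For $f_4$, the analogous computation gives numerator
\begin{equation}
\sin\theta\cos^2\theta\bigl[(1+\cos^2\theta+2b\cos\theta) - 2(1+b\cos\theta)\bigr] = -\sin^3\theta\cos^2\theta,
\end{equation}
so after incorporating the leading minus sign in the definition of $f_4$,
\begin{equation}
f_4'(b) = 4\pi\int_0^\pi \frac{\sin^3\theta\,\cos^2\theta}{(1+\cos^2\theta+2b\cos\theta)^2}\,d\theta > 0.
\end{equation}

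There is no real obstacle here; the only mild subtlety is justifying differentiation under the integral, which is immediate from the uniform positive lower bound on the denominator for $b$ away from $\pm 1$. The cancellations producing $\sin^5\theta$ and $-\sin^3\theta\cos^2\theta$ are the entire content of the positivity assertion.
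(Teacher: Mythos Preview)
Your proof is correct and follows essentially the same approach as the paper: an odd-symmetry argument for the values at $b=0$ (the paper phrases it as a translation by $\pi/2$, you use the equivalent substitution $u=\cos\theta$), and direct differentiation under the integral sign yielding the same explicit positive integrands $\sin^5\theta$ and $\sin^3\theta\cos^2\theta$ over the squared denominator. One minor slip: your factorization of the denominator is incorrect---the right identity is $1+\cos^2\theta+2b\cos\theta=(\cos\theta+b)^2+(1-b^2)$---but this still gives the uniform positive lower bound you need, so the argument stands.
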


\begin{proof}
When $b=0$ the integrands, after translation by $\pi/2$ to the left, become odd functions and hence $f_3(0)=f_4(0)=0$. Next observe that for each $b\in (-1,1)$ a direct computation shows
\begin{equation}
f'_3(b)=4\pi\int_{0}^{\pi}\frac{\sin^5 \theta}{(1+\cos^2 \theta+2b\cos\theta)^2}d\theta,\quad\quad
f'_4 (b)=4\pi\int_{0}^{\pi}\frac{\sin^3 \theta \cos^2 \theta}{(1+\cos^2 \theta+2b\cos\theta)^2}d\theta
\end{equation}
from which the desired results follows.
\end{proof}

We are now ready to establish the main result of this section. We will utilize an elementary fact concerning the interchange of derivatives and limits. Recall that if $\{h_n\}$ is a sequence of continuously differentiable functions on a bounded interval $I$ such that $\{h_{n}(t_*)\}$ converges for some $t_* \in I$, and $\{h'_{n}\}$ is uniformly convergent on $I$, then $\{h_n\}$ converges uniformly on the same interval to a continuously differentiable function $h$ with $h'(t)=\lim_{n\rightarrow\infty}h'_{n}(t)$ for all $t\in I$. 

\begin{thm}\label{decreaselaw}
Let $\mathbf{z}(t)$ be a solution of the puncture flow \eqref{flowdef} for $t\in[0,T)$ provided by Proposition \ref{blipschitz},
and consider the 1-parameter family of harmonic maps $\Phi_t=\Phi_{\mathbf{z}(t)}$. Then the flow of reduced energy $\mathcal{E}(\Phi_t)$ is continuously differentiable and monotonically nonincreasing on $[0,T)$.
\end{thm}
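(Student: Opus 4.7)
The strategy is to pass to the limit $\lambda\to\infty$, $\delta\to 0$, $\epsilon\to 0$ in the identity \eqref{4.7} and invoke the Weierstrass interchange of derivative and uniform limit recalled just before the statement. First I would fix an arbitrary compact subinterval $I\subset[0,T)$; on $I$ the components of $\mathbf{z}(t)$ remain pairwise distinct with a positive uniform minimum separation and a uniform upper bound, so Remark \ref{uniform} ensures that the estimates underlying Lemmas \ref{I1}--\ref{I4} are uniform in $t\in I$. For each admissible triple $(\epsilon,\delta,\lambda)$ the function $t\mapsto\mathcal{E}_{\Omega^t_{\epsilon,\delta,\lambda}}(\Phi_t)$ is already $C^1$ on $I$ with derivative given by the right-hand side of \eqref{4.7}, and the four lemmas yield, uniformly in $t\in I$, that $\mathbf{I}_1(\lambda,\delta)\to 0$ as $\lambda\to\infty$, that $\mathbf{I}_2^t(\delta,\epsilon,\lambda,i)\to 0$ first as $\lambda\to\infty$ then as $\delta\to 0$, and that $\mathbf{I}_3^t(\epsilon,\delta,i)+\mathbf{I}_4^t(\epsilon,\delta,i)\to -(f_3(b_i(t))+f_4(b_i(t)))b_i(t)$ as $\delta\to 0$ followed by $\epsilon\to 0$. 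Pointwise convergence $\mathcal{E}_{\Omega^t_{\epsilon,\delta,\lambda}}(\Phi_t)\to\mathcal{E}(\Phi_t)$ follows from monotone/dominated convergence, since the renormalized energy density is integrable near punctures, along the axis, and at infinity by \eqref{3.1}--\eqref{asyminfin}.

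Next I would execute the three limits successively. Taking $\lambda\to\infty$ first with $\epsilon,\delta$ fixed, uniform convergence of the $\lambda$-dependent boundary integrals on $I$ together with pointwise convergence at $t=0$ supplies the hypotheses of the Weierstrass statement and yields that $g_{\epsilon,\delta}(t):=\lim_{\lambda\to\infty}\mathcal{E}_{\Omega^t_{\epsilon,\delta,\lambda}}(\Phi_t)$ is $C^1$ on $I$ with the expected limiting derivative. Repeating the same argument for $\delta\to 0$ and then for $\epsilon\to 0$, and observing that continuity of the $b_i(t)$ from Proposition \ref{blipschitz} renders the candidate limit derivative continuous in $t$, I would conclude $\mathcal{E}(\Phi_t)\in C^1(I)$ with
\begin{equation*}
\frac{d}{dt}\mathcal{E}(\Phi_t)=-\sum_{i=1}^{N}\bigl(f_3(b_i(t))+f_4(b_i(t))\bigr)b_i(t).
\end{equation*}
By Lemma \ref{flemma}, $f_3(0)=f_4(0)=0$ and $f_3',f_4'>0$ on $(-1,1)$, so $(f_3(b)+f_4(b))b\geq 0$ there and the right-hand side is $\leq 0$. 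Since $I\subset[0,T)$ was arbitrary, both $C^1$ regularity and monotonic nonincrease extend to all of $[0,T)$, giving exactly the formula \eqref{gonaoinoinihjjj} foreshadowed in the introduction with $f=f_3+f_4$.

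The main obstacle is justifying the interchange of $\tfrac{d}{dt}$ with the three successive limits. Rather than attempting a single diagonal argument, I would perform three nested Weierstrass interchanges, verifying at each stage that the new derivative---the sum of those boundary integrals from \eqref{4.7} that survive the already-taken limits---converges uniformly in $t\in I$. The key technical input enabling all three interchanges is the uniformity in $t$ recorded in Remark \ref{uniform}, which holds precisely because on a compact subinterval of $[0,T)$ the punctures stay uniformly separated and bounded, preventing the constants in the asymptotic expansions of $\Phi_t$ and $\dot\Phi_t$ near punctures, along the axis, and at infinity from degenerating along the flow.
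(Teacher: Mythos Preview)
Your proposal is correct and follows essentially the same route as the paper: both arguments start from the identity \eqref{4.7}, take the three limits $\lambda\to\infty$, $\delta\to 0$, $\epsilon\to 0$ in succession, invoke Remark \ref{uniform} for the uniformity needed to interchange $\tfrac{d}{dt}$ with each limit via the Weierstrass-type fact recalled before the theorem, and then conclude monotonicity from Lemma \ref{flemma} with $f=f_3+f_4$. Your version is slightly more explicit about restricting to compact subintervals $I\subset[0,T)$ to secure uniform puncture separation, which is implicit in the paper's treatment.
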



\begin{proof}
Set $\Omega^t_{\epsilon,\delta}=\Omega^t_{\epsilon,\delta,\infty}$, and observe that since the renormalized energy densities of the harmonic maps are globally integrable, we have that $\mathcal{E}_{\Omega^t_{\epsilon,\delta,\lambda}}(\Phi_t)\rightarrow\mathcal{E}_{\Omega^t_{\epsilon,\delta}}(\Phi_t)$ as $\lambda\rightarrow\infty$ for all $t\in [0,T)$. Then with the aid of Remark \ref{uniform}, upon letting $\lambda\rightarrow\infty$ in
\eqref{4.7} we may interchange derivative and limit and apply Lemma \ref{I1} to find that
$\mathcal{E}_{\Omega^t_{\epsilon,\delta}}(\Phi_t)$ is continuously differentiable in $t$ with
\begin{equation}
\frac{d}{dt}\mathcal E_{\Omega^t_{\epsilon,\delta}}(\Phi_t)=\lim_{\lambda\to\infty}\frac{d}{dt}\mathcal E_{\Omega^t_{\epsilon,\delta,\lambda}}(\Phi_t)=\lim_{\lambda\rightarrow\infty}\sum_{i=1}^{N+1} \mathbf{I}^t_2(\delta,\epsilon,\lambda,i) + \sum_{i=1}^N \mathbf{I}^t_3(\epsilon,\delta,i)
+ \sum_{i=1}^N \mathbf{I}^t_4(\epsilon,\delta,i).
\end{equation}
Next set $\Omega^t_{\epsilon}=\Omega^t_{\epsilon,0}$ and note that in a similar manner, using now Lemma~\ref{I2}, we have that $\mathcal{E}_{\Omega^t_{\epsilon}}(\Phi_t)$ is continuously differentiable in $t$ with
\begin{equation}
\frac{d}{dt}\mathcal E_{\Omega^t_{\epsilon}}(\Phi_t)=\lim_{\delta\to 0}\frac{d}{dt}\mathcal E_{\Omega^t_{\epsilon,\delta}}(\Phi_t)=\lim_{\delta\rightarrow 0}\sum_{i=1}^N \left(\mathbf{I}^t_3(\epsilon,\delta,i) +\mathbf{I}^t_4(\epsilon,\delta,i)\right).
\end{equation}
Finally, interchanging derivative and limit one more time while letting $\epsilon\to0$ shows that $\mathcal{E}(\Phi_t)$ is continuously differentiable in $t$, and employing Lemmas \ref{I3} and \ref{I4} yields
\begin{equation}\label{derivative}
\frac{d}{dt}\mathcal E(\Phi_t)=\lim_{\epsilon\to0}\frac{d}{dt}\mathcal E_{\Omega^t_{\epsilon}}(\Phi_t)
=\lim_{\epsilon\to0} \lim_{\delta\rightarrow 0} \sum_{i=1}^N \left(\mathbf{I}^t_3(\epsilon,\delta,i) + \mathbf{I}^t_4(\epsilon,\delta,i)\right)=
-\sum_{i=1}^N f(b_i)\, b_i \leq 0,
\end{equation}
where $f=f_3 +f_4$ and the last inequality follows from Lemma~\ref{flemma}.
\end{proof}


\section{Collision of Punctures}
\label{collision}
\setcounter{equation}{0}
\setcounter{section}{5}

Consider the 1-parameter family of harmonic maps $\Phi_t =(u_t,v_t):=(u_{\mathbf{z}(t)},v_{\mathbf{z}(t)})$ with fixed potential constants, defined by the flow of punctures \eqref{flowdef}. Let $z_i(t)$, $1\leq i \leq N$ denote the $z$-coordinate of the $i$-th puncture at time $t$, and assume that $z_1(0)<\cdots<z_N(0)$. According to Theorem \ref{thm:smoothness} and Proposition \ref{blipschitz}, this family of maps is continuously differentiable in time away from the axis for $t\in[0,T)$. At time $T$ (which may be infinite), at least one of the possibilities from \eqref{faoijofinoihyhj} occurs. In the latter case, namely when
$\liminf_{t\to T}\left(z_{j+1}(t)-z_j(t)\right)=0$ for some $j\in \{1,\ldots, N-1 \}$, we say that two (or more) adjacent punctures $z_j<z_{j+1}$ \textit{collide} at time $T$. Note that more than two punctures can collide at time $T$ in a single location, or in multiple locations simultaneously. In this section we will treat the case in which no scattering occurs, that is, when the first possibility of \eqref{faoijofinoihyhj} does not happen. With this setting, after possibly translating in the $z$-direction, we may assume without loss of generality that the set of punctures remains within a fixed compact set for all time.
Hence, there is a sequence of times $t_n\to T$ such that $\lim_{n \rightarrow\infty} z_i(t_n)=z_{i}^*$ for all $i$, and $z_j(t_n)$ eventually collides with $z_{j+1}(t_n)$ for some $j$. Note that the neighboring $j$th punctures collide if and only if
$z_{j}^*=z_{j+1}^*$, in which case we call $z_{j}^*$ a \textit{collision location}. 
The collection of values $\{z_{1}^*,\ldots, z_{N}^* \}$, which has at most $N-1$ distinct elements, is referred to as the \textit{collision configuration}. Consider a harmonic map $\Phi_*$ having the set of punctures given by $\{z_{1}^*,\ldots, z_{N}^* \}$ and with the same potential constants on the rods that remain; this will be referred to as the \textit{collision configuration map}. We point out that different sequences $t_n\to T$ may result in different collision configurations and collision configuration maps. Nevertheless, the limiting renormalized energy is the same for all sequences, that is $\lim_{t\to T} \mathcal E(\Phi_{t})$ exists in light of the monotonicity provided by Theorem \ref{decreaselaw}. The proposition below contains the primary result of this section, which will allow the flow to `pass through' the maximal time of existence when a collision happens, while maintaining the desired monotonicity property with the use of collision configuration maps. 

\begin{prop}\label{prop:collision}
Let $\Phi_t$ be the 1-parameter family of harmonic maps with fixed potential constants defined by the flow of punctures \eqref{flowdef}, in which a collision occurs at the limiting time $T$, with no scattering. If $\Phi_*$ is a collision configuration map described above, then
\begin{equation} \label{2-collision-limit}
\mathcal E(\Phi_*) \le \lim_{t\to T} \mathcal E(\Phi_{t}).
\end{equation}
\end{prop}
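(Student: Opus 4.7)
The plan is to extract a subsequential limit of $\Phi_{t_n}$ on the complement of the axis and the collision points, to identify this limit as an admissible competitor for $\Phi_\ast$ in the variational characterization of the collision configuration, and to combine lower semicontinuity of the renormalized energy with the minimizing property recorded around \eqref{masslowerb2}.

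First I would establish subsequential convergence. Because no scattering occurs, the positions $\mathbf{z}(t_n)$ remain bounded, and after passing to a subsequence $z_i(t_n)\to z_i^\ast$ with the colliding indices realized by equalities among the $z_i^\ast$. Fix a compact set $K\subset\mathbb R^3\setminus(\Gamma\cup\{z_1^\ast,\dots,z_N^\ast\})$; for $n$ large the punctures of $\Phi_{t_n}$ are uniformly separated from $K$. The expansions \eqref{3.1}--\eqref{3.3} (with tangent parameters $b_i\in(-1,1)$ uniformly bounded away from $\pm 1$ along the flow), the behavior \eqref{asyminfin} at infinity, and the axis behavior $U=-\ln\rho+O(1)$ together yield uniform $C^0$ bounds on $\Phi_{t_n}$ in a neighborhood of $K$. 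Interior regularity for harmonic maps into the negatively curved target $\mathbb H^2$ then upgrades this to uniform $C^{k}_{loc}$ bounds, and a diagonal argument produces a limit $\Phi_\infty=(u_\infty,v_\infty)$, harmonic on the complement of $\Gamma$ together with $\{z_1^\ast,\dots,z_N^\ast\}$, with $\Phi_{t_n}\to\Phi_\infty$ in $C^{k}_{loc}$ there.

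Next I would verify that $\Phi_\infty$ belongs to the admissible class defining $\Phi_\ast$. The potential constants along the surviving axis rods pass to the limit, since $v_{t_n}$ converges uniformly on compact subsets of every such rod and carries a constant value determined by the fixed $c_1,\dots,c_{N+1}$. At a surviving puncture $z_i^\ast$ the parameters $a_i=2\mathcal J_i$ are $t$-independent and a further subsequence gives $b_i(t_n)\to b_i^\ast\in(-1,1)$, so \eqref{3.1}--\eqref{3.3} pass to the corresponding extreme Kerr expansion for $\Phi_\infty$. At a collision puncture $z_j^\ast$, additivity of the potential-constant jumps across the pinching rod $\Gamma_{j+1}$ produces the correct local angular momentum $\mathcal J_j+\mathcal J_{j+1}$, and uniform control of the tangent parameters via \eqref{9jnhininkankhhj}--\eqref{fffoanoinilnhj} keeps the local parameter inside $(-1,1)$. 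Granting these points, Fatou applied to the pointwise convergence of the renormalized energy density $|\nabla U_{t_n}|^2+e^{4U_{t_n}}\rho^{-4}|\nabla v_{t_n}|^2$ gives
\begin{equation*}
\mathcal E(\Phi_\infty)\le\liminf_{n\to\infty}\mathcal E(\Phi_{t_n})=\lim_{t\to T}\mathcal E(\Phi_{t}),
\end{equation*}
the last equality coming from Theorem \ref{decreaselaw}. Since $\Phi_\ast$ is the unique singular harmonic map with punctures $\{z_1^\ast,\dots,z_N^\ast\}$ and the inherited potential constants, and since by \cite{CLW,KhuriWeinstein} it minimizes the renormalized energy among admissible competitors with the same boundary data, we conclude $\mathcal E(\Phi_\ast)\le\mathcal E(\Phi_\infty)$, and \eqref{2-collision-limit} follows.

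The main obstacle I anticipate is the analysis at the collision puncture $z_j^\ast$. Along the flow the expansions \eqref{3.1}--\eqref{3.3} are centered at the two distinct punctures $z_j(t_n)$ and $z_{j+1}(t_n)$ that are coalescing, so producing a \emph{single} extreme Kerr tangent map for $\Phi_\infty$ at $z_j^\ast$ with angular momentum $\mathcal J_j+\mathcal J_{j+1}$ requires a delicate resummation of the two local expansions and, crucially, ruling out degeneration of the corresponding tangent parameter to $\pm 1$ in the limit (which would force $\Phi_\infty$ out of the admissible class and invalidate the appeal to the minimizing property). The rigidity of the Brill--harmonic-map correspondence, together with the preserved total potential jump across $z_j^\ast$ and the uniform control of the $b_i(t_n)$ provided by Proposition \ref{blipschitz}, should furnish the needed compactness at the collision point.
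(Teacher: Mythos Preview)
Your strategy has a genuine gap at exactly the point you flag, and the fix you propose does not close it. The uniform $C^0$ bounds you assert on compact sets $K\subset\mathbb R^3\setminus(\Gamma\cup\{z_1^*,\dots,z_N^*\})$ do not follow from the cited expansions: \eqref{3.1}--\eqref{3.3} are valid only in balls around each individual puncture whose radii are comparable to the distance to the nearest neighboring puncture, so as $z_j(t_n)$ and $z_{j+1}(t_n)$ coalesce these neighborhoods shrink to points and give no control on a fixed $K$ near the collision. Nor does Proposition~\ref{blipschitz} help, since its regularity statement for $b_i(\mathbf z)$ is explicitly conditioned on the components of $\mathbf z$ remaining pairwise distinct; it says nothing about the behavior of the tangent parameters as punctures merge, and in particular does not prevent the ``effective'' tangent parameter at $z_j^*$ from degenerating. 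Without either uniform bounds on $\Phi_{t_n}$ near the collision or verified extreme-Kerr asymptotics of $\Phi_\infty$ at $z_j^*$, you cannot invoke the minimizing property of $\Phi_*$ from \cite{CLW,KhuriWeinstein}, because that result requires the competitor to lie in the prescribed asymptotic class.

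The paper circumvents this entirely. Rather than showing $\Phi_\infty$ is an admissible competitor, it proves directly that $\Phi_n\to\Phi_*$ by constructing an explicit model map $\Xi_n$ that interpolates between single extreme Kerr maps near each colliding puncture and the auxiliary collision map $\Psi_n$ outside a ball $B_\delta$, with carefully estimated tension $|\tau(\Xi_n)|\le C/r^2$ (the constant independent of both $n$ and $\delta$). A maximum-principle argument for $\sqrt{1+d_{\mathbb H^2}(\Phi_n,\Xi_n)^2}$ against an explicit barrier then yields the quantitative bound $d_{\mathbb H^2}(\Phi_n,\Psi_n)\le\sqrt{C\delta(C\delta+2\bar r)}/\bar r$ on $\mathbb R^3\setminus(B_{3\delta}\cup\Gamma)$. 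Sending $\delta\to 0$ along with $n\to\infty$ forces $\Phi_n$ and $\Psi_n$ to share the same limit $\Phi_*$, and the conclusion follows from $\mathcal E_{\mathcal B_\delta}(\Phi_*)<\epsilon/2$ together with $\mathcal E_{\mathbb R^3\setminus\mathcal B_\delta}(\Phi_n)\to\mathcal E_{\mathbb R^3\setminus\mathcal B_\delta}(\Phi_*)$. The substantive work is the tension estimate for $\Xi_n$ through the transition regions, handled in three cases and supported by Lemmas~\ref{lemma:v}--\ref{lemma:v1}; this is the content your sketch is missing.
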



\begin{proof} 
Let $\{t_n\}$ be a sequence of times with $t_n\to T$, which leads to a collision configuration $\{z_{1}^*,\ldots, z_{N}^* \}$ and the associated collision configuration map $\Phi_*$. For convenience denote $\Phi_n =\Phi_{t_n}$. We claim that 
\begin{equation}\label{2-collision-convergence-Phi}
\Phi_n\to\Phi_* \text{ in $C^3$ away from the axis},
\end{equation} 
and that for any $\epsilon>0$ there is an $n_\epsilon$ large enough such that the renormalized energies satisfy
\begin{equation}\label{2-collision}
\mathcal E(\Phi_*) < \mathcal E(\Phi_{n_\epsilon})+\epsilon.
\end{equation}
A simple limit as $\epsilon\to 0$ in \eqref{2-collision} then yields \eqref{2-collision-limit}.
The rest of the proof is devoted to the proof of \eqref{2-collision-convergence-Phi} and \eqref{2-collision}. 

For each $n$, consider a harmonic map $\Psi_n$ having the same punctures as $\Phi_n$ except for the colliding punctures which are replaced by the corresponding set of collision locations, and having the same potential constants on the rods that remain. The maps $\Psi_n$ are called \textit{auxiliary collision configuration maps}.
The punctures at collision locations within $\Psi_n$ have absorbed all the colliding punctures from $\Phi_n$, and it follows from \eqref{angpotconst} that the resulting angular momentum at any collision location is the sum of all the angular momenta from punctures colliding at that point. With $\Phi_*$ given as the harmonic map associated with the collision configuration, then observe that Theorem \ref{thm:smoothness} implies 
\begin{equation}\label{2-collision-energy-convergence}
\Psi_n\to\Phi_* \text{ in $C^3$ away from the axis},
\end{equation} 
since no collisions occur in the sequence $\Psi_n$. 

We will construct a \emph{model map} $\Xi_n$ for $\Phi_n$ which coincides with $\Psi_n$ outside a fixed compact set. This will lead to an upper bound on the hyperbolic distance $d_{\H^2}(\Phi_n,\Psi_n)$ which is independent of $n$, and will allow us to argue that $\Phi_n\to\Phi_{*}$ uniformly on compact subsets of $\mathbb{R}^3 \setminus\Gamma$. Recall that a model map \cite[Definition 4.1]{KKRW} for $\Phi$ is a map $\Xi\colon \R^3\setminus\Gamma\to\H^2$ which is \emph{asymptotic} to $\Phi$ in the sense that $d_{\H^2}(\Phi,\Xi)$ is bounded and tends to zero at infinity, and such that its tension $\tau(\Xi)$ is bounded and satisfies $|\tau(\Xi)|\leq -\Delta w$, for some positive function $w\in C^2(\mathbb{R}^3)$ which tends to zero at infinity. In order not to encumber the notation, we will omit the subscript $n$ until the $L^\infty$-bound has been obtained.

The construction of the model map, and the estimation of its tension will proceed in three cases. We will first consider the case where there is only one collision, and only two colliding punctures. We will then consider the case where there is one collision location but possibly more than two colliding punctures. Finally, we will treat the general case.\smallskip

\textit{Case 1.} Consider the situation in which only two punctures collide, and denote for convenience the two colliding punctures by $p_1$, $p_2$ with $z$-coordinates $z_1<z_2$. Without loss of generality it may be assumed that the midpoint between these two punctures occurs at the origin. Set $\eta=(z_2-z_1)/4$ and let $B_i=B_\eta(p_i)$ be the ball of radius $\eta$ centered at $p_i$; clearly $B_1\cap B_2=\emptyset$. For any $\delta>0$, $n$ may be taken large enough and hence $\eta$ small enough to guarantee that the union of these two balls lies within the $\delta/4$-ball centered at the origin, 
and to guarantee that the collision location lies within $B_{\delta/4}$.  We define the model map by
\begin{equation}
\Xi = (u,v) = \chi_0\Theta_0 + \chi_1\Theta_1 + \chi_2\Theta_2,
\end{equation}
where $\Theta_i=(u_i,v_i)$, $i=1,2$ are the corresponding extreme Kerr maps associated with each $p_i$, and we have denoted the mutli-extreme Kerr auxiliary collision configuration map $\Psi$ by $\Theta_0=(u_0,v_0)$ for uniformity. Furthermore, $0\leq\chi_i\leq1$ are cut-off functions described as follows. Let $(r,\theta,\phi)$ be polar coordinates centered at the origin, and take $0\leq \tilde\chi_1,\tilde\chi_2\leq 1$ to be functions of $\theta$ alone such that $\tilde\chi_1+\tilde\chi_2=1$ and $\tilde\chi_i=1$ in $B_i$. For instance these may be chosen so that $\tilde\chi_1=0$ for $0\leq\theta\leq\pi/4$, and $\tilde\chi_1=1$ for $3\pi/4\leq\theta\leq\pi$. Then take $\chi_0$ to be a nonnegative function of $r$ alone such that $\chi_0=0$ in $B_{\delta/2}$, and $\chi_0=1$ outside $B_\delta$, and set $\chi_i=(1-\chi_0)\tilde\chi_i$, $i=1,2$. See Figure \ref{domain1} for a diagram of the regions involved as well as other regions detailed below. 

\begin{figure}
\includegraphics[height=9cm]{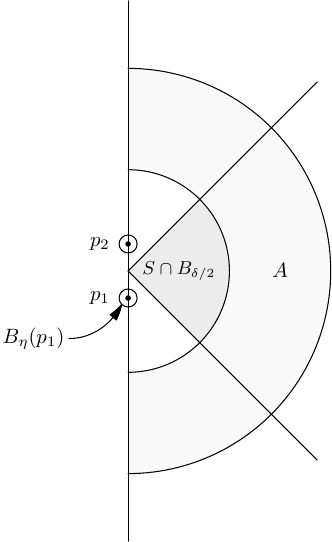}
\caption{Case 1: collision of two punctures.}  \label{domain1}
\end{figure}

First note that the tension $\tau(\Xi)$ vanishes except in an annulus $A$ and the portion of a sector $S\cap B_{\delta/2}$ where
\begin{equation}
S=\{(r,\theta,\phi)\in\mathbb{R}^3 \mid \pi/4<\theta<3\pi/4\},\quad\quad\quad A=\{(r,\theta,\phi)\in\mathbb{R}^3 \mid \delta/2<r<\delta\}.
\end{equation}
In regions where it is nonzero we seek to bound the four quantities labelled by Roman numerals from
\begin{equation} \label{eq:tension-est}
\begin{aligned}
|\tau(\Xi)| &= \sqrt{ |\Delta u - e^{4u} |\nabla v|^2 |^2
+ e^{4u} |\Delta v + 4\nabla u \cdot\nabla v|^2} \\
&\leq 2\left( |\Delta u| + e^{4u} |\nabla v|^2 + e^{2u} |\Delta v| + 4e^{2u} |\nabla u \cdot \nabla v| \right)\\
&=:\mathbf{I} + \mathbf{II} + \mathbf{III} + \mathbf{IV},
\end{aligned}
\end{equation}
where the tension norm is with respect to the hyperbolic space metric. We begin by estimating the tension in $S \cap B_{\delta/2}$, term by term. Observe that in this region $\chi_0=0$, hence $\chi_i =\tilde{\chi}_i$ for $i=1,2$ and $\Xi$ is a combination of only $\Theta_1$ and $\Theta_2$. Since $\tilde{\chi}_2=1-\tilde{\chi}_1$ we find that
\begin{equation}\label{I}
\mathbf{I} \leq |\Delta \chi_1| |u_1-u_2| + 2|\nabla \chi_1||\nabla(u_1-u_2)| + \chi_1|\Delta u_1| + \chi_2|\Delta u_2|.
\end{equation}
The first term of \eqref{I} may be estimated by $C/r^2$, where here and in what follows $C$ represents a constant independent of $\eta$ (or rather $n$) and of $\delta$. This follows from $|\Delta \chi_1|\leq C/r^2$ and $|u_1 -u_2|\leq C$, with the latter inequality arising from \eqref{3.1} together with the leading term inequality $|\ln r_1 - \ln r_2|\leq \ln 5$ outside $B_1\cup B_2$ which is shown in Corollary~\ref{cor:r1-r2}; $r_1$ and $r_2$ denote the distances to $p_1$ and $p_2$ respectively.
Similarly, the second term satisfies the same bound since $|\nabla \chi_1|\leq C/r$, and $|\nabla u_1-\nabla u_2|\leq C(1/r_1+1/r_2)\leq C/r$ outside $B_1\cup B_2$. Moreover, \eqref{3.1}-\eqref{3.3} imply that the last two terms satisfy $|\Delta u_i|\leq C/r^2$ outside $B_1\cup B_2$. It follows that $\mathbf{I} \leq C/r^2$. Term $\mathbf{III}$ in \eqref{eq:tension-est} is handled analogously to $\mathbf{I}$ and thus satisfies the same estimate, using the fact that $e^{2u}|v_1-v_2|$, $r e^{2u}|\nabla v_i|$, and $r^2e^{2u}|\Delta v_i|$ are uniformly bounded in $S \cap B_{\delta/2}$ as a consequence of Lemma~\ref{lemma:v} below. 

Consider now term $\mathbf{IV}$ and use the expression
\begin{equation}
\nabla u = (u_1-u_2)\nabla\chi_1 + \chi_1\nabla u_1 + \chi_2\nabla u_2,
\end{equation}
as well as a similar expression for $v$ to find
\begin{align}\label{du*dv}
\begin{split}
\nabla u \cdot \nabla v &=
(u_1-u_2)(v_1-v_2)|\nabla\chi_1|^2 + \chi_1^2 \nabla u_1 \cdot \nabla v_1
+\chi_2^2 \nabla u_2 \cdot \nabla v_2\\
&+(u_1-u_2)(\chi_1\nabla\chi_1 \cdot\nabla v_1 + \chi_2\nabla\chi_1 \cdot\nabla v_2)\\
&+(v_1-v_2)(\chi_1\nabla\chi_1 \cdot\nabla u_1 + \chi_2\nabla\chi_1 \cdot\nabla u_2)\\
&+\chi_1\chi_2(\nabla u_1 \cdot\nabla v_2+\nabla u_2 \cdot\nabla v_1).
\end{split}
\end{align}
When multiplied by $e^{2u}$ each term on the right-hand side of \eqref{du*dv} is bounded by $C/r^2$ in $S\cap B_{\delta/2}$, as can be seen from the uniform boundedness in this region of the following quantities discussed in the previous paragraph: $|u_1-u_2|$, $e^{2u}|v_1-v_2|$, $r |\nabla u_i|$, $r e^{2u}|\nabla v_i|$ for $i=1,2$. Term $\mathbf{II}$ of \eqref{eq:tension-est} may be handled similarly replacing $\nabla u$ with $e^{2u}\nabla v$. Therefore we conclude that
\begin{equation}\label{eq:C/r^2}
|\tau(\Xi)| \leq \frac{C}{r^2} \quad \text{ in } B_{\delta/2}
\end{equation}
for some constant $C$ independent of $\eta$ and $\delta$, since the tension vanishes on $B_{\delta/2}\setminus S$.

Turning now our attention to the annulus $A$, note that away from the axis in this region $\widetilde\Theta=\tilde\chi_1\Theta_1+\tilde\chi_2\Theta_2=(\tilde u,\tilde v)$ is bounded. We can perform a computation similar to \eqref{eq:tension-est}, \eqref{I} utilizing
\begin{equation}
\Xi=\chi_0\Theta_0+(1-\chi_0)\widetilde\Theta
\end{equation}
with $u_1$ replaced by $\tilde u$ and $u_2$ replaced by $u_0$, to obtain quantities analogous to $\mathbf{I}$--$\mathbf{IV}$ which will be denoted with the same notation. The analogue of \eqref{I} becomes
\begin{equation} \label{I.1}
\mathbf{I} \leq |\Delta\chi_0||\tilde u-u_0|  + 2|\nabla\chi_0 \cdot \nabla(\tilde u-u_0)| + (1-\chi_0)|\Delta \tilde u| + \chi_0|\Delta u_0|.
\end{equation}
As a consequence of Lemma~\ref{lem:r0-ri} we have that $|\tilde u - u_0|$ is uniformly bounded outside $B_{\delta/2}$, and moreover $|\Delta\chi_0|\leq C/\delta^2\leq C/r^2$ in $A$. To estimate the gradient term note that \eqref{3.1}-\eqref{3.3} yield
\begin{equation}
\tilde u - u_0=\tilde{\chi}_1 \ln r_1 +\tilde{\chi}_2 \ln r_2 -\ln r +O_1(1),
\end{equation}
and since $\tilde{\chi}_i$ are independent of $r$ while $\chi_0$ is a function of $r$ alone, we obtain
\begin{equation}
|\nabla(\tilde u-u_0)\cdot\nabla\chi_0|\leq \left(|\tilde{\chi}_1 \nabla\ln r_1 +\tilde{\chi}_2 \nabla\ln r_2 -\nabla\ln r| +O(r^{-1})\right)|\nabla\chi_0|\leq \frac{C}{r^2}.
\end{equation}
Furthermore, as indicated above $|\Delta u_i|\leq C/r^2$, $i=1,2$ outside of $B_1 \cup B_2$ and hence on $A$; the same is true also for $i=0$. Therefore the last two terms of \eqref{I.1} admit the inverse square estimate by routine computations. Putting this together then produces $\mathbf{I}\leq C/r^2$ on $A$. As before, term $\mathbf{II}$ admits the desired estimate using a version of \eqref{du*dv}, together with the fact that $e^{2u}|\tilde{v}-v_0|$ and $r e^{2u}(|\nabla \tilde{v}|+|\nabla v_0|)$ are uniformly bounded in $A$ by Lemma~\ref{lemma:v1} below. However, since the estimates for $\Delta v_0$ in this same lemma involve a nonzero $\varsigma$, we cannot directly follow the approach used in $S\cap B_{\delta/2}$ for $\mathbf{III}$. Instead, terms $\mathbf{III}$ and $\mathbf{IV}$ of \eqref{eq:tension-est} will be replaced by $e^{2u}|\Delta v+4\nabla u\cdot\nabla v|$. Observe that a calculation similar to that of \eqref{I.1} in which $u$ is replaced by $v$, combined with the estimates of Lemma~\ref{lemma:v1} yields
\begin{equation}
e^{2u}\Delta v=e^{2u}\chi_0 \Delta v_0 +O\left(\tfrac{1}{r^2}\right)
\end{equation}
in $A$. Moreover a computation, along with \eqref{3.1}-\eqref{3.3}, and again Lemma~\ref{lemma:v1} show that
in this region
\begin{align}
\begin{split}
\nabla u\cdot\nabla v&= \chi_0 \nabla u_0 \cdot\nabla v_0 +(v_0 -\tilde{v})\nabla u_0 \cdot\nabla\chi_0
+(1-\chi_0)\nabla u_0 \cdot\nabla\tilde{v}\\
&+\nabla[(1-\chi_0)(\tilde{U}-U_0)]\cdot\nabla[\tilde{v}+\chi_{0}(v_0 -\tilde{v})]\\
&=\chi_0 \nabla u_0 \cdot\nabla v_0 +O\left(\tfrac{e^{-2u}}{r^2}\right),
\end{split}
\end{align}
where $\tilde{U}=\tilde{u}+\ln\rho$ and $U_0=u_0+\ln\rho$. Therefore since $(u_0,v_0)$ is harmonic we find that
\begin{equation}
e^{2u}|\Delta v+4\nabla u\cdot\nabla u|\leq \frac{C}{r^2},
\end{equation}
where $C$ is independent of $\eta$ and $\delta$. Thus, the tension of $\Xi$ satisfies the same estimate of \eqref{eq:C/r^2} within the annulus $A$.

At this stage notation indicating the dependence on $n$ will be reintroduced. We may summarize what has been established with the following properties of the model map tension
\begin{equation}\label{eq:tension-bound}
|\tau(\Xi_n)|\leq \frac C{r^2} \quad \text{ on }B_{\delta}\setminus\Gamma,
\qquad |\tau(\Xi_n)|=0 \quad \text{ on }(\mathbb{R}^3 \setminus B_{\delta})\setminus\Gamma,
\end{equation}
where $C$ is a constant independent of $n$ and $\delta$. Let 
\begin{equation}
\Lambda(\Phi_n,\Xi_n)=\sqrt{1+d_{\H^2}(\Phi_n,\Xi_n)^2} 
\end{equation}
and observe that by \cite[Lemma 1]{WeinsteinGR} we have
\begin{equation}\label{oanoinininhhj}
\Delta\Lambda(\Phi_n,\Xi_n) \geq -|\tau(\Xi_n)| \geq 
\begin{cases}
-\frac C{r^2} & \text{ on } B_{\delta}\setminus \Gamma \\
0 & \text{ on } (\mathbb{R}^3 \setminus B_{\delta} )\setminus\Gamma
\end{cases}.
\end{equation}
Now define a radial function
\begin{equation}
w(r)=\begin{cases}
C\ln \left(\dfrac{\delta}{r}\right) +C & \text{ on } B_{\delta} \\
\dfrac{C \delta}{r} & \text{ on } \mathbb{R}^3 \setminus B_{\delta} 
\end{cases},
\end{equation}
and note that $w\in C^{1,1}(\R^3\setminus\{0\})$ with
\begin{equation}
\Delta w= \begin{cases}
-\dfrac{C}{r^2} & \text{ on }B_{\delta}\\
0 & \text{ on } \mathbb{R}^{3}\setminus B_{\delta}
\end{cases},
\end{equation}
from which it follows that
\begin{equation}\label{aoihnoinoinqoihnpoiqjhpoijnpioh}
\Delta\left(\Lambda(\Phi_n,\Xi_n)-w\right) \geq 0 \quad \text{ on }\mathbb{R}^3 \setminus \left(\Gamma \cup \partial B_{\delta}\right).
\end{equation}
Although this inequality is satisfied only outside the axis $\Gamma \cup \partial B_{\delta}$, \cite[Lemma~8]{WeiDuke} implies it is satisfied weakly on all of $\R^3$ so that the maximum principle applies. Since $\Phi_n$ and $\Xi_n$ are asymptotic by Lemma \ref{aoijfoainoinhk} and Remark \ref{foanfoinhghhh}, we find that $\Lambda(\Phi_n,\Xi_n)-w\to1$ at infinity. Moreover, $\Lambda(\Phi_n,\Xi_n)-w <0$ on the boundary of a small enough ball centered at the origin. Hence, the maximum principle then yields the $L^\infty$-bound
\begin{equation} \label{eq:pointwise-bound}
d_{\H^2}(\Phi_n,\Psi_n) = d_{\H^2}(\Phi_n,\Xi_n) \leq \sqrt{w(w+2)} = \frac{\sqrt{C\delta(C\delta +2r)}}{r} \quad\text{ on }\R^3\setminus (B_\delta\cup\Gamma),
\end{equation}
where $C$ is independent of $n$ and $\delta$.\smallskip

\textit{Case 2.} Consider the situation in which there is only a single collision location with multiple punctures colliding, and for convenience denote the $z$-coordinates of the $N_1$ punctures $p_i$ colliding at the one location by $z_1<\dots<z_{N_1}$. Without loss of generality it may be assumed that $\sum_{i=1}^{N_1} z_i=0$. Let $z_{0,i}=(z_{i+1}+z_i)/2$, $\eta=1/4 \cdot\min_{1\leq i\leq N_1 -1}(z_{i+1}-z_i)$, and $B_i=B_\eta(p_i)$. Clearly the balls $B_1,\dots,B_{N_1}$ are mutually disjoint. Now set $\vartheta=\frac{\pi}{2(N_1 -1)}$ and for $i=1,\ldots,N_1 -1$ define the sectors
\begin{equation}
S_i=\{(\tilde{r}_i,\tilde{\theta}_i,\phi)\in\mathbb{R}^3 \mid \tfrac{3\pi}{4}-i\vartheta < \tilde{\theta}_i < \tfrac{3\pi}{4}-(i-1)\vartheta\}, 
\end{equation}
where $(\tilde{r}_i,\tilde{\theta}_i,\phi)$ are polar coordinates centered at the point $\tilde{p}_i \in\Gamma$ with $z$-coordinate $z_{0,i}$. The sectors $S_i$ are mutually disjoint, and are also disjoint from the balls $\cup_{i=1}^{N_1} B_i$. We will denote the components of the complement of $\cup_{i=1}^{N_1 -1} S_i$ by $T_1 ,\ldots , T_{N_1}$ in order of increasing $z$. For each $i=1,\ldots, N_1$ construct cut-off functions $\tilde\chi_i\geq 0$ satisfying the following properties: $\tilde{\chi}_i$ is a function of only $\tilde{\theta}_i$ in $S_i$ and is a function of only $\tilde{\theta}_{i-1}$ in $S_{i-1}$, with $\tilde\chi_i=1$ in $T_i$, $\tilde\chi_i+\tilde\chi_{i+1}=1$ in $S_i$, and $\sum_{i=1}^{N_1}\tilde\chi_i=1$ globally. Motivated by the arguments from Case~1, the model map will be taken in the form
\begin{equation}
\Xi = (u,v) = \chi_0\Theta_0 + \chi_1\Theta_1 + \dots + \chi_{N_1}\Theta_{N_1},
\end{equation}
where $\chi_0$ is a nonnegative function of $r$ alone which vanishes in $B_{\delta/2}$ and equals $1$ outside $B_\delta$, and $\chi_i=(1-\chi_0)\tilde\chi_i$ so that $\sum\chi_i =1$. As before $n$ may be taken large enough to guarantee that $\cup_{i=1}^{N_1} B_i$ and the collision location lie within $B_{\delta/4}$, where $r$ respectively $B_{\delta}$ are the Euclidean distance and $\delta$-ball centered at the origin which serves as the center of mass of the punctures. Moreover $\Theta_i$, $i=1,\ldots,N_{1}$ denotes the extreme Kerr harmonic map associated with each colliding puncture, and $\Theta_0$ is the multi-extreme Kerr auxiliary collision configuration map $\Psi$. 

The tension $\tau(\Xi)$ vanishes except possibly in the regions $S_i \cap B_{\delta/2}$ and $A=B_{\delta}\setminus \overline{B}_{\delta/2}$. Note that in each sector $S_i$ only two of the cut-off functions, $\tilde{\chi}_i$ and $\tilde{\chi}_{i+1}$, are ever nonconstant. Therefore, a straightforward generalization of the arguments leading to 
the tension estimate \eqref{eq:C/r^2} in $B_{\delta/2}$ apply here to yield the same bound with $1/r^2$ replaced by $\sum_{i=1}^{N_1 -1}1/\tilde{r}_{i}^2$. Similarly, minor modifications to the methods of Case 1 provide the same estimate with $1/r^2$ in the annulus $A$. Since $\tilde{r}_i$ is comparable with $r$ inside $A$ it follows that
\begin{equation}\label{eq:tension-bound11}
|\tau(\Xi_n)|\leq C\sum_{i=1}^{N_1 -1}\frac{1}{\tilde{r}_i^2} \quad \text{ on }B_{\delta}\setminus\Gamma,
\qquad |\tau(\Xi_n)|=0 \quad \text{ on }(\mathbb{R}^3 \setminus B_{\delta})\setminus\Gamma,
\end{equation}
where $C$ is a constant independent of $n$ and $\delta$.  Define radial functions $w_i\in C^{1,1}(\R^3\setminus\{\tilde{p}_i\})$, $i=1,\ldots N_1 -1$ by
\begin{equation}
w_i(\tilde{r}_i)=\begin{cases}
C\ln \left(\dfrac{2\delta}{\tilde{r}_i}\right) +C & \text{ on } B_{2\delta}(\tilde{p}_i) \\
\dfrac{2C \delta}{\tilde{r}_i} & \text{ on } \mathbb{R}^3 \setminus B_{2\delta}(\tilde{p}_i) 
\end{cases},
\end{equation}
and note that as in \eqref{oanoinininhhj}-\eqref{aoihnoinoinqoihnpoiqjhpoijnpioh} we have
\begin{equation}
\Delta\left(\Lambda(\Phi_n,\Xi_n)-\sum_{i=1}^{N_1 -1}w_i\right) \geq 0 \quad \text{ on }\mathbb{R}^3 \setminus \left(\Gamma \cup_{i=1}^{N_1 -1}\partial B_{2\delta}(\tilde{p}_i)\right).
\end{equation}
The weak maximum principle argument may then be applied as before to obtain
\begin{equation} \label{eq:pointwise-bound1}
d_{\H^2}(\Phi_n,\Psi_n) = d_{\H^2}(\Phi_n,\Xi_n) \leq \sqrt{w (w+2)} \leq\frac{\sqrt{C\delta(C\delta +2r)}}{r} \quad\text{ on }\R^3\setminus (B_{3\delta}\cup\Gamma),
\end{equation}
where $w=\sum_{i=1}^{N_1 -1}w_i$ for some constant $C$ is independent of $n$ and $\delta$.\smallskip

\textit{Case 3.} Consider now the situation in which there are simultaneous collisions at $k>1$ separate locations, with punctures $p_i$ having $z$-coordinates $z_1<\dots<z_{N_1}$ colliding at the first location, those labelled by $z_{N_1+1}<\dots<z_{N_2}$ colliding at the second location and so on, up to those labelled by $z_{N_{k-1}+1}<\dots<z_{N_k}$ colliding at the $k$th location. For convenience of notation the indices moving from one collision group to another are consecutive, however there may be noncolliding punctures located between the collision groups. Taking $\eta$ to be $1/4$ of the minimum distance between any two colliding punctures, we can repeat the arguments of Case~2 inside the balls $B_{\delta}(\bar{p}_j)$, $j=1,\ldots,k$ where $\bar{p}_j \in\Gamma$ is the point with $z$-coordinate given by the mean of $z_{N_{j-1}+1},\dots,z_{N_j}$, and $\delta$ is chosen so that the larger balls $B_{3\delta}(\bar{p}_j)$
are mutually disjoint; here $N_0 =0$. When $n$ is sufficiently large, $\eta$ is small enough to guarantee that each $B_{\eta}(p_i)$, for $N_{j-1}+1\leq i\leq N_j$, is contained within the corresponding ball $B_{\delta/4}(\bar{p}_j)$. The resulting model map $\Xi_n$ transitions within each $B_{\delta/2}(\bar{p}_j)$ from one extreme Kerr solution to another across sector regions, and it further transitions across the associated $k$ annuli out to the multi-extreme Kerr auxiliary collision configuration map $\Psi_n$ on the compliment of the $\delta$-balls. The methods of Case 2 lead to a tension estimate
\begin{align}\label{eq:tension-boundfajopijnh}
\begin{split}
|\tau(\Xi_n)|&\leq C\!\!\!\sum_{i=N_{j-1}+1}^{N_{j}-1}\frac{1}{\tilde{r}_i^2} \quad \text{ on }B_{\delta}(\bar{p}_j)\setminus\Gamma,\text{ }j=1,\ldots,k,\\
|\tau(\Xi_n)|&=0 \quad \text{ on }\left(\mathbb{R}^3 \setminus \cup_{j=1}^{k} B_{\delta}(\bar{p}_j)\right)\setminus\Gamma,
\end{split}
\end{align}
where the $\tilde{r}_i$ denote Euclidean distance to the sector vertices.
As above, this leads via the maximum principle to the distance bound
\begin{equation} \label{eq:pointwise-bound12}
d_{\H^2}(\Phi_n,\Psi_n) = d_{\H^2}(\Phi_n,\Xi_n) \leq\frac{\sqrt{C\delta(C\delta +2\bar{r})}}{\bar{r}} \quad\text{ on }\R^3\setminus \left(\cup_{j=1}^{k}B_{3\delta}(\bar{p}_j)\cup\Gamma\right),
\end{equation}
for some constant $C$ independent of $n$ and $\delta$ where $\bar{r}=\min\{\bar{r}_1,\ldots,\bar{r}_k\}$ with $\bar{r}_j$ representing the distance to $\bar{p}_j$. For convenience we will denote the region on which \eqref{eq:pointwise-bound12} holds by $D_{\delta}$.

Having achieved the $L^\infty$-bound, we are now able to prove \eqref{2-collision-convergence-Phi} and \eqref{2-collision}. According to \eqref{2-collision-energy-convergence}, the sequence $\Psi_n$ converges on $\mathbb{R}^3 \setminus\Gamma$ to the collision configuration map $\Phi_*$. Thus, by standard theory of harmonic maps \eqref{eq:pointwise-bound12} implies that $\Psi_n$ subconverges uniformly together with all of its derivatives on compact subsets of $D_{\delta}$; see \cite[Proof of Theorem 1]{WeinsteinGR} for a proof in the current setting. Letting now $\delta\to0$ and passing to a diagonal subsequence (without changing notation), the same is true on $\R^3\setminus\Gamma$. Let $\widetilde{\Phi}_*$ denote the limit of $\Phi_n$, then we claim that $\widetilde{\Phi}_* =\Phi_*$. Since both of these maps are harmonic, this conclusion would follow immediately if they were known to be asymptotic. However, the estimates above do not show this property due to the blow-up at certain points on the axis. Instead, we can use \eqref{eq:pointwise-bound12} directly to show that $\Phi_n\to\Phi_*$. Indeed, if $\delta_2 >>\delta_1 >0$ then~\eqref{eq:pointwise-bound12} implies that $d_{\H^2}(\Phi_n,\Psi_n)=O(\sqrt{\delta_1})$ on $D_{\delta_2}$. In particular, by keeping $\delta_2>0$ fixed and sending $\delta_1$ to zero as $n\rightarrow\infty$, we find that $\Phi_n$ and $\Psi_n$ must converge to the same limit $\Phi_*$ on $D_{\delta_2}$. Since $\delta_2$ is arbitrary, it follows that $\widetilde{\Phi}_* =\Phi_*$. This establishes \eqref{2-collision-convergence-Phi}.

To complete the proof of the proposition let $\epsilon>0$ be given, and set $\mathcal{B}_{\delta}=\cup_{j=1}^{k} B_{\delta}(p_j^*)$ where $p_j^*$ denote the $k$ distinct collision points having $z$-coordinates $z_j^*$. Since $\Phi_*$ has finite renormalized energy, $\delta$ may be chosen small enough to guarantee that $\mathcal{E}_{\mathcal{B}_{\delta}}(\Phi_*)<\epsilon/2$.
Furthermore, since $\widetilde{\Phi}_* =\Phi_*$ 
we have that 
$\mathcal E_{\R^3\setminus\mathcal{B}_{\delta}}(\Phi_n) \to \mathcal E_{\R^3\setminus \mathcal{B}_{\delta}}(\Phi_*)$ as $n\rightarrow\infty$. To see this in more detail, note that since the difference of energy densities converges to zero in $L^1$ on any fixed compact subset of $\R^3\setminus\mathcal{B}_{\delta}$ with the aid of \cite[Theorem 2.1]{HKWX}, the same holds if the fixed set is replaced by a slow sequence of exhausting domains. This fact, combined with estimates for the difference of energy densities in the complement of the exhausting domains obtained from the expansions \cite[Theorem 2.3]{HKWX}, produces the desired conclusion. Thus, there exists a sufficiently large $n_{\epsilon}$ such that
\begin{equation}
\mathcal E(\Phi_*) =
\mathcal E_{\mathcal{B}_{\delta}}(\Phi_*) + \mathcal E_{\R^3\setminus \mathcal{B}_{\delta}}(\Phi_*)
<\epsilon + \mathcal E_{\R^3\setminus \mathcal{B}_{\delta}}(\Phi_{n_{\epsilon}})
\leq \epsilon + \mathcal E(\Phi_{n_{\epsilon}}),
\end{equation}
which yields \eqref{2-collision}.
\end{proof}

The proof of Proposition \ref{prop:collision} relies on two lemmas which we now establish. In what follows the notation will be consistent with that utilized in the proof of this proposition.

\begin{lem} \label{lemma:v}
Let $\eta,\delta>0$ be small parameters with $\delta> 12\eta$. Consider two extreme Kerr harmonic maps $(u_i,v_i)$, $i=1,2$ with punctures located at $p_1=(0,0,-2\eta)$ and $p_2=(0,0,2\eta)$ in Cartesian coordinates, and set $(u,v)=\sum_{i=1}^2\chi_i (u_i,v_i)$ where $\chi_i$ are cut-off functions from Case 1 in the proof of Proposition~\ref{prop:collision}. Then there exists a constant $C$ independent of $\eta$ and $\delta$ such that
\begin{equation}\label{aohonoinhjj}
e^{2u}|v_1-v_2| \leq C, \quad\quad
r e^{2u}|\nabla v_i| \leq C, \quad\quad
r^2 e^{2u} |\Delta v_i| \leq C,
\end{equation}
in sector $S \cap B_{\delta}$ for $i=1,2$.
\end{lem}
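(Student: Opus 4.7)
The plan is to use the explicit closed-form expressions for the extreme Kerr harmonic maps, combined with the favorable geometry of sector $S$. In polar coordinates $(r_i,\theta_i)$ centered at $p_i$, the extreme Kerr map takes the form $u_i = -\ln\sin\theta_i + \tfrac12\ln\bigl((1+\cos^2\theta_i)/(2a_i)\bigr)$, and once the normalization from Case~1 is fixed so that $v_1$ and $v_2$ share the same constant value $c$ on the axis rod between $p_1$ and $p_2$, one has
\begin{equation*}
v_1 = c - a_1\frac{(1-\cos\theta_1)^2}{1+\cos^2\theta_1},\qquad v_2 = c + a_2\frac{(1+\cos\theta_2)^2}{1+\cos^2\theta_2}.
\end{equation*}
Since $v_i$ depends only on $\theta_i$, direct differentiation gives $|\nabla v_i|\le C\sin^3\theta_i/r_i$ and $|\Delta v_i|\le C\sin^2\theta_i/r_i^2$, while $\sin\theta_i=\rho/r_i$ yields $e^{2u_i}\le Cr_i^2/\rho^2$.

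The geometric input from $S\cap B_\delta$ is as follows: $\rho\ge r/\sqrt2$ (from $\theta\in(\pi/4,3\pi/4)$), $\rho\le r_i\le r+2\eta$, the bound $r_1/r_2\le 5$ from Corollary~\ref{cor:r1-r2} (valid outside $B_1\cup B_2$), and the consequence $|u_1-u_2|\le C$, which gives $e^{2u}\le Ce^{2u_i}$ for either $i$. The most delicate piece is a uniform lower bound on $1+\cos\theta_1$ and $1-\cos\theta_2$ throughout $S\cap B_\delta$, independent of $\eta$ and $\delta$. Using $\cos\theta_1=(z+2\eta)/r_1$, this follows from two cases: if $r\le 2\sqrt2\,\eta$, then $z\ge -r/\sqrt2$ forces $z+2\eta\ge 0$, so $1+\cos\theta_1\ge 1$; if instead $r>2\sqrt2\,\eta$, then $|z+2\eta|\le\sqrt2\,r$ together with $\rho\ge r/\sqrt2$ gives $\rho/|z+2\eta|\ge 1/2$, yielding an explicit positive lower bound such as $1+\cos\theta_1\ge 1-2/\sqrt5$.

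These pieces combine quickly. The algebraic identity $(1\pm\cos\theta)^2/(1+\cos^2\theta)=\sin^4\theta/[(1\mp\cos\theta)^2(1+\cos^2\theta)]$, together with the lower bounds above, produces $|v_1-v_2|\le C(\sin^4\theta_1+\sin^4\theta_2)$. Since $e^{2u_i}\sin^2\theta_i=(1+\cos^2\theta_i)/(2a_i)\le C$, the diagonal term satisfies $e^{2u_i}\sin^4\theta_i\le C$, and the cross term $e^{2u_1}\sin^4\theta_2\le Cr_1^2\rho^2/(a_1 r_2^4)\le C$ is handled using $r_1/r_2\le 5$ and $\sin\theta_2\le 1$. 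This settles the first estimate. For the remaining two, the same manipulations give $e^{2u_i}|\nabla v_i|\le C\rho/r_i^2\le C/r$ (from $r_i\ge\rho\ge r/\sqrt2$) and $e^{2u_i}|\Delta v_i|\le C/r_i^2\le C/r^2$, which combined with $e^{2u}\le Ce^{2u_i}$ finish the proof.

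The conceptual heart of the lemma—and where I expect the main care to be needed—is the normalization of the $v_i$ that makes $v_1-v_2$ vanish on the central axis rod and decay like $\sin^4\theta_i$ as one approaches that rod. Without this matching of potential constants, $|v_1-v_2|$ would remain of unit size near the midpoint of $p_1$ and $p_2$ while $e^{2u}$ blows up like $\eta^2/r^2$, so the claim would fail outright. With the correct normalization in place, the decay of $v_1-v_2$ precisely compensates the blow-up of $e^{2u}$, and the uniform angular lower bounds in $S$ ensure that all constants are independent of $\eta$ and $\delta$.
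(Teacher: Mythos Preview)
Your overall strategy is sound, but the explicit formulas you wrote down for $(u_i,v_i)$ are those of the \emph{tangent map} at the puncture (compare \eqref{eq:tange-map} with $b_i=0$), not of the full extreme Kerr harmonic map. The extreme Kerr map genuinely depends on $r_i$ as well as $\theta_i$; see the formulas quoted in the paper's proof of this lemma. In particular your assertion ``$v_i$ depends only on $\theta_i$'' is false. The damage is limited, however: the full Kerr $v_i$ equals its leading $\theta_i$-dependent part $a_i^2\cos\theta_i(3-\cos^2\theta_i)+\text{const}$ plus an $O(\sin^4\theta_i)$ correction, and after the potential-constant normalization the leading part factors as $v_1-c=-a_1^2(1-\cos\theta_1)^2(2+\cos\theta_1)$ (and similarly $v_2-c=a_2^2(1+\cos\theta_2)^2(2-\cos\theta_2)$), which has exactly the $(1\mp\cos\theta_i)^2$ structure you exploit. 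Likewise $e^{2u_i}\le Cr_i^2/\rho^2$, $|\nabla v_i|\le C\sin^3\theta_i/r_i$, and $|\Delta v_i|\le C\sin^2\theta_i/r_i^2$ all survive with the correct expressions. So your argument goes through once the right formulas are substituted; as written it is a proof about the wrong objects.

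Once repaired, your route is genuinely different from the paper's. The paper changes variables to $x=2\eta/r$ and the origin-centered angle $\theta$, writes $v_1-v_2=W(x,\theta)+O(\cdot)$, and estimates $W$ by a case split $x\ge1$ versus $x\le1$ (with $|W|\le C/x^2$ in the former range obtained by expansion); the weight $e^{2u}$ is then controlled through $(1\pm2x\cos\theta+x^2)^{\chi_i}/\sin^2\theta$. You instead stay in the $(\theta_1,\theta_2)$ variables and make the geometry of $S$ do the work via the uniform lower bounds on $1+\cos\theta_1$ and $1-\cos\theta_2$. Your approach is arguably more direct once the factorization is in hand; the paper's $x$-variable approach is more systematic and transfers more cleanly to the annulus estimates in Lemma~\ref{lemma:v1} and to the multi-puncture collisions treated in Cases~2--3 of Proposition~\ref{prop:collision}.
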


\begin{proof}
Let $(r_i,\theta_i,\phi)$ be polar coordinates in $\mathbb{R}^3$ centered at $p_i$, $i=1,2$ and let $a_i>0$ be parameters, then recall that the extreme Kerr harmonic map \cite[Appendix B]{KhuriWeinstein} with puncture $p_i$ and angular momentum $a_i^2$ is given by
\begin{equation}
u_i=-\ln\sin\theta_i -\frac{1}{2}\ln\left((r_i +a_i)^2 +a_i^2 +\frac{2a_i^2 (r_i +a_i)\sin^2 \theta_i}{(r_i +a_i)^2 +a_i^2 \cos^2 \theta_i}\right),
\end{equation}
\begin{equation}
v_i=a_i^2 \cos\theta_i (3-\cos^2 \theta_i)+\frac{a_i^4 \cos\theta_i \sin^4 \theta_i}{(r_i +a_i)^2 +a_i^2 \cos^2 \theta_i}.
\end{equation}
The extreme Kerr map with angular momentum $-a_i^2$ may be obtained by replacing $v_i$ with $-v_i$. Furthermore, for the purposes of this lemma we may assume without loss of generality that the angular momentum at $p_1$ is $a/2$ and that the angular momentum at $p_2$ is $1/2$ for some $a\neq 0$, so that
\begin{equation}\label{foanofinoiqh}
v_1 = \frac a2\cos\theta_1 (3-\cos^2\theta_1) - a -1 + O(\sin^4\theta_1),\quad
v_2 = \frac12 \cos\theta_2 (3-\cos^2\theta_2) + O(\sin^4\theta_2).
\end{equation}
Generality is achieved by noting that for any constant $c$ the map $(u,v)\rightarrow (u+c , e^{-2c}v)$ is an isometry in the target hyperbolic space. Even though such `hyperbolic translations' do not preserve asymptotic flatness of the associated spacetime, the estimates in \eqref{aohonoinhjj} will be preserved. Note that the potential constants for $v_1$ and $v_2$ agree (and equal $-1$) on the axis rod between the two punctures.

Using $\rho=r\sin\theta=r_i\sin\theta_i$ and $z=r\cos\theta=r_1 \cos\theta_1 -2\eta=r_2\cos\theta_2 +2\eta$, we find 
in the region $S\cap B_{\delta/2}$ (where $\theta_1 <\pi/2$ and $\theta_2 >\pi/2$) that
\begin{equation}
\cos\theta_{1,2}= \frac{\pm 1}{\sqrt{1+\tan^2\theta_{1,2}}} = \pm \left(1+\left(\frac{r\sin\theta}{r\cos\theta\pm 2\eta}\right)^2 \right)^{-1/2},
\end{equation}
and consequently if $x=2\eta/r$ then
\begin{equation}\label{aboifnoianoginoqaiahg}
\sin^2 \theta_{1,2} =1-\cos^2 \theta_{1,2} =\frac{\sin^2 \theta}{1\pm 2x\cos\theta +x^2}.
\end{equation}
It follows that
\begin{equation}\label{aonfoinaoinhqah}
v_1 -v_2=W(x,\theta)+O\left(\frac{\sin^4 \theta}{(1+2x\cos\theta +x^2)^2}+\frac{\sin^4 \theta}{(1-2x\cos\theta +x^2)^2}\right),
\end{equation}
where
\begin{align}
\begin{split}
W(x,\theta)&=\frac{a|\cos\theta+x|}{2\sqrt{1+2x\cos\theta+x^2}}\left(3-\frac{(\cos\theta+x)^2}{1+2x\cos\theta+x^2}\right) - a - 1\\
& \quad+\frac{|\cos\theta-x|}{2\sqrt{1-2x \cos\theta+ x^2}}\left(3-\frac{(\cos\theta-x)^2}{1-2 x \cos\theta+x^2}\right).
\end{split}
\end{align}
Moreover
\begin{equation}\label{fonOINIOQHH}
u = \chi_1 u_1 + \chi_2 u_2 = -\ln\sin\theta + \frac{\chi_1}{2}\ln(1+2x \cos\theta+ x^2) + \frac{\chi_2}{2}\ln(1-2x \cos\theta+ x^2) + O(1),
\end{equation}
and hence
\begin{equation}\label{afonoih}
e^{2u}|v_1-v_2| \leq C \frac{(1-2x \cos\theta+ x^2)^{\chi_1}(1+2x \cos\theta+ x^2)^{\chi_2}}{\sin^2\theta} W(x,\theta)+O(1)
\end{equation}
in $S \cap B_{\delta}$, for some constant $C$ independent of $\eta$ and $\delta$. By expanding in powers of $x$ and noting several cancellations, a calculation shows that 
\begin{equation}
|W(x,\theta)|\leq \frac{C}{x^2} \quad\text{ for all }x\geq 1 \text{ and }\pi/4 < \theta< 3\pi/4.
\end{equation}
We also observe that within the region given by $x\leq 1$ and $\pi/4 < \theta< 3\pi/4$, the function $W(x,\theta)$ remains uniformly bounded. These facts, together with $\chi_1 +\chi_2 =1-\chi_0 \leq 1$, imply that the first inequality of \eqref{aohonoinhjj} is valid.

Consider now the second inequality of \eqref{aohonoinhjj} and notice that for $i=1,2$ a direct computation yields
\begin{equation}\label{goinaqoinoiqh}
|\nabla v_i| \leq C\frac{ \sin^3 \theta_i}{r_i}.
\end{equation}
Next use $u_i=-\ln\sin\theta_i +O(1)$ and $\chi_1+\chi_2=1-\chi_0$ to find
\begin{equation}
e^{2u}|\nabla v_i|\leq e^{2|1-\chi_0-\chi_i||u_1 -u_2|}e^{2(1-\chi_0)u_i}|\nabla v_i|\leq C\frac{e^{2|u_1 -u_2|}\sin \theta_i}{r_i}.
\end{equation}
Moreover, for all $x>0$ and $\pi/4 < \theta< 3\pi/4$ we obtain  
\begin{equation}\label{foanoianh}
e^{2(u_1 -u_2)} \leq C\frac{\sin^2 \theta_2}{\sin^2 \theta_1}\leq C\frac{1 +2x\cos\theta+x^2}{1 -2x\cos\theta +x^2}\leq 10C
\end{equation}
with the help of \eqref{aboifnoianoginoqaiahg}, and a similar inequality holds for $e^{2(u_2 -u_1)}$. Since 
$1/r_i \leq C/r$ for some uniform constant $C$ at all points of $S$ by Corollary \ref{cor:r1-r2}, we conclude that the second inequality of \eqref{aohonoinhjj} is valid.

Consider now the third inequality of \eqref{aohonoinhjj} and note that for $i=1,2$ we have
\begin{equation}\label{gonqaoingoih}
|\Delta v_i| \leq C\frac{\sin^2 \theta_i}{r^2_i}.
\end{equation}
As above this gives rise to
\begin{equation}
e^{2u}|\Delta v_i|\leq e^{2|1-\chi_0-\chi_i||u_1 -u_2|}e^{2(1-\chi_0)u_i}|\Delta v_i|\leq C\frac{e^{2|u_1 -u_2|}}{r^2_i},
\end{equation}
and the desired estimate follows from \eqref{foanoianh}.
\end{proof}

\begin{lem} \label{lemma:v1}
Let $\eta,\delta>0$ be small parameters with $\delta> 12\eta$. Using the notation from Case 1 in the proof of Proposition~\ref{prop:collision}, consider a multi-extreme Kerr harmonic map $(u_0,v_0)$ having a puncture at the collision location in $B_{\delta/4}$ with all other punctures occurring outside a fixed radius, and set $(\tilde{u},\tilde{v})=\sum_{i=1}^2\tilde{\chi}_i (u_i,v_i)$ using two extreme Kerr harmonic maps 
$(u_i,v_i)$, $i=1,2$ having punctures located at $p_1=(0,0,-2\eta)$ and $p_2=(0,0,2\eta)$ in Cartesian coordinates. Then there exists a constant $C$ independent of $\eta$ and $\delta$ such that
\begin{equation}\label{eq:v1}
(\sin\theta)^{-1-\varsigma}e^{2u}|\tilde{v}-v_0| \leq C, \qquad \quad
r e^{2u}\left((\sin\theta)^{-1}|\nabla \tilde{v}|+(\sin\theta)^{-\varsigma}|\nabla v_0|\right)  \leq C, 
\end{equation}
\begin{equation}\label{eq:v2}
r^2 e^{2u} \left(|\Delta \tilde{v}|+ (\sin\theta)^{1-\varsigma} |\Delta v_0|\right) \leq C,
\end{equation}
in annulus $A=B_{\delta}\setminus \overline{B}_{\delta/2}$ where $u=\chi_0 u_0 +(1-\chi_0)\tilde{u}$, and $\varsigma\in(0,1)$ is arbitrary but fixed.
\end{lem}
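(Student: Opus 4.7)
My plan is to adapt the strategy of Lemma~\ref{lemma:v} to compare the combined map $\tilde v$ against the multi-extreme Kerr map $v_0$ in the annulus $A=B_\delta\setminus\overline B_{\delta/2}$. The starting observation is that, since $\delta>12\eta$ and since both the collision puncture of $v_0$ and the Kerr punctures $p_1,p_2$ lie in $B_{\delta/4}$, the distances $r,r_0,r_1,r_2$ and the sines $\sin\theta,\sin\theta_0,\sin\theta_1,\sin\theta_2$ are pairwise comparable throughout $A$, while the dimensionless ratio $x=2\eta/r$ stays bounded by $1/3$. The structural fact driving all three estimates is that $v_0$ and $\tilde v$ share the same top and bottom potential constants $c_\pm$ on the axis rods adjacent to the collision cluster, because the angular momentum of $v_0$ at its collision puncture equals the sum of the angular momenta of the colliding punctures of $\tilde v$. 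Hence their difference carries extra powers of $\sin\theta$ near the poles, which is what produces the desired weighted bounds.

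I would first invoke \cite[Theorems 2.1--2.2]{HKWX} to decompose $v_0=\bar v_0(\theta_0)+\tilde v_{0,\mathrm{err}}$, where $\bar v_0$ is the tangent-map value at the collision puncture and the weighted error estimates of \eqref{3.3} (together with their third-order analogues) are available. A short calculation from the explicit formula \eqref{eq:tange-map} gives
\begin{equation*}
\bar v(\theta,b)-a=-\frac{a(1-b)(1-\cos\theta)^2}{1+\cos^2\theta+2b\cos\theta},
\end{equation*}
so that $\bar v_0(\theta_0)-c_+=O(\sin^4\theta)$ near the top axis (symmetrically near the bottom), with accompanying bounds $|\partial_\theta\bar v_0|=O(\sin^3\theta)$ and $|\partial_\theta^2\bar v_0|=O(\sin^2\theta)$ obtained by direct differentiation. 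The exact extreme-Kerr expressions used in Lemma~\ref{lemma:v} yield the same $O(\sin^4\theta)$ rate for $v_i-c_\pm$, along with $|\nabla v_i|\le C\sin^3\theta_i/r_i$ and $|\Delta v_i|\le C\sin^2\theta_i/r_i^2$ by \eqref{goinaqoinoiqh}--\eqref{gonqaoingoih}, all uniform in $\eta$.

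The next step is to split $A$ into three subregions: the top cap $\{\theta\le\pi/4\}$ where $\tilde v=v_2$, the bottom cap $\{\theta\ge 3\pi/4\}$ where $\tilde v=v_1$, and the middle sector $S=\{\pi/4<\theta<3\pi/4\}$. In $S$ the angle $\sin\theta$ is bounded below by $1/\sqrt 2$ and $e^{2u}=O(1)$, so the three inequalities reduce to uniform boundedness of the relevant quantities, handled by the same sort of cut-off manipulations as in Lemma~\ref{lemma:v}. In the top cap the key identity is
\begin{equation*}
\tilde v-v_0=(v_2-c_+)-(\bar v_0(\theta_0)-c_+)-\tilde v_{0,\mathrm{err}},
\end{equation*}
whose first two terms are $O(\sin^4\theta)$ and whose third is $O(r^{\beta_0}(\sin\theta)^{3+\varsigma})$ by \eqref{3.3}; multiplying by $e^{2u}=O(\sin^{-2}\theta)$ then delivers $(\sin\theta)^{-1-\varsigma}e^{2u}|\tilde v-v_0|\le C$, and the bottom cap is symmetric. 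For the gradient in \eqref{eq:v1} I would expand $\nabla\tilde v=(v_1-v_2)\nabla\tilde\chi_1+\tilde\chi_1\nabla v_1+\tilde\chi_2\nabla v_2$ (the cutoff term is supported in $S$ and bounded by $C/r$), and $\nabla v_0=\nabla\bar v_0+\nabla\tilde v_{0,\mathrm{err}}$ with $|\nabla\bar v_0|\le C\sin^3\theta/r$ and $|\nabla\tilde v_{0,\mathrm{err}}|\le Cr^{\beta_0-1}(\sin\theta)^{2+\varsigma}$; the extra $r$ in the weight absorbs the $r^{\beta_0-1}$ growth. The Laplacian bound in \eqref{eq:v2} is analogous, using $\Delta\bar v_0=r^{-2}\Delta_{S^2}\bar v_0=O(\sin^2\theta/r^2)$ and $|\Delta\tilde v_{0,\mathrm{err}}|\le Cr^{\beta_0-2}(\sin\theta)^{1+\varsigma}$ from the third-order version of \eqref{3.3}; the weight $(\sin\theta)^{1-\varsigma}$ is tuned precisely to absorb the $(\sin\theta)^{1+\varsigma}$ factor.

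The hardest part will be ensuring every constant is uniform in both $\eta$ and $\delta$. Uniformity in $\eta$ is essentially automatic because the extreme-Kerr formulas depend only on the fixed angular momenta and on the ratio $x<1/3$, as in \eqref{aboifnoianoginoqaiahg}. Uniformity in $\delta$ rests on the observation that the constants and the exponent $\beta_0$ in the \cite{HKWX} expansion of $v_0$ near its collision puncture depend only on the fixed angular momentum there and on the fixed radius beyond which all remaining punctures of $v_0$ lie; this guarantees that the asymptotic bounds for $v_0$ hold on a $\delta$-independent neighborhood of the puncture, hence throughout $A$ once $\delta$ is small enough. A minor technical wrinkle will be keeping track of the shift between $\theta$ and $\theta_0$ (the polar angle measured from the collision puncture); since that puncture lies on the $z$-axis inside $B_{\delta/4}$ while $r\ge\delta/2$ in $A$, the ratio $\sin\theta_0/\sin\theta=r/r_0$ is bounded between $2/3$ and $2$, which suffices to freely interchange $\sin\theta$ and $\sin\theta_0$ in the estimates.
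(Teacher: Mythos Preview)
Your proposal is correct and follows essentially the same route as the paper: both split $A$ into the sector $S$ and the two caps $A_\pm$, reduce the sector to Lemma~\ref{lemma:v}-type bounds where $\sin\theta$ is bounded below, and in the caps exploit that $\tilde v$ and $v_0$ share the same potential constant so that $\tilde v-v_0=O((\sin\theta)^{3+\varsigma})$ via the expansions \eqref{3.1}--\eqref{3.3}. The only cosmetic difference is that the paper writes the cap expansion of $v_0$ in the polynomial form $\tfrac{a+1}{2}\cos\theta_0(3-\cos^2\theta_0)-a+O(\sin^{3+\varsigma}\theta_0)$ rather than separating the tangent map $\bar v_0$ from the error as you do, and in $A\cap S$ the paper makes the triangle-inequality split $e^{2u}|\tilde v-v_0|\le e^{2u}|\tilde\chi_2(v_1-v_2)|+e^{2u}|v_1-v_0|$ explicit before invoking Lemma~\ref{lemma:v}.
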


\begin{proof}
Consider the first expression of \eqref{eq:v1} in $A\cap S$, and observe that 
\begin{equation}
e^{2u}|\tilde{v}-v_0|\leq e^{2u}|\tilde{\chi}_2 (v_1 -v_2)|+e^{2u}|v_1 -v_0|.
\end{equation}
Applying Lemma \ref{lemma:v} in this region yields
\begin{equation}
e^{2u}|\tilde{\chi}_2 (v_1 -v_2)|\leq e^{2\chi_0 u_0}\left(e^{2(\chi_1 u_1 +\chi_2 u_2)}|v_1-v_2|\right)\leq C,
\end{equation}
where we have also used that $e^{2\chi_0 u_0}$ remains uniformly bounded due to \eqref{3.1}-\eqref{3.3} combined with the restriction of angles inherent within $A\cap S$. It should be noted that even though the collision location which serves as the puncture for the map $(u_0,v_0)$ does not necessarily coincide with the origin, its distance to the origin tends to zero as $\eta\rightarrow 0$, so that polar coordinate angles measured from the collision location are comparable to those measured from the origin. Moreover, $v_1 -v_0$ may be computed analogously to \eqref{aonfoinaoinhqah}, and arguments as in Lemma \ref{lemma:v} may be utilized to estimate $e^{2(\chi_0 u_0 +\chi_1 u_1)}|v_1 -v_0|$ inside $A\cap S$. Again since $e^{2\chi_2 u_2}$ is controlled here we obtain a bound for $e^{2u}|v_1 -v_0|$, and hence the desired inequality for the first term of \eqref{eq:v1} follows in $A\cap S$. In this process, the expansion \eqref{3.1}-\eqref{3.3} allows us to effectively replace $(u_0,v_0)$ with a single extreme Kerr. Similar considerations show that the proof of Lemma \ref{lemma:v} also yields the desired estimates for the remaining quantities of \eqref{eq:v1} and \eqref{eq:v2}, in $A\cap S$. In fact, $\varsigma$ plays no role here and may be taken to be zero.


Consider now the region $A\setminus S=A_+ \cup A_-$, where $A_{\pm}$ are the portions of the annulus with $0\leq \theta\leq \pi/4$ and $3\pi/4\leq\theta\leq \pi$, respectively. We will treat the $A_+$ region, and note that similar arguments hold for $A_-$. First observe that in this region $\tilde{\chi}_1 =0$, and therefore $\tilde{v}=v_2$. We may assume without loss of generality that the potential constants for $v_1$ and $v_2$ are as in the proof of  Lemma \ref{lemma:v}. Then \eqref{3.3} and \eqref{foanofinoiqh} produce
\begin{equation}
v_0 =\frac{a+1}{2}\cos\theta_0(3-\cos^2 \theta_0)-a +O(\sin^{3+\varsigma}\theta_0),\quad 
v_2 = \frac12 \cos\theta_2 (3-\cos^2\theta_2) + O(\sin^4\theta_2),
\end{equation}
where $(r_0,\theta_0,\phi)$ are polar coordinates centered at the collision location and $\varsigma\in(0,1)$. Using the identity
\begin{equation}
\frac{1}{2}\cos\theta(3-\cos^2 \theta)=1-6 \sin^4(\theta/2)+4\sin^6(\theta/2),
\end{equation}
it follows that
\begin{equation}
\tilde{v}-v_0 =O(\sin^{3+\varsigma}\theta_0 +\sin^4 \theta_2).
\end{equation}
Next utilize \eqref{3.1} and similar computations as in \eqref{fonOINIOQHH} to find
\begin{equation}
u=\chi_0 u_0 +(1-\chi_0)u_2=-\ln\sin\theta +O(1),
\end{equation}
and hence within $A_+$ we have
\begin{equation}
e^{2u}|\tilde{v}-v_0|\leq C\left(\frac{\sin^{3+\varsigma}\theta_0}{\sin^2 \theta} +\frac{\sin^4 \theta_2}{\sin^2 \theta} \right)
\end{equation}
for some uniform constant $C$. Furthermore, \eqref{aboifnoianoginoqaiahg} implies
\begin{equation}
\frac{\sin^2 \theta_2}{\sin^2 \theta} =(1-2x\cos\theta +x^2)^{-1}=\left((x-\cos\theta)^2 +\sin^2 \theta\right)^{-1}
\leq (1/\sqrt{2} -1/3)^{-2}
\end{equation}
since $x=2\eta/r <4\eta/\delta<1/3$ in $A_+$, and a similar estimate holds if $\theta_2$ is replaced by $\theta_0$. Hence, the first inequality of \eqref{eq:v1} is satisfied in $A_+$. Moreover, these observations also show that the derivative inequalities of \eqref{eq:v1} and \eqref{eq:v2} involving $\tilde{v}$ follow from \eqref{goinaqoinoiqh} and \eqref{gonqaoingoih}, together with the fact that $r_i$, $i=1,2$ are comparable with $r$ in $A$ by Lemma \ref{lem:r0-ri}. Lastly, the derivative inequalities involving $v_0$ may be obtained by replacing \eqref{goinaqoinoiqh} and \eqref{gonqaoingoih} with
\begin{equation}
|\nabla v_0|\leq C\frac{\sin^{2+\varsigma}\theta_0}{r_{0}},\quad\quad\quad |\Delta v_0|\leq C\frac{\sin^{1+\varsigma}\theta_{0}}{r_0^2},
\end{equation}
which is due to \eqref{3.1}-\eqref{3.3}. We conclude that the desired estimates are valid on all of $A$.
\end{proof}

\section{Scattering of Punctures}
\label{sec5} \setcounter{equation}{0}
\setcounter{section}{6}

Consider the setup at the beginning of Section \ref{collision}, in which the flow of harmonic maps $\Phi_t =(u_t,v_t)$ 
having punctures with $z$-coordinates $z_i(t)$, $1\leq i \leq N$ with $z_1(0)<\cdots<z_N(0)$, has a maximal time of existence $T$ (which may be infinite). At the maximal time at least one of the possibilities from \eqref{faoijofinoihyhj} must transpire. In the former case, namely when $\limsup_{t\to T}\left(z_{j+1}(t)-z_j(t)\right)=\infty$ for some $j\in \{1,\ldots, N-1 \}$, we say that \textit{scattering} occurs at time $T$. In a manner similar to that used in the proof of Proposition~\ref{prop:collision}, we will first handle the case where only one rod length is unbounded and no collisions occur. We will then subsequently generalize to the other cases.

Suppose that only one axis rod $\Gamma_j =(z_{j-1}, z_{j})$ of length $\ell$ becomes unbounded. There is then a sequence of times $t_n \rightarrow T$, and associated maps $\Phi_n =\Phi_{t_n}$ such that the $j$th-rod lengths satisfy $\ell_n\to\infty$. Without loss of generality, it may be assumed that the origin is always at the midpoint of this rod. In particular, the punctures $p_1,\ldots,p_{j-1}$ lie below the origin, and the punctures $p_{j},\dots, p_N$ lie above the origin. We point out that $z_i$ and $p_i$ here are associated with the harmonic map $\Phi_n$, although the subscript $n$ is suppressed for brevity. Since no collisions occur and only one rod length becomes unbounded, it follows that after translating by $z\mapsto z+\ell_{n} /2$ the punctures $p_1,\ldots,p_{j-1}$ subconverge to distinct points with $z$-coordinates $z_{1}^*,\ldots,z_{j-1}^*$. Similarly, after translating by $-\ell_{n} /2$ the punctures $p_{j},\dots, p_N$ also subconverge to distinct points with $z$-coordinates $z_{j}^*,\ldots,z_{N}^*$. For simplicity of notation, we will assume that they all converge along the original sequence. The two sets of starred punctures will be referred to as the lower and upper \textit{separation configurations}, respectively. Denote by ${\Phi}_*^i$, $i=1,2$ the harmonic maps associated with these two set of punctures. More precisely, ${\Phi}_*^1$ admits punctures $z_{1}^*,\ldots,z_{j-1}^*$ while ${\Phi}_*^2$ admits punctures $z_{j}^*,\ldots,z_{N}^*$, and both maps have the same potential constants as $\Phi_n$ for corresponding rods. The $\Phi_*^i$ are referred to as {\it separation configuration maps.}
We point out that different sequences $t_n\to T$ may result in different separation configurations and seaparation configuration maps. Nevertheless, the limiting renormalized energy is the same for all sequences, that is $\lim_{t\to T} \mathcal E(\Phi_{t})$ exists in light of the monotonicity provided by Theorem \ref{decreaselaw}.
The next result will allow the flow of harmonic maps to ‘pass through’ the maximal time of existence when single rod scattering happens, while maintaining  the desired monotonicity property with the use of separation configuration maps.

\begin{prop}\label{kjhaskdjhasd}
Let $\Phi_t$ be the 1-parameter family of harmonic maps with fixed potential constants defined by the flow of punctures \eqref{flowdef}, in which scattering occurs along a single axis rod and no collisions occur at the limiting time $T$. If $\Phi_*^1$ and $\Phi_*^2$ are separation configuration maps described above, then
\begin{equation}\label{scattering111-limit}
\mathcal E(\Phi_*^1) + \mathcal E(\Phi_*^2) \le \lim_{t\to T} \mathcal E(\Phi_{t}).
\end{equation}
\end{prop}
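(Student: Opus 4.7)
The plan is to adapt the four-step strategy of Proposition~\ref{prop:collision} to the scattering regime: construct an auxiliary model map, use a tension estimate together with the weak maximum principle to bound the hyperbolic distance between $\Phi_n$ and the model, upgrade this to $C^3_{loc}$ convergence after appropriate translations, and conclude via local lower semi-continuity of the renormalized energy. The distinctive feature here is that the two separation configuration maps $\Phi_*^1,\Phi_*^2$ live in two distinct translated frames, and the transition between them happens across a slab of unbounded width $\sim\ell_n:=z_j(t_n)-z_{j-1}(t_n)$ rather than in a shrinking ball.

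First I would introduce the translated maps $\Phi_n^1(\rho,z,\phi):=\Phi_*^1(\rho,z+\ell_n/2,\phi)$ and $\Phi_n^2(\rho,z,\phi):=\Phi_*^2(\rho,z-\ell_n/2,\phi)$, so that the punctures of $\Phi_n^i$ align with the $i$th cluster of $\Phi_n$; crucially, both possess a semi-infinite axis rod of common potential constant $c_j$ overlapping the scattering rod. Using a smooth cut-off $\chi(z)$ equal to $1$ for $z<-\ell_n/4$ and to $0$ for $z>\ell_n/4$, I would set
\begin{equation*}
\Xi_n := \chi\,\Phi_n^1 + (1-\chi)\,\Phi_n^2,
\end{equation*}
so $\Xi_n$ coincides with a harmonic map outside the slab $\{|z|<\ell_n/4\}$. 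Inside the slab each $\Phi_n^i$ lies at distance $\geq \ell_n/4$ from all of its punctures, and the expansions~\eqref{asyminfin} together with \cite[Theorem 2.3]{HKWX} yield $u_{\Phi_n^i}=-\ln\rho+O(\ell_n^{-1})$ and $v_{\Phi_n^i}=c_j+O(\ell_n^{-2})$ with matching derivative decay. Because the leading profile $(-\ln\rho,c_j)$ agrees between $\Phi_n^1$ and $\Phi_n^2$, the would-be dangerous cross terms in the tension expression~\eqref{eq:tension-est} cancel, and a direct computation in the spirit of Case~1 of Proposition~\ref{prop:collision} produces an integrable estimate of the form $|\tau(\Xi_n)|\leq C\ell_n^{-3}$ in the slab and $\tau(\Xi_n)=0$ elsewhere, with $C$ independent of $n$.

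Second, setting $\Lambda_n:=\sqrt{1+d_{\H^2}(\Phi_n,\Xi_n)^2}$, \cite[Lemma 1]{WeinsteinGR} gives $\Delta\Lambda_n\geq-|\tau(\Xi_n)|$ weakly away from $\Gamma$. A comparison function $w_n$ modeled on the Newton potential of $|\tau(\Xi_n)|$ satisfies $-\Delta w_n\geq|\tau(\Xi_n)|$ with $\|w_n\|_\infty=O(\ell_n^{-1})\to 0$ and $w_n\to 0$ at infinity. A subtlety absent from the collision case is that the total angular momenta of $\Phi_n$ and $\Xi_n$ differ, so the asymptotic matching required by Lemma~\ref{aoijfoainoinhk} is not automatic; I would repair this by interpolating $\Xi_n$ with a fixed far-field stand-in reproducing the spatial-infinity asymptotics of $\Phi_n$ outside a large fixed ball $B_R$. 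Since both $\Xi_n$ and the stand-in are close to $(-\ln\rho,c_j)$ in the transition shell $B_{2R}\setminus B_R$ for large $n$, the correction contributes only a negligible additional tension of size $O((\ell_n R^2)^{-1})$. The weak maximum principle of \cite[Lemma 8]{WeiDuke}, which extends across the axis, then yields $d_{\H^2}(\Phi_n,\Xi_n)\leq\sqrt{w_n(w_n+2)}$ on $\R^3\setminus\Gamma$, vanishing on any fixed compact set as $n\to\infty$. Standard elliptic theory for \eqref{equationaklfahg1}, along the lines of \cite[Proof of Theorem 1]{WeinsteinGR}, upgrades this pointwise bound to $C^3_{loc}$ convergence of $\widetilde{\Phi}^1_n:=\Phi_n(\rho,z-\ell_n/2,\phi)\to\Phi_*^1$ and $\widetilde{\Phi}^2_n:=\Phi_n(\rho,z+\ell_n/2,\phi)\to\Phi_*^2$ on compact subsets of $\R^3\setminus(\Gamma\cup\{\text{punctures of }\Phi_*^i\})$.

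Finally, for the energy inequality, fix $\epsilon>0$ and choose compact sets $K_i\subset\R^3\setminus\Gamma$ avoiding the punctures of $\Phi_*^i$ with $\mathcal{E}_{K_i}(\Phi_*^i)>\mathcal{E}(\Phi_*^i)-\epsilon/2$, which is possible since the renormalized energy density is integrable. For $n$ large enough the shifted copy of $K_1$ near the lower cluster and of $K_2$ near the upper cluster are disjoint in $\R^3$, so translation invariance of the energy density and its nonnegativity give
\begin{equation*}
\mathcal{E}(\Phi_n)\geq\mathcal{E}_{K_1}(\widetilde{\Phi}^1_n)+\mathcal{E}_{K_2}(\widetilde{\Phi}^2_n).
\end{equation*}
The $C^3_{loc}$ convergence yields $\mathcal{E}_{K_i}(\widetilde{\Phi}^i_n)\to\mathcal{E}_{K_i}(\Phi_*^i)$, hence $\liminf_{n\to\infty}\mathcal{E}(\Phi_n)\geq\mathcal{E}(\Phi_*^1)+\mathcal{E}(\Phi_*^2)-\epsilon$; letting $\epsilon\to 0$ produces~\eqref{scattering111-limit}. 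The hardest step will be the combined tension-and-asymptotics analysis: establishing the leading-order cancellation inside the slab and the compatible far-field correction outside $B_R$, so that the maximum principle delivers a hyperbolic-distance bound vanishing on compact sets. Once this is achieved the remaining convergence and energy-splitting steps closely parallel those of Proposition~\ref{prop:collision}.
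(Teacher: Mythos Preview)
Your overall strategy is the right one, but two structural choices break the maximum-principle step that the whole argument hinges on.

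First, you build $\Xi_n$ from translates of the \emph{limiting} maps $\Phi_*^i$ rather than from auxiliary harmonic maps whose punctures sit at the \emph{current} locations $z_i(t_n)$. Since $z_i(t_n)+\ell_n/2\to z_i^*$ but equality fails for each finite $n$, the singularities of $\Xi_n$ and of $\Phi_n$ are displaced, and hence $d_{\H^2}(\Phi_n,\Xi_n)$ blows up in a neighborhood of every puncture (compare $U\sim\ln r_i$ versus $U\sim\ln r_i'$ with $r_i\neq r_i'$). The comparison function you describe is bounded near the punctures because the tension vanishes there, so $\Lambda_n-w_n$ is unbounded from above and the weak maximum principle yields nothing. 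The paper avoids this by introducing auxiliary separation maps $\tilde\Psi_n^i$ with punctures exactly at $z_i(t_n)$, and only afterwards passes from the translated $\Psi_n^i$ to $\Phi_*^i$ via Theorem~\ref{thm:smoothness}.

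Second, a $z$-cutoff over the slab $\{|z|<\ell_n/4\}$ cannot produce a tension bound admitting a comparison function that vanishes at infinity. You have $|\Delta\chi|=O(\ell_n^{-2})$ uniformly, while $|u_1-u_2|=O(1/(\ell_n+\rho))$ (not $O(\ell_n^{-1})$ uniformly), so the term $(u_1-u_2)\Delta\chi$ contributes $O\bigl(\ell_n^{-2}(\ell_n+\rho)^{-1}\bigr)$; integrating this over the slab, which is unbounded in $\rho$, gives infinite total mass, and no $w$ with $-\Delta w\ge|\tau(\Xi_n)|$ and $w\to0$ at infinity exists. The paper instead uses an \emph{angular} cutoff $\chi=\chi(\theta)$ (plus a fixed small ball $B_\varrho$ at the rod midpoint), so that $|\nabla\chi|\le C/(1+r)$ and $|\Delta\chi|\le C/(1+r^2)$; this yields $|\tau(\Xi_n)|\le C/(1+r^3)$ globally, for which $w=\lambda(1+r^2)^{-1/4}$ works. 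A bonus of the angular partition into cones $\Omega_0,\Omega_1$ is that on each cone the model $\tilde\Psi_n^i$ already shares the potential constant of $\Phi_n$ on the adjacent semi-infinite rod, so Lemma~\ref{aoijfoainoinhk} and Remark~\ref{foanfoinhghhh} give $d_{\H^2}(\Phi_n,\Xi_n)\to0$ at infinity directly, and your far-field correction is not needed.
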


\begin{proof}  
As described in the discussion preceding the statement of the proposition, let $\{t_n\}$ be a sequence of times with $t_n\to T$, which leads to lower and upper separation configurations $\{z_{1}^*,\ldots, z_{j-1}^* \}$ and $\{z_{j}^*,\ldots,z_{N}^*\}$ along with the given separation configuration maps $\Phi_*^1$ and $\Phi_*^2$. For convenience write $\Phi_n =\Phi_{t_n}$.
Denote by $\tilde{\Psi}_n^i$, $i=1,2$ the harmonic maps associated with the set of punctures below and above the origin. More precisely, $\tilde{\Psi}_n^1$ admits punctures $p_1,\ldots , p_{j-1}$ while $\tilde{\Psi}_n^2$ admits punctures $p_j ,\ldots, p_N$, and both maps have the same potential constants as $\Phi_n$ for corresponding rods. Furthermore, denote by $\Psi_n^i$ the $z$-translation of the harmonic maps $\tilde{\Psi}_n^i$, $i=1,2$ by $\ell_n/2$ and $-\ell_n/2$, respectively. We call these translated versions the lower and upper {\it auxiliary separation configuration maps}. According to Theorem \ref{thm:smoothness} we have that 
\begin{equation}\label{scattering111-limit-2}
\Psi_n^i\to\Phi_*^i\quad\text{in $C^3$ away from the axis},
\end{equation} 
for $i=1,2$. 


As in the proof of Proposition \ref{prop:collision}, we will construct a model map $\Xi_n$ for $\Phi_n$ which coincides with $\tilde{\Psi}_n^i$ outside a fixed set. 
Let us divide $\R^3$ into four regions: a ball $B=B_\varrho(0)$ where $\varrho>0$ is independent of $n$ and is chosen small enough so that this set does not contain any punctures, a partial sector
\begin{equation}
S=\{(r,\theta,\phi)\in\mathbb{R}^3 \mid \pi/4 \leq \theta \leq 3\pi/4\} \setminus B
\end{equation}
which will serve as a transition region, and their complement $\Omega_1 \sqcup \Omega_2 =\mathbb{R}^3 \setminus (S \cup B)$ where $3\pi/4< \theta< \pi$ in $\Omega_1$ and $0<\theta< \pi/4$ in $\Omega_2$.
Define a model map by
\begin{equation}\label{Xi}
\Xi_n =(u_n, v_n)= \chi_1 \tilde\Psi_n^1 + \chi_2 \tilde\Psi_n^2,
\end{equation}
where the cut-off functions $\chi_i$, $i=1,2$ are smooth and satisfy the following properties. Namely, they are functions only of $\theta$ alone outside of $B$, $0\leq\chi_i\leq 1$, $\chi_1+\chi_2=1$ globally, and $\chi_i=1$ on $\Omega_i$. These functions may also be chosen such that $|\nabla \chi_i| \leq C/(1+r)$ and $|\nabla^2 \chi_i|\leq C/(1+r^2)$. Note that $\Xi_n=\tilde\Psi_n^i$ in $\Omega_i$ for $i=1,2$, and hence the tension $\tau(\Xi_n)$ is nonzero only in $S\cup B$.
For convenience, we will temporarily omit the subscript $n$ and write $\Xi_n=(u,v)$ as well as $\tilde\Psi^i_n=(\tilde u_i,\tilde v_i)$. Then as in \eqref{eq:tension-est} we have
\begin{equation}\label{eq-tension-S}
|\tau(\Xi)|
 \leq 2\left( |\Delta u| + e^{4u} |\nabla v|^2 + e^{2u} |\Delta v +4\nabla u\cdot \nabla v| \right)\\
=:2\left(\mathbf{I}+\mathbf{II}+\mathbf{III}\right).
\end{equation}
Estimates for the tension will be made separately in $B$ and $S$.

Consider the region $B$. Using $\chi_1+\chi_2=1$ observe that
\begin{equation}\label{S-I}
\Delta u=\chi_1 \Delta\tilde{u}_1 +\chi_2 \Delta\tilde{u}_2 +2\nabla\chi_1 \cdot\nabla(\tilde{u}_1 -\tilde{u}_2)+(\tilde{u}_1 -\tilde{u}_2)\Delta\chi_1.
\end{equation}
In this domain $|\tilde u_1-\tilde u_2|$, $|\nabla (\tilde u_1- \tilde u_2)|$, and $|\Delta\tilde{u}_i|$ are all uniformly bounded independent of $n$. Indeed, since $B$ is far away from the punctures this follows from the expansion of \eqref{asyminfin} which shows that $\tilde{u}_i =-\ln\rho +O(\ell_{n}^{-1})$ in $B$, with corresponding fall-off for derivatives. Note here that $\ln\rho$ is harmonic, and that the error estimates are bounded independent of $n$ in light of Theorem \ref{thm:smoothness} since the puncture configurations of the maps $\tilde{\Psi}^i_n$ converge modulo translation.
Therefore, $\mathbf{I}$ is uniformly bounded in $B$. Term $\mathbf{II}$ may be estimated utilizing \eqref{du*dv} upon replacing $\nabla u$ with $e^{4u} \nabla v$, and noting that the expansions of \eqref{asyminfin} imply $e^{2u}|\tilde{v}_1 -\tilde{v}_2|\leq C\rho$ and $e^{2u}|\nabla \tilde{v}_i|\leq C$ in $B$. Thus, we have that $\mathbf{II}$ is uniformly bounded. In order to control the third term of the tension first notice that \eqref{S-I} with $u$ replaced by $v$, along with the estimates already mentioned, yields
\begin{equation}
e^{2u}\Delta v =e^{2u}\left(\chi_{1}\Delta \tilde{v}_1 +\chi_2 \Delta \tilde{v}_2 \right)+O(1).
\end{equation}
Furthermore, a computation shows that
\begin{align}
\begin{split}
\nabla u\cdot \nabla v &= \chi_1 \nabla\tilde{u}_1 \cdot\nabla\tilde{v}_1 + \chi_2 \nabla\tilde{u}_2 \cdot\nabla\tilde{v}_2
+(\tilde{v}_1 -\tilde{v}_2)\nabla\tilde{u}_1 \cdot\nabla\chi_1 \\
&+\chi_2 \nabla(\tilde{U}_1 -\tilde{U}_2)\cdot\nabla\tilde{v}_2 
+\nabla[\chi_2 (\tilde{U}_2 -\tilde{U}_1)]\cdot\nabla[\tilde{v}_2 +\chi_1 (\tilde{v}_1 -\tilde{v}_2)]
\end{split}
\end{align}
where $\tilde{U}_i =\tilde{u}_i+\ln\rho$, $i=1,2$. Since $|\nabla\tilde{u}_i|\leq C/\rho$ and $|\nabla\tilde{U}_i|\leq C$ in $B$ by \eqref{asyminfin}, together with the estimates mentioned above we then find
\begin{equation}
e^{2u}\nabla u\cdot\nabla v=e^{2u}\left(\chi_1 \nabla\tilde{u}_1 \cdot\nabla\tilde{v}_1 + \chi_2 \nabla\tilde{u}_2 \cdot\nabla\tilde{v}_2\right)+O(1).
\end{equation}
It now follows that $\mathbf{III}$ is uniformly bounded, as the maps $(\tilde{u}_i,\tilde{v}_i)$ are harmonic. Hence, $\tau(\Xi)\leq C$ in $B$ for some constant independent of $n$.


Consider now the region $S$.  Let $r_{i}$, $i=1,2$ be the distances to the punctures $p_{j-1}$, $p_j$ respectively, which make up the end points of the expanding rod $\Gamma_j$. Notice that due to the angle restrictions within $S$ we have the following relation between $r$ and $r_i$ using the law of cosines
\begin{equation}
r_{i}^2=r^2 +(\ell_{n}/2)^2 \pm 2r(\ell_{n}/2)\cos\theta\geq r^2 +(\ell_{n}/2)^2 - 2r(\ell_{n}/2)(1/\sqrt{2})\geq \frac{1}{2}r^2.
\end{equation}
According to the expansion \eqref{asyminfin} we then have
\begin{equation}
|\tilde{u}_{i}+\ln\rho| +\sqrt{2} r |\nabla(\tilde{u}_i+\ln\rho)|\leq|\tilde{u}_{i}+\ln\rho| +r_i |\nabla(\tilde{u}_i+\ln\rho)|\leq \frac{C}{r_{i}}\leq\frac{\sqrt{2}C}{r},
\end{equation}
and therefore $|\tilde u_1-\tilde u_2|=O(r^{-1})$, $|\nabla (\tilde u_1- \tilde u_2)|=O(r^{-2})$. Moreover, in a similar way the expansion also yields $|\Delta \tilde u_i|=O(r^{-3})$, and hence \eqref{S-I} implies $\mathbf{I}\leq C/(1+r^3)$ where $C$ is independent of $n$. We may break up $\mathbf{III}$ into two terms. The first, involving a Laplacian, may be estimated in the same manner using the fact that $e^{2u}|\tilde{v}_1-\tilde{v}_2|$, $r e^{2u}|\nabla \tilde{v}_i|$, and $r^2e^{2u}|\Delta \tilde{v}_i|$ are uniformly decaying on the order of $r^{-2}$, due to \eqref{asyminfin} and $e^{2u}\leq C/\rho^{2}\leq C/r^{2}$ in $S$. For the second term of $\mathbf{III}$
we employ a computation similar to ~\eqref{du*dv}, as well as the expansion asymptotics just described to obtain
$\mathbf{III}\leq C/(1+r^4)$. 
Term $\mathbf{II}$ is handled analogously to the second term in $\mathbf{III}$, by replacing $\nabla u$ with $e^{2u}\nabla v$ to find $\mathbf{II}\leq C/(1+r^4)$. It follows that 
\begin{equation}
|\tau(\Xi_n)|\leq \frac{C}{1+r^3} \quad\text{ on } \mathbb{R}^3 \setminus \Gamma
\end{equation}
for some constant $C$ independent of $n$.

Consider the quantity
\begin{equation}
\Lambda(\Phi_n,\Xi_n)=\sqrt{1+d_{\H^2}(\Phi_n,\Xi_n)^2},
\end{equation}
and recall that \cite[Lemma 1]{WeinsteinGR} implies
\begin{equation}
\Delta\Lambda(\Phi_n,\Xi_n) \geq -|\tau(\Xi_n)| \geq -\frac C{1+r^3} \quad\text{ on } \mathbb{R}^3 \setminus \Gamma.
\end{equation}
Let $\lambda>0$ be a sufficiently large constant such that the radial function $w(r)=\lambda(1+r^2)^{-1/4}$ satisfies
\begin{equation} \label{r3}
\Delta w=-\frac{\lambda}{4}(6+r^2)(1+r^2)^{-9/4}\leq -\frac{C}{1+r^3}.
\end{equation}
It follows that
\begin{equation}\label{lambda}
\Delta\left(\Lambda(\Phi_n,\Xi_n)-w\right) \geq 0 \quad\text{ on } \mathbb{R}^3 \setminus \Gamma.
\end{equation}
As in Section \ref{collision}, the maximum principle \cite[Lemma~8]{WeiDuke} applies.
Since $\Phi_n$ and $\Xi_n$ are asymptotic according to Lemma \ref{aoijfoainoinhk} and Remark \ref{foanfoinhghhh}, we have that $\Lambda(\Phi_n,\Xi_n)-w\to1$ at infinity, and hence
\begin{equation} \label{eq:pointwise-bound2}
d_{\H^2}(\Phi_n,\Xi_n) \leq \sqrt{w(w+2)} \leq \frac{C}{1+r^{1/4}}
\end{equation}
on $\R^3\setminus \Gamma$ where $C$ is independent of $n$.

Let $\mathcal{B}_n^{i}\subset\Omega_i$ denote the ball of radius $\ell_{n}/4$ centered at the puncture $p_{j+i-2}$, $i=1,2$. Since $\Xi_n=\tilde{\Psi}_n^i$ on $\Omega_i$ by \eqref{Xi}, it follows from \eqref{eq:pointwise-bound2} that $d_{\H^2}(\Phi_n,\tilde{\Psi}_n^i)$ is uniformly bounded on $\Omega_i$ and tends to zero when restricted to $\mathcal{B}_{n}^i$. Set $\tilde{\Phi}_n^i$ and $\tilde{\mathcal{B}}_n^i$ to be $z$-translations by $(-1)^{i-1}\ell_n /2$ of $\Phi_n$ and $\mathcal{B}_n ^i$, respectively.
By \eqref{scattering111-limit-2}, $\Psi_n^i \rightarrow \Phi^i_*$ uniformly on fixed compact subsets of $\tilde{\mathcal{B}}_n^i \setminus\Gamma$.
It follows that $\tilde{\Phi}_n^i$ is uniformly bounded on such subsets. 
Using the fact that $\tilde{\mathcal{B}}_n^i$ exhausts $\mathbb{R}^3$ in the limit, a standard diagonal argument shows that $\tilde{\Phi}_n^i$ subconverges on compact subsets of $\mathbb{R}^3\setminus\Gamma$ to a harmonic map $\tilde{\Phi}_*^i$; we will henceforth restrict attention to this subsequence but for convenience the notation will remain unchanged. Furthermore, since the hyperbolic distance $d_{\H^2}(\Phi_n,\tilde{\Psi}_n^i)$ tends to zero on these compact sets we must have $\tilde{\Phi}_*^i=\Phi_*^i$. 
This together with Theorem \ref{decreaselaw} implies that
\begin{equation}
\mathcal{E}_{\tilde{\mathcal{B}}_n^i}(\tilde\Phi_n^i) \rightarrow \mathcal{E}(\Phi_*^i),
\end{equation}
for $i=1,2$.
Moreover 
\begin{equation}
\mathcal{E}_{\tilde{\mathcal{B}}_{n}^1}(\tilde{\Phi}_{n}^1) + \mathcal{E}_{\tilde{\mathcal{B}}_{n}^2}(\tilde{\Phi}_{n}^2)  \leq  \mathcal{E}_{\Omega_1}(\Phi_{n}) + \mathcal{E}_{\Omega_2}(\Phi_{n}) 
\leq \mathcal{E}(\Phi_{n}).
\end{equation}
A simple limit then yields \eqref{scattering111-limit}.
\end{proof}

For the next proposition we introduce some notation for clarity. 
Recall that at each time $t$, there are $N$ punctures $\{p_i\}_{1\le i\le N}$ whose $z$-coordinates are arranged in increasing order $z_1<\cdots<z_N$, where we have suppressed the reference to $t$ here for brevity.
Suppose now that the lengths of $k>1$ finite axis rods, say $\Gamma_{N_1}, \dots, \Gamma_{N_k}$ with $N_1<\cdots<N_k$, tend to infinity simultaneously along a sequence of times $t_n\rightarrow T$ associated with the harmonic maps $\Phi_n=\Phi_{t_n}$. The punctures lying between the midpoints of consecutive expanding rods $\Gamma_{N_i}$ and $\Gamma_{N_{i+1}}$ are labeled with $z$-coordinates $z_{N_i},\dots, z_{N_{i+1}-1}$ for $i=0,\dots, k$; here $N_0 =0$, $N_{k+1}=N+2$ while $z_0$, $\Gamma_{N_0}$ are both formally replaced by $-\infty$ and $z_{N+1}$, $\Gamma_{N_{k+1}}$ are both formally replaced by $+\infty$. We assume that no collision occurs. It follows that the rods $\Gamma_j$ for $N_i+1\leq j\leq N_{i+1}-1$ and $j\neq 1,N$, have lengths uniformly bounded from above and below away from zero. Thus, there are parameters $\ell_n^i\in\R$, $i=0,\dots, k$ such that the translated punctures with $z$-coordinates $\tilde z_j=z_j+\ell_n^i$ subconverge to $z_j^*$ for $N_i\leq j \leq N_{i+1}-1$. 
Let $\Phi_*^{i}$, $i=0,\dots, k$ be the harmonic map having punctures with $z$-coordinates $z^*_{N_i},\dots, z^*_{N_{i+1}-1}$ and the same potential constants as $\Phi_n$ for corresponding rods. The $\Phi_*^i$ are referred to as {\it separation configuration maps.}

\begin{prop}\label{prop:scattering}
Let $\Phi_t$ be the 1-parameter family of harmonic maps with fixed potential constants defined by the flow of punctures \eqref{flowdef}, in which scattering occurs along $k>1$ axis rods and no collisions occur at the limiting time $T$. If $\Phi_*^i$, $i=0,\dots, k$ are separation configuration maps described above, then
\begin{equation}\label{scattering111-limit-b}
\sum_{i=0}^{k}\mathcal E(\Phi_*^i)  \le \lim_{t\to T} \mathcal E(\Phi_{t}).
\end{equation}
\end{prop}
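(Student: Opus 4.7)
The plan is to adapt the model-map and maximum-principle argument of Proposition~\ref{kjhaskdjhasd}, localizing the construction near the midpoint of each expanding rod so that contributions from distinct rods decouple. Let $t_n\to T$ be a sequence realizing the scattering configurations, write $\Phi_n=\Phi_{t_n}$, and let $m_n^j\in\Gamma$ denote the $z$-midpoint of the expanding rod $\Gamma_{N_j}$ for $j=1,\ldots,k$. For each cluster index $i=0,\ldots,k$ define the auxiliary harmonic map $\tilde{\Psi}_n^i$ whose punctures are exactly those of the $i$-th cluster (namely the ones with $z$-coordinates $z_{N_i},\ldots,z_{N_{i+1}-1}$) and whose potential constants on the corresponding rods agree with those of $\Phi_n$. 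By Theorem~\ref{thm:smoothness} the $z$-translate $\Psi_n^i$ of $\tilde{\Psi}_n^i$ by $\ell_n^i$ converges in $C^3_{\mathrm{loc}}(\R^3\setminus\Gamma)$ to $\Phi_*^i$.

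Construct a partition of unity $\{\chi_i\}_{i=0}^{k}$ with $0\le\chi_i\le 1$ and $\sum_i\chi_i\equiv 1$, in which $\chi_i$ is supported in the slab $\{m_n^i\le z\le m_n^{i+1}\}$ (with the conventions $m_n^0=-\infty$ and $m_n^{k+1}=+\infty$), and such that the transition between $\chi_{j-1}$ and $\chi_j$ occurs in a neighborhood of $m_n^j$ whose geometry is modeled on the $S\cup B$ transition region of Proposition~\ref{kjhaskdjhasd}. In particular, outside a fixed small ball around $m_n^j$ the function $\chi_j$ depends only on the polar angle centered at $m_n^j$, while $|\nabla\chi_i|\le C/(1+r_{m_n^j})$ and $|\nabla^2\chi_i|\le C/(1+r_{m_n^j}^2)$ nearby. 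Setting $\Xi_n=\sum_{i=0}^{k}\chi_i\tilde{\Psi}_n^i$, the hypothesis $|m_n^j-m_n^{j'}|\to\infty$ for $j\neq j'$ ensures that the transition regions around distinct midpoints are mutually disjoint for $n$ large, so the tension estimate in each one reduces to that carried out in Proposition~\ref{kjhaskdjhasd} and yields
$$
|\tau(\Xi_n)|\le\sum_{j=1}^{k}\frac{C}{1+r_{m_n^j}^{3}}\quad\text{on }\R^3\setminus\Gamma,
$$
with $C$ independent of $n$.

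With the additive comparison function $w=\sum_{j=1}^{k}w_j$, where $w_j(r_{m_n^j})=\lambda(1+r_{m_n^j}^2)^{-1/4}$ is taken as in \eqref{r3} for $\lambda$ sufficiently large and independent of $n$, one has $\Delta w\le-|\tau(\Xi_n)|$ and hence $\Delta(\Lambda(\Phi_n,\Xi_n)-w)\ge 0$ away from $\Gamma$, weakly on all of $\R^3$ by \cite[Lemma~8]{WeiDuke}. Since $\Phi_n$ and $\Xi_n$ are asymptotic by Lemma~\ref{aoijfoainoinhk} and Remark~\ref{foanfoinhghhh}, the weak maximum principle yields the distance bound
$$
d_{\H^2}(\Phi_n,\Xi_n)\le\sqrt{w(w+2)}\quad\text{on }\R^3\setminus\Gamma,
$$
which tends to zero on any compact set whose distance to every $m_n^j$ diverges.

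To conclude, partition $\R^3$ by the open slabs $R_n^i=\{m_n^i<z<m_n^{i+1}\}$ for $i=0,\ldots,k$. After translating by $\ell_n^i$ in the $z$-direction, $R_n^i$ exhausts $\R^3$, and the distance bound combined with standard elliptic regularity for harmonic maps (as in \cite[Proof of Theorem~1]{WeinsteinGR}) shows that the translates $\tilde{\Phi}_n^i$ of $\Phi_n$ subconverge in $C^3_{\mathrm{loc}}(\R^3\setminus\Gamma)$; the limit must equal $\Phi_*^i$ by the same asymptotic argument used at the end of the proof of Proposition~\ref{prop:collision}. Following the final steps of Propositions~\ref{prop:collision} and \ref{kjhaskdjhasd}, $L^1$ convergence of the renormalized energy densities on exhausting domains gives $\mathcal{E}_{R_n^i}(\Phi_n)=\mathcal{E}_{\tilde{R}_n^i}(\tilde{\Phi}_n^i)\to\mathcal{E}(\Phi_*^i)$, and the pairwise disjointness of the slabs together with Theorem~\ref{decreaselaw} produces
$$
\sum_{i=0}^{k}\mathcal{E}(\Phi_*^i)=\lim_{n\to\infty}\sum_{i=0}^{k}\mathcal{E}_{R_n^i}(\Phi_n)\le\lim_{n\to\infty}\mathcal{E}(\Phi_n)=\lim_{t\to T}\mathcal{E}(\Phi_t).
$$
The main obstacle is choosing the partition of unity so that the tension contributions from different midpoints decouple; once this is arranged, the single-rod analysis transfers verbatim to each transition region and the additivity of the comparison functions $w_j$ delivers the required global bound.
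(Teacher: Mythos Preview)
Your overall strategy matches the paper's, but there is a genuine gap in the construction of the partition of unity. You claim that ``the hypothesis $|m_n^j-m_n^{j'}|\to\infty$ ensures that the transition regions around distinct midpoints are mutually disjoint for $n$ large.'' This is false. If each transition is modeled on the sector $S=\{\pi/4\le\theta\le 3\pi/4\}$ of Proposition~\ref{kjhaskdjhasd}, centered at $m_n^j$, then these sectors are unbounded cones and \emph{always} overlap once $\rho$ exceeds half the distance between consecutive midpoints, regardless of how far apart the midpoints are. Concretely, for midpoints at heights $z_1<z_2$ the two sectors both contain every point with $\rho>(z_2-z_1)/2$ and $z_1<z<z_2$. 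In that overlap more than two of your cut-offs are nonconstant, so the tension computation no longer reduces to a combination of just two harmonic maps and the estimate $|\tau(\Xi_n)|\le C\sum_j(1+r_{m_n^j}^3)^{-1}$ is not justified. Replacing the angular cut-offs by slab cut-offs $\chi_i=\chi_i(z)$ does keep only two nonconstant at once, but then $|\nabla\chi_i|$ no longer decays in $\rho$, and the term $|\Delta\chi||\tilde u_{i}-\tilde u_{i+1}|$ in the analogue of \eqref{S-I} yields only $|\tau(\Xi_n)|=O(\rho^{-1})$ in an infinite slab, which is too weak for the comparison-function step.

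The paper resolves this by shrinking and \emph{staggering} the angular windows: with $\vartheta=\pi/(2k)$ it takes the $j$-th transition sector to be
\[
S_j=\big\{\tfrac{3\pi}{4}-j\vartheta\le\theta_j\le\tfrac{3\pi}{4}-(j-1)\vartheta\big\}\setminus B_j,
\]
so that the $k$ angular ranges partition $[\pi/4,3\pi/4]$. Because the upper boundary of $S_j$ and the lower boundary of $S_{j+1}$ are parallel half-planes at the \emph{same} angle but issued from $p_{0,j}$ and $p_{0,j+1}$ respectively, the sectors are genuinely disjoint for all $\rho$, and in each $S_j$ only the two cut-offs $\chi_{j-1},\chi_j$ vary. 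With this modification your tension and maximum-principle steps go through as written. A smaller point: your final claim $\mathcal{E}_{R_n^i}(\Phi_n)\to\mathcal{E}(\Phi_*^i)$ asserts more than needed (and more than is easily proved, since the slab boundaries sit in regions influenced by the neighboring clusters); the paper instead takes growing balls $\mathcal B_n^i\subset\Omega_i$ centered on each cluster, shows $\mathcal{E}_{\tilde{\mathcal B}_n^i}(\tilde\Phi_n^i)\to\mathcal{E}(\Phi_*^i)$, and uses only the inequality $\sum_i\mathcal{E}_{\mathcal B_n^i}(\Phi_n)\le\mathcal{E}(\Phi_n)$.
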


\begin{proof} 
As described in the discussion preceding the statement of the proposition, let $\{t_n\}$ be a sequence of times with $t_n\to T$, which leads to separation configuration maps $\Phi_*^i$ for $i=0,\dots, k$. For convenience write $\Phi_n =\Phi_{t_n}$.
Let $\tilde\Psi_n^{i}$, $i=0,\dots, k$ be the harmonic map having punctures with $z$-coordinates $z_{N_i},\dots, z_{N_{i+1}-1}$ and the same potential constants as $\Phi_n$ for corresponding rods, and let $\Psi_n^i$ be the associated translated maps having punctures with $z$-coordinates $\tilde{z}_{N_i},\dots, \tilde{z}_{N_{i+1}-1}$. We call these translated versions {\it auxiliary separation configuration maps}. According to Theorem \ref{thm:smoothness} we have that 
\begin{equation}\label{scattering111-limit-b2}
\Psi_n^i\to\Phi_*^i\quad\text{in $C^3$ away from the axis},
\end{equation} 
for $i=0,\dots,k$.


\begin{figure}
\includegraphics[height=9cm]{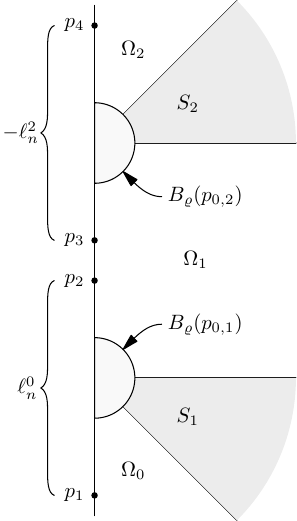}
\caption{
Scattering along two rods with four punctures.}\label{domain2}
\end{figure}

Let $p_{0,i}$, $i=1,\dots,k$ be the midpoint of the $i$th expanding rod $\Gamma_{N_i}$, and set $B_i=B_{\varrho}(p_{0,i})$ to be the ball centered at the midpoint with fixed radius $\varrho>0$ chosen small enough so that no punctures lie inside this set. Consider polar coordinates $(r_i,\theta_i,\phi)$ centered at $p_{0,i}$, set angle $\vartheta=\frac{\pi}{2k}$, and for $i=1,\dots,k$ define the disjoint sector portions
\begin{equation}
S_i=\{(r_i,\theta_i,\phi)\mid \tfrac{3\pi}{4}-i\vartheta\leq\theta_i\leq \tfrac{3\pi}{4}-(i-1)\vartheta\}\setminus B_i, 
\end{equation}
We decompose the complement into regions $\sqcup_{j=0}^{k}\Omega_j=\mathbb{R}^3 \setminus (\cup_{i=1}^k S_i \cup B_i )$, such that $3\pi/4<\theta_1 <\pi$ in $\Omega_0$ and $0<\theta_k <\pi/4$ in $\Omega_k$, while for $j\neq 0,k$ both $\theta_j < 3\pi/4 -j\vartheta$ and $\theta_{j+1}> 3\pi/4-j\vartheta$ in $\Omega_j$; see Figure \ref{domain2}. Next define smooth cut-off function $\chi_i\geq 0$ for $i=0,\dots,k$, such that $\chi_i$ is a function of $\theta_i$ alone in $S_i$ for $i=1,\dots k$, and $\chi_i=1$ on $\Omega_{i}$ for $i=0,\dots,k$. Furthermore, these functions are chosen so that $\sum_{i=0}^k\chi_i=1$ globally, $\chi_i+\chi_{i+1}=1$ in each $\Omega_i \cup S_{i+1}\cup B_{i+1} \cup \Omega_{i+1}$ for $i=0,\dots,k-1$, and
\begin{equation}
|\nabla \chi_{i-1}| +|\nabla\chi_i|\leq \frac{C}{1+r_i}, \qquad |\nabla^2 \chi_{i-1}|+|\nabla^2 \chi_i|\leq \frac{C}{1+r_i^2}, \qquad i=1,\dots,k.
\end{equation}
Since in any of the sector portions $S_{i+1}$ only two of the cut-offs $\chi_i$ and $\chi_{i+1}$ are nonconstant, we can
repeat the arguments from the proof of Proposition~\ref{kjhaskdjhasd} using the model map
\begin{equation}\label{koanoignboiqnbh}
\Xi_n = (u_n,v_n) = \chi_0 \tilde{\Psi}^0_n  + \dots + \chi_k \tilde{\Psi}^k_n
\end{equation}
to obtain tension estimates 
\begin{equation}
|\tau(\Xi_n)|\leq \frac{C}{1+r_i^3} \qquad \text{ on } (S_i \cup B_i)\setminus \Gamma, \quad i=1,\dots,k,
\end{equation}
 where $C$ is a constant independent of $i$ and $n$. Note that $|\tau(\Xi_n)|=0$ on the compliment $(\mathbb{R}^3 \setminus\Gamma) \setminus (\cup_{i=1}^k S_i \cup B_i )$.

Consider the quantity
\begin{equation}
\Lambda(\Phi_n,\Xi_n)=\sqrt{1+d_{\H^2}(\Phi_n,\Xi_n)^2}, 
\end{equation}
and recall that \cite[Lemma 1]{WeinsteinGR} implies
\begin{equation}
\Delta\Lambda(\Phi_n,\Xi_n) \geq -|\tau(\Xi_n)| \geq 
\begin{cases}
-\frac C{1+r_i^3} & \text{ on }(S_i \cup B_i)\setminus\Gamma \text{ for }i=1,\dots,k\\
0 & \text{ on } (\mathbb{R}^3 \setminus\Gamma) \setminus (\cup_{i=1}^k S_i \cup B_i )
\end{cases}.
\end{equation}
As before, we may choose $\lambda>0$ sufficiently large so that the function $w=\lambda\sum_{i=1}^k (1+r_i^2)^{-1/4}$ satisfies
\begin{equation} \label{r311}
\Delta w=-\sum_{i=1}^k \frac{\lambda}{4}(6+r_i^2)(1+r_i^2)^{-9/4}\leq -\sum_{i=1}^k \frac{C}{1+r_i^3} \quad\text{ on }\mathbb{R}^3.
\end{equation}
It follows that
\begin{equation} \label{lambda11}
\Delta\left(\Lambda(\Phi_n,\Xi_n)-w\right) \geq 0 \quad\text{ on }\mathbb{R}^3 \setminus\Gamma.
\end{equation}
As a consequence of Lemma \ref{aoijfoainoinhk} together with the Remark \ref{foanfoinhghhh}, we find that the maps $\Phi_n$ and $\Xi_n$ are asymptotic. Thus, the maximum principle \cite[Lemma~8]{WeiDuke} applies to yield
\begin{equation} \label{eq:pointwise-bound211}
d_{\H^2}(\Phi_n,\Xi_n) \leq \sqrt{w(w+2)} \leq \sum_{i=1}^k\frac{C}{1+r_i^{1/4}}
\end{equation}
on $\R^3\setminus \Gamma$ where $C$ is independent of $n$.

Let $\mathcal{B}_n^{i}\subset\Omega_i$ denote the ball of radius $l_n$ centered at the midpoint of the $i$-th \textit{cluster} of punctures having $z$-coordinates $z_{N_i},\dots, z_{N_{i+1}-1}$ for $i=0,\dots,k$. Here the radii are chosen such that $l_n \rightarrow\infty$ slowly enough to ensure that these balls stay within $\Omega_i$. Note that it is possible to find such radii since the midpoints of the expanding rods, about which the balls $B_i$ are centered, are receding away from each cluster of punctures. Since $\Xi_n=\tilde{\Psi}_n^i$ on $\Omega_i$ by \eqref{koanoignboiqnbh}, it follows from \eqref{eq:pointwise-bound211} that $d_{\H^2}(\Phi_n,\tilde{\Psi}_n^i)$ is uniformly bounded on $\Omega_i$ and tends to zero when restricted to $\mathcal{B}_{n}^i$. Set $\tilde{\Phi}_n^i$ and $\tilde{\mathcal{B}}_n^i$ to be $z$-translations by $\ell_n^i$ of $\Phi_n$ and $\mathcal{B}_n ^i$, respectively.
By \eqref{scattering111-limit-b2}, $\Psi_n^i \rightarrow \Phi^i_*$ uniformly on fixed compact subsets of $\tilde{\mathcal{B}}_n^i \setminus\Gamma$. It follows that $\tilde{\Phi}_n^i$ is uniformly bounded on such subsets.  
Using the fact that $\tilde{\mathcal{B}}_n^i$ exhausts $\mathbb{R}^3$ in the limit, a standard diagonal argument shows that $\tilde{\Phi}_n^i$ subconverges on compact subsets of $\mathbb{R}^3\setminus\Gamma$ to a harmonic map $\tilde{\Phi}_*^i$; we will henceforth restrict attention to this subsequence but for convenience the notation will remain unchanged. Furthermore, since the hyperbolic distance $d_{\H^2}(\Phi_n,\tilde{\Psi}_n^i)$ tends to zero on these compact sets we must have $\tilde{\Phi}_*^i=\Phi_*^i$. 
This together with Theorem \ref{decreaselaw} implies that
\begin{equation}
\mathcal{E}_{\tilde{\mathcal{B}}_n^i}(\tilde\Phi_n^i) \rightarrow \mathcal{E}(\Phi_*^i),
\end{equation}
for $i=0,\dots,k$. Moreover 
\begin{equation}
\sum_{i=0}^k\mathcal{E}_{\tilde{\mathcal{B}}_{n}^i}(\tilde{\Phi}_{n}^i) \leq  \sum_{i=0}^k\mathcal{E}_{\Omega_i}(\Phi_{n})
\leq \mathcal{E}(\Phi_{n}).
\end{equation}
A simple limit then yields \eqref{scattering111-limit-b}.
\end{proof}

Finally, we come to the last proposition of this section dealing with the most general event, namely simultaneous scattering along any number of rods while at the same time some collisions occur in any of the clusters. 
Thus, consider the situation in which the lengths of $k\geq 1$ axis rods, say $\Gamma_{N_1}, \dots, \Gamma_{N_k}$ with $N_1<\cdots<N_k$, tend to infinity simultaneously with any number of collisions occurring in the puncture clusters between these rods, along a sequence of times $t_n\rightarrow T$ associated with the harmonic maps $\Phi_n=\Phi_{t_n}$. 
As in Proposition~\ref{prop:scattering}, there are parameters $\ell_n^i\in\R$, $i=0,\dots, k$ such that the translated punctures with $z$-coordinates $\tilde z_j=z_j+\ell_n^i$ subconverge for $N_i\leq j \leq N_{i+1}-1$; the set of these pretranslated punctures will be called the $i$-th cluster. Let $\Phi_*^{i}$, $i=0,\dots, k$ be the harmonic map having only the punctures from the limit of the translated $i$-th cluster except that those which are colliding from the $i$-th cluster are replaced by the limit of translated collision locations. Moreover, the potential constants for $\Phi_*^i$ are chosen to be the same as $\Phi_n$ for corresponding rods. The $\Phi_*^i$ are referred to as {\it separation/collision configuration maps}.

\begin{prop}\label{prop:scattering1}
Let $\Phi_t$ be the 1-parameter family of harmonic maps with fixed potential constants defined by the flow of punctures \eqref{flowdef}, in which scattering occurs along $k\ge 1$ axis rods at the limiting time $T$. If $\Phi_*^i$, $i=0,\dots, k$ are separation/collision configuration maps described above, then
\begin{equation}\label{scattering111-limit-c}
\sum_{i=0}^{k}\mathcal E(\Phi_*^i)  \le \lim_{t\to T} \mathcal E(\Phi_{t}).
\end{equation}
\end{prop}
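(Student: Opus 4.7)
The plan is to layer the two model-map constructions used in Propositions \ref{prop:collision} and \ref{prop:scattering}. I would first use a global scattering-type model map to decouple the $k+1$ puncture clusters, reducing the problem to $k+1$ independent problems (one per cluster), and then within each cluster invoke the collision argument of Proposition \ref{prop:collision}. Concretely, fix a sequence $t_n \to T$ with $\Phi_n = \Phi_{t_n}$, and for each $i = 0, \ldots, k$ let $\tilde\Psi_n^i$ be the (untranslated) auxiliary map: the harmonic map whose punctures form the $i$-th cluster of $\Phi_n$ and whose potential constants agree with those of $\Phi_n$ on the corresponding rods. Then build the model map $\Xi_n = \sum_{i=0}^k \chi_i \tilde\Psi_n^i$ using the cutoffs introduced in \eqref{koanoignboiqnbh}.

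The tension estimate $|\tau(\Xi_n)| \leq C/(1+r_i^3)$ on each sector $S_i \cup B_i$ follows by the same argument as in Proposition~\ref{prop:scattering}: near the midpoints of the expanding rods only the far-field expansion \eqref{asyminfin} of each $\tilde\Psi_n^i$ enters, and this expansion is insensitive to whether the individual clusters contain internal collisions. The maximum principle applied to $\Lambda(\Phi_n,\Xi_n)-w$ with $w=\lambda\sum_{i=1}^k(1+r_i^2)^{-1/4}$ then yields $d_{\H^2}(\Phi_n,\tilde\Psi_n^i)\to0$ uniformly on every compact subset of $\Omega_i\setminus\Gamma$. Translating by $\ell_n^i$, the translated family $\Psi_n^i$ differs from the translated $\Phi_n$ by a hyperbolic distance tending to zero, on exhaustion sequences $\tilde{\mathcal B}_n^i \nearrow \R^3\setminus\Gamma$.

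Within each cluster I would then apply the proof of Proposition~\ref{prop:collision} to the sequence $\Psi_n^i$. The crucial observation is that the Proposition~\ref{prop:collision} argument uses only the intrinsic harmonic-map structure: the extreme-Kerr gluing model maps, the tension bounds of Lemmas~\ref{lemma:v} and \ref{lemma:v1}, the $C^3$ convergence of the auxiliary collision configuration maps guaranteed by Theorem~\ref{thm:smoothness}, and the weak maximum principle on $\Lambda-w$ all extend without change to any sequence of singular harmonic maps with fixed potential constants whose punctures collide in the limit. This delivers $\mathcal E(\Phi_*^i)\leq\liminf_n \mathcal E(\Psi_n^i)$ for each $i$. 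Combined with the additivity $\sum_i \mathcal E_{\Omega_i}(\Phi_n)\leq \mathcal E(\Phi_n)$ and the smooth convergence from the scattering step (which, together with the decay of renormalized energy densities at infinity, gives $\mathcal E(\Psi_n^i)-\mathcal E_{\mathcal B_n^i}(\Phi_n)\to 0$), passing to the limit produces \eqref{scattering111-limit-c}.

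The main obstacle will be reconciling the two scales in a single coherent argument. The scattering model map is meaningful only on domains that avoid the collision balls inside each cluster (where the extreme-Kerr gluing used in Proposition~\ref{prop:collision} introduces its own $1/r^2$ tension), while the collision model map is meaningful only inside a fixed compact region around each cluster, far from the midpoints of the expanding rods. A careful diagonal choice of parameters --- $l_n\to\infty$ governing the scattering reduction and $\delta_n\to 0$ governing the collision bound, with $\delta_n$ much smaller than the distances from the cluster centers to the expanding-rod midpoints and much smaller than the minimum separation within each colliding sub-cluster --- is required so that the two layers do not interfere. This bookkeeping, together with a verification that the Proposition~\ref{prop:collision} argument genuinely does not use the puncture-flow equation \eqref{flowdef} (only that the sequence is a sequence of singular harmonic maps with the prescribed limiting puncture configuration), constitutes the essential technical content beyond what has been established in Sections~\ref{collision} and \ref{sec5}.
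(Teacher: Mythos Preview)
Your two-layer strategy is genuinely different from the paper's single combined model map, and the scattering layer is correctly set up: taking $\tilde\Psi_n^i$ to be the harmonic map with \emph{all} punctures of the $i$-th cluster (rather than with collision locations substituted, as the paper does), the tension estimate and maximum principle go through since only the far-field expansion \eqref{asyminfin} is used near the expanding-rod midpoints. This yields $d_{\H^2}(\tilde\Phi_n^i,\Psi_n^i)\le \epsilon_n\to 0$ uniformly on the translated balls $\tilde{\mathcal B}_n^i$.

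The gap is in the step $\mathcal E(\Psi_n^i)-\mathcal E_{\mathcal B_n^i}(\Phi_n)\to 0$. ``Smooth convergence from the scattering step'' together with decay at infinity does not give this: smooth convergence holds only on \emph{fixed} compact subsets of $\R^3\setminus\Gamma$, while the renormalized energy integral receives order-one contributions from arbitrarily small neighborhoods of the \emph{moving} colliding punctures. Converting the uniform $d_{\H^2}$-bound into an $L^1$ bound on the difference of energy densities near those punctures requires exactly the kind of scale-by-scale gradient estimate (of the type $|\nabla(U_1-U_2)|\le C\epsilon_n/\sigma$ with $\sigma$ the distance to the nearest puncture) that drives the collision analysis itself; it is not free. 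You would also need to know a priori that $\mathcal E(\Psi_n^i)$ stays bounded, which is not available since $\Psi_n^i$ is not a flow trajectory.

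Both issues disappear if you reroute the argument: from $d_{\H^2}(\tilde\Phi_n^i,\Psi_n^i)\to 0$ and (Proposition~\ref{prop:collision} applied to $\Psi_n^i$) $\Psi_n^i\to\Phi_*^i$ in $C^3$ away from the axis, deduce directly that $\tilde\Phi_n^i\to\Phi_*^i$ on compact subsets of $\R^3\setminus\Gamma$, and then run the excision argument on $\tilde\Phi_n^i$: for small $\delta>0$, $\mathcal E_{\tilde{\mathcal B}_n^i\setminus\tilde{\mathcal B}_\delta}(\tilde\Phi_n^i)\to\mathcal E_{\R^3\setminus\tilde{\mathcal B}_\delta}(\Phi_*^i)$, while $\mathcal E_{\tilde{\mathcal B}_\delta}(\Phi_*^i)<\epsilon$. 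This is exactly what the paper does, except that the paper obtains $\tilde\Phi_n^i\to\Phi_*^i$ in one shot by building a single model map that already incorporates both the scattering cutoffs and the extreme-Kerr collision cutoffs, with auxiliary maps $\tilde\Psi_n^i$ whose colliding punctures are \emph{replaced} by collision locations (so that $\Psi_n^i\to\Phi_*^i$ follows immediately from Theorem~\ref{thm:smoothness} with no further collision analysis). Your modular route is arguably cleaner conceptually, but the paper's combined model map avoids the intermediate energy comparison altogether.
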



\begin{proof}
This result follows from a combination of the techniques used in the previous section as well as those of the current section. For simplicity of presentation, we will expound on the case when $k=1$ and there is one collision location, say in the $i=1$ north cluster. The general case may be obtained in a similar manner, albeit with more complicated notation.
In what follows we will adopt the setup preceding the statement of the Proposition.

Let $\{t_n\}$ be a sequence of times with $t_n\to T$, which leads to separation/collision configuration maps $\Phi_*^i$ for $i=0,1$, and write $\Phi_n =\Phi_{t_n}$. We assume that the length of only one axis rod $\Gamma_{N_1}$ tends to infinity. All punctures are grouped into two clusters, the one below the midpoint of $\Gamma_{N_1}$ corresponding to $i=0$, and another above the midpoint corresponding to $i=1$. We further assume that collision occurs only in the cluster corresponding to $i=1$. 
As in Proposition~\ref{prop:scattering}, there are parameters $\ell_n^i\in\R$, $i=0,1$ such that the translated punctures with $z$-coordinates $\tilde z_j=z_j+\ell_n^i$ subconverge for $z_j$ in the $i$-th cluster. Let $\tilde\Psi_n^{i}$, $i=0,1$ be the harmonic map having only the punctures from the $i$-th cluster except that those which are colliding from the $i$-th cluster are replaced by collision locations. Moreover, the potential constants for $\tilde\Psi_n^i$ are chosen to be the same as $\Phi_n$ for corresponding rods. Then set $\Psi_n^i$ to be the associated harmonic maps translated in the $z$-direction by $\ell_n^i$. 
We call these translated versions the lower and upper 
{\it auxiliary separation/collision configuration maps}. According to Theorem \ref{thm:smoothness} we have that 
\begin{equation}\label{scattering111-limit-c2}
\Psi_n^i\to\Phi_*^i\quad\text{in $C^3$ away from the axis},
\end{equation} 
for $i=0,1$.

Let there be $N_*$ colliding punctures and set $B_j$, $j=1,\dots, N_*$ to be balls of radius $\eta$ centered at these points, where $4\eta$ is the minimum distance between any two of the colliding punctures. Following Case 2 in the proof of Proposition~\ref{prop:collision} let $S_j$, $j=1,\dots,N_* -1$ be sectors centered at the midpoints between any two consecutive colliding punctures with opening angles of size $\tfrac{\pi}{4(N_* -1)}$, ranging from $\pi/4$ to $\pi/2$. Clearly the balls $B_j$ are disjoint, and are also disjoint from all of the sectors $S_j$. Define $B_\delta$ to be a ball centered at the mean $\bar{p} \in\Gamma$ of all the colliding punctures, where $\delta$ is chosen (independent of $n$) sufficiently large so that all of the $B_j$, $j=1,\dots,N_*$ are contained within the radius $\delta/4$, while at the same time $\delta$ is chosen small enough so that $B_{3\delta}$ does not contain any non-colliding punctures. Without loss of generality it may be assumed that the midpoint of the expanding axis rod coincides with the origin. We then set $S$ to be the sector having vertex at the origin and opening angle ranging between $\pi/2$ and $3\pi/4$. Define also the ball $B$ centered at the origin of fixed radius $\varrho$ small enough not to intersect any punctures. Next decompose the complement of these domains by $\Omega_0 \sqcup \Omega_1 =\mathbb{R}^3 \setminus (B_{\delta} \cup S \cup B)$, where $\Omega_0$ lies below $S$ within the range of angles larger than $3\pi/4$ and $\Omega_1$ lies above $S$ within the range of angles less than $\pi/2$. Consider the model map
\begin{equation}
\Xi_n = \chi_0\tilde\Psi_n^0 + \chi_1\tilde\Psi_n^1 + \sum_{j=1}^{N_*} \hat\chi_j \Theta_j,
\end{equation}
where $0\leq\chi_i\leq 1$, $i=0,1$ and $0\leq\hat\chi_j\leq1$, $j=1,\dots,N_*$ are smooth cut-off functions such that $\chi_i=1$ in $\Omega_i$ and $\chi_i =0$ in $\Omega_l$, $l\ne i$, while $\hat\chi_i=1$ in $B_i$ and vanishes outside $B_{\delta}$, with $\chi_0+\chi_1+\hat\chi_1+\dots\hat\chi_{N_*}=1$ globally. These cut-off functions are constructed using the sectors and balls discussed above as transition regions; details concerning the construction may be found in the proofs of Proposition~\ref{prop:collision} (Case 2) and Proposition~\ref{prop:scattering}. Furthermore, $\Theta_j$ represents the extreme Kerr harmonic map with a single puncture at the $j$-th colliding puncture, and having the appropriate potential constants that are compatible with $\Phi_n$.

We may now repeat the arguments in the proof of Proposition~\ref{prop:collision} (Case 2) and Proposition~\ref{prop:scattering} to obtain the tension estimate
\begin{equation}
|\tau(\Xi_n)|\leq C\sum_{j=1}^{N_* -1}\frac{1}{\tilde{r}_j^2} \quad \text{ on }B_{\delta}\setminus\Gamma,\qquad
|\tau(\Xi_n)|\leq \frac{C}{1+r^3} \quad\text{ on } (S\cup B)\setminus \Gamma,
\end{equation}
where $C$ is a constant independent of $n$ and $\delta$, and $\tilde{r}_j$ is the distance to the vertex $\tilde{p}_j \in\Gamma$ of sector $S_j$. Note that the tension vanishes elsewhere on $\mathbb{R}^3 \setminus (B_{\delta} \cup S\cup B)$. Define now comparison functions $w_0(r)=\lambda(1+r^2)^{-1/4}$ for $\lambda>0$ sufficiently large depending on $C$, and  
\begin{equation}
w_j(\tilde{r}_j)=\begin{cases}
C\ln \left(\dfrac{2\delta}{\tilde{r}_j}\right) +C & \text{ on } B_{2\delta}(\tilde{p}_j) \\
\dfrac{2C \delta}{\tilde{r}_j} & \text{ on } \mathbb{R}^3 \setminus B_{2\delta}(\tilde{p}_j) 
\end{cases},
\end{equation}
for $j=1,\ldots N_* -1$. By setting $w=w_0 + \dots +w_{N_* -1}$ we find that
\begin{equation}
\Delta\left(\Lambda(\Phi_n,\Xi_n)-w\right) \geq 0 \quad \text{ on }\mathbb{R}^3 \setminus \left(\Gamma \cup_{j=1}^{N_1 -1}\partial B_{2\delta}(\tilde{p}_j)\right).
\end{equation}
As before a weak version of the maximum principle then produces
\begin{equation}\label{foaoinihj}
d_{\H^2}(\Phi_n,\Xi_n) \leq \sqrt{w(w+2)} \leq C\left(\frac{1}{1+r^{1/4}}+ 
\frac{\sqrt{\delta(\delta +\bar{r})}}{\bar{r}}\right) \quad\text{ on }\R^3\setminus (B_{3\delta}(\bar{p})\cup\Gamma)=:D_{\delta},
\end{equation}
where $\bar{r}$ is the distance to $\bar{p}$.

Let $\mathcal{B}_n^{i}\subset\Omega_i \cup B_{\delta}$, $i=0,1$ denote the ball of radius $l_n$ centered at the midpoint of the $i$-th cluster of punctures. Here the radii are chosen such that $l_n \rightarrow\infty$ slowly enough to ensure that these balls stay within $\Omega_i \cup B_{\delta}$. Note that $B_{\delta}$ is far removed from $\Omega_0$, but borders part of $\Omega_1$. Since $\Xi_n=\tilde{\Psi}_n^i$ on $\Omega_i$, it follows from \eqref{foaoinihj} that $d_{\H^2}(\Phi_n,\tilde{\Psi}_n^i)$ is uniformly bounded on $\Omega_i \cap D_{\delta}$. 
Set $\tilde{\Phi}_n^i$ and $\tilde{\mathcal{B}}_n^i$ for $i=0,1$ to be $z$-translations by $\pm\ell_n /2$ of $\Phi_n$ and $\mathcal{B}_n ^i$ respectively, where $\ell_n$ is the length of the expanding rod.
Since $\Psi_n^i \rightarrow \Phi^i_*$ uniformly on fixed compact subsets of $\tilde{\mathcal{B}}_n^i \setminus\Gamma$ by \eqref{scattering111-limit-c2}, it follows that $\tilde{\Phi}_n^i$ is uniformly bounded on such subsets as long as they do not intersect translated image of the balls $B_{3\delta}$.  Using the fact that $\tilde{\mathcal{B}}_n^i$ exhausts $\mathbb{R}^3$ in the limit, a standard diagonal argument involving both $n\rightarrow\infty$ and $\delta\rightarrow 0$ shows that $\tilde{\Phi}_n^i$ subconverges on compact subsets of $\mathbb{R}^3\setminus\Gamma$ to a harmonic map $\tilde{\Phi}_*^i$; we will henceforth restrict attention to this subsequence but for convenience the notation will remain unchanged. Furthermore, as in the proof of Proposition~\ref{prop:collision} (Case 3) and Proposition~\ref{prop:scattering}, since the hyperbolic distance $d_{\H^2}(\Phi_n,\tilde{\Psi}_n^i)$ tends to zero on these compact sets we must have $\tilde{\Phi}_*^i=\Phi_*^i$ for $i=0,1$.

Let $\mathcal{B}_{\delta}\subset\mathcal{B}_{n}^1$ be a $\delta$-ball centered at the collision location, and denote its $z$-translation in the amount $-\ell_n /2$ by $\tilde{\mathcal{B}}_{\delta}$. Then as discussed in the proofs of the previous propositions we have
\begin{equation}
\mathcal{E}_{\tilde{\mathcal{B}}_n^0}(\tilde\Phi_n^0) \rightarrow \mathcal{E}(\Phi_*^0), \qquad
\mathcal{E}_{\tilde{\mathcal{B}}^1_n \setminus\tilde{\mathcal{B}}_{\delta}}(\tilde{\Phi}_{n}^1)\rightarrow \mathcal{E}_{\mathbb{R}^3 \setminus \tilde{\mathcal{B}}_{\delta}}(\Phi_*^1).
\end{equation}
Note that $\mathcal E_{\tilde{\mathcal{B}}_{\delta}}(\Phi_*^1)\to 0$ as $\delta\to 0$, and hence given $\epsilon>0$ there exists $\delta>0$ sufficiently small and $n_{\epsilon}$ sufficiently large to guarantee that
\begin{equation}
\mathcal E(\Phi_*^1) =
\mathcal E_{\tilde{\mathcal{B}}_{\delta}}(\Phi_*^1) + \mathcal E_{\R^3\setminus \tilde{\mathcal{B}}_{\delta}}(\Phi_*^1)
< \mathcal E_{\tilde{\mathcal{B}}^1_{n_{\epsilon}} \setminus\tilde{\mathcal{B}}_{\delta}}(\tilde\Phi_{n_{\epsilon}}^1) + \frac\epsilon2.
\end{equation}
Thus, 
for potentially larger $n_{\epsilon}$ we obtain 
\begin{equation}
\mathcal{E}(\Phi_{*}^0)+\mathcal{E}(\Phi_{*}^1) <\mathcal{E}_{\tilde{\mathcal{B}}_{n_\epsilon}^0}(\tilde{\Phi}_{n_\epsilon}^0) + \mathcal{E}_{\tilde{\mathcal{B}}_{n_\epsilon}^1 \setminus\tilde{\mathcal{B}}_{\delta}}(\tilde{\Phi}_{n_\epsilon}^1) + \epsilon \leq  \mathcal{E}_{\mathcal{B}_{n_\epsilon}^0}(\Phi_{n_\epsilon}) + \mathcal{E}_{\mathcal{B}_{n_\epsilon}^1}(\Phi_{n_\epsilon}) + \epsilon
\leq \mathcal{E}(\Phi_{n_\epsilon}) + \epsilon.
\end{equation}
A simple limit then yields \eqref{scattering111-limit-c} for $k=1$.
\end{proof}

\section{Proof of the Main Theorem}
\label{sec6} \setcounter{equation}{0}
\setcounter{section}{7}

In this section we will complete the proof of Theorem~\ref{maintheorem}. Let $(M,g,k)$ be an initial data set satisfying the requisite hypotheses of the theorem. According to \eqref{masslowerb2} we have the following relation between the ADM mass $m$ and the renormalized energy of a harmonic map $\Phi=(u,v):\mathbb{R}^3 \setminus \Gamma\rightarrow\mathbb{H}^2$ which encodes the angular momenta of the data, namely
\begin{equation}
m\geq \frac{1}{8\pi}\mathcal{E}(\Phi).
\end{equation}
For convenience in what follows, we will write $\bar{\mathcal{E}}(\Phi)=\tfrac{1}{8\pi}\mathcal{E}(\Phi)$.
Suppose there are $N$ punctures associated with the map $\Phi$, and that it has total angular momentum $\mathcal{J}$. Assuming that there does not exist an ADM minimizing counterexample to the extreme black hole uniqueness conjecture, it will proven by induction on $N$ that 
\begin{equation}\label{epsiloninequality}
\bar{\mathcal{E}}(\Phi)\geq \sqrt{|\mathcal{J}|}.
\end{equation}
This establishes the inequality portion of the theorem. The rigidity statement will be treated after the induction argument. 

First note that the case $N=1$ is true, since $\Phi$ must correspond to an extreme Kerr harmonic map with angular momentum $\mathcal{J}$, and therefore $1/8\pi$ times the renormalized energy of the map agrees with $\sqrt{|\mathcal{J}|}$. This latter statement follows from the arguments leading to \eqref{masslowerb2} when applied to extreme Kerr initial data. More precisely, in this situation the ADM mass agrees with the appropriate multiple of renormalized energy, and it is known by direct calculation that the mass is given by $\sqrt{|\mathcal{J}|}$ for extreme Kerr.

Suppose now that \eqref{epsiloninequality} holds for any multi-extreme Kerr harmonic map with $n$ punctures and total angular momentum $\mathcal{J}$, where $1\leq n\leq N-1$, and let $\Phi_0$ be such a harmonic map with $N$ punctures. We start the flow defined in Section~\ref{secflow} with initial condition $\Phi_0$. By Proposition \ref{blipschitz} the flow $\Phi_t$ exists and is regular up to a maximal time $T$ (possibly infinite), and according to Theorem~\ref{decreaselaw} the renormalized energy $\mathcal{E}(\Phi_t)$ is nonincreasing along this flow. There are three possibilities to contend with at time $T$: either a collision occurs without scattering, a scattering occurs possibly with simultaneous collisions, or no collisions and no scattering occur. Note that in the first two cases $T$ can be either finite or infinite, while in the third case $T$ will be infinite. Consider the first of these cases in which a collision occurs at time $T$ without any scattering.
By Proposition~\ref{prop:collision} we have that 
\begin{equation}
\lim_{t\rightarrow T}\bar{\mathcal{E}}(\Phi_{t}) \geq \bar{\mathcal{E}}(\Phi_{*}),
\end{equation}
for some collision configuration map $\Phi_{*}$ which has at most $N-1$ punctures and total angular momentum $\mathcal{J}$. By the induction hypothesis we then obtain the desired inequality
\begin{equation}
\bar{\mathcal{E}}(\Phi_0) \geq \lim_{t\rightarrow T}\bar{\mathcal{E}}(\Phi_{t}) \geq \bar{\mathcal{E}}(\Phi_{*})\geq \sqrt{|\mathcal J|}.
\end{equation}

Now assume that scattering occurs along $k\geq 1$ axis rods, possibly with collisions as well, at time $T$. By Proposition~\ref{prop:scattering1} we have that 
\begin{equation}
\lim_{t\rightarrow T}\bar{\mathcal{E}}(\Phi_{t}) \geq \sum_{i=0}^{k}\bar{\mathcal{E}}(\Phi_{*}^i),
\end{equation}
for some separation/collision configuration maps $\Phi_*^i$. Let $N_i$ denote the number of punctures associated with $\Phi_{*}^i$, then $N_i \leq N-1$ and $\sum_{i=0}^k N_i\leq N$. By the induction hypothesis we then obtain
\begin{equation}
\bar{\mathcal E}(\Phi_0) \geq \lim_{t\rightarrow T}\bar{\mathcal E}(\Phi_{t}) \geq \sum_{i=0}^{k}\bar{\mathcal{E}}(\Phi_{*}^i) \geq \sum_{i=0}^k \sqrt{|\mathcal{J}^*_{i}|} \geq \sqrt{|\mathcal J|},
\end{equation}
where $\mathcal{J}^*_{i}$ is the total angular momentum associated with $\Phi^{i}_{*}$. Since $\sum_{i=0}^{k}\mathcal{J}^*_i =\mathcal{J}$, the last inequality concerning the angular momentum follows from the elementary inequality $\sqrt{a}+\sqrt{b}\geq\sqrt{a+b}$ for two nonnegative numbers $a$, $b$.

Now consider the last of the three cases, in which no collisions and no scattering occur at time $T=\infty$. Since the function $t\mapsto \mathcal{E}(\Phi_t)$ is continuously differentiable and nonincreasing on $[0,\infty)$ we find that
\begin{equation}
\int_{0}^{\infty}\Big|\frac{d}{dt}\mathcal{E}(\Phi_t)\Big| dt = \mathcal{E}(\Phi_0)-\lim_{t\rightarrow \infty}\mathcal{E}(\Phi_t)\leq \mathcal{E}(\Phi_0).
\end{equation}
It follows that there exists a sequence of times $t_j \rightarrow\infty$ such that $\tfrac{d}{dt}\mathcal{E}(\Phi_{t_j})\rightarrow 0$. Next observe that the $N$ punctures along these times, possibly after recentering by a $z$-translation and passing to a subsequence, converge to some $N$ distinct fixed limits $p^*_i$, $i=1,\dots, N$. 
The limiting harmonic map $\Phi^*$ must be stationary with respect to the flow, since the associated tangent map parameters satisfy $b_{i}^*=0$ for $i=1,\dots,N$ by virtue of their continuity (Proposition \ref{blipschitz}) and their relation to the first variation of the renormalized energy \eqref{gonaoinoinihjjj}. The axisymmetric stationary vacuum spacetime constructed from $\Phi^*$ must be regular, due to the relation \eqref{fffoanoinilnhj} between $b_i^*$ and logarithmic angle defects of the spacetime. Moreover, the calculation that produced \eqref{masslowerb2} shows that the mass $m^*$ of the initial data associated with the constant time slice of this spacetime is given by $\tfrac{1}{8\pi}\mathcal{E}(\Phi^*)$. Let $\underline{m}^*$ denote the infimum of the ADM mass over all regular axisymmetric stationary vacuum spacetimes with an asymptotically flat end and $N$ degenerate horizons have the same angular momenta as those associated with $\Phi^*$. There is then a nonincreasing sequence of masses, renormalized energies, and harmonic maps $m^*_j =\tfrac{1}{8\pi}\mathcal{E}(\Phi^*_j)\rightarrow \underline{m}^*$. If no collisions and no scattering occur as $j\rightarrow\infty$, then as above $\Phi^*_j$ subconverges after possible translations to a limit $\underline{\Phi}^*$ with $N$ punctures which achieves the infimum $\underline{m}^*=\tfrac{1}{8\pi}\mathcal{E}(\underline{\Phi}^*)$. This however, contradicts the assumption that there does not exist an ADM minimizing counterexample to the extreme black hole uniqueness conjecture. Hence, some combination of collisions and scattering must occur in the limit, and the same arguments used above with the induction hypothesis yield
\begin{equation}
\underline{m}^* =\lim_{j\rightarrow\infty}\bar{\mathcal{E}}(\Phi^*_{j})\geq\sqrt{|\mathcal{J}|}.
\end{equation}
The nonexistence of an ADM minimizing counterexample also implies $m^* > \underline{m}^*$, and thus we obtain
\begin{equation}
\bar{\mathcal{E}}(\Phi_0)\geq \lim_{t\rightarrow\infty}\bar{\mathcal{E}}(\Phi_t)=\bar{\mathcal{E}}(\Phi^*)=m^* > \underline{m}^*\geq \sqrt{|\mathcal{J}|}.
\end{equation}
This completes the induction argument, and therefore also the inequality portion of Theorem~\ref{maintheorem}.

It remains to establish the rigidity statement for the mass-angular momentum inequality.
Consider the case of equality when $m=\sqrt{\mathcal{|J|}}$. This implies equality of renormalized energies $\mathcal{E}(\bar{\Phi})=\mathcal{E}(\Phi)$, for the map $\bar{\Phi}$ obtained from the initial data $(M,g,k)$ and the associated multi-extreme Kerr harmonic map $\Phi$. Due to the gap bound \cite[Theorem 4.1]{KhuriWeinstein}, it follows that $\bar{\Phi}=\Phi$. Moreover, several other restrictions on $g$ and $k$ are obtained from equality in \eqref{masslowerbound1}, as detailed at the end of Section 2 in \cite{KhuriWeinstein}. We may then construct from $\Phi$ an axisymmetric stationary vacuum spacetimes with an asymptotically flat end and $N$ degenerate horizons, from which $(M,g,k)$ arises as a constant time slice. Since the given initial data set is void of conical singularities, this spacetime is regular. Furthermore, in light of the mass-angular momentum inequality, the spacetime minimizes the ADM mass among all such solutions with the same angular momentum configuration. If $N>1$, then a contradiction is obtained to the assumption concerning the nonexistence of a minimizing counterexample to extreme black hole uniqueness. We conclude that $N=1$, and \cite[Theorem 1.1]{KhuriWeinstein} yields the desired result that $(M,g,k)$ must arise from a constant time slice of an extreme Kerr black hole.

\section{Linearized Harmonic Maps Near Punctures}
\label{sec:linear-hm}
\setcounter{equation}{0}
\setcounter{section}{8}

This section represents an extension of the study of isolated singularities that was presented in \cite[Theorem 2.1]{HKWX}, to higher order expansions.  Let $\Phi=(u, v) $ be an $H_{\mathrm{loc}}^{1}$-harmonic map from $B_1\setminus \Gamma$, the Euclidean unit ball minus the $z$-axis, to $\mathbb{H}^2$. It will not be assumed here that this map is axisymmetric.\footnote{Note that $\Phi$ in the current setting has a different meaning from that in \cite{HKWX}.} 
Suppose that
\begin{equation}
\sup_{B_1\setminus \Gamma }|u+\ln \sin\theta |<\infty ,\quad\quad
|\nabla (u+\ln \rho)|^2+e^{4u} |\nabla v|^2 \in L^1_{\mathrm{loc}} (B_1\setminus \{0\}),
\end{equation}
with
\begin{equation}\label{oinhiqoinh}
v=a\text{ on }B_1\cap \Gamma_+ \quad\text{ and }\quad v =-a\text{ on }B_1\cap\Gamma_- \quad\text{ in the trace sense},
\end{equation}
for some positive constant $a$. We will use the notation $x=(x_1,x_2,x_3)$, $r=|x|$ and $\sin \theta= \frac{\rho}{r}$.  By Theorem 2.1 of \cite{HKWX},
$( u+\ln \sin\theta, v ) \in C^{3,\alpha}(B_1\setminus \{0\})$ for any $\alpha\in(0,1)$,
and there exists a harmonic map $\bar{\Phi}=(\bar{u}, \bar{v})$: $\mathbb{S}^2 \setminus \{N,S\} \to \mathbb{H}^2 $ satisfying
\begin{equation}
(\bar{u}+\ln \sin\theta, \bar{v}) \in C^{3,\alpha}(\mathbb{S}^2) , \quad \bar{v}(N)=a, \quad \bar{v}(S)= -a,
\end{equation}
such that
\begin{equation}\label{qaohoihqhq}
\sup_{\mathbb{S}^2 \setminus\{N,S\}}\left(|(r\partial_r)^l \nabla_{\mathbb{S}^2}^k \left(u(r)-\bar{u}\right)|
+e^{(3+\alpha-k)\bar{u}}|(r\partial_r)^l \nabla_{\mathbb{S}^2}^k \left(v(r)-\bar{v}\right)|\right)
\le \bar{C} r^{\beta}
\end{equation}
 for $l+k\leq 3$, where $\bar C$ and $\beta$ are positive constants and $\Phi(r)=\Phi(r,\cdot)$.
The map $\bar{\Phi}$ is the tangent map of $\Phi$ at the origin, and
the constant $\beta$ depends only on $\bar{\Phi}$.

In the following, we fix a harmonic map $\Phi=(u,v)$ and its tangent map $\bar\Phi=(\bar u,\bar v)$ as above. Let $L=(L_1, L_2)$ be the linearized harmonic map operator at $\Phi$, which is given by
\begin{align}
\begin{split}
L_{1}\varphi &= \Delta \varphi_1 - 8 e^{4u} |\nabla  v|^2 \varphi_1 -4 e^{4u} \nabla v \cdot \nabla \varphi_2,\\
L_{2}\varphi&= \Delta \varphi_2+4    \nabla u \cdot \nabla \varphi_2+4    \nabla v \cdot \nabla \varphi_1,
\end{split}
\end{align}
for $\varphi=(\varphi_1,\varphi_2)$ with $\varphi_1\in C^2(B_1)$ and $\varphi_2\in  C^2_c(B_1\setminus \Gamma)$.
Here, $\Delta$ and $\nabla$ are with respect to the flat metric in $\mathbb R^3$.
The spherical part $\mathcal{T}=(\mathcal{T}_{1},\mathcal{T}_{2})$ of $L$ at $\overline{\Phi}$ is given by
\begin{align}
\begin{split}
 \mathcal{ T}_{1}  \varphi&= \Delta_{g_0} \varphi_1
 - 8 e^{4\overline{u}} |\nabla_{g_0}  \overline{v}|^2 \varphi_1
 -4 e^{4\overline{u}} \nabla_{g_0} \overline{v} \cdot \nabla_{g_0} \varphi_2,\\
 \mathcal{T}_{2}  \varphi&=  \Delta_{g_0} \varphi_2
 +4 \nabla_{g_0} \overline{u}\cdot \nabla_{g_0} \varphi_2 + 4    \nabla_{g_0} \overline{v} \cdot \nabla_{g_0} \varphi_1,
\end{split}
\end{align}
where $g_0 =d\theta^2 +\sin^2 \theta d\phi^2$ is the round metric on $\mathbb{S}^2$.

Denote by $L^2(\mathbb{S}^2, e^{4\bar{u}})$ the subspace of $L^2(\mathbb{S}^2)$
consisting of all functions $f$ with the bounded norm
\begin{equation}
\|f\|_{L^{2}(\mathbb{S}^2, e^{4\bar{u}} )}=\Big(\int_{\mathbb{S}^2} e^{4\bar{u}}  f^2\,  d  vol_{g_0} \Big)^{1/2},
\end{equation}
and by $ H_0^{1}(\mathbb{S}^2, e^{4\bar{u}} )$ the closure of
$C_c^\infty(\mathbb{S}^2\setminus \{N,S\})$ under the norm
\begin{equation}
\|f\|_{H^{1}(\mathbb{S}^2, e^{4\bar{u}} )}
=\Big(\int_{\mathbb{S}^2}e^{4\bar{u}}  (|\nabla_{g_0} f|^2 +f^2)\,  d  vol_{g_0} \Big)^{1/2}.
\end{equation}
The inner products associated with these norms will be denoted by the braces $\langle\cdot, \cdot\rangle$. Introduce the bilinear form
\begin{align}\label{eq:bilinear-form}\begin{split}
\overline{\mathcal{B}}[\varphi, \psi]&= \int_{\mathbb{S}^2} \big(\nabla_{g_0} \varphi_1\cdot   \nabla_{g_0} \psi_1
+e^{4\bar{u}} \nabla_{g_0} \varphi_2 \cdot  \nabla_{g_0} \psi_2
+8 e^{4 \bar{u}} |\nabla_{g_0}    \bar{v}|^2 \varphi_1 \psi_1\\
&\qquad\qquad  +4 e^{4\bar{u}}\psi_1 \nabla_{g_0}  \bar{v}\cdot \nabla_{g_0} \varphi_2
-4 e^{4\bar{u}} \psi_2\nabla_{g_0}  \bar{v}\cdot\nabla_{g_0}  \varphi_1 \big)\, d vol_{g_0},
\end{split}\end{align}
for any $\varphi=(\varphi_1, \varphi_2), \psi=(\psi_1, \psi_2)\in H^1(\mathbb{S}^2)\times H_0^{1}(\mathbb{S}^2, e^{4\bar{u}})$.
If $\varphi_1\in C^2(\mathbb{S}^2)$ and $\varphi_2\in C^2_c(\mathbb{S}^2\setminus\{N,S\})$, then
\begin{equation}
\overline{\mathcal{B}}[\varphi, \psi]= -\langle\mathcal{T} \varphi, \psi\rangle_{L^2(\mathbb{S}^2, 1\times e^{4\bar{u}})}
=-\langle\mathcal{T} _1 \varphi, \psi_1\rangle_{L^2(\mathbb{S}^2)}
- \langle \mathcal{T}_2 \varphi, \psi_2\rangle_{L^2(\mathbb{S}^2, e^{4\bar{u}} )}.
\end{equation}
It was proved in \cite{HKWX} that $\overline{\mathcal{B}}$  is symmetric and nonnegative
on $H^1(\mathbb{S}^2)\times H_0^{1}(\mathbb{S}^2, e^{4 \bar{u}})$, and zero is the least and simple eigenvalue of the problem
\begin{equation}
-\mathcal{T} \varphi= \mu \varphi  \quad\text{in }\mathbb S^2\setminus\{N,S\},\quad\quad
\varphi_2(N)=\varphi_2(S)=0.
\end{equation}
Let
\begin{equation}
0=\mu_1<\mu_2\le \dots\le \mu_i\le \dots \to \infty
\end{equation}
be the collection of eigenvalues, and $\varphi^{(i)}$ be the normalized eigenfunction corresponding to $\mu_i$.

We present a result on the asymptotic behavior of solutions to the linearized equation
\begin{equation} \label{eq:LL-0}
Lw =f \quad \mbox{in }B_1 \setminus\Gamma, \quad w_2=0 \quad \mbox{on }\Gamma \cap B_1,
\end{equation}
as $x\to 0$. The weak solutions will be understood in the integration by parts sense.
Their regularity and local derivative estimates can be deduced from \cite[Proposition 3.9]{HKWX}.

\begin{prop} \label{prop:linear-asymptotic}
Assume that $f=(f_1, f_2)\in C^{1}(B_1 \setminus\Gamma)$ satisfies 
\begin{equation}
|\nabla^k( r^2f_1- \bar{f}_1) |+e^{2u} |\nabla^k ( r^2 f_2- \bar{f}_2)|\le K r^{\gamma-k}\quad\text{in }B_1 \setminus\Gamma,\quad\quad k=0,1,
\end{equation}
for some positive constants $K$, $\gamma$, and a $C^1$-function $\bar{f}=(\bar{f}_1,\bar{f}_2)$ on $\mathbb{S}^2$. Let $w=(w_1,w_2)$ be a weak solution of \eqref{eq:LL-0} satisfying $(w_1,e^{2u}w_2)\in L^\infty (B_1)$.
Then for any $\varphi\in H^1(\mathbb{S}^2)\times H_0^{1}(\mathbb{S}^2, e^{4 \bar{u}})$ with $\mathcal T\varphi=0$ we have
\begin{equation}
\langle\bar{f}, \varphi\rangle_{L^2(\mathbb{S}^2, 1\times e^{4\bar{u}})}=0,
\end{equation}
and there exists a function $\bar{w}=(\bar{w}_1, \bar{w}_2)\in H^1(\mathbb{S}^2)\times H_0^{1}(\mathbb{S}^2, e^{4 \bar{u}})$ with
\begin{equation}
\mathcal{T}\bar{w}=\bar{f}\quad\text{in }\mathbb S^2\setminus\{N, S\}.
\end{equation}
Moreover, there is a universal constant
$0<\bar \beta\le \min\{\beta, \gamma\}$ such that for $k=0,1$ and any $\alpha\in (0,1)$ it holds that
\begin{align}\label{eq-linear-limit-decay}
|\nabla ^k(  w_1-\bar{w}_1)|(x) + (\sin \theta)^{k-3-\alpha } |\nabla ^k (  w_2-\bar{w}_2) |(x) \le C |x| ^{\bar \beta-k}\quad\text{in } B_1 \setminus \Gamma,
\end{align}
where $C$ is a constant depending only on $K$, $\alpha$, $\gamma$, and the $L^\infty$-norms of $w_1$, $e^{2u}w_2$, $r^2 f_1$, and $r^2 e^{2u}f_2$ in $B_{1}$.
\end{prop}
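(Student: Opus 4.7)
The plan is to view $r^2 L$ as a perturbation of the spherical operator $\mathcal{T}$ near the puncture: by the tangent-map estimate \eqref{qaohoihqhq}, the difference $r^2 L - \mathcal{T}$ acts on spherical data with coefficients $O(r^\beta)$. This places the problem in the setting of a Fredholm/spectral analysis on $\mathbb{S}^2$ coupled to a radial ODE system, in the spirit of \cite[Theorem 2.1]{HKWX} but adapted to the inhomogeneous linear setting.

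First I would decompose $w(r,\cdot) = \sum_i c_i(r)\varphi^{(i)}(\cdot)$ in the orthonormal eigenbasis of $\mathcal{T}$, noting that the hypotheses $w_1, e^{2u}w_2\in L^\infty$ give $L^\infty$-bounds on each $c_i(r)$. Projecting $r^2 L w = r^2 f$ onto the one-dimensional kernel spanned by $\varphi^{(1)}$, the $\mathcal{T}w$-term vanishes, the radial Laplacian is preserved, and the perturbation $r^2 L - \mathcal{T}$ contributes $O(r^\beta)$, yielding
\[
c_1'' + \frac{2}{r}c_1' = \frac{1}{r^2}\bigl[\langle\bar f,\varphi^{(1)}\rangle_{L^2(\mathbb{S}^2,\, 1\times e^{4\bar u})} + O(r^{\min\{\beta,\gamma\}})\bigr].
\]
Direct integration shows that a bounded $c_1$ forces the constant source to vanish; the same argument with any $\varphi \in \ker\mathcal{T}$ gives $\langle\bar f,\varphi\rangle_{L^2(\mathbb{S}^2,\, 1\times e^{4\bar u})}=0$. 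The Fredholm alternative for the symmetric nonnegative form $\overline{\mathcal{B}}$ from \eqref{eq:bilinear-form}, together with the spectral gap $\mu_2>0$ above the simple kernel, then supplies a weak solution $\bar w \in H^1(\mathbb{S}^2)\times H^1_0(\mathbb{S}^2,e^{4\bar u})$ of $\mathcal{T}\bar w = \bar f$, unique modulo $\ker\mathcal{T}$, and weighted elliptic regularity on $\mathbb{S}^2\setminus\{N,S\}$ gives the smoothness needed later, as for $(\bar u,\bar v)$ in \cite{HKWX}.

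Set $\tilde w = w - \bar w$ with $\bar w$ extended $r$-independently. Using $\mathcal{T}\bar w = \bar f$ and \eqref{qaohoihqhq},
\[
r^2 L\tilde w = (r^2 f - \bar f) + (\mathcal{T}\bar w - r^2 L\bar w) = O(r^{\min\{\beta,\gamma\}})
\]
in $L^\infty$, with a matching bound on the first derivative. Expanding $\tilde w = \sum_i d_i(r)\varphi^{(i)}$, each $d_i$ satisfies an Euler-type ODE $d_i'' + (2/r)d_i' - (\mu_i/r^2)d_i = F_i(r)$, where $F_i$ combines the source projection and cross-mode couplings of size $O(r^{\beta-2})$. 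For $i\geq 2$ the indicial roots $\tfrac12(-1\pm\sqrt{1+4\mu_i})$ satisfy $\lambda_+(\mu_i)\geq\lambda_+(\mu_2)>0$, so $L^\infty$-boundedness selects the decaying branch and the particular integral decays at rate $\min\{\beta,\gamma\}$. For $i=1$ the orthogonality leaves $F_1 = O(r^{-2+\bar\beta_0})$ with no $r^{-2}$-singularity, and integration yields $d_1 = O(r^{\bar\beta_0}) + \mathrm{const}$, with the residual freedom in $\bar w$ absorbing the constant. Taking $\bar\beta = \min\{\beta,\gamma,\lambda_+(\mu_2)\}$ gives $|\tilde w| = O(r^{\bar\beta})$, and the gradient estimate together with the weighted $(\sin\theta)^{k-3-\alpha}$-bound for $\tilde w_2$ follow from scaled interior and weighted Schauder estimates on dyadic annuli $B_{2r}\setminus B_{r/2}$, using \cite[Proposition 3.9]{HKWX} for the refined axis asymptotics.

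The main obstacle is the mode coupling: the perturbation $r^2 L - \mathcal{T}$ is not diagonal in the $\mathcal{T}$-eigenbasis and mixes modes with a coefficient decaying only like $r^\beta$, so the $d_i$-ODEs cannot be decoupled and treated in isolation. Handling this requires a Gronwall/bootstrap iteration that upgrades the crude $L^\infty$-bound to a uniform $r^{\bar\beta}$-decay across all modes, producing an exponent $\bar\beta$ that depends only on $\beta$, $\gamma$ and the spectral gap $\mu_2$ in agreement with the universality claimed in the statement, while the multiplicative constant in \eqref{eq-linear-limit-decay} inherits dependence on the $L^\infty$-norms through the selection of homogeneous branches at the first iteration.
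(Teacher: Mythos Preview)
Your framework is essentially the paper's: pass to cylindrical variables (you stay in $r$, the paper sets $t=-\ln r$, which is cosmetic), split $r^2L=\mathcal{T}+\text{perturbation}$ with the perturbation $O(r^\beta)$ by \eqref{qaohoihqhq}, project onto the kernel $\varphi^{(1)}$ to force $\langle\bar f,\varphi^{(1)}\rangle=0$ via boundedness of the resulting second-order ODE, invoke Fredholm for $\mathcal{T}\bar w=\bar f$, and then study $\tilde w=w-\bar w$. Your identification $\bar\beta\le\min\{\beta,\gamma,\lambda_+(\mu_2)\}$ is exactly what the paper obtains (their condition $\bar\beta^2+\bar\beta<\mu_2$ is $\bar\beta<\lambda_+(\mu_2)$).

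The divergence is precisely at the obstacle you name. The paper does \emph{not} attempt a mode-by-mode Gronwall/bootstrap across the coupled $d_i$-ODEs. Instead, after peeling off the $\varphi^{(1)}$-component it treats all higher modes \emph{collectively}: set $\widehat w(t)=\tilde w(t)-\langle\tilde w(t),\varphi^{(1)}\rangle\varphi^{(1)}$ and $y(t)=\|\widehat w(t)\|_{L^2(\mathbb{S}^2,\,1\times e^{4\bar u})}$. The Cauchy--Schwarz identity $|y'|\le\|\widehat w'\|$ gives $yy''\ge\langle\widehat w,\widehat w''\rangle$, and pairing the equation with $-\widehat w$ together with the spectral gap $\overline{\mathcal B}[\widehat w,\widehat w]\ge\mu_2\|\widehat w\|^2$ yields the scalar differential inequality
\[
-y''+y'+\mu_2\,y\le C(A+K)e^{-\bar\gamma t}\qquad\text{wherever }y>0.
\]
A direct comparison with the supersolution $z(t)=Be^{-\bar\beta(t-1)}$ (choosing $B$ large and using that $-z''+z'+\mu_2 z=B(\mu_2-\bar\beta^2-\bar\beta)e^{-\bar\beta(t-1)}$), plus a short maximum-principle argument ruling out interior nonnegative maxima of $y-z$ and using boundedness of $y$ at infinity, gives $y\le z$. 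The pointwise and weighted gradient bounds then follow from the local estimate of \cite[Proposition 3.9]{HKWX} on $t$-slabs, exactly as you outline.

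So your proposal is not wrong, but the ``Gronwall/bootstrap across modes'' you defer is the hard part, and the paper's device---a single ODE inequality for the full orthogonal $L^2$-norm---sidesteps the coupling entirely and is what you are missing.
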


Although $w$ in Proposition \ref{prop:linear-asymptotic} is assumed to be bounded in $B_1$,
in some applications only its bound in $B_1\setminus B_\delta$ is known, for some $\delta>0$. In order to apply
Proposition \ref{prop:linear-asymptotic}, we need to estimate the bound of $w$ in $B_1$ in terms of that in $B_1\setminus B_\delta$.

\begin{prop} \label{prop:linear-boundness}
Let $f$ and $w$ be as in Proposition \ref{prop:linear-asymptotic}.
Then there exists $0<\delta<1/2$
depending only on $\gamma$, $a$, as well as $\bar{C}$ and $\beta$ in \eqref{qaohoihqhq}, such that 
\begin{align}
\begin{split}
&\||w_1|+e^{2u}|w_2|\|_{L^\infty(B_{1/2})} \\
&\qquad\le C\big(\||w_1|+e^{2u}|w_2|\|_{L^\infty(B_{1/2}\setminus B_{\delta})} +\| |r^2f_1|+e^{2u} |r^2 f_2| \|_{L^\infty(B_1)} +K\big).
\end{split}
\end{align}
Moreover, \eqref{eq-linear-limit-decay} holds for
$C$ depending only on $K$, $\alpha$, $\gamma$ and the $L^\infty$-norms of $w_1$ and $e^{2u}w_2$ in $B_{1}\setminus B_\delta$.
\end{prop}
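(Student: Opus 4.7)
The plan is to reduce the statement to a quantitative $L^\infty$-bound of the form
\begin{equation*}
\||w_1|+e^{2u}|w_2|\|_{L^\infty(B_{1/2})} \le C\bigl(\||w_1|+e^{2u}|w_2|\|_{L^\infty(B_{1/2}\setminus B_\delta)} + \||r^2 f_1|+e^{2u}|r^2 f_2|\|_{L^\infty(B_1)} + K\bigr),
\end{equation*}
after which the decay estimate \eqref{eq-linear-limit-decay} follows by direct reapplication of Proposition \ref{prop:linear-asymptotic}, since the only way the $L^\infty$-norms on $B_1$ enter the conclusion of that proposition is through a constant that can now be expressed purely in terms of the annular norms and $K$. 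The core difficulty lies in propagating an $L^\infty$-bound from the annulus across the puncture at the origin, and my approach is a barrier argument exploiting the spectral gap of $\mathcal T$.

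First I would introduce the rescaled unknowns $(W_1,W_2):=(w_1,e^{2u}w_2)$, which are bounded on the annulus by hypothesis. Using the expansion \eqref{qaohoihqhq}, the operator $L$ in the rescaled variables and polar coordinates $(r,\omega)\in (0,1)\times(\mathbb S^2\setminus\{N,S\})$ takes the form
\begin{equation*}
L = (r\partial_r)^2 + r\partial_r + \mathcal T + r^{\beta}\,\mathcal R,
\end{equation*}
where $\mathcal R$ is a bounded perturbation acting on the $\omega$-variables. I would then choose $\delta\in(0,1/2)$ small enough so that $r^\beta\mathcal R$ is dominated by the spectral gap $\mu_2>0$ of $\mathcal T$. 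A supersolution on $B_\delta$ is constructed in the form
\begin{equation*}
\Psi(r,\omega) = A\,\psi_0(\omega) + B\, r^{\epsilon_0}\psi_1(\omega),
\end{equation*}
where $\psi_0$ is strictly positive and coincides (up to a constant shift) with $|\varphi^{(1)}|$, $\psi_1$ is a strictly positive combination of higher eigenfunctions, and $0<\epsilon_0<\lambda_2:=\tfrac{-1+\sqrt{1+4\mu_2}}{2}$. The first term is neutral under $\mathcal T$ and absorbs the zero-mode component of $W$ on $\partial B_\delta$; the second satisfies $L(r^{\epsilon_0}\psi_1)\le -c r^{\epsilon_0-2}\psi_1$ on $B_\delta$ by the spectral gap, providing the margin to absorb both the orthogonal modes of the boundary data and the forcing $r^{-2}\bar f + r^{\gamma-2}K$. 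Choosing $A,B$ proportional to the annulus $L^\infty$-norm of $W$ plus $K$, a weak maximum principle comparison of $\pm W$ against $\Psi$ on $B_\delta\setminus\Gamma$ yields the desired pointwise bound.

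The main obstacle will be the rigorous implementation of the comparison principle for the coupled linear system. The off-diagonal coupling between $W_1$ and $W_2$ through the $\bar v$-gradient terms of $\mathcal T$ is not cooperative in the standard sense, so either a diagonalization at leading order (via the explicit formulas for $\bar\Phi$) is needed, or the supersolution $\Psi$ must be chosen with vector components aligned to the coupling direction so that the effective scalar barrier works. A related delicate point is matching the anisotropic vanishing rate $(\sin\theta)^{3+\alpha}$ of $w_2$ on the axis, which must be respected by $\Psi_2$ for the barrier to be sharp near the poles and compatible with the trace condition on $\Gamma$. Finally, the perturbation $r^\beta\mathcal R$ can be large precisely near $\Gamma$ where $e^{4\bar u}$ is degenerate; one must verify, using the bilinear form \eqref{eq:bilinear-form}, that it remains subordinate to the spectral gap on the orthogonal complement of $\varphi^{(1)}$. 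Once these technicalities are handled, the $L^\infty$-bound follows on $B_\delta$ and hence on $B_{1/2}$, and Proposition \ref{prop:linear-asymptotic} with the refined constant completes the proof.
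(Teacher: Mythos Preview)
Your barrier/maximum-principle strategy is genuinely different from what the paper does, and the obstacle you flag is not a technicality but the actual gap. The linearized operator $L_s$ couples $w_1$ and $w_2$ through the first-order terms $\mp 4e^{4\bar u}\nabla_{g_0}\bar v\cdot\nabla_{g_0}$ with \emph{opposite} signs (this is exactly the antisymmetric part visible in the bilinear form \eqref{eq:bilinear-form}), so the system is not cooperative and no componentwise comparison principle is available. Your suggested fixes---diagonalizing at leading order, or aligning a vector barrier with the coupling---are not fleshed out, and in fact the explicit tangent-map formulas do not yield a sign-definite diagonalization that survives the $r^\beta\mathcal R$ perturbation uniformly in $\theta$ near the poles. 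Without a working comparison principle, the construction of $\Psi$ does not yield a pointwise bound on $W$.

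The paper avoids this entirely by working in $L^2(\mathbb S^2,1\times e^{4\bar u})$, where the coupling is harmless because $\overline{\mathcal B}$ is symmetric with spectral gap $\mu_2>0$. After the change of variable $t=-\ln r$ (so $\delta$ becomes a large $t_*$), the equation is written as $w''-w'+\mathcal T w=\bar f+(f-\bar f)-\mathcal Qw$ and expanded in the eigenbasis $\{\varphi^{(i)}\}$. Each Fourier coefficient satisfies a scalar second-order ODE with characteristic roots $\lambda_i^\pm=\tfrac12(1\pm\sqrt{1+4\mu_i})$; since $\lambda_i^-\le 0$, the homogeneous part $\sum c_i^- e^{\lambda_i^- t}\varphi^{(i)}$ has $L^2$-norm nonincreasing in $t$, hence bounded by its value at a fixed finite time. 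The particular solution is controlled by explicit integral formulas, and the $\mathcal Qw$ contribution (Lemma \ref{lemma-Q-decay}) produces a term $Ce^{-\beta t_*/2}\sup_{[t_*,\infty)}\|w\|_{L^2}$ which is absorbed into the left-hand side once $t_*$ is large. Local elliptic estimates then upgrade $L^2$ to the pointwise bound. This spectral/ODE route is what makes the argument close; your barrier route would need a genuinely new ingredient to handle the non-cooperative coupling.
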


To prove Propositions \ref{prop:linear-asymptotic} and \ref{prop:linear-boundness}, it is convenient to adopt the cylindrical coordinate $t=-\ln r$. Then the Euclidean metric becomes
\begin{equation}
(\ud x )^2= \ud r^2+r^2 g_0=  e^{-2t} (\ud t^2+g_0). 
\end{equation}
In what follows we will set
\begin{equation}\label{eq:product-metric}
\hat g=\ud t^2+g_0,
\end{equation}
which is the standard product metric on $\mathbb R_{+}\times \mathbb S^2$.
Denote by $\nabla_{\hat g}$ and $\Delta_{\hat g}$ the gradient operator and the Laplace-Beltrami operator with respect to the metric $\hat g$ on $\R_{+}\times \mathbb S^2$, in particular
\begin{equation}
\nabla_{\hat g}=(\pa_t, \nabla_{g_0}), \quad\quad \Delta_{\hat g}=\pa_t^2+\Delta_{g_0}.
\end{equation}

Recall that $(\bar{u}, \bar{v})$ is the tangent map of $\Phi$ and $L$ is the linearized harmonic map operator at $\Phi$. In the new coordinates this takes the form
\begin{align}
\begin{split}
e^{-2t}L_{1} \varphi &= (\pa_t^2-\pa_t +\Delta_{g_0}) \varphi_1
- 8 e^{4u } |\nabla_{\hat g}  v|^2 \varphi_1 -4 e^{4u } \nabla_{\hat g} v  \cdot \nabla_{\hat g} \varphi_2,\\
e^{-2t}L_{2}\varphi& = (\pa_t^2-\pa_t +\Delta_{g_0}) \varphi_2
+ 4 \nabla_{\hat g} u \cdot \nabla_{\hat g} \varphi_2 + 4    \nabla_{\hat g} v  \cdot \nabla_{\hat g} \varphi_1.
\end{split}
\end{align}
Write
\begin{equation}
e^{-2t} L\varphi= \mathcal{ L}  \varphi +\mathcal{Q} \varphi,
\end{equation}
where $\mathcal{ L}=(\mathcal{ L}_1, \mathcal{ L}_2)$ and $\mathcal{Q}=(\mathcal{Q}_1, \mathcal{Q}_2)$ are given by
\begin{align}
\begin{split}
\mathcal{L}_{1}  \varphi&=( \pa_t^2-\pa_t +\Delta_{g_0}) \varphi_1
- 8 e^{4\bar{u}} |\nabla_{g_0}  \bar{v}|^2 \varphi_1 -4 e^{4\bar{u}} \nabla_{g_0} \bar{v} \cdot \nabla_{g_0} \varphi_2,\\
\mathcal{ L}_{2}  \varphi&= (\pa_t^2-\pa_t +\Delta_{g_0}) \varphi_2
+4 \nabla_{g_0} \bar{u}\cdot \nabla_{g_0} \varphi_2 + 4    \nabla_{g_0} \bar{v} \cdot \nabla_{g_0} \varphi_1,
\end{split}
\end{align}
and
\begin{align}
\begin{split}
\mathcal{Q}_{1} \varphi &= (8 e^{4\bar{u}} |\nabla_{g_0}  \bar{v}|^2 - 8 e^{4u} |\nabla_{\hat g}  v|^2 )\varphi_1 
+(4 e^{4\bar{u}} \nabla_{g_0} \bar{v} \cdot \nabla_{g_0} \varphi_2
-4 e^{4u} \nabla_{\hat g} v \cdot \nabla_{\hat g} \varphi_2), \\
\mathcal{Q}_{2} \varphi &= 4 \nabla_{\hat g} u\cdot \nabla_{\hat g} \varphi_2
+ 4    \nabla_{\hat g} v \cdot \nabla_{\hat g} \varphi_1-4 \nabla_{g_0} \bar{u}\cdot \nabla_{g_0} \varphi_2
- 4    \nabla_{g_0} \bar{v} \cdot \nabla_{g_0} \varphi_1.
\end{split}
\end{align}
The operator $\mathcal{T}$ introduced earlier is the spherical part of $\mathcal L$.
We point out that the coefficients of $\mathcal L$ do not depend on $t$. The next result asserts that $\mathcal Qw$ is a perturbative term.

\begin{lem}\label{lemma-Q-decay} 
Let $\bar{C}$ and $\beta$ be as in \eqref{qaohoihqhq}. For any $w=(w_1,w_2)\in C^{1}(B_1 \setminus\Gamma)$, the estimate
\begin{equation}
|\mathcal Q_1w|+e^{2u}|\mathcal Q_2w|\le Ce^{-\beta t}\big(|w_1|+|\nabla_{\hat g}w_1|+e^{2u}|\nabla_{\hat g}w_2|\big)
\end{equation}
is valid with a constant $C$ depending only on $\bar{C}$.
\end{lem}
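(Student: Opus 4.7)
The strategy is to decompose every quantity appearing in $\mathcal{Q}_i w$ as its tangent-map value plus an error controlled by \eqref{qaohoihqhq}, and then estimate term by term. Using $r\pa_r=-\pa_t$ together with $\pa_t\bar u=\pa_t\bar v=0$, the cases $l+k\le 1$ of \eqref{qaohoihqhq} supply
\begin{equation*}
|u-\bar u|+|\nabla_{g_0}(u-\bar u)|+|\pa_t u|\le \bar C e^{-\beta t},
\end{equation*}
\begin{equation*}
e^{(3+\alpha)\bar u}|\pa_t v|+e^{(2+\alpha)\bar u}|\nabla_{g_0}(v-\bar v)|\le \bar C e^{-\beta t}.
\end{equation*}

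Two structural features of $\bar\Phi$ drive the analysis. Since $\bar u+\ln\sin\theta\in C^{3,\alpha}(\mathbb{S}^2)$, we have $e^{-\bar u}=O(\sin\theta)$, so $e^{-\gamma\bar u}$ is uniformly bounded for every $\gamma\ge 0$; together with $|u-\bar u|\le\bar C$ this makes $e^{ku}$ uniformly comparable to $e^{k\bar u}$. Moreover, the harmonic map equation $\Delta_{g_0}\bar u=2e^{4\bar u}|\nabla_{g_0}\bar v|^2$ combined with $\Delta_{g_0}\ln\sin\theta=-1$ shows $\Delta_{g_0}\bar u\in L^\infty(\mathbb S^2)$, yielding the crucial pointwise estimate $e^{4\bar u}|\nabla_{g_0}\bar v|^2\le C$, equivalently $|\nabla_{g_0}\bar v|\le Ce^{-2\bar u}$.

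Splitting $\nabla_{\hat g}=(\pa_t,\nabla_{g_0})$ and using $\pa_t\bar u=\pa_t\bar v=0$ gives
\begin{equation*}
\mathcal{Q}_2 w = 4\nabla_{g_0}(u-\bar u)\cdot\nabla_{g_0}w_2+4\pa_t u\,\pa_t w_2+4\nabla_{g_0}(v-\bar v)\cdot\nabla_{g_0}w_1+4\pa_t v\,\pa_t w_1.
\end{equation*}
After multiplication by $e^{2u}$, the first two terms are directly bounded by $Ce^{-\beta t}\cdot e^{2u}|\nabla_{\hat g}w_2|$, while the last two give $Ce^{-\beta t}|\nabla_{\hat g}w_1|$ via the residual weights $e^{2(u-\bar u)-\alpha\bar u}$ and $e^{2(u-\bar u)-(1+\alpha)\bar u}$, both bounded by the preceding observations. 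The parallel expansion
\begin{equation*}
\mathcal{Q}_1 w = 8\bigl(e^{4\bar u}|\nabla_{g_0}\bar v|^2-e^{4u}|\nabla_{g_0}v|^2-e^{4u}|\pa_t v|^2\bigr)w_1+4\bigl(e^{4\bar u}\nabla_{g_0}\bar v-e^{4u}\nabla_{g_0}v\bigr)\cdot\nabla_{g_0}w_2-4e^{4u}\pa_t v\,\pa_t w_2
\end{equation*}
is handled by splitting each difference. For the $w_1$-coefficient I write $e^{4\bar u}|\nabla_{g_0}\bar v|^2-e^{4u}|\nabla_{g_0}v|^2=(e^{4\bar u}-e^{4u})|\nabla_{g_0}\bar v|^2+e^{4u}(\nabla_{g_0}\bar v+\nabla_{g_0}v)\cdot\nabla_{g_0}(\bar v-v)$ and control the first piece by $|e^{4\bar u}-e^{4u}|\le Ce^{4\bar u}e^{-\beta t}$ together with $e^{4\bar u}|\nabla_{g_0}\bar v|^2\le C$, the second by $|\nabla_{g_0}\bar v|\le Ce^{-2\bar u}$ together with the weighted decay of $\nabla_{g_0}(v-\bar v)$, and the $|\pa_t v|^2$ piece by a direct power count. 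The same splittings for $e^{4\bar u}\nabla_{g_0}\bar v-e^{4u}\nabla_{g_0}v$ produce a bound $\le Ce^{2u}e^{-\beta t}$, yielding $Ce^{-\beta t}\cdot e^{2u}|\nabla_{\hat g}w_2|$ when paired with $\nabla_{g_0}w_2$, with $e^{4u}\pa_t v$ treated identically.

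The only real obstacle is careful bookkeeping of the $e^{\bar u}$ weights: each difference supplied by \eqref{qaohoihqhq} carries a negative weight $e^{-(k+\alpha)\bar u}$, which combines with the $e^{ku}$ prefactor in $\mathcal{Q}_i$ to leave a residual $e^{-m\bar u}$ with $m\ge 0$, rendered harmless by $e^{-\bar u}=O(\sin\theta)$. Summing all contributions produces the stated bound with $C$ depending only on $\bar C$.
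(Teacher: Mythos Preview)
Your proof is correct and follows essentially the same approach as the paper. The paper compresses the entire argument into the single intermediate inequality
\[
|\mathcal Q_1w|+e^{2u}|\mathcal Q_2w|\le C\big(|u-\bar{u}|+|\nabla_{\hat g}(u-\bar{u})|+e^{2u}|\nabla_{\hat g}(v-\bar{v})|\big)\big(|w_1|+|\nabla_{\hat g}w_1|+e^{2u}|\nabla_{\hat g}w_2|\big),
\]
labelled a ``straightforward calculation'', and then invokes \eqref{qaohoihqhq}; you have supplied exactly that calculation, including the key observations $e^{4\bar u}|\nabla_{g_0}\bar v|^2\le C$ and $e^{-\bar u}=O(\sin\theta)$ needed to keep the weight bookkeeping honest.
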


\begin{proof} 
A straightforward calculation yields
\begin{align}
\begin{split}
|\mathcal Q_1w|+e^{2u}|\mathcal Q_2w|&\le C\big(|u-\bar{u}|+|\nabla_{\hat g}(u-\bar{u})|
+e^{2u}|\nabla_{\hat g}(v-\bar{v})|\big)\\
&\qquad\cdot\big(|w_1|+|\nabla_{\hat g}w_1|+e^{2u}|\nabla_{\hat g}w_2|\big).
\end{split}
\end{align}
The desired result follows from \eqref{qaohoihqhq}, reformulated on the cylinder.
\end{proof}

We now prove Propositions \ref{prop:linear-asymptotic} and \ref{prop:linear-boundness} in the new coordinates. Note that the $f$ appearing in these propositions differs from that of the following result, by a scaling factor of $e^{-2t}$.

\begin{prop}\label{prop:linear-asymptotic-cylinder}
Assume that $f=(f_1, f_2)\in C^1(\mathbb{R}_{+} \times\left(\mathbb{S}^2 \setminus\{N,S\}\right))$ satisfies 
\begin{equation}
|\nabla_{\hat g}^k( f_1- \bar{f}_1) |
+e^{2u} |\nabla_{\hat g}^k (f_2- \bar{f}_2)|\le K e^{-\gamma t}\quad\text{in }\mathbb{R}_{+} \times\left(\mathbb{S}^2 \setminus\{N,S\}\right),\quad\quad k=0,1,
\end{equation}
for some positive constants $K$, $ \gamma$, and a $C^1$-function $\bar{f}=(\bar{f}_1,\bar{f}_2)$ on $\mathbb{S}^2$. 
Let $w$ satisfying $(w_1,e^{2u}w_2)\in L^{\infty}(\mathbb{R}_{+} \times \mathbb{S}^2)$ be a weak solution of
\begin{equation}\label{onaofinoihjhj}
e^{-2t}Lw =f \quad\text{in }\mathbb R_+\times \mathbb S^2 ,\quad\quad w_2 =0 \quad\text{on } \mathbb{R}_{+}\times\{N,S\}.
\end{equation}
Then for any $\varphi\in H^1(\mathbb{S}^2)\times H_0^{1}(\mathbb{S}^2, e^{4 \bar{u}})$ with $\mathcal T\varphi=0$ we have
\begin{equation}
\langle\bar{f}, \varphi\rangle_{L^2(\mathbb{S}^2, 1\times e^{4\bar{u}})}=0,
\end{equation}
and there exists a function $\bar{w}=(\bar{w}_1, \bar{w}_2)\in H^1(\mathbb{S}^2)\times H_0^{1}(\mathbb{S}^2, e^{4 \bar{u}})$
with
\begin{equation}
\mathcal{T}\bar{w}=\bar{f}\quad\text{in }\mathbb S^2 \setminus \{N,S\}. 
\end{equation}
Moreover for all
$0<\bar{\beta}\le \min\{\beta, \gamma\}$ with $\bar{\beta}^2+\bar{\beta}<\mu_2$, $k=0,1$, and any $\alpha\in (0,1)$ it holds that
\begin{align}\label{eq-linear-limit-decay-cylinder}
\begin{split}
&\sup_{\mathbb S^2 \setminus\{N,S\}}\big(|\nabla_{\hat{g}}^k(  w_1(t)-\bar{w}_1)   | + (\sin \theta)^{k-3-\alpha } |\nabla_{\hat{g}}^k(w_2(t)-\bar{w}_2) |\big) \\
&\qquad\le C e^{-\bar \beta t}
\big\{\sup_{\mathbb R_+\times\mathbb (S^2 \setminus\{N,S\})}\big(|w_1 | + e^{2u}|w_2 |\big)
+\sup_{\mathbb R_+\times\mathbb (S^2 \setminus\{N,S\})}\big(|f_1 | + e^{2u}|f_2 |\big)+K\big\},
\end{split} 
\end{align}
where $C$ is a constant depending only on $K$, $\alpha$, $\gamma$, $\beta$, and $\bar{\beta}$.
\end{prop}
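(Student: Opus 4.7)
The plan is to perform a spectral decomposition of $w$ on each time slice in the eigenfunctions $\{\varphi^{(i)}\}$ of $-\mathcal{T}$ (with eigenvalues $0=\mu_1<\mu_2\leq\cdots$), and reduce \eqref{onaofinoihjhj} to a family of second-order ODEs in $t$. Since $\hat g=dt^2+g_0$ is a product and the coefficients of $\mathcal L=(\partial_t^2-\partial_t)+\mathcal T$ are $t$-independent, the coefficients $a_i(t):=\langle w(t,\cdot),\varphi^{(i)}\rangle_{L^2(\mathbb S^2,\,1\times e^{4\bar u})}$ satisfy
\begin{equation*}
\ddot a_i-\dot a_i-\mu_i a_i=F_i(t),\qquad F_i(t):=\langle f(t,\cdot),\varphi^{(i)}\rangle-\langle\mathcal Q w(t,\cdot),\varphi^{(i)}\rangle.
\end{equation*}
The hypothesis on $f$ gives $\langle f,\varphi^{(i)}\rangle\to\langle\bar f,\varphi^{(i)}\rangle$ at rate $O(e^{-\gamma t})$, and Lemma~\ref{lemma-Q-decay} combined with the $L^\infty$ bound on $(w_1,e^{2u}w_2)$ yields $\langle\mathcal Q w,\varphi^{(i)}\rangle=O(e^{-\beta t})$. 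Because $|w_2|\le Ce^{-2u}$ and $e^{4\bar u-4u}$ is uniformly bounded by \eqref{qaohoihqhq}, $w$ lies in the weighted $L^2$ uniformly in $t$, so the expansion is legitimate.

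For the orthogonality assertion, I would focus on the unique kernel mode $i=1$ where $\mu_1=0$. The ODE reduces to $\ddot a_1-\dot a_1=F_1$, and setting $b:=\dot a_1$ and solving $\dot b-b=F_1$ via integrating factors, boundedness of $a_1$ (inherited from the $L^\infty$ bound on $w$) forces
\begin{equation*}
b(t)=-e^{t}\!\int_t^{\infty}\!e^{-s}F_1(s)\,ds,
\end{equation*}
which gives $b(t)\to-\langle\bar f,\varphi^{(1)}\rangle$; since $a_1=\int b$ must also remain bounded, this limit must vanish. As $\ker\mathcal T$ is one-dimensional, this is exactly the claimed orthogonality. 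Given it, the Fredholm alternative applied to the symmetric nonnegative form $\bar{\mathcal B}$ on $H^1(\mathbb S^2)\times H_0^1(\mathbb S^2,e^{4\bar u})$ supplies some $\bar w$ with $\mathcal T\bar w=\bar f$; after adjusting by a kernel element, I would arrange $\langle\bar w,\varphi^{(1)}\rangle$ to equal the (now finite) limit of $a_1(t)$.

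For the decay rate, set $v:=w-\bar w$, which satisfies $\mathcal Lv=(f-\bar f)-\mathcal Q w$ with right-hand side $O(e^{-\min(\beta,\gamma)t})$. For $i\ge 2$ the characteristic roots of $\lambda^2-\lambda-\mu_i=0$ are $\lambda^{(i)}_{\pm}=\tfrac{1\pm\sqrt{1+4\mu_i}}{2}$ with $\lambda^{(i)}_-<0$; the $L^\infty$ bound eliminates the growing homogeneous mode $e^{\lambda^{(i)}_+ t}$, and variation of parameters produces $|v_i(t)|\le C e^{-\bar\beta t}$ for any $\bar\beta$ strictly below $\min(\beta,\gamma,|\lambda^{(i)}_-|)$. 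Because $|\lambda^{(i)}_-|^2+|\lambda^{(i)}_-|=\mu_i$ and the $\mu_i$ are nondecreasing, the stated restriction $\bar\beta^2+\bar\beta<\mu_2$ is precisely what keeps $\bar\beta$ below this spectral threshold for every $i\ge 2$; the kernel mode $v_1$ decays at the same rate by the same integrating-factor formula now applied with an exponentially decaying $F_1$. Summing via Parseval yields $\|w(t,\cdot)-\bar w\|_{L^2(\mathbb S^2,\,1\times e^{4\bar u})}\le C e^{-\bar\beta t}$, and interior Schauder estimates for $\mathcal L$ on unit-length slabs $[t,t+1]\times\mathbb S^2$ then upgrade this $L^2$ decay to the pointwise bound \eqref{eq-linear-limit-decay-cylinder} for $k=0$; the $k=1$ bound follows by applying the same slab estimates to derivatives of the system in $t$ and in $g_0$.

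The main obstacle will be the combined challenges posed by the self-referential term $\mathcal Q w$ and the weighted pointwise control of $w_2$ with the $(\sin\theta)^{k-3-\alpha}$ factor near the poles. The first is handled by a single-pass bootstrap: the starting $L^\infty$ hypothesis feeds Lemma~\ref{lemma-Q-decay} to yield the rate $e^{-\beta t}$ already sufficient to drive the spectral argument without iteration. The second is more delicate, since near $\{N,S\}$ the weight $e^{4\bar u}$ blows up and interior elliptic estimates must be carried out in adapted weighted spaces; I would recast the linearized system in local polar coordinates at each pole and invoke the weighted Schauder-type theory of \cite[Proposition 3.9]{HKWX} to convert weighted $L^2$ decay into the sharp pointwise bound with the correct angular weight.
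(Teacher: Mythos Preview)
Your proposal is correct and aligns with the paper for the kernel mode: both project onto $\varphi^{(1)}$, solve the resulting ODE $\ddot a_1-\dot a_1=F_1$, and use boundedness of $a_1$ to force $\langle\bar f,\varphi^{(1)}\rangle=0$, then invoke the Fredholm alternative for $\bar w$.

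The genuine difference lies in the treatment of the higher modes. You continue the mode-by-mode decomposition for $i\ge 2$, bound each Fourier coefficient individually via variation of parameters, and sum by Parseval. The paper instead works with the aggregate quantity $y(t)=\|\widehat w(t)\|_{L^2}$, where $\widehat w=w-\bar w_*-\langle w-\bar w_*,\varphi^{(1)}\rangle\varphi^{(1)}$ is orthogonal to the kernel; from the spectral gap $\bar{\mathcal B}[\widehat w,\widehat w]\ge\mu_2\|\widehat w\|^2$ it derives the scalar differential inequality $-y''+y'+\mu_2 y\le C(A+K)e^{-\bar\gamma t}$ and concludes $y(t)\le Be^{-\bar\beta(t-1)}$ by a maximum-principle comparison with $z(t)=Be^{-\bar\beta(t-1)}$. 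Your route is valid but requires verifying that the mode-wise constants are summable in $i$ (they are, via factors of order $(\lambda_i^+)^{-3}$ in the integral representation, exactly the mechanism the paper uses in the proof of the subsequent Proposition~\ref{prop:linear-boundness-cylinder}); the paper's comparison argument sidesteps this bookkeeping entirely. Conversely, your explicit formulas essentially anticipate the proof of Proposition~\ref{prop:linear-boundness-cylinder}. The final upgrade from $L^2$ decay to the weighted pointwise estimate \eqref{eq-linear-limit-decay-cylinder} via \cite[Proposition~3.9]{HKWX} on unit slabs is the same in both approaches, and that reference already delivers the $k=1$ derivative bound directly, so differentiating the system is unnecessary.
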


\begin{proof} 
The proof consists of two steps. For brevity we will make use of the notation 
\begin{equation}
\|\cdot\|_{L^2}=\|\cdot\|_{L^2(\mathbb{S}^2; 1\times e^{4\bar{u}} )},\quad
\langle\cdot,\cdot\rangle_{L^2}=\langle\cdot,\cdot\rangle_{L^2(\mathbb{S}^2; 1\times e^{4\bar{u}} )},\quad
A=\sup_{t\in[0,\infty)}\big(\|w(t)\|_{L^2}
+\|f(t)\|_{L^2}\big).
\end{equation}

{\it Step 1.} By a local estimate for $w$ (see the proof of \cite[Proposition 3.9]{HKWX}), for any $t\geq 1$ and $k=0,1$ we have 
\begin{equation}\label{eq:time-slice-1}
\sup_{\mathbb{S}^2 \setminus\{N,S\}}\!\!\!\big(|\nabla^k_{\hat g} w_1(t)|+(\sin\theta)^{k-3-\alpha} |\nabla^k_{\hat g} w_2(t) |\big)
\le C\Big(\int_{t-1}^{t+1}\!\!\!\big(\|w(s)\|^2_{L^2}
+\|f(s)\|^2_{L^2}\big)ds\Big)^{1/2}\!\!\le \!\!CA.
\end{equation}
It follows that Lemma \ref{lemma-Q-decay} implies 
\begin{equation}
|\mathcal{Q}_{1} w| +e^{2u}|\mathcal{Q}_{2} w| \le
CA e^{-\beta t},\quad\quad t\geq 1.
\end{equation}
By setting $\bar{\gamma}=\min\{\gamma, \beta\}$ we then have that for $t$ in this range
\begin{align}\label{eq-assumption-f-Laplace-special-tilde}
\big\|(f-\bar{f}) - \mathcal{Q} w\big\|_{L^2}\le C(A+K)e^{-\bar{\gamma} t}.
\end{align}
Now write equations \eqref{onaofinoihjhj} as
\be \label{eq:reduced-linear}
w''-w'+\mathcal{T} w=\bar{f}+(f-\bar{f}) - \mathcal{Q} w  \quad \mbox{in }\R_+\times \mathbb{S}^2,
\ee
where the prime notation indicates $t$-derivatives. Taking an inner product with the eigenfunction $\varphi^{(1)}$ in $L^2(\mathbb{S}^2, 1\times e^{4\bar{u}})$ then yields
\begin{equation}
\frac{d^2\,}{dt^2}\langle w, \varphi^{(1)}\rangle_{L^2} -\frac{d\,}{dt}\langle w, \varphi^{(1)}\rangle_{L^2}
=\langle \bar{f}, \varphi^{(1)}\rangle_{L^2}+\langle f-\bar{f}-\mathcal{Q} w, \varphi^{(1)}\rangle_{L^2},
\end{equation}
which is a second order ODE for $\langle w, \varphi^{(1)}\rangle_{L^2}$.
The first term on the right-hand side is constant and the second term decays exponentially as $t\to\infty$
according to \eqref{eq-assumption-f-Laplace-special-tilde}. Therefore
\begin{equation}
\langle w, \varphi^{(1)}\rangle_{L^2}=c_1+c_2e^t-t\langle \bar{f}, \varphi^{(1)}\rangle_{L^2}
+\int_t^\infty (1-e^{t-s})\langle f(s)-\bar{f}-\mathcal{Q} w(s), \varphi^{(1)}\rangle_{L^2} d s,
\end{equation}
for some constants $c_1$ and $c_2$. Since $\langle w, \varphi^{(1)}\rangle_{L^2}$ is bounded, we must have $c_2=0$ and $\langle \bar{f}, \varphi^{(1)}\rangle_{L^2}=0$. Hence, there exists a unique $\bar{w}_*=(\bar{w}_{*1}, \bar{w}_{*2})\in H^1(\mathbb{S}^2)\times H_0^{1}(\mathbb{S}^2, e^{4 \bar{u}})$ satisfying
\begin{equation}\label{eq-special-w}
\mathcal{T}\bar{w}_*=\bar{f}\quad \text{on }\mathbb S^2 
\quad\text{and}\quad \langle \bar{w}_*, \varphi^{(1)}\rangle_{L^2}=0. 
\end{equation}
Moreover, note that
\begin{equation}\label{eq-Laplace-sup-norm-tilde-0}
\big|\langle w(t)-\bar{w}_*, \varphi^{(1)}\rangle_{L^2}-c_1\big|
\le C(A+K)e^{-\bar{\gamma} t},\quad\quad t\geq 1.
\end{equation}

{\it Step 2.} We will now establish \eqref{eq-linear-limit-decay-cylinder}, which
follows from standard ODE analysis. 
Set
\begin{align} \label{eq:Laplace-linear-solution-special-hat}
\widehat w(t)=w(t)-\bar{w}_*-\langle w(t)-\bar{w}_*, \varphi^{(1)}\rangle_{L^2}\varphi^{(1)},\end{align}
and
\begin{equation}
\widehat f(t)=(f(t)-\bar{f}) - \mathcal{Q} w(t)-\langle (f(t)-\bar{f}) - \mathcal{Q} w(t), \varphi^{(1)}\rangle_{L^2}\varphi^{(1)}.
\end{equation}
Then
\begin{equation}\label{eq:Laplace-linear-special-hat}
\widehat{w}''-\widehat{w}'+\mathcal T\widehat  w=\widehat  f  \quad \text{in }\mathbb R_+\times \mathbb{S}^2,
\end{equation}
and
\begin{equation}\label{eq-assumption-f-Laplace-special-hat}
\|\widehat  f(t)\|_{L^2}\le C(A+K)e^{-\bar{\gamma} t},\quad\quad t\geq 1.
\end{equation}
Moreover $\langle \widehat w(t), \varphi^{(1)}\rangle_{L^2}=0$ and hence
\begin{equation}\label{eq-lower-energy-Laplace-special-hat}
\overline{\mathcal{B}}[\widehat w(t), \widehat w(t)]\ge \mu_2
\|\widehat w(t)\|_{L^2}^2,
\end{equation}
where $\overline{\mathcal{B}}$ is the bilinear form defined by \eqref{eq:bilinear-form}.
Taking an inner product of the equation \eqref{eq:Laplace-linear-special-hat}
with $-\widehat w(t)$ in $L^2(\mathbb{S}^2, 1\times e^{4\bar{u}})$ produces
\begin{equation}
\langle \widehat w(t), -\widehat{w}''(t)+\widehat{w}'(t)\rangle_{L^2} + \overline{\mathcal{B}}[\widehat w(t), \widehat w(t)]
=-\langle \widehat w(t), \widehat  f(t)\rangle_{L^2}.
\end{equation}
For any $t\ge 0$ set
\begin{equation}\label{eq-def-y-Laplace-special}
y(t)=\|\widehat w(t)\|_{L^2}
=\Big(\int_{\mathbb{S}^2} \big(\widehat w_1^2(t)+e^{4\bar{u}}\widehat{w}_2^2(t) \big)\, d vol_{g_0}\Big)^{1/2},
\end{equation}
and note that $y$ is differentiable at $t>0$ if $y(t)>0$. At such $t$ we have
\begin{equation}
y(t)y'(t)=\langle \widehat w(t), \widehat  w'(t)\rangle_{L^2},\quad y(t)y''(t)+[y'(t)]^2=\langle \widehat w(t), \widehat  w''(t)\rangle_{L^2}+\langle \widehat w'(t), \widehat  w'(t)\rangle_{L^2}.
\end{equation}
The Cauchy-Schwarz inequality implies that if $y(t)>0$ then $|y'(t)|\leq\| \widehat{w}'(t)\|_{L^2}$
and hence
\begin{equation}
y(t)y''(t)\ge \langle \widehat w(t), \widehat  w''(t)\rangle_{L^2}.
\end{equation}
It follows that for $t\geq 1$ inequalities \eqref{eq-assumption-f-Laplace-special-hat}
and \eqref{eq-lower-energy-Laplace-special-hat} yield
\begin{equation}
-y(t)y''(t)+y(t)y'(t)+\mu_2y(t)^2\le y(t)\|\widehat f(t)\|_{L^2}
\le C(A+K)y(t)e^{-\bar{\gamma} t},
\end{equation}
and thus when $y(t)>0$ we obtain
\begin{equation}\label{eq-ODE-ineq-y-Laplace-special}
-y''(t)+y'(t)+\mu_2y(t)\le C(A+K)e^{-\bar{\gamma} t},\qquad t\geq 1.
\end{equation}

Consider $0<\bar{\beta}\le \bar\gamma$ with $\bar{\beta}^2+\bar{\beta}<\mu_2$.
For some positive constant $B$ to be determined, set $z(t)=Be^{-\bar{\beta} (t-1)}$ and observe that
\begin{equation}
-z''(t)+z'(t)+\mu_2z(t)=B(\mu_2-\bar{\beta}^2-\bar{\beta})e^{-\bar{\beta} (t-1)}.
\end{equation}
We choose $B$ large enough such that
\begin{equation}
B(\mu_2-\bar{\beta}^2-\bar{\beta})>C(A+K)\quad\text{and}
\quad B>\|\widehat w(1)\|_{L^2}.
\end{equation}
Hence, $y(1)<z(1)$ and if $y(t)>0$ then
\begin{equation}\label{eq-ODE-ineq-Laplace-special}
-(y-z)''(t)+(y-z)'(t)+\mu_2(y-z)(t)< 0,\qquad t\geq 1.
\end{equation}
We now claim that $y(t)\le z(t)$ for any $t\geq 1$.
If $y-z$ has a local nonnegative maximum at some $t_*>1$ then
\begin{equation}
(y-z)(t_*)\ge 0, \quad (y-z)'(t_*)=0, \quad (y-z)''(t_*)\le 0,
\end{equation}
and in particular, $y(t_*)\ge z(t_*)>0$, contradicting \eqref{eq-ODE-ineq-Laplace-special} at $t_*$.
Thus, $y-z$ does not have a local nonnegative maximum on $(0,\infty)$.
Assume $(y-z)(t_0)>0$ for some $t_0>1$.
Then $y(t_0)>0$ and \eqref{eq-ODE-ineq-Laplace-special} holds at $t_0$.
If $(y-z)(t)<(y-z)(t_0)$ for some $t>t_0$,
then $y-z$ has a local positive maximum in $(1,t)$, leading to a contradiction.
Therefore, $(y-z)(t)\ge (y-z)(t_0)>0$ for all $t>t_0$.
In particular, $y(t)> z(t)>0$ for all $t>t_0$ so that \eqref{eq-ODE-ineq-Laplace-special} holds whenever $t>t_0$.
Next observe that for any $s>t>t_0$, a similar argument shows that $(y-z)(s)\ge (y-z)(t)$.
In other words, $y-z$ is nondecreasing on $(t_0, \infty)$.
Hence $(y-z)'\ge 0$ on $(t_0, \infty)$, and by \eqref{eq-ODE-ineq-Laplace-special} we have
\begin{equation}
(y-z)''(t)>(y-z)'(t)+\mu_2(y-z)(t)\ge \mu_2(y-z)(t_0), \qquad t>t_0.
\end{equation}
It follows that $y(t)\to\infty$, which contradicts the boundedness of $w$ on $\mathbb R_+\times\mathbb S^2$.
Thus $y(t)\le z(t)$ for all $t\geq 1$, and hence
\begin{equation}
\|\widehat w(t)\|_{L^2}\le Be^{-\bar{\beta}( t-1)},\quad\quad t\geq 1.
\end{equation}

The proof of \cite[Proposition 3.9]{HKWX} applied to equation \eqref{eq:Laplace-linear-special-hat} in $(t-1,t+1)\times \mathbb S^2$, for $t>2$ and $k=0,1$, now implies
\begin{align}\label{eq-Laplace-sup-norm-hat}
\begin{split}
\sup_{\mathbb{S}^2 \setminus\{N,S\}}\big(|\nabla^k_{\hat g} \widehat w_1(t)|+(\sin\theta)^{k-3-\alpha} |\nabla^k_{\hat g} \widehat w_2(t) |\big)
& \le C\Big(\int_{t-1}^{t+1}\big(\|\widehat w(s)\|^2_{L^2}
+\|\widehat f(s)\|^2_{L^2}\big)ds\Big)^{1/2}\\
&\le C(A+K)e^{-\bar{\beta} t}.
\end{split}
\end{align}
Note that by \eqref{eq:Laplace-linear-solution-special-hat} we have
\begin{equation}
w(t)-\bar{w}_*-c_1\varphi^{(1)}
=\widehat w(t)+\big[\langle w(t)-\bar{w}_*, \varphi^{(1)}\rangle-c_1\big]\varphi^{(1)}.
\end{equation}
Then for $k=0,1$ the estimates \eqref{eq-Laplace-sup-norm-tilde-0} and \eqref{eq-Laplace-sup-norm-hat} produce
\begin{align}
\begin{split}
&\sup_{\mathbb{S}^2 \setminus\{N,S\}}\big(|\nabla^k_{\hat g} (w_1(t)-\bar{w}_{*1}-c_1\varphi^{(1)}_1)|
+(\sin\theta)^{k-3-\alpha} |\nabla^k_{\hat g}  (w_2(t)-\bar{w}_{*2}-c_1\varphi^{(1)}_2) |\big)\\
&\qquad\le C(A+K)e^{-\bar{\beta} t}.
\end{split}
\end{align}
This establishes the desired result with
$\bar{w}=\bar{w}_*+c_1\varphi^{(1)}.$
\end{proof}

\begin{prop} \label{prop:linear-boundness-cylinder}
Let $f$ and $w$ be as in Proposition \ref{prop:linear-asymptotic-cylinder}.
Then there exists $t_*>10$
depending only on $\gamma$, $a$, as well as $\bar{C}$ and $\beta$ in \eqref{qaohoihqhq}, such that
\begin{align}
\begin{split}
&\||w_1|+e^{2u}|w_2|\|_{L^\infty(\mathbb R_+\times \mathbb S^2)}  \\
&\qquad
\le C\big(\||w_1|+e^{2u}|w_2|\|_{L^\infty([0, t_*]\times \mathbb S^2)}+\| |f_1|+e^{2u} | f_2| \|_{L^\infty(\mathbb R_+\times \mathbb S^2)} +K\big).
\end{split}
\end{align}
Moreover, \eqref{eq-linear-limit-decay-cylinder} holds with the $L^\infty$-norms of $w_1$ and $e^{2u}w_2$ on $[0, \infty)\times \mathbb S^2$ replaced by that on $[0, t_*]\times \mathbb S^2$.
\end{prop}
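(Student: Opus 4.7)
The plan is to close a short bootstrap by invoking Proposition \ref{prop:linear-asymptotic-cylinder} directly, exploiting the key observation that its constants involve the global sup of $w$ only linearly on the right-hand side; an exponentially small factor at a suitable time $t_*$ can then be absorbed into the left.

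For notational brevity, set
\begin{equation*}
\mathcal N(w;I):=\||w_1|+e^{2u}|w_2|\|_{L^{\infty}(I\times\mathbb S^{2})},\qquad \mathcal N_f:=\mathcal N(f;\mathbb R_+).
\end{equation*}
Since $(w_1,e^{2u}w_2)\in L^\infty(\mathbb R_+\times\mathbb S^2)$ by hypothesis, Proposition \ref{prop:linear-asymptotic-cylinder} produces an asymptotic profile $\bar w$ with $\mathcal T\bar w=\bar f$ together with a constant $C_1$ depending only on $\alpha$, $\gamma$, $\beta$, $\bar\beta$, and (through $\mu_2$) on $a$, such that
\begin{equation*}
\mathcal N(w(t)-\bar w;\{t\})\le C_1 e^{-\bar\beta t}\bigl(\mathcal N(w;\mathbb R_+)+\mathcal N_f+K\bigr),\qquad t\ge 0.
\end{equation*}
To pass from the pointwise statement \eqref{eq-linear-limit-decay-cylinder} to the weighted sup $\mathcal N$, note that multiplication of $(\sin\theta)^{-3-\alpha}|w_2-\bar w_2|$ by $e^{2u}(\sin\theta)^{3+\alpha}=O((\sin\theta)^{1+\alpha})$ is bounded on $\mathbb S^2$.

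Evaluating at $t=t_*$ and applying the triangle inequality yields
\begin{equation*}
\mathcal N(\bar w;\mathbb S^2)\le\mathcal N(w;[0,t_*])+C_1 e^{-\bar\beta t_*}\bigl(\mathcal N(w;\mathbb R_+)+\mathcal N_f+K\bigr),
\end{equation*}
and then for every $t\ge t_*$ a second triangle inequality combined with the previous display gives
\begin{equation*}
\mathcal N(w(t);\{t\})\le\mathcal N(w;[0,t_*])+2C_1 e^{-\bar\beta t_*}\bigl(\mathcal N(w;\mathbb R_+)+\mathcal N_f+K\bigr).
\end{equation*}
Taking the sup over $t\ge t_*$ and using $\mathcal N(w;\mathbb R_+)=\max\bigl(\mathcal N(w;[0,t_*]),\mathcal N(w;[t_*,\infty))\bigr)$, one obtains
\begin{equation*}
\mathcal N(w;\mathbb R_+)\le\mathcal N(w;[0,t_*])+2C_1 e^{-\bar\beta t_*}\bigl(\mathcal N(w;\mathbb R_+)+\mathcal N_f+K\bigr).
\end{equation*}
Now fix $t_*=\max\{11,(1/\bar\beta)\ln(4C_1)\}$ so that $2C_1 e^{-\bar\beta t_*}\le 1/2$; since $C_1$ and $\bar\beta$ depend only on $\gamma$, $\beta$, $\bar C$, and $a$, so does $t_*$. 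Absorbing the first factor of $\mathcal N(w;\mathbb R_+)$ on the right then produces
\begin{equation*}
\mathcal N(w;\mathbb R_+)\le 2\,\mathcal N(w;[0,t_*])+\mathcal N_f+K,
\end{equation*}
which is the asserted global $L^\infty$ bound. The second conclusion---that \eqref{eq-linear-limit-decay-cylinder} persists with $\mathcal N(w;\mathbb R_+)$ replaced by $\mathcal N(w;[0,t_*])$---then follows by substituting this global bound into the right-hand side of \eqref{eq-linear-limit-decay-cylinder}.

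The principal subtlety is verifying that the constant $C$ in \eqref{eq-linear-limit-decay-cylinder} is genuinely independent of $\mathcal N(w;\mathbb R_+)$, i.e.\ that $\mathcal N(w;\mathbb R_+)$ enters only as a multiplicative factor on the right-hand side. Tracing the two-step argument for Proposition \ref{prop:linear-asymptotic-cylinder}, the quantity $A=\sup_t(\|w(t)\|_{L^2}+\|f(t)\|_{L^2})$ is controlled linearly by $\mathcal N(w;\mathbb R_+)+\mathcal N_f$, and the super-solution parameter $B=Be^{-\bar\beta(t-1)}$ may be taken with $B\lesssim A+K$ and multiplicative constant depending only on $\mu_2$ and $\bar\beta$. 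This linearity is exactly what lets the bootstrap close for $t_*$ chosen from the fixed problem data alone.
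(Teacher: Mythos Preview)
Your proof is correct and takes a genuinely different route from the paper's. You bootstrap directly from Proposition \ref{prop:linear-asymptotic-cylinder}, using the crucial observation that the global quantity $\mathcal N(w;\mathbb R_+)$ enters the right-hand side of \eqref{eq-linear-limit-decay-cylinder} only linearly with a prefactor $C_1e^{-\bar\beta t_*}$ that can be made $\le 1/2$ by choosing $t_*$ from the fixed data alone; a triangle inequality through the asymptotic limit $\bar w$ then closes the loop.

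The paper instead re-enters the eigenfunction expansion of $\widetilde w=w-\bar w_*$ directly, writing explicit Duhamel formulas for each mode $\widetilde w^{(i)}$ and decomposing $\widetilde w=\widetilde w_0+\widetilde w_*$ into a homogeneous piece (monotone nonincreasing in $L^2$ since all $\lambda_i^-\le 0$) and a particular piece controlled by $\widetilde f=(f-\bar f)-\mathcal Qw$. The perturbation $\mathcal Qw$ is then bounded via Lemma \ref{lemma-Q-decay} and the local estimate of \cite[Proposition 3.9]{HKWX}, producing a term $Ce^{-\beta t_*/2}\sup_{[t_*,\infty)}\|w(s)\|_{L^2}$ that is absorbed for $t_*$ large.

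Your approach is more economical and conceptually clean, at the cost of requiring the reader to verify---as you carefully do in your final paragraph---that the multiplicative constant in Proposition \ref{prop:linear-asymptotic-cylinder} is genuinely independent of $\mathcal N(w;\mathbb R_+)$ and of $K$, something the stated dependencies there leave slightly ambiguous. The paper's approach is longer but self-contained and does not lean on the previous proposition; it also makes transparent exactly where the smallness originates (the exponential decay of $\mathcal Q$ from Lemma \ref{lemma-Q-decay}).
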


\begin{proof} 
For brevity we will write $\|\cdot\|_{L^2}=\|\cdot\|_{L^2(\mathbb{S}^2; 1\times e^{4\bar{u}} )}.$
Let $\bar{w}_*$ be as in \eqref{eq-special-w} and set
\begin{equation}\label{eq:Laplace-linear-solution-special-tilde1}
\widetilde w(t)=w(t)-\bar{w}_*,\qquad \widetilde f(t)=(f-\bar{f}) - \mathcal{Q} w.
\end{equation}
Then
\begin{equation}\label{eq:Laplace-linear-special-tilde1}
\widetilde{w}''-\widetilde{w}'+\mathcal T\widetilde w= \widetilde f  \quad \text{in }\mathbb R_+\times \mathbb S^2.
\end{equation}
We will expand $\widetilde w$ and  $\widetilde f$ according to the eigenfunctions $\{\varphi^{(i)}\}$ as
\begin{equation}
\widetilde w(t)= \sum_{i=1}^\infty \widetilde w^{(i)}(t)\varphi^{(i)},
\quad \widetilde f(t)= \sum_{i=1}^\infty \widetilde f^{(i)}(t)\varphi^{(i)}.
\end{equation}
The regularity of $w$ obtained from \cite[Proposition 3.9]{HKWX} guarantees that the expansion can be differentiated term by term so that
\begin{equation}
\frac{d^2}{dt^2}\widetilde w^{(i)}- \frac{d}{dt}\widetilde w^{(i)} -\mu_i \widetilde w^{(i)} = \widetilde  f^{(i)}  \quad \text{in }\mathbb R_+.
\end{equation}
For $i\ge 1$ define
\begin{equation}
\lda_i^+= \frac{1}{2}\big(1+ \sqrt{1+4\mu_i}\big), \quad \lda_i^-= \frac{1}{2}\big(1- \sqrt{1+4\mu_i}\big),
\end{equation}
and note that $\lda_i^+\ge 1$, $\lda_i^-\le 0$, as well as $\lda_1^+=1$, $\lda_1^-= 0$.
We will treat $i=1$ differently from $i\ge 2$. Since $\widetilde w^{(i)}$ is bounded on $\mathbb R_+$,
there exists a constant $c_i^-$ such that
\begin{equation}
\widetilde w^{(i)}(t)=c_i^- e^{\lda_i^- t}+\widetilde w_*^{(i)}(t),
\end{equation}
where for $i=1$ the remaining function takes the form
\begin{equation}\label{eq:ode-representation-1}
\widetilde w_*^{(1)}(t) =  - \int_t^{\infty} e^{t-s}\widetilde  f^{(1)}(s)  \, d s
+  \int_{t}^{\infty} \widetilde  f^{(1)}(s) \, d s,
\end{equation}
and for $i\ge 2$ it becomes
\begin{equation}\label{eq:ode-representation-2}
\widetilde w_*^{(i)}(t) =\frac{1}{\lda_i^- -\lda_i^{+}} \int_t^{\infty} e^{\lda_i^+(t-s) }\widetilde  f^{(i)}(s) \, d s
+ \frac{1}{\lda_i^- -\lda_i^{+}} \int_2^{t} e^{\lda_i^-(t-s) }\widetilde  f^{(i)}(s) \, d s.
\end{equation}
Correspondingly, we may write
\begin{equation}
\widetilde  w(t)=\widetilde{w}_0(t)+\widetilde w_*(t),\quad
\widetilde{w}_0(t)=\sum_{i=1}^\infty c_i^- e^{\lda_i^- t}\varphi^{(i)},
\quad \widetilde {w}_*(t)=\sum_{i=1}^\infty \widetilde w_*^{(i)}(t)\varphi^{(i)}.
\end{equation}

By the explicit expression of $\widetilde w_0(t)$ for $t\ge 2$, it follows that
\begin{equation}
\|\widetilde{w}_0(t)\|_{L^2}^2 =\sum_{i=1}^\infty (c_i^-)^2 e^{2\lda_i^- t}
\le \sum_{i=1}^\infty (c_i^-)^2 e^{2\cdot 2\lda_i^-}
=\|\widetilde{w}_0(2)\|_{L^2}^2.
\end{equation}
Moreover, the triangle inequality then gives
\begin{align}
\begin{split}
\| \widetilde w(t) \|_{L^2}
&\le  \| \widetilde{w}_0(t) \|_{L^2}+\| \widetilde{w}_*(t) \|_{L^2}\
\le  \| \widetilde{w}_0(2) \|_{L^2}+\| \widetilde{w}_*(t) \|_{L^2}\\
&\le  \| \widetilde{w}(2) \|_{L^2}+\| \widetilde{w}_*(2) \|_{L^2}+\| \widetilde{w}_*(t) \|_{L^2}.
\end{split}
\end{align}
For $t\geq 2$ this, together with $w(t)=\widetilde{w}(t)+\bar{w}_*$, implies
\begin{align}\label{eq-estimate-bound-intermediate}
\begin{split}
\| w(t) \|_{L^2}
&\le\| {w}(2) \|_{L^2}+2\| \bar{w}_* \|_{L^2}+\| \widetilde{w}_*(2) \|_{L^2}+\| \widetilde{w}_*(t) \|_{L^2}\\
&\le \| {w}(2) \|_{L^2}+C\| \bar{f} \|_{L^2}+\| \widetilde{w}_*(2) \|_{L^2}+\| \widetilde{w}_*(t) \|_{L^2}.
\end{split}
\end{align}

We next estimate $\widetilde {w}_*$.
By \eqref{eq:ode-representation-2} and the H\"older inequality, we have for $i\ge 2$ and $t\ge 2$ that
\begin{align}
\begin{split}
|\widetilde w_*^{(i)}(t) |^2
&\le  \frac{C}{(\lda_i^+)^3}  \int_t^{\infty} e^{\lda_i^+(t-s) } |\widetilde f^{(i)}(s)|^2  \, d s
+ \frac{C}{(\lda_i^+)^3}  \int_2^{t} e^{\lda_i^-(t-s) } |\widetilde  f^{(i)}(s) |^2 \, d s\\
&\le  C  \int_t^{\infty} e^{t-s} |\widetilde f^{(i)}(s)|^2  \, d s
+ C  \int_2^{t} e^{\lda_2^-(t-s) } |\widetilde  f^{(i)}(s) |^2 \, d s,
\end{split}
\end{align}
where in the second inequality $\lda_i^+\ge \lda_2^+\ge 1$ and $\lda_i^-\le \lda_2^-<0$ were used.
Similarly, for $i=1$ and $t\ge 2$ we find that \eqref{eq:ode-representation-1} yields
\begin{equation}
|\widetilde w_*^{(1)}(t) |^2
\le  C  \int_t^{\infty} e^{t-s} |\widetilde f^{(1)}(s)|^2  \, d s
+ C \Big(\int_{t}^{\infty} |\widetilde  f^{(1)}(s)| \, d s\Big)^{2}.
\end{equation}
Since
\begin{equation}
\| \widetilde w_*(t) \|_{L^2} ^2
=\sum_{i=1}^\infty|\widetilde w_*^{(i)}(t) |^2, \quad
\| \widetilde f(t) \|_{L^2} ^2
=\sum_{i=1}^\infty|\widetilde f^{(i)}(t) |^2,
\end{equation}
for $t\geq 2$ we have 
\begin{equation}
\| \widetilde w_*(t) \|_{L^2} ^2
\le  C  \int_t^{\infty} e^{t-s}\| \widetilde f(s) \|_{L^2} ^2  \, d s
+ C  \int_2^{t} e^{\lda_2^-(t-s) } \| \widetilde f(s) \|_{L^2} ^2\, d s
 + C \Big(\int_{t}^{\infty} |\widetilde  f^{(1)}(s)| \, d s\Big)^{2},
\end{equation}
and hence
\begin{equation}
\| \widetilde w_*(t) \|_{L^2} ^2
\le  C \sup_{s\in [2,\infty)} \| \widetilde f(s) \|_{L^2} ^2
+ C \Big(\int_{t}^{\infty} |\widetilde  f^{(1)}(s)| \, d s\Big)^{2}.
\end{equation}
It is obvious that
\begin{equation}
\| \widetilde f(s) \|_{L^2}\le \| f(s)-\bar{f} \|_{L^2}+\| \mathcal{Q}w(s) \|_{L^2}
\le CKe^{-\gamma s}+\| \mathcal{Q}w(s) \|_{L^2},
\end{equation}
and
\begin{equation}
|\widetilde  f^{(1)}(s)|\le \| \widetilde f(s) \|_{L^2}\le CKe^{-\gamma s}+\| \mathcal{Q}w(s) \|_{L^2},
\end{equation}
so that
\begin{equation}
\| \widetilde w_*(t) \|_{L^2}
\le  C\Big(K
+ \sup_{s\in [2,\infty)} \| \mathcal{Q}w(s) \|_{L^2}
+\int_{t}^{\infty}\| \mathcal{Q}w(s) \|_{L^2} \, d s\Big).
\end{equation}
Furthermore, by Lemma \ref{lemma-Q-decay} we have
\begin{equation}
\| \mathcal{Q}w(s) \|_{L^2}\le Ce^{-\beta s}\big(\| w(s) \|_{L^2}+\| \nabla_{\hat{g}} w(s) \|_{L^2}\big),
\end{equation}
and a local estimate for $w$ (see Proposition 3.9 of \cite{HKWX}) when $s\ge 2$ produces
\begin{align}
\begin{split}
\|\nabla_{\hat{g}} w(s) \|^2_{L^2}&\le C\int_{s-1}^{s+1}(\| f(\tau) \|^2_{L^2}+\| w(\tau) \|^2_{L^2}) \, d\tau\\
&\le C\sup_{[s-1,s+1]}\| f(\tau) \|^2_{L^2}+C\int_{s-1}^{s+1}\| w(\tau) \|^2_{L^2} \, d \tau,
\end{split}
\end{align}
so that
\begin{align*}\| \mathcal{Q}w(s) \|_{L^2}\le Ce^{-\beta s}\Big(\sup_{[s-1,s+1]}\| f(\tau) \|_{L^2}
+\| w(s) \|_{L^2}+\Big(\int_{s-1}^{s+1}\| w(\tau) \|^2_{L^2} \, d \tau\Big)^{1/2}\Big).
\end{align*}
Next note that a simple substitution gives
\begin{align}\label{aoinoinihh}
\begin{split}
\| \widetilde w_*(t) \|_{L^2}
&\le  C\Big(K+ \sup_{s\in [1,\infty)} \| f(s) \|_{L^2}+ \sup_{s\in [1,\infty)} e^{-\beta s}\|w(s) \|_{L^2}\\
&\quad+\int_{t}^{\infty}e^{-\beta s}\|w(s) \|_{L^2} \, d s
+\int_{t}^{\infty}e^{-\beta s}\Big(\int_{s-1}^{s+1}\| w(\tau) \|^2_{L^2} \, d \tau\Big)^{1/2}\, d s\Big)\\
&\le  C\big(K+ \sup_{s\in [1,\infty)} \| f(s) \|_{L^2}+ \sup_{s\in [1,\infty)} e^{-\frac\beta2 s}\|w(s) \|_{L^2}\big);
\end{split}
\end{align}
this holds for any $t\ge 2$. 

By substituting \eqref{aoinoinihh} into \eqref{eq-estimate-bound-intermediate} we obtain
\begin{equation}
\| w(t) \|_{L^2}
\le C\big(\| {w}(2) \|_{L^2}+K+ \sup_{s\in [1,\infty)} \| f(s) \|_{L^2}+ \sup_{s\in [1,\infty)} e^{-\frac\beta2 s}\|w(s) \|_{L^2}\big).
\end{equation}
Next, let $t_*>1$ and break the interval of the last term on the right-hand side into $[1, t_*]$ and $[t_*, \infty)$ to find
\begin{align}
\begin{split}
\sup_{t\in [t_*,\infty)}\| w(t) \|_{L^2}
&\le C\big(\|{w}(2) \|_{L^2}+K+ \sup_{s\in [1,\infty)} \| f(s) \|_{L^2}+ \sup_{s\in [1,t_*]} e^{-\frac\beta2 s}\|w(s) \|_{L^2}\big)\\
&\qquad+Ce^{-\frac\beta2 t_*}\sup_{s\in [t_*,\infty)} \|w(s) \|_{L^2}.
\end{split}
\end{align}
Choosing $t^*>2$ large enough such that $C e^{-\beta t^* /2} <1/2$ then produces
\begin{equation}
\sup_{t\in [t_*,\infty)}\| w(t) \|_{L^2}
\le C\big(K+ \sup_{s\in [1,\infty)} \| f(s) \|_{L^2}+ \sup_{s\in [1,t_*]}\|w(s) \|_{L^2}\big).
\end{equation}
Furthermore by the local estimate \cite[Proposition 3.9]{HKWX}, for any $t>3$ and $k=0,1$, we have
\begin{equation}\label{foanoinoinhjj}
|\nabla_{\hat g}^k w_1(t)|+(\sin\theta)^{k-3-\alpha}|\nabla_{\hat g}^k w_2(t)|
\le C\big(K+ \sup_{s\in [1,\infty)} \| f(s) \|_{L^2}+ \sup_{s\in [1,t_*]}\|w(s) \|_{L^2}\big).
\end{equation}
Following the proof of Proposition \ref{prop:linear-asymptotic-cylinder}, using \eqref{foanoinoinhjj} in place of \eqref{eq:time-slice-1}, yields the desired \eqref{eq-linear-limit-decay-cylinder} with the $L^\infty$-norms of $w_1$ and $e^{2u}w_2$ on $[0, \infty)\times \mathbb S^2$ replaced by that on $[0, t_*]\times \mathbb S^2$.
\end{proof}

\begin{rem}
Given the propositions above, one can improve \eqref{qaohoihqhq} by identifying higher order limits of $(u,v)$ as $x\to 0$. In particular, we find that $\beta>0$ depends only on $\mu_2$.
\end{rem}

\section{Differentiability of Singular Harmonic Maps With Respect to Parameters}
\label{sec:diff-dp} \setcounter{equation}{0}

We now study the harmonic maps discussed in Section \ref{secflow} which depend on parameters, and provide a proof of Theorem \ref{thm:smoothness} and Corollary \ref{cor:linest}.  Let $\mathbf{z}_0=(z_{0,1},\dots, z_{0,N})$ with $z_{0,1}<z_{0,2} <\dots<z_{0,N}$ denote the collection of $z$-coordinates for points $p^0_i=(0,0,z_{0,i})\in\mathbb{R}^3$, and set 
$\delta_{0}:=\frac{1}{8}\min_{1\le i\le N-1}\{z_{0,i+1}-z_{0,i}\}$. Generic points in $\mathbb{R}^3$ will be denoted by
$y=(y_1,y_2,y_3)$. Let $\bar \chi_1$ be a smooth function on $[0,\infty)$ with support in $[0,\sqrt{2}\delta_0)$, $0\le \bar \chi_1\le 1$, and $\bar \chi_1(t)= 1$ for $t\le \delta_0$, and define
\begin{equation}
\bar \chi(y)=\bar \chi_1(\rho(y)) \cdot \bar \chi_1(|y_3|)
\end{equation}
where $\rho(y)= \sqrt{y_1^2+y_2^2}$. Consider the map $\Pi_{\mathbf{z}}:\R^3 \to \R^3 $ given by
\begin{equation}
y\mapsto  y+\sum_{i=1}^N (z_i-z_{0,i})\bar \chi(y_1,y_2,y_3-z_{0,i}) e_3=:x,
\end{equation}
where $e_3=(0,0,1)$. Note that it is a diffeomorphism when $\mathbf{z}\in B_{\delta_1}^N(\mathbf{z}_0)$ for some small $\delta_1>0$, and that $\Pi_{\mathbf{z}}(p_i^0) =p_{i}$ for every $1\le i\le N$. Furthermore,
the coefficients of the pull back of the Euclidean metric $g_{\mathbf{z}}=\Pi_{\mathbf{z}}^* \pmb{\delta}=(g_{\mathbf{z}})_{ab}dy^a dy^b$ take the form
\begin{equation}\label{eq:metric-perturbation}
(g_{\mathbf{z}})_{ab}(y)=\pmb{\delta}_{ab}+ \sum_{i=1}^N\Big((z_i-z_{0,i})(\pmb{\delta}_{a3}\pa_b \bar \chi  +\pmb{\delta}_{b3}\pa_a \bar \chi)  +(z_i-z_{0,i})^2\pa_{a} \bar{\chi} \pa_{b} \bar{\chi}\Big)(y_1,y_2,y_3-z_{0,i}).
\end{equation}
Notice that
\begin{equation}
(g_{\mathbf{z}})_{ab}=\pmb{\delta}_{ab} \quad \text{on } \cup_{i=1}^N B_{\delta_0}(p_i^0) \cup (\mathbb{R}^3\setminus \cup_{i=1}^N B_{2\delta_0}(p_i^0) ) ,
\end{equation}
and 
\begin{equation}\label{anfoininihnmojj}
\Delta_{g}\ln\rho=\Delta_{\Pi_{\mathbf{z}}^* \pmb{\delta}}\left(\Pi_{\mathbf{z}}^* \ln\rho\right)=\Pi_{\mathbf{z}}^* \left(\Delta_{\pmb{\delta}}\ln \rho\right)=0\quad\text{in }\mathbb{R}^3.
\end{equation}
Moreover, we may choose $0<\va_0\le \min \{\delta_0,\delta_1\}$ small enough so that, in the sense of positive definite matrices, $\frac{1}{2} \pmb{\delta}\le  g_{\mathbf{z}}\le 2\pmb{\delta}$ for $\mathbf{z}\in B_{\va_0}(\mathbf{z}_0)$. For convenience, in what follows we will forgo the subscript $\mathbf{z}$ and simply write $g$ in place of $g_{\mathbf{z}}$.

For simplicity, we shall only consider the case when one component $z_{i_0}$ of $\mathbf{z}$ is varying, so that $z_{i}=z_{0,i}$ for $i\neq i_0$. It will become evident that the general case can be proved similarly, albeit with a tedious but straightforward adaptation. Without loss of generality we may assume that $z_{0,i_0}=0$, and denote $s:=z_{i_0}\in (-\delta_0, \delta_0)$. To streamline the notation set
\begin{equation}\label{eq:definition-u-s}
\Phi(s, y)=(u(s,y),v(s,y)):=\Phi_{\mathbf{z}}(\Pi_{\mathbf{z}}(y)),
\end{equation}
where $\Phi_{\mathbf{z}}$ is the harmonic map defined in Section \ref{secflow}. In the $y$ coordinates, the harmonic map system becomes
\begin{equation}\label{eq:perturbation}
\Delta_{g} u- 2e^{4u} |\nabla_{g} v|^2 =0,\quad\quad
\Delta_{g} v+4 \nabla_{g} u \cdot\nabla_{g} v=0.
\end{equation}
Let $\sigma$ be a smooth function on $\mathbb R^3\setminus \{p_1^0,\dots, p_N^0\}$ such that
\begin{equation}
\sigma(y)= \begin{cases}
\text{dist}_{\pmb{\delta}}(y, \{p_1^0,\dots, p_N^0\} ) &\text{if }\text{dist}_{\pmb{\delta}}(y, \{p_1^0,\dots, p_N^0\} ) \le 2 \delta_0,\\
|y| &\text{if }|y| \ge  2 \sum_{i=1}^{N} |z_i|+1,
\end{cases}
\end{equation}
and
\begin{equation}
\frac{1}{2}\text{dist}_{\pmb{\delta}}(y, \{p_1^0,\dots, p_N^0\} ) \le \sigma(y)\le 2 \text{dist}_{\pmb{\delta}}(y, \{p_1^0,\dots, p_N^0\} ).
\end{equation}

\begin{lem} \label{lem:compactness-1}
Let $\Phi=\Phi(s)=(u(s), v(s))$ be given by \eqref{eq:definition-u-s}, for any $s\in [-\varepsilon_0, \varepsilon_0]$.
Then 
\be \label{eq:global-distance-bound}
d_{\mathbb{H}^2} (\Phi(s),\Phi(0))\le \Lda\quad\text{in }\mathbb R^3\setminus \Gamma, 
\ee
where $\Lda$ is a positive constant depending only on $\va_0$ and $\{p_1^0,\dots, p_N^0\}$.
Moreover for any $\mathbf{r}\ge 2(|z_{0,1}|+|z_{0,N}|)$, $\alpha \in (0,1)$, and $k=1,2,3$ we have 
\be \label{eq:tangent-uniform-1}
|\nabla_g ^k \Big(u(s)+\ln \frac {\rho} {\sigma} \Big) |
+ \Big(\frac{\rho}{\sigma }\Big)^{k-3-\alpha} |\nabla_g ^k v(s)|
\le C(\mathbf{r})\sigma ^{-k}  \quad \mbox{in }B_{\mathbf{r}}\setminus\Gamma,
\ee
where $C(\mathbf{r})$ is a constant independent of $s$.
\end{lem}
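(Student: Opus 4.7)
The plan is to establish the two estimates in sequence: first the global hyperbolic distance bound \eqref{eq:global-distance-bound} via a maximum principle comparison between $\Phi(s)$ and $\Phi(0)$, and then the uniform derivative estimate \eqref{eq:tangent-uniform-1} by feeding this into the asymptotic analysis of \cite{HKWX} and Section~\ref{sec:linear-hm}, with constants chosen uniformly in $s$.

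For the global distance bound, I would exploit that in the $y$-coordinates both $\Phi(s)$ and $\Phi(0)$ share the same punctures $\{p_1^0,\dots,p_N^0\}$, the same potential constants on corresponding axis rods, and identical leading singular profiles $-\ln\rho + \ln r_i$ near each $p_i^0$ (since $a_i = 2\mathcal J_i$ is independent of $\mathbf{z}$). The map $\Phi(s)$ is harmonic with respect to $g = \Pi_{\mathbf{z}}^*\pmb{\delta}$ while $\Phi(0)$ is harmonic with respect to $\pmb{\delta}$, and by \eqref{eq:metric-perturbation} the two metrics agree both on $\cup_i B_{\delta_0}(p_i^0)$ and on $\R^3\setminus\cup_i B_{2\delta_0}(p_i^0)$. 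Consequently, the tension $\tau_g(\Phi(0))$ of $\Phi(0)$ with respect to $g$ is compactly supported in $\cup_i(B_{\sqrt{2}\delta_0}(p_i^0)\setminus B_{\delta_0}(p_i^0))$, a region bounded away from $\Gamma$ and from the punctures, and $|\tau_g(\Phi(0))|$ is uniformly bounded by a constant depending only on $\va_0$ and $\{p_i^0\}$.

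Applying the Bochner-type inequality \cite[Lemma~1]{WeinsteinGR} to $\Lambda(\Phi(s),\Phi(0)) = \sqrt{1+d_{\H^2}(\Phi(s),\Phi(0))^2}$ gives
\begin{equation*}
\Delta_g\Lambda \ge -|\tau_g(\Phi(0))| \quad\text{on } \R^3\setminus\Gamma.
\end{equation*}
I would then select a positive compactly supported $w\in C^{1,1}(\R^3)$ with $\Delta_g w \le -|\tau_g(\Phi(0))|$ and $\|w\|_\infty$ controlled by $\va_0$ and $\{p_i^0\}$, so that $\Lambda - w$ is $g$-subharmonic off the axis. The asymptotic expansions \eqref{3.1}--\eqref{3.3} near punctures, the rod regularity \cite[Theorem~1.1]{LT}, and the far-field expansion \eqref{asyminfin}, combined with the continuity of the tangent-map parameters $b_i(\mathbf{z})$ from Proposition \ref{blipschitz}, show that $\Lambda$ remains bounded near each puncture and each rod interior and that $\Lambda\to 1$ at infinity. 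Applying the weak maximum principle \cite[Lemma~8]{WeiDuke} (as in the proofs of Propositions \ref{prop:collision}, \ref{prop:scattering}, which extend subharmonic comparison across $\Gamma$) then yields \eqref{eq:global-distance-bound} with $\Lda$ depending only on $\va_0$ and $\{p_i^0\}$.

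For the derivative estimate \eqref{eq:tangent-uniform-1}, the bound \eqref{eq:global-distance-bound} supplies the uniform $L^\infty$-control needed to run the asymptotic analysis of \cite[Theorems~2.1--2.3]{HKWX} with constants independent of $s$. Near each puncture $p_i^0$, the expansions \eqref{3.1}--\eqref{3.3} provide the decomposition $u(s) + \ln(\rho/\sigma) = \bar U(\theta_i,b_i(s)) + \tilde U_{\mathbf{z},i}$ and $v(s) = \bar v(\theta_i,b_i(s)) + \tilde v_{\mathbf{z},i}$, and the weight $(\rho/\sigma)^{k-3-\alpha}$ in \eqref{eq:tangent-uniform-1} reproduces the factor $e^{(k-3-\alpha)u(s)}$ appearing in the $v$-component of the expansion. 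Standard interior elliptic estimates for the harmonic map equations \eqref{eq:perturbation} on balls of radius comparable to $\sigma$, applied with the distance bound \eqref{eq:global-distance-bound} as a starting $L^\infty$-control, then supply the derivative bounds for $k=1,2,3$ in the entire $B_{\mathbf{r}}\setminus\Gamma$. The main obstacle is verifying that the exponents $\beta_i$ and the implicit constants in the asymptotic expansions of \cite{HKWX} can be chosen uniformly in $s\in[-\va_0,\va_0]$; this requires inspecting the proofs there to confirm that the relevant quantities depend only on the $L^\infty$-distance from a fixed tangent profile (controlled by \eqref{eq:global-distance-bound}) and on the continuity of $b_i(\mathbf{z})$ from Proposition \ref{blipschitz}.
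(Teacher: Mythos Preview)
Your approach to \eqref{eq:global-distance-bound} differs from the paper's and contains a genuine error. The paper simply invokes \cite[Proposition~2.1]{CLW} (and \cite{WeinsteinHadamard}), whose construction already furnishes a uniform hyperbolic-distance bound between the harmonic map and a fixed model map, adapted from $\pmb\delta$ to $g$; triangle inequality then compares $\Phi(s)$ and $\Phi(0)$. Your direct Bochner/maximum-principle comparison is a reasonable alternative in spirit, but it rests on the assertion that the support of $\tau_g(\Phi(0))$---namely $\cup_i\big(B_{2\delta_0}(p_i^0)\setminus B_{\delta_0}(p_i^0)\big)$---is ``bounded away from $\Gamma$.'' This is false: each such spherical annulus meets the axis in the two arcs $\{\rho=0,\ |y_3-z_{0,i}|\in(\delta_0,2\delta_0)\}$. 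Once the support touches $\Gamma$, the tension is no longer obviously bounded in the $\mathbb H^2$-norm: the component $e^{2u(0)}|\tau_{g}(\Phi(0))_2|$ carries a factor $\rho^{-2}$, and the generic estimate $|\nabla^2 v(0)|=O(\rho^{1+\alpha})$ from \cite[Proposition~3.8]{HKWX} only gives $O(\rho^{\alpha-1})$. A fix is possible---near the axis with $\rho<\delta_0$ only $g_{33}-\pmb\delta_{33}$ is nonzero, so only $\partial_3$-derivatives of $v(0)$ enter, and these vanish to higher order by axisymmetry---but this analysis is absent from your argument and is exactly the kind of work the paper avoids by citing the existing model-map bound.

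There is also a circularity: you invoke Proposition~\ref{blipschitz} for the continuity of $b_i(\mathbf z)$, but that proposition is proved via Theorem~\ref{thm:smoothness}, whose proof (Section~\ref{sec:diff-dp}) requires the present lemma. The reference is in fact unnecessary---boundedness of $d_{\mathbb H^2}(\Phi(s),\Phi(0))$ near $p_i^0$ follows from \eqref{3.1}--\eqref{3.3} with $s$-dependent constants, which suffices for the weak maximum principle---but as written the dependency is circular. For \eqref{eq:tangent-uniform-1}, your plan (feed the distance bound into \cite[Proposition~3.8, Theorem~2.3]{HKWX}) matches the paper's; note however that \eqref{eq:tangent-uniform-1} is only a scale-invariant derivative bound and does \emph{not} require uniform constants in the full tangent-map expansion \eqref{eq:HKWX-main-1}---that uniformity is the separate content of Lemma~\ref{lem:interior-uniform}.
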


\begin{proof} 
Both estimates are consequences of slight modifications of the proofs from previous results, where the flat metric $\pmb\delta$ is replaced by $g$. In particular, the first conclusion follows from Proposition 2.1 of \cite{CLW} (see also \cite{WeinsteinHadamard}). The second one follows from Proposition 3.8 and Theorem 2.3 of \cite{HKWX}, together with standard estimates in regions away from the axis. 
\end{proof}

\begin{rem}
Notice that although the distance between $\Phi(s)$ and $\Phi(0)$ is defined only on $\mathbb{R}^3\setminus \Gamma$, according to \cite[Theorems 2.1 and 2.3]{HKWX} it lies within $C^{3,\alpha}(\mathbb R^3\setminus \{p_1^0,\dots, p_N^0\})$ for any $\alpha\in(0,1)$.
\end{rem}

By Theorem 2.1 of \cite{HKWX} (see also \eqref{qaohoihqhq} above), for each $s\in [-\va_0,\va_0]$ and each $p_i$, $1\le i\le N$ there exists a unique tangent map $\bar \Phi_i=(\bar u_{i},\bar v_{i})$ from $\mathbb{S}^2\setminus \{N,S\}$ to $\mathbb{H}^2$, such that
for any $l+k\le 3$ we have  
\be\label{eq:HKWX-main-1}
\begin{split}
\sup_{\mathbb{S}^2 \setminus\{N,S\}} \Big(& |(r_i \pa_{r_i})^l \nabla_{\mathbb{S}^2} ^k (u(s,r_i,\theta_i)-\bar u_{i}(s,\theta_i) ) |
\\& +e^{(3+\alpha-k)\bar u_{i}} |(r_i \pa_{r_i})^l \nabla_{\mathbb{S}^2} ^k (v(s,r_i,\theta_i) -\bar v_i(s,\theta_i)  ) |\Big)
\le C_1(s)r_i^{\beta}
\end{split}
\ee
in $B_{\delta_0}(p_i^0)\setminus \{p_i^0\}$, where $\beta>0$ is independent of $s$, $r_i= |y-p_i^0|$, and $\sin\theta_i= \tfrac{\rho(y)}{r_i}$.
Furthermore, $(\bar u_{i}+\ln \frac{\rho}{r_i},\bar v_{i})\in C^{3,\alpha } (\mathbb{S}^2)$
for any $\alpha \in (0,1)$, and $\bar v_{i}=c_{i}$ at the south pole $S$ while $\bar v_{i}=c_{i+1}$ at the north pole $N$; see \eqref{eq:tange-map} for the explicit form. We will now prove that the dependence of $C_1(s)$ on $s$ can be removed.

\begin{lem} \label{lem:interior-uniform}
Let $\Phi=\Phi(s)$ be given by \eqref{eq:definition-u-s}, for any $s\in [-\varepsilon_0, \varepsilon_0]$.
Then the constant $C_1(s)$ in \eqref{eq:HKWX-main-1} is uniformly bounded for $s\in [-\va_0, \va_0]$, and hence may be chosen independent of $s$.
\end{lem}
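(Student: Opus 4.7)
The plan is to argue by inspecting the constants appearing in the asymptotic expansion \eqref{eq:HKWX-main-1}, combined with compactness of the tangent maps in $s$. The estimate \eqref{eq:HKWX-main-1} originates from \cite[Theorems 2.1, 2.2]{HKWX}; tracing through the proof there (whose structure parallels that of Section \ref{sec:linear-hm}), the constant $C_1(s)$ depends on three ingredients: (i) an $L^\infty$ bound on $u(s)+\ln(\rho/r_i)$ together with the weighted version of $v(s)-\bar v_i(s)$ on a fixed annulus around $p_i^0$, (ii) the tangent map parameters $b_i(s), a_i$, and (iii) the spectral gap $\mu_2(s)$ of the linearized operator $\mathcal{T}$ at $\bar{\Phi}_i(s)$, which determines the admissible range of the decay exponent $\bar\beta$. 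The task is therefore reduced to showing each of these is uniformly controlled in $s\in[-\varepsilon_0,\varepsilon_0]$.

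Item (i) follows immediately from Lemma \ref{lem:compactness-1}. By the construction of $\Pi_{\mathbf{z}}$, the pulled-back metric $g$ coincides with $\pmb{\delta}$ on $B_{\delta_0}(p_i^0)$, so \eqref{eq:perturbation} is the standard harmonic map system there, and \eqref{eq:tangent-uniform-1} (applied for some $\mathbf{r}$ that is uniform in $s$ since the punctures $p_i^0$ are fixed) provides uniform $C^3$ control on the relevant quantities on any annulus $\{r_i\in[\delta_0/4,\delta_0]\}$. Item (ii) is addressed via continuity of the map $s\mapsto b_i(s)$ which was already established in Proposition \ref{blipschitz} through the angle-defect representation $b_i(s)=\tanh(\mathcal{I}_i(\mathbf{z}(s))/2)$; the integrand defining $\mathcal{I}_i$ is built from $(u,v,\nabla u,\nabla v)$ and is continuous in $s$ thanks precisely to the uniform bounds in Lemma \ref{lem:compactness-1}. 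The constants $a_i=2\mathcal{J}_i$ are by construction independent of $s$.

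For item (iii), the explicit formula \eqref{eq:tange-map} shows that $(\bar u_i(s),\bar v_i(s))$ depends smoothly on $(b_i(s),a_i)$, so the linearized operator $\mathcal{T}$ and its second eigenvalue $\mu_2(s)$ depend continuously on $s$. Over the compact interval $[-\varepsilon_0,\varepsilon_0]$ this yields a uniform lower bound $\mu_2(s)\ge\mu_*>0$, hence a single choice of $\bar\beta\in(0,\beta]$ satisfying $\bar\beta^2+\bar\beta<\mu_*$ works for all $s$. With these three uniform ingredients, the iteration underlying \cite[Theorems 2.1, 2.2]{HKWX} — which is the nonlinear counterpart of Proposition \ref{prop:linear-boundness-cylinder} and whose perturbation term is controlled exactly as in Lemma \ref{lemma-Q-decay} — produces the decay \eqref{eq:HKWX-main-1} with $C_1$ depending only on the quantities (i)--(iii), hence independent of $s$.

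The main obstacle is ensuring that the nonlinear iteration scheme indeed tracks only those uniform quantities. Concretely, one must verify that the $\mathcal{Q}$-type remainder, whose size is controlled by $|u(s)-\bar u_i(s)|+|\nabla_{\hat g}(u(s)-\bar u_i(s))|+e^{2u(s)}|\nabla_{\hat g}(v(s)-\bar v_i(s))|$, becomes uniformly small at a base radius $t_*$ chosen independently of $s$ (cf.\ Proposition \ref{prop:linear-boundness-cylinder}). If this smallness failed uniformly, a compactness/contradiction argument using Lemma \ref{lem:compactness-1} and the continuity of $\bar\Phi_i(s)$ would extract a limiting tangent map at which the corresponding error is positive, contradicting \eqref{qaohoihqhq} for that limiting map. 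This is where the bulk of the technical work lies; once executed, the uniform bound $C_1(s)\le C$ follows.
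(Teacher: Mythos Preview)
Your approach has a logical circularity. You invoke Proposition \ref{blipschitz} for the continuity of $s\mapsto b_i(s)$, but that proposition is proved using Theorem \ref{thm:smoothness}, and Lemma \ref{lem:interior-uniform} is part of the build-up toward Theorem \ref{thm:smoothness} in Section \ref{sec:diff-dp}. Likewise, in your final paragraph you appeal to ``the continuity of $\bar\Phi_i(s)$'', which at this stage has not been established either. Your claim that continuity of $\mathcal I_i$ in $s$ follows ``thanks precisely to the uniform bounds in Lemma \ref{lem:compactness-1}'' is also not right as stated: uniform bounds alone do not give continuity in $s$. The argument can be repaired --- what you actually need for items (ii) and (iii) is not continuity in $s$ but a uniform bound $|b_i(s)|\le 1-\eta$, and that does follow from the integral formula \eqref{poniaonoinhh} together with the $s$-uniform derivative estimates of Lemma \ref{lem:compactness-1}; then $\mu_2$ is bounded below because it depends continuously on $b_i\in(-1,1)$, not on $s$ --- but as written the proposal is circular.

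The paper's proof sidesteps tracking $b_i$ or $\mu_2$ entirely. It argues by contradiction with a sequence $s_n$, uses Arzel\`a--Ascoli (fed by \eqref{eq:global-distance-bound} and \eqref{eq:tangent-uniform-1}) to make $d_{\mathbb H^2}(\Phi(s_n),\Phi(s_m))$ small on an annulus $\overline B_{\delta_0}(p_i^0)\setminus B_{\delta_0/2}(p_i^0)$, and then exploits the crucial fact that on $B_{\delta_0}(p_i^0)$ the metric $g(s)$ is flat for every $s$, so both $\Phi(s_n)$ and $\Phi(s_m)$ are harmonic for the \emph{same} background metric and $\sqrt{1+d_{\mathbb H^2}^2}$ is subharmonic there. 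The maximum principle then pushes the smallness from the annulus to the entire punctured ball, which is exactly the input needed to make the constants in \cite[Lemmas 5.6, 5.7, Theorem 5.8]{HKWX} uniform. Your outline never invokes this maximum-principle step, and without it (or a repaired version of your items (ii)--(iii)) the uniform smallness of the $\mathcal Q$-remainder near the puncture that you flag in your last paragraph is not secured.
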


\begin{proof}
Take an arbitrary sequence $s_n\in [-\va_0, \va_0]$, and
let $\Phi(s_n)$ be the corresponding solutions of \eqref{eq:perturbation}.
By \eqref{eq:global-distance-bound} and \eqref{eq:tangent-uniform-1}, the Arzel\`a-Ascoli theorem may be applied to extract a subsequence, still denoted by $\Phi(s_n)$, such that 
\begin{equation}
\lim_{n,m\to \infty}d_{\mathbb{H}^2} (\Phi(s_n), \Phi(s_{m})) =0
\quad \text{in }\overline B_{\delta_0}(p_i^0)\setminus (B_{\delta_0/2}(p_i^0) \cup \Gamma).
\end{equation}
By \eqref{eq:global-distance-bound}, we have
$d_{\mathbb{H}^2} (\Phi(s_n), \Phi(s_{m})) \le C$ in $B_{\delta_0}(p_n^0)$. 
Note also that the negative curvature of the target space yields 
\begin{equation}
\Delta \sqrt{1+d_{\mathbb{H}^2} (\Phi(s_n), \Phi(s_{m}))^2} \ge 0  \quad \text{in }B_{\delta_0}(p_i^0) \setminus \Gamma.
\end{equation}
Therefore, the maximum principle \cite[Lemma 8]{WeiDuke} implies
\begin{equation}
\max_{B_{\delta_0}(p_i^0)\setminus\{p_i^0\}}d_{\mathbb{H}^2} (\Phi(s_n), \Phi(s_{m}))
\le \max_{\pa B_{\delta_0}} d_{\mathbb{H}^2} (\Phi(s_n), \Phi(s_{m})).
\end{equation}
These estimates show that the constants $C_*$ and $C$ from Lemmas 5.6 and 5.7 of \cite{HKWX}, are independent of $n$. It then follows from the proof of \cite[Theorem 5.8]{HKWX} that $C_1(s_{n})$ is uniformly bounded.
The desired result may now be obtained from a simple contradiction argument.
Suppose that there exists a sequence $s_n\in [-\va_0, \va_0]$ 
such that $C_1(s_n)\to \infty$ as $n\to\infty$.
The argument above shows that $C_1(s_n)$ is bounded, leading to a contradiction.
\end{proof}

Let $\mathbf{r}_0>8(|z_{0,1}|+|z_{0,N}|)$ be a large number. According to Theorem 2.3 of \cite{HKWX}, for each $s$ and any $\alpha\in(0,1)$
there exist constants $\nu_0$, $\nu_1$, $\nu_2$, $C$, and spherical harmonics $Y_1$, $Y_2$ of degrees $1$ and $2$ such that within $B_{\mathbf{r}_0}^c$ we have
\begin{equation}\label{eq:HKWX-main-2}
\begin{split}
&\big|u (s, r, \theta)+\ln \rho -\nu_0-\nu_1 r^{-1}-Y_1 r^{-2}-Y_2 r^{-3}\big|_{C^3(\mathbb{S}^2)} \leq Cr^{-3-\beta_0},\\
&\big|v (s, r, \theta) - \mathcal{J}_0\cos \theta (3-\cos^2 \theta )-\nu_2 r^{-1} \sin^ 4 \theta \big |_{C^3(\mathbb{S}^2)}
\leq Cr^{-1-\beta_0}(\sin\theta)^{3+\alpha},
\end{split}
\end{equation}
where $r=|y|$, $\mathcal{J}_0$ is a multiple of total angular momentum, and $\beta_0\in(0,1)$ is independent of $s$.  Moreover, the corresponding asymptotics are valid for the $r$-derivatives of $(u,v)$.
We point out that the constants $\nu_0$, $\nu_1$, $\nu_2$, $C$ and functions
$|Y_1|$, $|Y_2|$ have uniform bounds for $s\in [-\va_0,\va_0]$, which depend only on $\Lda$ and $\mathbf{r}_0$.

\begin{lem}\label{lem:exterior-uniform}
Let $\Phi=\Phi(s)$ be given by \eqref{eq:definition-u-s}, for any $s\in [-\varepsilon_0, \varepsilon_0]$.
Then $\nu_0=0$ for all $s\in [-\va_0,\va_0]$ and
\begin{equation}
d_{\mathbb{H}^2} (\Phi(s,y),\Phi(0,y)) \le C r^{-1} \quad \mbox{in }B_{\mathbf{r}_0}^c,
\end{equation}
where $C$ is a positive constant independent of $s$.
\end{lem}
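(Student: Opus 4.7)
The plan is to exploit the refined asymptotic expansion \eqref{eq:HKWX-main-2} together with its uniform-in-$s$ constants, which are asserted in the paragraph preceding the lemma. First I would establish $\nu_0(s)=0$. Because $\bar\chi$ has compact support, the diffeomorphism $\Pi_{\mathbf{z}}$ is the identity outside a fixed compact set: one checks directly from the definition that $\Pi_{\mathbf{z}}(y)=y$ for all $|y|\geq\mathbf{r}_0$, so $\Phi(s,y)=\Phi_{\mathbf{z}}(y)$ there. The asymptotic flatness expansion \eqref{asyminfin} gives $u_{\mathbf{z}}+\ln\rho=c_1 r^{-1}+O(r^{-2})$ with no $O(1)$ term, and comparison with \eqref{eq:HKWX-main-2} forces $\nu_0(s)=0$. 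Parallel to this, the angular leading term $\mathcal{J}_0\cos\theta(3-\cos^2\theta)$ of $v$ is independent of $s$, since $\mathcal{J}_0$ is proportional to the total angular momentum $\sum_i\mathcal{J}_i=\tfrac{1}{4}\sum_i(c_{i+1}-c_i)$, which is fixed throughout by the prescribed potential constants.

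Subtracting the expansions \eqref{eq:HKWX-main-2} for parameter $s$ and $0$, and using the uniform bounds on $\nu_1$, $\nu_2$, $Y_1$, $Y_2$, and on the remainders, I would obtain
\[
|u(s,y)-u(0,y)|\leq Cr^{-1},\qquad |v(s,y)-v(0,y)|\leq C\bigl(r^{-1}\sin^4\theta+r^{-1-\beta_0}(\sin\theta)^{3+\alpha}\bigr),
\]
uniformly for $s\in[-\va_0,\va_0]$ and $y\in B_{\mathbf{r}_0}^c$. To convert these into a hyperbolic distance bound, I would use the straight-line path in horospherical coordinates from $\Phi(0,y)$ to $\Phi(s,y)$; its length dominates the geodesic distance and gives
\[
d_{\mathbb{H}^2}(\Phi(s,y),\Phi(0,y))\leq |u(s)-u(0)|+|v(s)-v(0)|\cdot\max\{e^{2u(s)},e^{2u(0)}\}.
\]
Since $\nu_0=0$ gives $e^{2u(s)}\leq C\rho^{-2}=C(r\sin\theta)^{-2}$ at this scale, the $v$-contribution is controlled by
\[
C(r\sin\theta)^{-2}\bigl(r^{-1}\sin^4\theta+r^{-1-\beta_0}(\sin\theta)^{3+\alpha}\bigr)=Cr^{-3}\sin^2\theta+Cr^{-3-\beta_0}(\sin\theta)^{1+\alpha}=O(r^{-3}),
\]
which combined with $|u(s)-u(0)|=O(r^{-1})$ yields the claimed bound.

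The main subtle point, which is invoked but not re-proved in this step, is the claimed uniformity in $s$ of the coefficients in \eqref{eq:HKWX-main-2}. This parallels Lemma \ref{lem:interior-uniform}: along a hypothetical sequence $s_n$ with some coefficient blowing up, the global distance bound \eqref{eq:global-distance-bound} together with the interior estimate \eqref{eq:tangent-uniform-1} and standard elliptic compactness on exterior annular regions would produce a convergent subsequence $\Phi(s_n)\to\Phi(s_*)$; the limit inherits a finite-coefficient expansion from \cite[Theorem 2.3]{HKWX}, yielding the contradiction.
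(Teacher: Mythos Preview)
Your argument for the distance bound is correct and matches the paper's implicit reasoning: once $\nu_0=0$, subtracting the two expansions \eqref{eq:HKWX-main-2} and converting via the hyperbolic distance formula gives the $Cr^{-1}$ estimate.

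The gap is in your justification of $\nu_0=0$. You invoke \eqref{asyminfin}, but that expansion's lack of a constant term is precisely the content of the present lemma; the remark ``due to asymptotic flatness'' following \eqref{asyminfin} is an informal forward reference, not an independent result. The harmonic maps $\Phi_{\mathbf z}$ are PDE constructions from \cite{CLW}, not directly read off from an asymptotically flat initial data set, so one cannot simply cite \eqref{asymdecaycoef}. And \cite{CLW} by itself only yields a uniform hyperbolic distance bound between $\Phi$ and the extreme Kerr model map $\tilde\Phi$ on compact subsets, which does not rule out a nonzero $\nu_0$.

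The paper closes this by bringing in the convexity/gap estimate of \cite[Theorem 4.1]{KhuriWeinstein}: the construction in \cite{CLW} together with that estimate yields
\[
\Big(\int_{\R^3} d_{\mathbb H^2}(\Phi,\tilde\Phi)^6\,dx\Big)^{1/3}<\infty.
\]
On the other hand, the expansion \eqref{eq:HKWX-main-2} combined with the explicit extreme Kerr asymptotics for $\tilde\Phi$ gives, for large $|x|$,
\[
\cosh\bigl(2d_{\mathbb H^2}(\Phi,\tilde\Phi)\bigr)=\cosh\bigl(2\nu_0+O(|x|^{-1})\bigr)+O(|x|^{-2}),
\]
so that $d_{\mathbb H^2}(\Phi,\tilde\Phi)\to|\nu_0|$ at infinity. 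Finiteness of the $L^6$ integral over $\R^3$ then forces $\nu_0=0$. Your compactness sketch in the last paragraph addresses a different point (uniformity of the coefficients in $s$) and does not substitute for this step.
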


\begin{proof}
For each fixed $s$, consider the original $x$ coordinates.
In this setting, recall the construction of $\Phi$ in \cite{CLW}.
In the proof of \cite[Proposition 2.1]{CLW}, a model (or reference) map $\tilde{\Phi}=(\tilde{u}, \tilde{v})$ is constructed such that $\tilde{v}=c_i$ on the interval $\Gamma_i\subset \Gamma$, $1\leq i\leq N$ and
$\tilde{\Phi}$ coincides, for all large values of $|x|$, with the extreme Kerr solution having potential constant value $c_{N+1}$ for $x_3>z_{N}$
and value $c_1$ for $x_3<z_{1}$ on the $x_3$-axis.
Next, one solves the Dirichlet problem for the harmonic map system in a sequence of exhausting domains $\om_n= \{x\mid\rho(x)>1/n, |x|<n \}$, with boundary values given by the above model map; this generates a sequence of harmonic maps $\Phi_n$ in $\Omega_n$. It is then shown that there is a uniform hyperbolic distance bound between $\Phi_n$ and $\tilde{\Phi}$ on compact subsets.
The desired solution $\Phi$ is obtained by sending $n$ to infinity and passing to a subsequence. 
Furthermore, by Theorem 4.1 of \cite{KhuriWeinstein} (with motivation from \cite{SchoenZhou}) we have
\begin{equation}
\left(\int_{\R^3} d_{\mathbb{H}^2} (\Phi, \tilde{\Phi} )^6 dx\right)^{1/3}  <\infty.
\end{equation}
On the other hand, for large $|x|$ the expansions \eqref{eq:HKWX-main-2} imply
\begin{equation}\label{afoniaoinionihhh}
\cosh \left(2d_{\mathbb{H}^2} (\Phi, \tilde{\Phi} )\right) =\cosh \left(2(u -\tilde{u})\right)+2 e^{2(u +\tilde{u})} (v -\tilde{v})^2
= \cosh [2\nu_0+O(|x|^{-1})] +O(|x|^{-2}),
\end{equation}
from which it follows that $\nu_0=0$.
\end{proof}

We will begin the study of regularity for $\Phi(s)$, with respect to the $s$-parameter, by establishing continuity in an $L^6$-sense. 

\begin{prop}\label{prop:integral-continuity}
Let $\Phi=\Phi(s)$ be given by \eqref{eq:definition-u-s}, for any $s\in [-\varepsilon_0, \varepsilon_0]$.
Then for any $s_1,s_2\in [-\va_0,\va_0]$ we have
\begin{equation}
\left(\int_{\R^3} d_{\mathbb{H}^2} (\Phi(s_1), \Phi(s_2))^6dy\right)^{1/6} \le C|s_1-s_2|^{1/2},
\end{equation}
where $C$ is a constant independent of $s_1$ and $s_2$.
\end{prop}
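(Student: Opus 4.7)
The plan is to combine the convexity/gap estimate for the renormalized harmonic map energy due to Schoen-Zhou \cite{SchoenZhou} and Khuri-Weinstein \cite[Theorem 4.1]{KhuriWeinstein} with a first-order comparison of the metrics $g_{s_1}$ and $g_{s_2}$. For each $s\in [-\va_0,\va_0]$, denote by
\begin{equation*}
\mathcal{E}_{g_s}(\Psi)=\int_{\R^3}\Big(|\nabla_{g_s}(\psi_1+\ln\rho)|_{g_s}^2+\frac{e^{4\psi_1}}{\rho^4}|\nabla_{g_s}\psi_2|_{g_s}^2\Big)\,dV_{g_s}
\end{equation*}
the renormalized energy with respect to $g_s$, and note that $\Phi(s)$ is the unique minimizer of $\mathcal{E}_{g_s}$ in the admissible class $\mathcal{A}$ of maps sharing the tangent-map singularities at the fixed punctures $p_1^0,\ldots,p_N^0$ and the prescribed potential constants $c_1,\ldots,c_{N+1}$ on the axis rods. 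After pulling back by $\Pi_{\mathbf{z}}$, both $\Phi(s_1)$ and $\Phi(s_2)$ belong to $\mathcal{A}$, so each serves as an admissible competitor for the other's variational problem.

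The key step is to invoke the gap estimate of \cite[Theorem 4.1]{KhuriWeinstein}, which asserts that for each $s$ and every $\Psi\in\mathcal{A}$,
\begin{equation}\label{eq:gap-proposal}
\mathcal{E}_{g_s}(\Psi)-\mathcal{E}_{g_s}(\Phi(s))\geq c_0\left(\int_{\R^3}d_{\H^2}(\Psi,\Phi(s))^6\,dy\right)^{1/3},
\end{equation}
where $c_0>0$ can be taken independent of $s\in[-\va_0,\va_0]$. Applying \eqref{eq:gap-proposal} once with $(s,\Psi)=(s_1,\Phi(s_2))$ and once with $(s,\Psi)=(s_2,\Phi(s_1))$, then adding, produces
\begin{equation*}
\big[\mathcal{E}_{g_{s_1}}(\Phi(s_2))-\mathcal{E}_{g_{s_2}}(\Phi(s_2))\big]-\big[\mathcal{E}_{g_{s_1}}(\Phi(s_1))-\mathcal{E}_{g_{s_2}}(\Phi(s_1))\big]\geq 2c_0\Big(\!\!\int_{\R^3}\!\!d_{\H^2}(\Phi(s_1),\Phi(s_2))^6 dy\Big)^{\!1/3}\!\!.
\end{equation*}

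To conclude, I would estimate the left-hand side by observing from \eqref{eq:metric-perturbation} that $g_{s_1}-g_{s_2}$ is supported in the compact set $A=\cup_i(B_{2\delta_0}(p_i^0)\setminus B_{\delta_0}(p_i^0))$, which is bounded away from the punctures, and that $|g_{s_1}-g_{s_2}|_{\pmb\delta}\leq C|s_1-s_2|$ on $A$. By the uniform local estimate \eqref{eq:tangent-uniform-1} together with Lemma~\ref{lem:interior-uniform}, the renormalized energy density of $\Phi(s)$ is uniformly bounded on $A$, so
\begin{equation*}
\big|\mathcal{E}_{g_{s_1}}(\Phi(s_j))-\mathcal{E}_{g_{s_2}}(\Phi(s_j))\big|\leq C|s_1-s_2|,\qquad j=1,2.
\end{equation*}
Inserting this into the previous inequality produces $(\int_{\R^3}d_{\H^2}(\Phi(s_1),\Phi(s_2))^6\,dy)^{1/3}\leq C|s_1-s_2|$, which is exactly the claimed bound after raising to the $1/2$ power.

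The main obstacle will be verifying that the constant $c_0$ in \eqref{eq:gap-proposal} can be chosen independently of $s$. This requires tracing how the constants in \cite[Theorem 4.1]{KhuriWeinstein} depend on the tangent-map data of the minimizer, on the coefficients in the expansion at infinity, and on the background metric; all of these are controlled uniformly in $s$ by Lemmas~\ref{lem:interior-uniform} and~\ref{lem:exterior-uniform}, together with the equivalence $\tfrac12\pmb\delta\leq g_s\leq 2\pmb\delta$. The remaining verification that the renormalized energy density of $\Phi(s)$ is $O(1)$ uniformly on the compact annular region $A$ is routine given \eqref{eq:tangent-uniform-1}, since $A$ avoids both $\Gamma$ and the punctures.
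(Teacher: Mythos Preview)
Your proposal is correct and follows essentially the same route as the paper: apply the Schoen--Zhou/Khuri--Weinstein gap estimate once at $s_1$ with competitor $\Phi(s_2)$ and once at $s_2$ with competitor $\Phi(s_1)$, add, rearrange into two terms of the form $\mathcal{E}_{g_{s_1}}(\Phi(s_j))-\mathcal{E}_{g_{s_2}}(\Phi(s_j))$, and bound each by $C|s_1-s_2|$ using that $g_{s_1}-g_{s_2}$ is supported in the annuli $B_{2\delta_0}(p_i^0)\setminus B_{\delta_0}(p_i^0)$ together with the uniform local bounds \eqref{eq:tangent-uniform-1}. One small slip: the annular set $A$ does intersect $\Gamma$ (the axis passes through each $p_i^0$), but this is harmless since \eqref{eq:tangent-uniform-1} already controls the renormalized energy density uniformly up to the axis away from the punctures.
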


\begin{proof}
Since $\rho(y)=\rho(\Pi_{\mathbf{z}}(y))$, we may renormalize the first term of the harmonic maps with the same expression in $x$ or $y$-coordinates by $U= u +\ln \rho$, and define the renormalized energy in $y$-coordinates with
\begin{equation}
\mathcal{E}_s(\Phi(s))= \int_{\R^3} (|\nabla_{g(s)} U(s)|^2+  e^{4u(s) } |\nabla_{g(s)} v(s) |^2 )\, dvol_{g(s)}.
\end{equation}
Using that $\Phi(s_j)$ is harmonic with respect to $g(s_j)$, $j=1,2$ the estimate of \cite[Theorem 4.1]{KhuriWeinstein} (see also \cite[Theorem 1.1]{SchoenZhou}) gives
\begin{align}
\begin{split}
&\mathcal{E}_{s_1} (\Phi(s_2)) - \mathcal{E}_{s_1} (\Phi(s_1)) \ge C\left(\int_{\R^3} d_{\mathbb{H}^2} (\Phi(s_1), \Phi(s_2))^6dy\right)^{1/3},\\
&  \mathcal{E}_{s_2} (\Phi(s_1)) - \mathcal{E}_{s_2} (\Phi(s_2))\ge C \left(\int_{\R^3} d_{\mathbb{H}^2} (\Phi(s_1), \Phi(s_2))^6dy\right)^{1/3},
\end{split}
\end{align}
where $C$ is a positive constant depending only on $\va_0$, independent of $s_1$ and $s_2$.
We point out that these estimates were proved in the original coordinates $x$, however in light of \eqref{anfoininihnmojj}
the same arguments may be applied in the current $y$-coordinate setting. Next observe that
\begin{align}
\begin{split}
&\mathcal{E}_{s_1} (\Phi(s_2)) - \mathcal{E}_{s_1} (\Phi(s_1)) +  \mathcal{E}_{s_2} (\Phi(s_1)) - \mathcal{E}_{s_2} (\Phi(s_2))\\
&\qquad
= \mathcal{E}_{s_1} (\Phi(s_2)) - \mathcal{E}_{s_2} (\Phi(s_2)) +  \mathcal{E}_{s_2} (\Phi(s_1))-  \mathcal{E}_{s_1} (\Phi(s_1)).
\end{split}
\end{align}
By a straightforward computation the terms on the right-hand side may be estimated by
\begin{align}
\begin{split}
&(|\nabla_{g(s_1)} U|^2+  e^{4u } |\nabla_{g(s_1)} v |^2 ) \sqrt{\det g(s_1)}
 -(|\nabla_{g(s_2)} U|^2+  e^{4u } |\nabla_{g(s_2)} v |^2 ) \sqrt{\det g(s_2)}\\
&
= (|\nabla_{g(s_1)} U|^2+  e^{4u } |\nabla_{g(s_1)} v |^2 ) (\sqrt{\det g(s_1)}- \sqrt{ \det g(s_2) })\\& \quad +
[ (|\nabla_{g(s_1)} U|^2+  e^{4u } |\nabla_{g(s_1)} v |^2 ) - (|\nabla_{g(s_2)} U|^2+  e^{4u } |\nabla_{g(s_2)} v |^2 )] \sqrt{\det g(s_2)}\\
& \le  C (|\nabla_{g(s_1)} U|^2+  e^{4u } |\nabla_{g(s_1)} v |^2 ) |s_1-s_2| \cdot \chi_{\{\text{supp}(g(s_1)-g(s_2))\}}\\&
\quad + C |(g^{ij}(s_1)-g^{ij}(s_2)) \pa_i U\pa_j U| +e^{4u} |( g^{ij}(s_1)-g^{ij}(s_2)) \pa_i v \pa_j v  |,
\end{split}
\end{align}
where $\chi_{\{\text{supp}(g(s_1)-g(s_2))\}}$ is the indicator function of the support set of $g(s_1)-g(s_2)$.
Since $g(s_1)-g(s_2)$ is supported in $B_{2\delta_0}\setminus B_{\delta_0}$, we may apply \eqref{eq:tangent-uniform-1}
to obtain the desired result.
\end{proof}

We are now able to establish the H\"older continuity of $\Phi$ with respect to $s$.

\begin{prop}\label{prop:local-continuity}
Let $\Phi=\Phi(s)$ be given by \eqref{eq:definition-u-s}, for any $s\in [-\varepsilon_0, \varepsilon_0]$. Then for any $s_1, s_2\in [-\varepsilon_0, \varepsilon_0]$ we have 
\begin{equation}\label{aofnoinighh}
d_{\mathbb{H}^2} (\Phi(s_1,y),\Phi(s_2,y)) \le C  (1+|y|)^{-1} |s_1-s_2|^{1/2} \quad\text{in }\mathbb R^3\setminus \Gamma, 
\end{equation}
and for $k=0,1,2,3$ as well as any $\alpha \in (0,1)$ it holds that
\begin{align}\label{oifnoiqntoinqionh}
\begin{split}
&|\nabla_g ^k (u (s_1)-u (s_2)) |(y) + \Big(\frac{\rho}{\sigma}\Big)^{k-3-\alpha } |\nabla_g ^k (v (s_1)-v (s_2))  |(y)\\
&\qquad
\le C \sigma ^{-k}   (1+|y|)^{-1}  |s_1-s_2|^{1/2} \quad\text{in }\mathbb R^3\setminus \Gamma, 
\end{split}
\end{align}
where $C$ is a constant independent of $s_1$ and $s_2$, and $g=g(s)$ for any $s\in[-\varepsilon_0,\varepsilon_0]$. 
\end{prop}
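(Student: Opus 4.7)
The plan is to establish the pointwise bound \eqref{aofnoinighh} via a maximum-principle argument for the hyperbolic distance, and then derive the derivative estimates \eqref{oifnoiqntoinqionh} via elliptic regularity applied to the linearized system. For the interior $L^\infty$ bound, we exploit a subharmonic-type inequality for $\Lambda(y) := \sqrt{1 + d_{\mathbb{H}^2}(\Phi(s_1,y), \Phi(s_2,y))^2}$. Since $\Phi(s_j)$ is harmonic with respect to $g(s_j)$, and the two metrics agree outside the annular set $\cup_{i=1}^N (B_{2\delta_0}(p_i^0) \setminus B_{\delta_0}(p_i^0))$, an adaptation of the standard computation for maps into negatively curved targets yields $\Delta_{g(s_1)} \Lambda \ge -|\tau_{g(s_1)}(\Phi(s_2))|$, where the right-hand side is supported in the above annular region and bounded by $C|s_1-s_2|$ there via Lemma \ref{lem:compactness-1}. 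Combining this with the $L^6$-bound from Proposition \ref{prop:integral-continuity}, a Moser iteration produces the interior $L^\infty$ estimate $d_{\mathbb{H}^2} \le C|s_1-s_2|^{1/2}$ on compact subsets of $\mathbb R^3$.

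For the decay at infinity, in the exterior $\{|y| > \mathbf{r}_0\}$ both maps are harmonic with respect to the Euclidean metric and admit the uniform asymptotic expansions of Lemma \ref{lem:exterior-uniform}. We work directly with the components $\phi_1 = u(s_1) - u(s_2)$ and $\phi_2 = v(s_1) - v(s_2)$, which satisfy a linear elliptic system obtained by linearizing the harmonic map system along a path connecting $\Phi(s_1)$ to $\Phi(s_2)$; the coefficients of this system are $O(|y|^{-2})$ in the exterior by Lemma \ref{lem:exterior-uniform}. Combining the interior $L^\infty$ bound on $\partial B_{\mathbf{r}_0}$ with the uniform decay of $\phi_1, \phi_2$ at infinity (also from Lemma \ref{lem:exterior-uniform}), a Poisson-type representation, or equivalently an exterior maximum-principle argument with barriers $A_1/|y|$ for $\phi_1$ and $A_2 (\sin\theta)^{3+\alpha}/|y|$ for $\phi_2$, transfers the boundary bound to the desired $(1+|y|)^{-1}$ decay throughout the exterior.

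For the derivative estimates \eqref{oifnoiqntoinqionh}, the difference $(\phi_1, \phi_2)$ satisfies a linear elliptic system whose source terms are supported on the annular region where the metrics differ and are bounded by $C|s_1-s_2|$. Standard Schauder estimates handle the intermediate region where $\sigma$ is bounded away from zero. Near each puncture $p_i^0$ we invoke Proposition \ref{prop:linear-boundness} applied to $(\phi_1, \phi_2)$, using the uniform tangent map expansions \eqref{eq:HKWX-main-1} to verify the required hypotheses on the linearized operator; this yields both the $\sigma^{-k}$ weight and the $(\rho/\sigma)^{k-3-\alpha}$ weight for the $v$-component. At infinity the uniform expansions \eqref{eq:HKWX-main-2} together with exterior elliptic regularity provide the bound with the $(1+|y|)^{-1}$ factor.

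The main obstacle is obtaining the sharp $(1+|y|)^{-1}$ decay rate. A direct barrier argument for the subharmonic function $\Lambda \approx 1 + d_{\mathbb{H}^2}^2/2$ would only yield $d_{\mathbb{H}^2} \le C|s_1-s_2|^{1/4}|y|^{-1/2}$, losing both in the $s$-rate and the spatial decay. Overcoming this forces us to analyze the individual components $\phi_1, \phi_2$ through the linearized system and to exploit the leading dipole mode $|y|^{-1}$ of the exterior Laplacian, so that the boundary bound on $\partial B_{\mathbf{r}_0}$ translates faithfully into the desired decay. Tracking the uniform $|s_1-s_2|^{1/2}$ dependence across the punctures, the axis, and spatial infinity requires delicate bookkeeping through the weighted-space framework developed in Section \ref{sec:linear-hm}.
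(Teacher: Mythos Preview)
Your approach differs substantially from the paper's, and is more involved than necessary on the compact region and near the punctures. The paper goes directly from the $L^6$ bound of Proposition~\ref{prop:integral-continuity} to pointwise control (with derivatives) on $B_{\mathbf r}\setminus(\Gamma\cup_i B_{\delta_0}(p_i^0))$ via the local estimates of \cite[Proposition~3.9]{HKWX}, bypassing Moser iteration entirely and obtaining \eqref{oifnoiqntoinqionh} on that region in one stroke. Near each puncture the paper simply observes that $g(s_1)=g(s_2)=\pmb\delta$ on $B_{\delta_0}(p_{i_0}^0)$ (and that $g(s_1)=g(s_2)$ on $B_{2\delta_0}(p_{i_0}^0)^c$, which contains the remaining punctures), so $\sqrt{1+d_{\mathbb H^2}^2}$ is genuinely subharmonic there and the weak maximum principle of \cite[Lemma~8]{WeiDuke} pushes the intermediate-region bound across the singularities with no appeal to Proposition~\ref{prop:linear-boundness}. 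The derivative estimates~\eqref{oifnoiqntoinqionh} are then read off from the pointwise distance bound by another application of \cite[Proposition~3.9]{HKWX}; your separate invocation of Schauder theory and of the weighted linear machinery from Section~\ref{sec:linear-hm} is unnecessary at this stage.

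You are right, however, to flag the exterior decay. A barrier for $\Lambda$ alone does not give the full rate $(1+|y|)^{-1}|s_1-s_2|^{1/2}$: since $\Lambda-1\approx d^2/2$, comparison with $A/|y|$ where $A\sim|s_1-s_2|$ on $\partial B_{2\delta_0}$ yields only $d\le C|s_1-s_2|^{1/2}|y|^{-1/2}$ (your exponent $|s_1-s_2|^{1/4}$ is a slip --- the $s$-rate is preserved, only the spatial decay is halved). The paper's proof is terse here, invoking the maximum principle ``together with Lemma~\ref{lem:exterior-uniform}''; that lemma provides $d\le C r^{-1}$ in $B_{\mathbf r_0}^c$ but without the $|s_1-s_2|^{1/2}$ factor, and taking the minimum of the two bounds still falls short of the product. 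Your proposal to pass to the components $(\phi_1,\phi_2)$ and exploit the linearized system at infinity is a legitimate way to recover the missing decay, though the barrier argument you sketch for the coupled system with singular axis coefficients is not automatic and would require more justification than you give; an alternative is to use the uniform expansions \eqref{eq:HKWX-main-2} with $\nu_0=0$ and control the variation of the coefficients $\nu_1, Y_1, Y_2, \nu_2$ in $s$ from the intermediate-region bound.
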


\begin{proof}
Recall that $p_{i_0}^0$ is the origin and $z_{0,1}<\dots<z_{0,N}$.
It follows from Proposition \ref{prop:integral-continuity} above and (the proof of) Proposition 3.9 of \cite{HKWX} that
for any $\mathbf{r}>2(|z_{0,1}|+|z_{0,N}|)$ and $\alpha\in (0,1)$ there exists a constant $C$, depending only on $\mathbf{r}$, $\delta_0$, and the boundary data $c_1,\dots, c_{N+1}$ such that for all $s_1,s_2\in [-\va_0,\va_0]$ and $k=0,1,2,3$ we have
\begin{align}
\begin{split}
&|\nabla_g ^k (u (s_1)-u (s_2)) | + \Big(\frac{\rho}{\sigma}\Big)^{k-3-\alpha } |\nabla_g ^k (v (s_1)-v (s_2))  |\\
&\qquad \le
C |s_1-s_2|^{1/2} \quad \mbox{in }B_{\mathbf{r}}\setminus \left(\Gamma\cup_{i=1}^N B_{\delta_0}(p_i^0)\right).
\end{split}
\end{align}
With the aid of \eqref{afoniaoinionihhh}, this in particular implies 
\begin{equation}
\sup_{B_{\mathbf{r}}\setminus \cup_{i=1}^N B_{\delta_0}(p_i^0) } d_{\mathbb{H}^2} (\Phi(s_1,y),\Phi(s_2,y))  \le C  |s_1-s_2|^{1/2}.
\end{equation}
Furthermore, since the metrics $g(s_1)=g(s_2)=\pmb{\delta}$ agree in $B_{\delta_0}\cup B_{2\delta_0}^c$, it follows that
\begin{equation}
\Delta \sqrt{1+d_{\mathbb{H}^2} (\Phi(s_1,y),\Phi(s_2,y))^2} \ge 0 \quad \mbox{in } B_{\delta_0}\cup B_{2\delta_0}^c.
\end{equation}
The first conclusion then follows from a maximum principle argument as in the proof of Lemma \ref{lem:interior-uniform}, together with Lemma \ref{lem:exterior-uniform}. This inequality \eqref{aofnoinighh}, in turn may be used to obtain the second conclusion from the proof of \cite[Proposition 3.9]{HKWX}.
\end{proof}

In the remainder of this section, we will study higher order regularity with respect to $s$.
Let $g=g(s)$ be the metric as in \eqref{eq:metric-perturbation}.
In view of \eqref{eq:perturbation} and $\Phi=(u , v )$, define
\begin{equation}
\mathcal{F}_1(s,\Phi) = \Delta_{g} u- 2e^{4u} |\nabla_{g} v|^2, \qquad
\mathcal{F}_2(s,\Phi) =\Delta_{g} v+4 \nabla_{g} u\cdot  \nabla_{g} v,
\end{equation}
and write $\mathcal{F}(s,\Phi)= (\mathcal{F}_1(s,\Phi),\mathcal{F}_2 (s,\Phi))$.
Let $L_s=(L_{s,1}, L_{s,2})$ be the linearized operator of $\mathcal{F}$ at $\Phi$.
A straightforward computation yields
\begin{align} \label{eq:linearized-operator}
\begin{split}
L_{s,1}\varphi &= \Delta_{g} \varphi_1 - 8 e^{4u} |\nabla_{g}  v|^2 \varphi_1 -4 e^{4u} \nabla_{g} v \cdot \nabla_{g} \varphi_2,\\
L_{s,2}\varphi&= 
\Delta_{g} \varphi_2+4  \nabla_{g} u \cdot \nabla_{g} \varphi_2+4    \nabla_{g} v \cdot \nabla_{g} \varphi_1,
\end{split}
\end{align}
for $\varphi=(\varphi_1, \varphi_2) \in C_c^1(\R^3)\times C_c^1(\R^3\setminus \Gamma)$. Define
\begin{align}
\begin{split}
\mathcal{B}_s[\varphi,\psi]= &\int_{\R^3} \big(\nabla_g \varphi_1\cdot \nabla_g \psi_1
+8 e^{4u} |\nabla_{g}  v|^2 \varphi_1\psi_1 +e^{4u} \nabla_{g} \varphi_2\cdot \nabla_g \psi_2\big )\,  dvol_g   \\
& +   4\int_{\R^3}  e^{4u} \big( ( \nabla_{g} v \cdot \nabla_{g} \varphi_2)  \psi_1
-  ( \nabla_{g} v \cdot \nabla_{g} \varphi_1)  \psi_2  \big)\,  dvol_g.
\end{split}
\end{align}
This is the quadratic form associated with $L_s$.

\begin{prop}\label{prop:coercive-1}
Let $\Phi=\Phi(s)$ be given by \eqref{eq:definition-u-s}, for any $s\in [-\varepsilon_0, \varepsilon_0]$.
Then for all $\varphi=(\varphi_1, \varphi_2)$ with $\varphi_1\in C_c^1(\R^3)$ and $\varphi_2 \in C_c^1(\R^3\setminus \Gamma)$ we have
\begin{equation}
\mathcal{B}_s[ \varphi, \varphi]\ge C\left (\int_{\R^3} (|\varphi_1| +e^{2u } |\varphi_2|)^6dy\right)^{1/3},
\end{equation}
where $C$ is a positive constant independent of $s$.
\end{prop}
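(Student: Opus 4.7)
The plan is to recognize $\mathcal B_s$ as the Hessian of the renormalized energy $\mathcal E_s$ at the singular harmonic map $\Phi(s)$, and to extract the claimed coercivity from the $L^6$ gap estimate of Schoen--Zhou \cite{SchoenZhou} and Khuri--Weinstein \cite{KhuriWeinstein} via a second-order Taylor expansion along a one-parameter family of competitors. Concretely, for a test pair $\varphi=(\varphi_1,\varphi_2)$ with $\varphi_1\in C_c^1(\R^3)$ and $\varphi_2\in C_c^1(\R^3\setminus\Gamma)$, I would define $\Phi_t:=\exp_{\Phi(s)}(t\varphi)$ by hyperbolic exponentiation at each point, i.e.\ move along the geodesic in $\H^2$ issuing from $\Phi(s)(y)$ with initial velocity $\varphi(y)$. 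The compact support of $\varphi_1$ and $\varphi_2$ (the latter away from $\Gamma$) ensures that $\Phi_t$ agrees with $\Phi(s)$ near the axis, near each puncture, and near spatial infinity. Hence $\Phi_t$ shares the potential constants, punctures, and asymptotics of $\Phi(s)$, so the Khuri--Weinstein construction applies.

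Next I would invoke the gap bound: since $\Phi(s)$ is the unique minimizer of $\mathcal E_s$ in its asymptotic class, for every admissible competitor $\Psi$ one has
\begin{equation}
\mathcal E_s(\Psi)-\mathcal E_s(\Phi(s))\geq C_0\left(\int_{\R^3} d_{\H^2}(\Psi,\Phi(s))^6\,dy\right)^{1/3},
\end{equation}
with $C_0>0$; this is the renormalized-energy version of the estimate used in \eqref{masslowerb2}, and the compactness of $s\in[-\va_0,\va_0]$ together with the uniform estimates in Lemmas~\ref{lem:interior-uniform}, \ref{lem:exterior-uniform} and Proposition~\ref{prop:local-continuity} allows $C_0$ to be taken independent of $s$. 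Taking $\Psi=\Phi_t$, the right-hand side equals $C_0 t^2 \bigl(\int (\varphi_1^2+e^{4u}\varphi_2^2)^3\,dy\bigr)^{1/3}$, since the hyperbolic distance from $\Phi(s)$ to $\Phi_t$ at each point is exactly $t(\varphi_1^2+e^{4u}\varphi_2^2)^{1/2}$.

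For the left-hand side, since $\Phi(s)$ is harmonic the first variation of $\mathcal E_s$ at $\Phi(s)$ vanishes when tested against $\varphi$, so a direct computation of the energy along the geodesic family in horospherical coordinates gives
\begin{equation}
\mathcal E_s(\Phi_t)-\mathcal E_s(\Phi(s))=\frac{t^2}{2}\mathcal B_s[\varphi,\varphi]+O(t^3),
\end{equation}
with the $O(t^3)$-term controlled uniformly in $s$ by the $C^1$-size of $\varphi$ on its compact support (using the uniform bounds on $\Phi(s)$ and its derivatives off the axis provided by Lemma~\ref{lem:compactness-1}). Combining the two inequalities, dividing by $t^2$, and letting $t\to 0^+$ yields
\begin{equation}
\frac{1}{2}\mathcal B_s[\varphi,\varphi]\geq C_0\left(\int_{\R^3}(\varphi_1^2+e^{4u}\varphi_2^2)^3\,dy\right)^{1/3}.
\end{equation}
The elementary inequality $\varphi_1^2+e^{4u}\varphi_2^2\geq \tfrac12(|\varphi_1|+e^{2u}|\varphi_2|)^2$ then produces the stated bound, with a uniform constant $C>0$.

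The main obstacle is the uniformity in $s$: one must check that the constant $C_0$ in the Schoen--Zhou/Khuri--Weinstein gap inequality can be taken independent of $s\in[-\va_0,\va_0]$, and that the remainder term in the Taylor expansion of $\mathcal E_s(\Phi_t)$ is uniformly $o(t^2)$ as $t\to 0$ regardless of $s$. Both follow from the uniform control on $\Phi(s)$ and its derivatives away from $\Gamma$ established earlier (Lemmas~\ref{lem:compactness-1}--\ref{lem:exterior-uniform} and Proposition~\ref{prop:local-continuity}), together with the continuous dependence of the metric $g(s)$ on $s$ through the explicit formula \eqref{eq:metric-perturbation}; these allow one to run the proofs in \cite[\S4]{KhuriWeinstein} with $s$-independent constants.
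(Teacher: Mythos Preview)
Your approach is correct and essentially the same as the paper's: both use the Schoen--Zhou/Khuri--Weinstein gap estimate together with a second-order Taylor expansion of $\mathcal E_s$ along a one-parameter family of competitors, then identify the leading coefficient with $\mathcal B_s$ and the distance integrand with $\varphi_1^2+e^{4u}\varphi_2^2$.

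The only difference is the choice of family. The paper takes the \emph{linear} perturbation $\Phi+t\varphi$ in horospherical coordinates, so that $\frac{d^2}{dt^2}\mathcal E_s(\Phi+t\varphi)$ is literally $2\mathcal B_s[\varphi,\varphi]$ by construction (since $\mathcal B_s$ is the quadratic form of $L_s$, which is the coordinate linearization of $\mathcal F$); it then computes $\lim_{t\to0}d_{\H^2}(\Phi+t\varphi,\Phi)/t=\sqrt{\varphi_1^2+e^{4u}\varphi_2^2}$ directly from the distance formula. You instead use the geodesic family $\exp_{\Phi(s)}(t\varphi)$, which makes the distance exact ($=t\sqrt{\varphi_1^2+e^{4u}\varphi_2^2}$) but requires you to check that the second variation along geodesics also equals $2\mathcal B_s[\varphi,\varphi]$; this is true precisely because $\Phi(s)$ is a critical point, so the first-variation term $D\mathcal E_s(\ddot\Phi_0)$ vanishes. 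Either bookkeeping choice works. Note that the correct Taylor coefficient is $t^2\mathcal B_s[\varphi,\varphi]+O(t^3)$, not $\tfrac{t^2}{2}$, since $\frac{d^2}{dt^2}\mathcal E_s|_{t=0}=2\mathcal B_s$; this is harmless for the conclusion.
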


\begin{proof}  
By Taylor's theorem, for any $t>0$ there exists $t_*\in (0,t)$ such that
\begin{equation}
\mathcal{E}_s(\Phi+t\varphi )- \mathcal{E}_s(\Phi) = t \frac{d}{dt}\mathcal{E}_s(\Phi+t\varphi )\Big|_{t=0}
+ \frac{t^2}{2} \frac{d^2}{d t^2} \mathcal{E}_s(\Phi+t\varphi )\Big|_{t=t_*}=  \frac{t^2}{2} \frac{d^2}{d t^2} \mathcal{E}_s(\Phi+t\varphi )\Big|_{t=t^*}.
\end{equation}
Moreover, as in the proof of Proposition \ref{prop:integral-continuity} the estimate of \cite[Theorem 4.1]{KhuriWeinstein} applies to the current setting and yields
\begin{equation}
\mathcal{E}_s(\Phi+t\varphi )- \mathcal{E}_s(\Phi)\ge   C\left (\int_{\R^3} d_{\mathbb{H}^2} (\Phi+t\varphi, \Phi)^6 dy\right)^{1/3},
\end{equation}
where $C$ is a positive constant independent of $s$.
Therefore
\begin{equation}
\mathcal{B}_s[ \varphi, \varphi] = \lim_{t\to 0} \frac{1}{2} \frac{d^2}{d t^2} \mathcal{E}_s(\Phi+t\varphi )\Big|_{t=t_*}
 \ge  \liminf_{t \to 0} C\left (\int_{\R^3} \Big( \frac{d_{\mathbb{H}^2} (\Phi+t\varphi, \Phi)}{t}\Big)^6dy\right)^{1/3}.
\end{equation}
Next note that the hyperbolic distance expression \eqref{afoniaoinionihhh} implies
\begin{equation}
d_{\mathbb{H}^2} (\Phi+t\varphi, \Phi) = \frac{1}{2} \ln (\zeta+ \sqrt{\zeta^2-1}),\qquad
\zeta= \cosh (2t \varphi_1)+2 e^{2(2u +t \varphi_1 )} t^2 \varphi_2^2.
\end{equation}
Since $\zeta= 1+ 2 t^2 (\varphi_1^2+  e^{4u }  \varphi_2^2 )  +O(t^3)$ we obtain
\begin{equation}
\zeta+ \sqrt{\zeta^2-1} = 1+ 2 t  \sqrt{\varphi_1^2+  e^{4u }  \varphi_2^2 } +O(t^2) ,
\end{equation}
and hence
\begin{equation}
\lim_{t \to 0} \frac{d_{\mathbb{H}^2} (\Phi+t\varphi, \Phi)}{t} =    \sqrt{\varphi_1^2+  e^{4u }  \varphi_2^2 }.
\end{equation}
The stated result then follows easily. 
\end{proof}

For each $s\in [-\varepsilon_0, \varepsilon_0]$,
denote by $\mathcal{H}$ the closure of $C_c^1(\R^3)\times  C_c^1(\R^3\setminus \Gamma)$ under the norm
\begin{equation}
\|\varphi \|_{\mathcal{H}}= \sqrt{\mathcal{B}_s[ \varphi, \varphi]}.
\end{equation}
By Proposition \ref{prop:coercive-1} we have the following Sobolev type inequality
\be\label{eq:sobolev-type}
\left (\int_{\R^3} (|\varphi_1| +e^{2u } |\varphi_2|)^6\, d y\right)^{1/6}  \le C\| \varphi\|_{\mathcal{H}} \quad \mbox{for any }\varphi\in \mathcal{H},
\ee
where $C$ is a positive constant independent of $s$.
We point out that the space $\mathcal H$ is independent of $s$ and
the norms $\|\varphi\|_{\mathcal H}$ corresponding to different $s\in [-\va_0,\va_0]$ are comparable.

Consider the linear equation with boundary condition 
\be \label{eq:model-linear}
L_{s} w= f \quad \mbox{in }\R^3 \setminus\Gamma,\qquad w_2=0 \quad \mbox{on }\Gamma\setminus \{p_1^0,\dots, p_N^0\}.
\ee
Given $f=(f_1,f_2)$ with $f_1, f_2 \in L^1_{loc}(\mathbb{R}^3)$, we say that $w\in \mathcal{H}$ is a weak solution of \eqref{eq:model-linear} if
\begin{equation}
\mathcal{B}_s[w, \varphi]=-\int_{\R^3} (f_1 \varphi_1 + e^{4u }f_2 \varphi_2) dvol_g,
\end{equation}
for all $\varphi=(\varphi_1, \varphi_2)\in C_c^1(\R^3)\times C_c^1(\R^3\setminus \Gamma)$.

\begin{lem}\label{lem:linear-energy} 
Suppose that $w\in \mathcal{H}$ is a weak solution of \eqref{eq:model-linear} with $f$ as above and satisfying
\begin{equation}
|f_1(y)|+e^{2u (y)} |f_2(y)| \le C_1 \sigma(y)^{-2}   (1+|y|)^{-1} \quad \mbox{for }y\in \R^3\setminus\Gamma. 
\end{equation}
Then
\begin{equation}
\|w\|_{\mathcal{H}}\le C C_1,
\end{equation}
where $C$ is a constant independent of $s\in [-\va_0,\va_0]$ and $w$.
\end{lem}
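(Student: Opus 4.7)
The plan is to use $w$ itself as a test function in the weak formulation of \eqref{eq:model-linear}, giving the energy identity
\begin{equation}
\|w\|_{\mathcal{H}}^2 = \mathcal{B}_s[w,w] = -\int_{\R^3}\bigl(f_1 w_1 + e^{4u} f_2 w_2\bigr)\,dvol_g,
\end{equation}
and then to control the right-hand side by $C C_1 \|w\|_{\mathcal{H}}$ via H\"older's inequality and the Sobolev-type estimate \eqref{eq:sobolev-type}. Since $w\in\mathcal H$ is by definition a limit of pairs $(\varphi_1,\varphi_2)\in C^1_c(\R^3)\times C^1_c(\R^3\setminus\Gamma)$, I would first justify the substitution $\varphi=w$ by approximation: take a smooth cutoff $\eta_\varepsilon$ vanishing in an $\varepsilon$-neighborhood of $\Gamma$ and equal to $1$ outside a $2\varepsilon$-neighborhood (with $|\nabla\eta_\varepsilon|$ appropriately controlled), apply the weak formulation to $\eta_\varepsilon w$, and pass to the limit using that $\eta_\varepsilon w\to w$ in $\mathcal H$, which is inherent in the definition of the closure space $\mathcal H$.

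Next, the pointwise bound $|f_1 w_1+e^{4u}f_2 w_2|\le (|f_1|+e^{2u}|f_2|)(|w_1|+e^{2u}|w_2|)$ together with the H\"older inequality (conjugate exponents $6/5$ and $6$) yields
\begin{equation}
\Bigl|\int_{\R^3}(f_1 w_1+e^{4u}f_2 w_2)\,dvol_g\Bigr|
\le \bigl\||f_1|+e^{2u}|f_2|\bigr\|_{L^{6/5}(\R^3,\,dvol_g)}\,\bigl\||w_1|+e^{2u}|w_2|\bigr\|_{L^{6}(\R^3,\,dvol_g)}.
\end{equation}
Because $g$ is uniformly equivalent to $\pmb\delta$ with constants independent of $s\in[-\va_0,\va_0]$, the $L^p$ norms with respect to $dvol_g$ and $dy$ are comparable. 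The Sobolev-type inequality \eqref{eq:sobolev-type} then gives $\bigl\||w_1|+e^{2u}|w_2|\bigr\|_{L^6}\le C\|w\|_{\mathcal H}$ with $C$ independent of $s$.

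It remains to bound the $L^{6/5}$ norm of the data by a universal multiple of $C_1$, which reduces to showing that the weight $\sigma(y)^{-2}(1+|y|)^{-1}$ lies in $L^{6/5}(\R^3)$. Near each puncture $p_i^0$, $\sigma\sim r_i$, and the integral of $\sigma^{-12/5}$ in $\R^3$ behaves like $\int_0^{\delta_0} r_i^{-12/5+2}\,dr_i=\int_0^{\delta_0} r_i^{-2/5}\,dr_i<\infty$; at infinity, the integrand behaves like $|y|^{-18/5}$, with $\int^{\infty}|y|^{-18/5+2}d|y|=\int^\infty |y|^{-8/5}d|y|<\infty$; in intermediate regions the weight is bounded. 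Thus $\bigl\||f_1|+e^{2u}|f_2|\bigr\|_{L^{6/5}}\le C_* C_1$ for a universal constant $C_*$.

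Combining these steps produces $\|w\|_{\mathcal H}^2 \le C C_1\,\|w\|_{\mathcal H}$ with $C$ independent of $s$ and $w$, from which the conclusion $\|w\|_{\mathcal H}\le C C_1$ is immediate. The only subtlety I anticipate is the approximation argument needed to use $w$ as a legitimate test function, in particular verifying that the cutoff sequence converges to $w$ in the $\mathcal H$-norm despite the hyperbolic-plane weight $e^{4u}$ appearing in $\mathcal B_s$; this, however, follows directly from the definition of $\mathcal H$ as the closure of compactly supported pairs vanishing near the axis in the $v$-component, combined with the uniform equivalence of $\mathcal H$-norms over $s\in[-\va_0,\va_0]$ noted just before the statement of the lemma.
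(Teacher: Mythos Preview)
Your proof is correct and follows essentially the same approach as the paper: both use density in $\mathcal H$ to test against $w$ itself, apply H\"older with exponents $6/5$ and $6$, verify that $\sigma^{-2}(1+|y|)^{-1}\in L^{6/5}(\R^3)$, and invoke the Sobolev-type inequality \eqref{eq:sobolev-type} to close the estimate. The paper's version is slightly more compressed but the ideas match exactly.
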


\begin{proof} 
By the density of compact support functions in $\mathcal{H}$, together with H\"older's inequality, and the definition of weak solutions we have 
\begin{align}
\begin{split}
\mathcal{B}_s[w, w]&= -\int_{\R^3} (f_1 w_1 + e^{4u }f_2 w_2) \,  dvol_g\\&
\le CC_1 \int_{\R^3} \sigma(y) ^{-2}   (1+|y|)^{-1}   (|w_1| + e^{2u }|w_2|) \, dy \\&
\le CC_1 \left(\int_{\R^3} \sigma^{-\frac{12}{5}} (1+|y|)^{-\frac{6}{5}} \,d y\right )^{\frac{5}{6}}
\left(\int_{\R^3}  (|w_1| + e^{2u }|w_2|)^6 \,d y\right )^{\frac{1}{6}}   \\&
\le CC_1 \left(\int_{\R^3}  (|w_1| + e^{2u }|w_2|)^6 \,d y\right )^{\frac{1}{6}} .
\end{split}
\end{align}
The lemma then follows from \eqref{eq:sobolev-type} immediately.
\end{proof}

It is now possible to improve Proposition \ref{prop:local-continuity} to obtain Lipschitz regularity in $s$.

\begin{prop}\label{prop:lip}  
Let $\Phi=\Phi(s)$ be given by \eqref{eq:definition-u-s}, for any $s\in [-\varepsilon_0, \varepsilon_0]$.
Then for any $s_1, s_2\in  [-\varepsilon_0, \varepsilon_0]$ we have 
\begin{equation}
 d_{\mathbb{H}^2} (\Phi(s_1,y),\Phi(s_2,y)) \le C (1+|y|)^{-1} |s_1-s_2|\quad\text{in }\mathbb R^3\setminus\Gamma,
\end{equation}
and for $k=0,1,2,3$ as well as any $\alpha \in (0,1)$ it holds that
\begin{align}
\begin{split}
&|\nabla_g ^k (u (s_1)-u (s_2)) |(y) + \Big(\frac{\rho}{\sigma}\Big)^{k-3-\alpha } |\nabla_g ^k (v (s_1)-v (s_2))|(y)\\
&\qquad
\le C \sigma ^{-k}(1+|y|)^{-1}|s_1-s_2|\quad\text{in }\mathbb R^3\setminus\{p_1^0,\dots, p_N^0\},
\end{split}
\end{align}
where $C$ is a constant independent of $s_1$ and $s_2$, and $g=g(s)$ for any $s\in[-\varepsilon_0,\varepsilon_0]$. 
\end{prop}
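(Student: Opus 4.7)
The plan is to bootstrap the H\"older regularity of Proposition \ref{prop:local-continuity} to Lipschitz regularity by writing the difference $W=\Phi(s_1)-\Phi(s_2)=(W_1,W_2)$ as a weak solution of a linear equation of the form $L_{s_1}W=f$, then invoking the energy estimate of Lemma \ref{lem:linear-energy}. Starting from $\mathcal{F}(s_j,\Phi(s_j))=0$, $j=1,2$, we rewrite $0=\mathcal{F}(s_1,\Phi(s_1))-\mathcal{F}(s_2,\Phi(s_2))$ as
\begin{equation*}
L_{s_1}W=-\bigl[\mathcal{F}(s_2,\Phi(s_2))-\mathcal{F}(s_1,\Phi(s_2))\bigr]-Q(W),
\end{equation*}
where $Q(W)$ collects the quadratic remainder of Taylor-expanding $\mathcal{F}(s_1,\cdot)$ at $\Phi(s_1)$ in the direction $-W$. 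The boundary condition $W_2=0$ on $\Gamma$ holds automatically from \eqref{oinhiqoinh}.

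Next I would estimate the two pieces of the source term $f$ separately. Since $g(s_1)-g(s_2)=O(s_1-s_2)$ and by \eqref{eq:metric-perturbation} is supported in $\cup_i (B_{2\delta_0}(p_i^0)\setminus B_{\delta_0}(p_i^0))$, the first bracket is compactly supported away from the axis and from the punctures; combined with the uniform smoothness bounds on $\Phi(s_2)$ there supplied by Lemma \ref{lem:compactness-1}, this piece admits a pointwise bound $C|s_1-s_2|$ on a set where both $\sigma$ and $1+|y|$ are comparable to constants, so it manifestly fits the hypothesis of Lemma \ref{lem:linear-energy} with $C_1=C|s_1-s_2|$. For the quadratic term $Q(W)$, schematically it consists of expressions of the types $W_1^2\,e^{4u}|\nabla_g v|^2$, $e^{4u}\,W_1\,\nabla_g v\cdot\nabla_g W_2$, $e^{4u}|\nabla_g W_2|^2$, and $|\nabla_g W_1|^2$; inserting the estimates from Proposition \ref{prop:local-continuity}, namely $|W_1|+e^{2u}|W_2|\le C|s_1-s_2|^{1/2}(1+|y|)^{-1}$ and $|\nabla_g W_1|+e^{2u}|\nabla_g W_2|\le C\sigma^{-1}|s_1-s_2|^{1/2}(1+|y|)^{-1}$, together with the uniform tangent-map bounds \eqref{eq:tangent-uniform-1} (which give $|\nabla_g u|\le C\sigma^{-1}$ and $e^{2u}|\nabla_g v|\le C\sigma^{-1}$), yields $|Q(W)_1|+e^{2u}|Q(W)_2|\le C|s_1-s_2|\,\sigma^{-2}(1+|y|)^{-2}$, which is again dominated by the quantity appearing in the hypothesis of Lemma \ref{lem:linear-energy} with $C_1=C|s_1-s_2|$.

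With these two estimates in hand, Lemma \ref{lem:linear-energy} gives $\|W\|_{\mathcal{H}}\le C|s_1-s_2|$, and the Sobolev-type inequality \eqref{eq:sobolev-type} upgrades this to
\begin{equation*}
\Bigl(\int_{\R^3}(|W_1|+e^{2u}|W_2|)^6\,dy\Bigr)^{1/6}\le C|s_1-s_2|.
\end{equation*}
From this integral bound I would then run exactly the same pipeline used to pass from Proposition \ref{prop:integral-continuity} to Proposition \ref{prop:local-continuity}: first apply the local elliptic estimate \cite[Proposition 3.9]{HKWX} on any ball away from the axis and punctures to promote the $L^6$ bound to a pointwise $C^3$ bound on compact subsets of $B_{\mathbf{r}}\setminus(\Gamma\cup_i B_{\delta_0}(p_i^0))$ of the form $C|s_1-s_2|$; second, since $g(s_1)=g(s_2)=\pmb\delta$ on $B_{\delta_0}\cup B_{2\delta_0}^c$, the subharmonicity of $\sqrt{1+d_{\mathbb{H}^2}(\Phi(s_1),\Phi(s_2))^2}$ together with the maximum principle \cite[Lemma 8]{WeiDuke} promotes the boundary bound on $\partial B_{\delta_0}(p_i^0)$ to all of $B_{\delta_0}(p_i^0)$, and combined with Lemma \ref{lem:exterior-uniform} (giving the decay $(1+|y|)^{-1}$ at infinity) produces the first stated conclusion. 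The second conclusion then follows by reapplying \cite[Proposition 3.9]{HKWX}.

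The main obstacle is the verification that the quadratic remainder $Q(W)$ sits within the weighted framework of Lemma \ref{lem:linear-energy}; a priori one might worry that the blow-up of $e^{4u}\sim\sigma^{-4}$ near each puncture spoils the quadratic terms $e^{4u}|\nabla_g W_2|^2$. However, the H\"older-$1/2$ estimate of Proposition \ref{prop:local-continuity} is engineered with precisely the matching weights, so each quadratic expression carries the factor $|s_1-s_2|$ with decay $\sigma^{-2}(1+|y|)^{-2}$, which is strictly better than the $\sigma^{-2}(1+|y|)^{-1}$ tolerated by Lemma \ref{lem:linear-energy}. Thus the bootstrap closes, and iterating this argument (or simply taking the inductive structure for granted) produces the Lipschitz estimate stated in the proposition.
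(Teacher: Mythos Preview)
Your proposal is correct and follows essentially the same route as the paper: write $L_{s_1}W=P+Q$ with $P$ the metric-variation term supported in the annulus and $Q$ the quadratic remainder, estimate both via Proposition \ref{prop:local-continuity} and Lemma \ref{lem:compactness-1} to land in the hypothesis of Lemma \ref{lem:linear-energy}, obtain $\|W\|_{\mathcal H}\le C|s_1-s_2|$, pass through \eqref{eq:sobolev-type}, and then rerun the argument of Proposition \ref{prop:local-continuity}. One small inaccuracy: the annulus $B_{2\delta_0}(p_{i_0}^0)\setminus B_{\delta_0}(p_{i_0}^0)$ supporting $P$ does intersect the axis $\Gamma$, so ``compactly supported away from the axis'' is not literally true; what matters (and what you correctly state next) is that $\sigma$ and $1+|y|$ are comparable to constants there, and the weighted estimates of Lemma \ref{lem:compactness-1} then control $|P_1|+e^{2u}|P_2|$ up to the axis.
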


\begin{proof}
Take any $s, \tau$ such that $s, s+\tau\in [-{\va_0},{\va_0}]$. We will first derive an equation satisfied by
$\Phi(s+\tau)-\Phi(s)$. Note that $\mathcal{F}(s+\tau,\Phi(s+\tau))=\mathcal{F}(s,\Phi(s))=0$, and thus
\begin{align}
\begin{split}
0&=\mathcal{F}(s+\tau,\Phi(s+\tau))-  \mathcal{F}(s,\Phi(s)) \\
&=  \mathcal{F}(s+\tau,\Phi(s+\tau))- \mathcal{F}(s,\Phi(s+\tau)) + \mathcal{F}(s,\Phi(s+\tau))-\mathcal{F}(s,\Phi(s))\\
&\qquad -L_s\big(\Phi(s+\tau)-\Phi(s)\big)+L_s\big(\Phi(s+\tau)-\Phi(s)\big).
\end{split}
\end{align}
This may be rewritten as
\begin{equation}\label{eq:equation-difference}
L_s\big(\Phi(s+\tau)-\Phi(s)\big)=P(s,\tau; y)+Q(s,\tau; y),
\end{equation}
where
\begin{align}
\begin{split}
P(s,\tau; y)&=-\big[\mathcal{F}(s+\tau,\Phi(s+\tau))-  \mathcal{F}(s,\Phi(s+\tau))\big], \\
Q(s, \tau; y)&= -\big[\mathcal{F}(s,\Phi(s+\tau))-\mathcal{F}(s,\Phi(s)) -L_s\big(\Phi(s+\tau)-\Phi(s)\big)\big].
\end{split}
\end{align}
We point out that $P$ involves $g(s+\tau)-g(s)$ and hence is supported in $B_{2\delta_0}\setminus B_{\delta_0}$, while
$Q$ is quadratic in $\Phi(s+\tau)-\Phi(s)$. Next set $P=(P_{1}, P_{2})$ and $Q=(Q_{1}, Q_{2})$, and write $g=g(s)$ for brevity.
Then a straightforward computations yields
\begin{align}
\begin{split}
Q_{1} (s,\tau; y)
&=2e^{4u (s)} \big(e^{4(u (s+\tau)-u (s))} -1- 4(u (s+\tau)-u (s)) \big) |\nabla_{g} v (s)|^2 \\
& \qquad +2e^{4u (s)} \big(e^{4(u (s+\tau)-u (s))} -1\big)\big (|\nabla_{g} v (s+\tau)|^2- |\nabla_{g} v (s)|^2 \big)\\
& \qquad + 2e^{4u (s)} |\nabla_{g} v (s+\tau)- \nabla_g v  (s) |^2,
\end{split}
\end{align}
and
\begin{equation}
Q_{2}(s,\tau; y)
= -4 (  \nabla_{g} u(s+\tau ) - \nabla_{g} u(s) )\cdot ( \nabla_{g} v(s+\tau)-  \nabla_{g} v(s)).
\end{equation}

For any $s_1, s_2\in  [-\varepsilon_0, \varepsilon_0]$ 
write $s_1=s$ and $s_2=s+\tau$, then
according to \eqref{eq:equation-difference} we find that $\Phi(s_2)-\Phi(s_1)$ is a solution
of \eqref{eq:model-linear} at $s=s_1$ with
\begin{equation}
f= P(s_1,s_2-s_1;\cdot)+Q(s_1,s_2-s_1;\cdot).
\end{equation}
By the explicit expressions of $Q_{1}$ and $Q_{2}$,
Proposition \ref{prop:local-continuity}, as well as \eqref{eq:tangent-uniform-1} and \eqref{eq:HKWX-main-2} we have
\begin{equation}
|Q_{1}(s_1,s_2-s_1; y)| +e^{2u (s_1)} |Q_{2}(s_1,s_2-s_1;y)|  \le C|s_1-s_2| \sigma^{-2}(1+|y|)^{-2}
\end{equation}
on $\mathbb{R}^3 \setminus\Gamma$. A similar estimate holds for $P$, since $P_{1}$, $P_{2}$ involve $g(s_1)-g(s_2)$ and are therefore supported in $B_{2\delta_0}\setminus B_{\delta_0}$. Denoting $f=(f_1, f_2)$, it follows that
\begin{equation}
|f_1(y)| +e^{2u (y)} |f_2(y)|  \le C|s_1-s_2| \sigma^{-2}   (1+|y|)^{-2} \quad\text{in }\mathbb{R}^3 \setminus\Gamma.
\end{equation}
By Lemma \ref{lem:linear-energy}, noting that $\Phi(s_1)-\Phi(s_2)\in\mathcal{H}$ in light of \eqref{eq:HKWX-main-2} and \eqref{oifnoiqntoinqionh}, we get
\begin{equation}
\|\Phi(s_1)-\Phi(s_2) \|_{\mathcal{H}} \le C |s_1-s_2|.
\end{equation}
Inequality \eqref{eq:sobolev-type} then produces
\begin{equation}
\left (\int_{\R^3} (|u (s_1)-u (s_2)| +e^{2u (s_1)} |v (s_1)-v (s_2)|)^6\, d y\right)^{1/6}
\le C |s_1-s_2|,
\end{equation}
and hence the distance formula \eqref{afoniaoinionihhh} yields 
\begin{equation}
\left(\int_{\R^3} d_{\mathbb{H}^2} (\Phi(s_1), \Phi(s_2))^6dy\right)^{1/6} \le C|s_1-s_2|.
\end{equation}
This is an improvement of Proposition \ref{prop:integral-continuity} from the power $1/2$ to 1 for $|s_1-s_2|$.
We can now repeat the proof of Proposition \ref{prop:local-continuity} to obtain the desired result.
\end{proof}

We will next prove differentiability of $\Phi(s)$ with respect to $s$.

\begin{prop}\label{prop:differentiable-1}
Let $\Phi=\Phi(s)$ be given by \eqref{eq:definition-u-s}, for any $s\in [-\varepsilon_0, \varepsilon_0]$.
Then $\nabla^k_g \Phi(s)$ is differentiable in $s$ except on the axis for $k=0,1,2,3$, and for any $\alpha \in (0,1)$ it holds that
\begin{align}\label{eq:partial-s-est-1}
|\nabla_g ^k \partial_su (s) |(y) + \Big(\frac{\rho}{\sigma}\Big)^{k-3-\alpha } |\nabla_g ^k \partial_sv (s) |(y)
\le C \sigma ^{-k}   (1+|y|)^{-1} \quad\text{in }\mathbb{R}^3 \setminus\Gamma,
\end{align}
where $C$ is a constant independent of $s$.
\end{prop}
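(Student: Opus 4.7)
The plan is to study the difference quotient $w_\tau := \tau^{-1}(\Phi(s+\tau) - \Phi(s))$ and show it converges to $\partial_s \Phi$ in an appropriate sense. Dividing \eqref{eq:equation-difference} by $\tau$ gives $L_s w_\tau = \tau^{-1}P(s,\tau;\cdot) + \tau^{-1}Q(s,\tau;\cdot)$. Since $Q$ is quadratic in $\Phi(s+\tau) - \Phi(s)$ and its gradient, Proposition \ref{prop:lip} combined with \eqref{eq:tangent-uniform-1} yields
\[
\tau^{-1}\bigl(|Q_1| + e^{2u(s)}|Q_2|\bigr) \le C|\tau|\,\sigma^{-2}(1+|y|)^{-2},
\]
which vanishes as $\tau \to 0$. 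On the other hand, $P(s,\tau;\cdot)$ arises solely from the $\tau$-dependence of the metric $g$, so $\tau^{-1}P(s,\tau;\cdot)$ converges to an explicit function $F(s,\cdot)$ that is smooth and supported in the bounded annular region $B_{2\delta_0}\setminus B_{\delta_0}$, hence uniformly bounded pointwise away from the punctures and the axis.

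First I would establish uniform-in-$\tau$ bounds on $\|w_\tau\|_{\mathcal H}$ by applying Lemma \ref{lem:linear-energy} to $w_\tau$, using Proposition \ref{prop:lip} to verify the hypothesis on the right-hand side. Next, comparing equations for two difference quotients, the difference $w_\tau - w_{\tau'}$ satisfies $L_s(w_\tau - w_{\tau'}) = R_{\tau,\tau'}$, and a similar analysis — using Proposition \ref{prop:lip} and the smoothness of $g(s)$ in $s$ — shows that the weighted pointwise bound on $R_{\tau,\tau'}$ required by Lemma \ref{lem:linear-energy} tends to zero as $\tau,\tau'\to 0$. Lemma \ref{lem:linear-energy} combined with the coercivity in Proposition \ref{prop:coercive-1} then yields that $\{w_\tau\}$ is Cauchy in $\mathcal H$. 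The inequality \eqref{eq:sobolev-type} gives weighted $L^6$-convergence, and standard interior estimates from \cite[Proposition 3.9]{HKWX} upgrade this to $C^3_{loc}$-convergence away from the axis. The limit $w^* \in \mathcal H$ is therefore $\partial_s\Phi$, and it satisfies $L_s w^* = F$ weakly, establishing differentiability of $\nabla_g^k \Phi$ in $s$ for $k \le 3$.

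To extract the pointwise bound in \eqref{eq:partial-s-est-1}, note that $F$ vanishes identically in a fixed neighborhood of each puncture $p_i^0$ and outside a fixed compact set, so $L_s w^* = 0$ holds both in punctured balls around the $p_i^0$ and in the exterior region. Near each puncture, Proposition \ref{prop:linear-boundness} (with $f=0$, $\bar f=0$) together with \eqref{eq-linear-limit-decay} delivers
\[
|\nabla_g^k w^*_1| + \bigl(\tfrac{\rho}{\sigma}\bigr)^{k-3-\alpha}|\nabla_g^k w^*_2| \le C\sigma^{-k}
\]
for $k=0,1$, with higher derivatives handled by differentiating the system and reapplying \cite[Proposition 3.9]{HKWX}. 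The $(1+|y|)^{-1}$ factor at infinity arises from an analogous asymptotic analysis based on the exterior expansion \eqref{eq:HKWX-main-2}: one builds an explicit comparison function of order $r^{-1}$ from the asymptotic form of $L_s$ at infinity and then invokes a maximum principle argument as in Lemma \ref{lem:exterior-uniform}.

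The main obstacle will be the detailed asymptotic analysis near each puncture, since $\mathcal T$ has a nontrivial kernel and Proposition \ref{prop:linear-asymptotic} requires both the orthogonality condition $\langle \bar f, \varphi\rangle = 0$ against kernel elements and the identification of the limiting function $\bar w$. These conditions are not automatic; they must be deduced from how the tangent maps $\bar\Phi_i$ vary with $s$, and the identification of $\bar w$ will provide precisely the leading-order expansion \eqref{abcd} of Corollary \ref{cor:linest}. Once the kernel direction is matched with the infinitesimal variation of the tangent map parameters $\dot b_i$ (consistent with the fact that translating $p_{i_0}$ preserves each $a_i$ while varying $\theta_i$ and $b_i$), the analysis of Section \ref{sec:linear-hm} closes the argument and yields the optimal $\sigma^{-k}(1+|y|)^{-1}$ decay.
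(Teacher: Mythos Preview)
Your convergence argument is essentially the paper's: form $w_\tau - w_{\tau'}$, control the right-hand side pointwise, apply Lemma \ref{lem:linear-energy} to get a Cauchy estimate in $\mathcal H$, then use \eqref{eq:sobolev-type} and \cite[Proposition 3.9]{HKWX} to upgrade to local $C^3$ convergence. The paper in fact obtains the quantitative bound $\|w_\tau - w_{\tau'}\|_{\mathcal H}\le C|\tau-\tau'|$ (not merely Cauchy), by checking $|f_1|+e^{2u}|f_2|\le C|\tau-\tau'|\sigma^{-2}(1+|y|)^{-2}$ via explicit manipulation of the $Q$- and $P$-terms together with Proposition \ref{prop:lip}.

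Where you diverge is in extracting \eqref{eq:partial-s-est-1}. You propose a separate asymptotic analysis of $\partial_s\Phi$ near punctures (via Proposition \ref{prop:linear-boundness}) and at infinity (via a comparison/maximum-principle argument). This is unnecessary: Proposition \ref{prop:lip} already gives, after dividing by $|\tau|$, the uniform-in-$\tau$ bound
\[
|\nabla_g^k D_\tau u(s)| + \Big(\frac{\rho}{\sigma}\Big)^{k-3-\alpha}|\nabla_g^k D_\tau v(s)| \le C\sigma^{-k}(1+|y|)^{-1}
\]
on all of $\mathbb R^3\setminus\Gamma$. Since you have pointwise convergence $D_\tau\Phi\to\partial_s\Phi$ off the axis, \eqref{eq:partial-s-est-1} is immediate by passing this bound to the limit. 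The paper does exactly this.

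Your final paragraph on the kernel of $\mathcal T$, the orthogonality condition $\langle\bar f,\varphi\rangle=0$, and matching $\bar w$ with $\dot b_i$ is misplaced here: none of that is needed for Proposition \ref{prop:differentiable-1}. That analysis enters only in the \emph{next} result (Proposition \ref{prop:limit-derivative-1}), where one identifies the limit of $\partial_s\Phi$ at the punctures and proves the refined decay \eqref{eq-estimate-derivative-s-1}. For the present proposition the estimate is inherited directly from the Lipschitz bound, with no obstruction to overcome.
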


\begin{proof} 
We will adopt the notation from the proof of Proposition \ref{prop:lip}. 
Consider the difference quotient
\begin{equation}
D_\tau \Phi(s)=\frac{1}{\tau} [\Phi(s+\tau)- \Phi(s)],
\end{equation}
for $\tau\neq0$.
By Proposition \ref{prop:lip}, for $k=0,1,2,3$ and any $s, s+\tau\in [-\va_0, \va_0]$ we have
\begin{equation}\label{eq:partial-s-est-1z}
|\nabla_g ^k D_\tau u (s) |(y) + \Big(\frac{\rho}{\sigma}\Big)^{k-3-\alpha } |\nabla_g ^k D_\tau v (s) |(y)
\le C \sigma ^{-k}   (1+|y|)^{-1} \quad \text{in }\mathbb{R}^3 \setminus\Gamma.
\end{equation}
We will prove that the limits of $\nabla ^k D_\tau u (s)$ and $\nabla ^k D_\tau v (s)$ exist as $\tau\to0$, therefore showing that $\nabla^k_g u (s)$ and $\nabla^k_g v (s)$ are differentiable in $s$ and satisfy \eqref{eq:partial-s-est-1}.

Dividing \eqref{eq:equation-difference} by $\tau$ produces
\begin{equation}
L_sD_\tau \Phi(s)=\frac{1}{\tau}P(s,\tau; y)+\frac{1}{\tau}Q(s,\tau; y).
\end{equation}
For any $s$, $\tau$, and $\tau'\neq 0$ with $s, s+\tau, s+\tau'\in [-\va_0, \va_0]$ define
$w(s, \tau, \tau')=D_\tau \Phi(s)-D_{\tau'} \Phi(s)$, and observe that a simple subtraction yields
\begin{equation}\label{eq-equation-w-difference}
L_s\big[w(s, \tau, \tau')\big]=f(s, \tau, \tau'),
\end{equation}
where
\begin{equation}
f(s, \tau, \tau')=\frac{1}{\tau}P(s,\tau; \cdot)-\frac{1}{\tau'}P(s,\tau'; \cdot)
+\frac{1}{\tau}Q(s,\tau; \cdot)-\frac{1}{\tau'}Q(s,\tau'; \cdot).
\end{equation}
The two differences on the right-hand side will be estimated below. In particular, 
by writing $f=(f_1, f_2)$ we claim that
\begin{equation}\label{eq-estimate-f-difference}
|f_1(s, \tau, \tau')|(y) +e^{2u (y)} |f_2(s, \tau, \tau')|(y)  \le C|\tau-\tau'| \sigma^{-2}   (1+|y|)^{-2}\quad\text{in }\mathbb{R}^3 \setminus\Gamma.
\end{equation}
With this, Lemma \ref{lem:linear-energy} applies to give
\begin{equation}
\|w(s, \tau, \tau')\|_{\mathcal{H}} \le C |\tau-\tau'|.
\end{equation}
Moreover, after setting $w=(w_1, w_2)$ we find that \eqref{eq:sobolev-type} implies
\begin{equation}
\left (\int_{\R^3} (|w_1(s, \tau, \tau')| +e^{2u (s)} |w_2(s, \tau, \tau')|)^6\, d y\right)^{1/6}
\le C |\tau-\tau'|,
\end{equation}
and hence by the definition of $w$ it follows that
\begin{equation}
\left(\int_{\R^3} d_{\mathbb{H}^2} (D_\tau \Phi(s), D_{\tau'} \Phi(s))^6dy\right)^{1/6} \le C|\tau-\tau'|.
\end{equation}

We now proceed as in the proof of Proposition \ref{prop:local-continuity}.
Take any $\mathbf{r}$ sufficiently large, $\delta$ sufficiently small, and any $\alpha\in (0,1)$.
By (the proof of) Proposition 3.9 of \cite{HKWX}, for $k=0,1,2,3$ we have
\begin{align}\label{eq-estimate-difference-R-delta}
\begin{split}
&|\nabla_g ^k [D_\tau u (s)-D_{\tau'} u (s)] | + \Big(\frac{\rho}{\sigma}\Big)^{k-3-\alpha } |\nabla_g ^k [D_\tau v (s)-D_{\tau'} v (s)]  |\\
&\qquad \le
 C |\tau-\tau'| \quad \mbox{in }B_{\mathbf{r}}\setminus\left(\Gamma \cup_{i=1}^N B_{\delta}(p_i^0)\right),
\end{split}
\end{align}
where $C$ is a constant depending on $\mathbf{r}$ and $\delta$, independent of $s$.
Hence, for $k=0,1,2,3$ the limits of $D_\tau \nabla ^ku (s)$ and $D_\tau \nabla ^kv (s)$ exist as $\tau\to0$,
and thus $\nabla ^ku (s)$ and $\nabla ^kv (s)$ are differentiable with respect to $s$ in $B_{\mathbf{r}}\setminus\left(\Gamma \cup_{i=1}^N B_{\delta}(p_i^0)\right)$ for any $\mathbf{r}, \delta>0$.

We now establish \eqref{eq-estimate-f-difference}.
Consider first the term
\begin{equation}
Q_2(s,\tau; \cdot)=-4\tau^2   \nabla_{g} D_\tau u(s) \cdot  \nabla_{g} D_\tau v(s),
\end{equation}
and observe that
\begin{align}
\begin{split}
\frac{1}{4\tau}Q_2(s,\tau; \cdot)-\frac{1}{4\tau'}Q_2(s,\tau'; \cdot)
&=-\tau   \nabla_{g} D_\tau u(s) \cdot  \nabla_{g} D_\tau v(s)+\tau'   \nabla_{g} D_{\tau'} u(s) \cdot  \nabla_{g} D_{\tau'} v(s)\\
&=-\nabla_{g}\left(u(s+\tau)-u(s+\tau')\right)\cdot\nabla_{g} D_{\tau}v(s)\\
&\quad\text{ }+(\tau-\tau')\nabla_{g} D_{\tau'}u(s)\cdot \nabla_{g}D_{\tau}v(s)\\
&\quad\text{ }+\nabla_{g}\left(v(s+\tau')-v(s+\tau)\right)\cdot\nabla_{g}D_{\tau'}u(s).
\end{split}
\end{align}
Then Proposition \ref{prop:lip} and \eqref{eq:partial-s-est-1z} imply
\begin{equation}
e^{2u (y)} \Big|\frac{1}{\tau}Q_2(s,\tau; y)-\frac{1}{\tau'}Q_2(s,\tau'; y)\Big|  \le C|\tau-\tau'| \sigma^{-2}   (1+|y|)^{-2}
\quad\text{in }\mathbb{R}^3 \setminus\Gamma.
\end{equation}
Estimates for $Q_1$, $P_1$, and $P_2$ may be achieved similarly, yielding the desired conclusion.
\end{proof}

For a later purpose, we shall improve \eqref{eq-estimate-difference-R-delta} so that for any large $\mathbf{r}$, $\alpha\in(0,1)$, and $k=0,1,2,3$ the following estimate is valid
\begin{align}\label{eq-estimate-difference-R}
\begin{split}
&|\nabla_g ^k [D_\tau u (s)-D_{\tau'} u (s)] | + \Big(\frac{\rho}{\sigma}\Big)^{k-3-\alpha } |\nabla_g ^k [D_\tau v (s)-D_{\tau'} v (s)]  |\\
&\qquad \le
 C \sigma^{-k}|\tau-\tau'| \quad \mbox{in }B_{\mathbf{r}}\setminus\Gamma,   
\end{split}
\end{align}
where $C$ is a constant depending only on $\mathbf{r}$, independent of $s$.
To see this, first rewrite \eqref{eq-estimate-difference-R-delta} as
\begin{equation}
|\nabla_g ^k w_1(s, \tau, \tau') | + \Big(\frac{\rho}{\sigma}\Big)^{k-3-\alpha } |\nabla_g ^k w_2(s, \tau, \tau')|
\le  C |\tau-\tau'| \quad \mbox{in }B_{\mathbf{r}}\setminus\left(\Gamma \cup_{i=1}^N B_{\delta}(p_i^0)\right).
\end{equation}
By applying Proposition \ref{prop:linear-boundness} to the equation \eqref{eq-equation-w-difference}
near each puncture $p_i^0$, we find that there exists a $\delta>0$ sufficiently small such that 
\begin{align}
\begin{split}
&\sup_{B_{\mathbf{r}}\setminus \Gamma}\Big(|w_1(s, \tau, \tau') | + \Big(\frac{\rho}{\sigma}\Big)^{-2 } |w_2(s, \tau, \tau')|\Big)\\
&\qquad \le \sup_{B_{\mathbf{r}}\setminus\left(\Gamma \cup_{i=1}^N B_{\delta}(p_i^0)\right)}\Big(|w_1(s, \tau, \tau') | + \Big(\frac{\rho}{\sigma}\Big)^{-2 } |w_2(s, \tau, \tau')|\Big)\\
&\qquad\qquad+\sup_{B_{\mathbf{r}}\setminus \Gamma}\Big(|\sigma^2 f_1(s, \tau, \tau') | + \Big(\frac{\rho}{\sigma}\Big)^{-2} |\sigma^2 f_2(s, \tau, \tau')|\Big)\\
&\qquad \le  C |\tau-\tau'|.
\end{split}
\end{align}
This inequality, together with the proof of Proposition 3.9 of \cite{HKWX}, then yields \eqref{eq-estimate-difference-R}.
We next study the asymptotic behavior of $\pa_s \Phi(s)$ as $y\to\{p_1^0,\dots, p_N^0\}$.

\begin{prop}\label{prop:limit-derivative-1}
Let $\Phi=\Phi(s)$ be given by \eqref{eq:definition-u-s} and $\bar{\Phi}_i(s)=(\bar{u}_{i}(s),\bar{v}_{i}(s))$
be the map satisfying \eqref{eq:HKWX-main-1}, for any $s\in [-\varepsilon_0, \varepsilon_0]$ and any puncture $p_i^0$. Then $\nabla^k_{\mathbb S^2}\bar{\Phi}_i(s)$ is differentiable
in $s$ except at the north and south pole for $k=0,1,2,3$, and for any $\alpha \in (0,1)$ we have
\begin{equation}\label{eq:partial-s-est-sphere-1}
|\nabla_{\mathbb S^2}^k \partial_s\bar{u}_{i}(s) |+ (\sin\theta_i)^{k-3-\alpha } |\nabla_{\mathbb S^2}^k \partial_s\bar{v}_{i}(s) |
\le C  \quad\text{on }\mathbb S^2 \setminus\{N,S\},
\end{equation}
and for $l+k\leq 3$ it holds that 
\begin{align}\label{eq-estimate-derivative-s-1}
\begin{split}
&\sup_{\mathbb{S}^2  \setminus\{N,S\}}\big(|(r_i\partial_{r_i})^l \nabla_{\mathbb{S}^2}^k [\pa_s u (s,r_i,\theta_i)-\partial_s\bar{u}_{i}(s,\theta_i)]|\\
&\qquad+e^{(3+\alpha-k)u (s)}|(r_i\partial_{r_i})^l \nabla_{\mathbb{S}^2}^k [\pa_s v (s,r_i,\theta_i)-\partial_s\bar{v}_{i}(s,\theta_i)]|\big)
\le C r_i^{\bar\beta}
\end{split}
\end{align}
in $B_{\delta_0}(p_i^0)\setminus\{p_i^0\}$, where $C$ and $\bar\beta$ are positive constants independent of $s$, $r_i=|y-p_i^0|$, and $\sin\theta_i=\tfrac{\rho(y)}{r_i}$.
\end{prop}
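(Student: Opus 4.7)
My approach is to apply Proposition \ref{prop:linear-asymptotic} to the function $w = \partial_s\Phi(s)$, whose existence and basic estimates are provided by Proposition \ref{prop:differentiable-1}. Differentiating the harmonic map equation $\mathcal{F}(s,\Phi(s)) = 0$ in $s$, with justification from the differentiability statement in Proposition \ref{prop:differentiable-1}, yields
\begin{equation*}
L_s\big(\partial_s\Phi(s)\big) = -\partial_s\mathcal{F}(s,\cdot)\big|_{\Phi(s)} =: -R(s).
\end{equation*}
Because the $s$-dependence of $\mathcal{F}$ enters only through the metric $g = g_{\mathbf{z}}$, and $\partial_s g$ is supported in $B_{2\delta_0}\setminus B_{\delta_0}$ by \eqref{eq:metric-perturbation}, the source $R(s)$ vanishes in each ball $B_{\delta_0}(p_i^0)$. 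Consequently $\partial_s\Phi$ solves the homogeneous linearized equation $L_s w = 0$ on $B_{\delta_0}(p_i^0)\setminus \Gamma$, and the estimate \eqref{eq:partial-s-est-1} at $k=0$ gives that $\partial_s u$ and $e^{2u}\partial_s v$ are bounded there.

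Next I would invoke Proposition \ref{prop:linear-asymptotic} on each such ball with $f \equiv 0$, so that $\bar{f}\equiv 0$. The proposition then delivers an element $\bar{w}^i(s) \in H^1(\mathbb{S}^2)\times H_0^1(\mathbb{S}^2, e^{4\bar{u}_i})$ with $\mathcal{T}\bar{w}^i(s) = 0$, together with the asymptotic
\begin{equation*}
\big|\nabla^k (\partial_s\Phi - \bar{w}^i(s))\big| + (\sin\theta_i)^{k-3-\alpha}\big|\nabla^k(\partial_s v - \bar{w}^i_2(s))\big| \le C r_i^{\bar\beta - k}
\end{equation*}
for $k = 0, 1$. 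The upgrade to mixed derivatives with $l + k \le 3$ required in \eqref{eq-estimate-derivative-s-1} would follow from a standard rescaling-bootstrap on dyadic annular shells about $p_i^0$, mirroring the mechanism used in the proof of \cite[Theorem 2.1]{HKWX} to pass from a base decay to three derivatives. Since $\ker\mathcal{T}$ is one-dimensional and, by the explicit formulas \eqref{eq:tange-map}, coincides with the span of $\partial_b\bar\Phi|_{b = b_i(s)}$, I may write $\bar{w}^i(s) = c_i(s)\,\partial_b\bar\Phi|_{b_i(s)}$ for a scalar $c_i(s)$; estimate \eqref{eq:partial-s-est-sphere-1} is then immediate from the smoothness of the explicit family in $b$ once $c_i$ is known to be bounded.

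Finally, I would verify that $\bar\Phi_i(s)$ is $C^1$ in $s$ with $\partial_s\bar\Phi_i(s) = \bar{w}^i(s)$, equivalently that $b_i(s)$ is $C^1$ with $b_i'(s) = c_i(s)$. Integrating the identity $\partial_s \Phi = c_i(s)\,\partial_b\bar\Phi|_{b_i(s)} + O(r_i^{\bar\beta})$ over $s \in [s_0,s_1]$ at fixed $(r_i,\theta_i)$, and using \eqref{eq:HKWX-main-1} with the uniform constant from Lemma \ref{lem:interior-uniform} at both endpoints to send $r_i\to 0$, produces
\begin{equation*}
\bar\Phi_i(s_1,\theta_i) - \bar\Phi_i(s_0,\theta_i) = \int_{s_0}^{s_1} c_i(s)\,\partial_b\bar\Phi|_{b_i(s)}(\theta_i)\,ds,
\end{equation*}
exhibiting $\bar\Phi_i$ as absolutely continuous in $s$. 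The continuity of $s \mapsto c_i(s)$, and hence the $C^1$-conclusion, would follow by applying Proposition \ref{prop:linear-boundness} to $w(s,\tau,\tau') = D_\tau\Phi(s) - D_{\tau'}\Phi(s)$: since $L_s w = 0$ near each puncture and $\|w\|_{L^\infty}\le C|\tau-\tau'|$ by \eqref{eq-estimate-difference-R}, the associated tangent maps $\bar{w}^i(s,\tau,\tau')$ have size $O(|\tau-\tau'|)$, furnishing the required Cauchy property and identifying $c_i(s) = b_i'(s)$. The main technical obstacle lies in this last continuity argument together with the higher-order bootstrap, both of which require a careful reprise of the asymptotic analysis of Section \ref{sec:linear-hm}, adapted to track the parameter $s$ while preserving uniformity of all constants.
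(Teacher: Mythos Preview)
Your approach is correct and uses the same core machinery as the paper (Propositions \ref{prop:linear-asymptotic} and \ref{prop:linear-boundness} applied to $\partial_s\Phi$ and to the difference quotients), but the organization differs in one respect. The paper first establishes differentiability of $\bar\Phi_i(s)$ \emph{abstractly}: it passes the Lipschitz estimate of Proposition \ref{prop:lip} to the limit $y\to p_i^0$ to obtain Lipschitz continuity of $\bar\Phi_i$ in $s$, and then reruns the difference-quotient argument of Proposition \ref{prop:differentiable-1} for the spherical harmonic maps $\bar\Phi_i\colon\mathbb{S}^2\setminus\{N,S\}\to\mathbb{H}^2$; only afterward does it apply Proposition \ref{prop:linear-asymptotic} to produce a limit $\bar\Phi^{(1)}_i$ and identify it with $\partial_s\bar\Phi_i$ by sending $y\to p_i^0$ in \eqref{eq-estimate-difference-R}. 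You instead exploit the explicit one-parameter structure \eqref{eq:tange-map}: since $\ker\mathcal{T}$ is spanned by $\partial_b\bar\Phi|_{b_i(s)}$, the limit $\bar{w}^i(s)$ is automatically $c_i(s)\,\partial_b\bar\Phi|_{b_i(s)}$, and differentiability of $b_i$ follows from your integration identity. This is a legitimate shortcut that uses the axisymmetric classification; the paper's route is more systematic and would transfer unchanged to settings where the tangent-map family is not explicitly parametrized.

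One correction: the claim that $L_s w(s,\tau,\tau')=0$ near each puncture is false. By \eqref{eq-equation-w-difference} the right-hand side $f(s,\tau,\tau')$ contains the quadratic terms $\tau^{-1}Q(s,\tau;\cdot)-(\tau')^{-1}Q(s,\tau';\cdot)$, which are supported everywhere; only the $P$-terms vanish in $B_{\delta_0}(p_i^0)$. This does not break your argument, since \eqref{eq-estimate-f-difference} supplies the bound $|f_1|+e^{2u}|f_2|\le C|\tau-\tau'|\sigma^{-2}$ and Proposition \ref{prop:linear-boundness} accommodates such a right-hand side, but the statement should be amended accordingly.
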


\begin{proof}
The proof consists of three steps. Throughout, we will fix a puncture $p_i^0$.

{\it Step 1.} We will show that $\nabla^k_{\mathbb S^2}\bar{\Phi}_i(s)$ is differentiable
in $s$ for $k=0,1,2,3$, and that \eqref{eq:partial-s-est-sphere-1} holds.
To this end recall that for $k=0,1,2,3$, any $s_1, s_2\in  [-\varepsilon_0, \varepsilon_0]$, and any $\alpha \in (0,1)$,
Proposition \ref{prop:lip} implies
\begin{align}
\begin{split}
&|\nabla_g ^k [u (s_1)-u (s_2)] |(y) + \Big(\frac{\rho}{\sigma}\Big)^{k-3-\alpha } |\nabla_g ^k [v (s_1)-v (s_2)]  |(y)\\
&\qquad
\le C \sigma ^{-k}   (1+|y|)^{-1}  |s_1-s_2|\quad\text{in }\mathbb{R}^3 \setminus\Gamma.   
\end{split}
\end{align}
By letting $y\to p_i^0$ and using \eqref{eq:HKWX-main-1} we obtain 
\begin{align}
\begin{split}
&|\nabla_{\mathbb S^2} ^k [\bar{u}_{i}(s_1)-\bar{u}_{i}(s_2)] |
+ (\sin\theta)^{k-3-\alpha } |\nabla_{\mathbb S^2} ^k [\bar{v}_{i}(s_1)-\bar{u}_{i}(s_2)]  |\\
&\qquad
\le C   |s_1-s_2|\quad\text{on }\mathbb S^2 \setminus\{N,S\}.
\end{split}
\end{align}
In other words, $\nabla_{\mathbb S^2} ^k \bar{u}_{i}(s)$ is Lipschitz in $s$ for $k=0,1,2,3$.
This is the counterpart of Proposition \ref{prop:lip} for $\bar{\Phi}_{i}=(\bar{u}_{i}, \bar{v}_{i})$.
Note that $\bar{\Phi}_{i}$ is a harmonic map from $\mathbb S^2\setminus \{N,S\}$ to $\mathbb H^2$.
By proceeding as in the proof of Proposition \ref{prop:differentiable-1}, we obtain the desired result.
In fact, the proof in the present case is easier since $\mathbb S^2$ is compact.

{\it Step 2.} We will show that there exists a map $\bar{\Phi}^{(1)}_i(s)=(\bar{u}^{(1)}_{i}(s), \bar{v}^{(1)}_{i}(s))$ with
both component functions in $C^{3,\alpha}(\mathbb{S}^2)$ for any $\alpha\in(0,1)$, while the second satisfies $\bar{v}^{(1)}_{i}(s)=0$ at $N$ and $S$, such that for $l+k\leq 3$ it holds that
\begin{align}\label{eq-estimate-derivative-s-1-main}
\begin{split}
&\sup_{\mathbb{S}^2  \setminus\{N,S\}}\big(|(r_i \partial_{r_i})^l \nabla_{\mathbb{S}^2}^k [\pa_s u (s,r_i,\theta_i)-\bar{u}^{(1)}_{i}(s,\theta_i)]|\\
&\qquad+e^{(3+\alpha-k)u (s)}|(r_i\partial_{r_i})^l \nabla_{\mathbb{S}^2}^k [\pa_s v (s,r_i,\theta_i)-\bar{v}^{(1)}_{i}(s,\theta_i)]|\big)
\le C r_i^{\bar\beta}
\end{split}
\end{align}
in $B_{\delta_0}(p_i^0)\setminus\{p_i^0\}$, where $C$ and $\bar{\beta}$ are independent of $s$.
To prove \eqref{eq-estimate-derivative-s-1-main} differentiate \eqref{eq:perturbation} with respect to $s$, and observe that
in view of \eqref{eq:linearized-operator} we have
\begin{equation}\label{eq-equation-derivative-1}
L_{s} \pa_s \Phi=\mathcal{P}_1,
\end{equation}
where $\mathcal{P}_1=(\mathcal{P}_{1,1}, \mathcal{P}_{1,2})$ with
\begin{align}
\begin{split}
\mathcal{P}_{1,1}&= - \pa_s g^{ab} \pa_{ab}u -\pa_s (\pa_a g^{ab} +g^{ab} \pa_a \ln \sqrt{\det g} ) \pa_b u +2 e^{4u } \pa_s g^{ab } \pa_a v  \pa_b v ,\\
\mathcal{P}_{1,2}&= - \pa_s g^{ab} \pa_{ab }v -\pa_s (\pa_a g^{ab} +g^{ab} \pa_a \ln \sqrt{\det g} ) \pa_b v -4 \pa_s g^{ab } \pa_a u  \pa_b v .
\end{split}
\end{align}
Note that $\mathcal{P}_1$ is supported in $B_{2\delta_0}(p_i^0)\setminus B_{\delta_0}(p_i^0)$ so that
\begin{equation}
L_s \pa_s \Phi= 0 \quad \mbox{in }B_{\delta_0}(p_i^0)\setminus\Gamma, \quad\quad \partial_s v=0\quad\mbox{on }\Gamma\cap B_{\delta_0}(p_i^0).
\end{equation}
Utilizing \eqref{eq:partial-s-est-1}, Proposition \ref{prop:linear-asymptotic} as well as the proof of Proposition 3.8 of \cite{HKWX} for higher order derivatives, we obtain \eqref{eq-estimate-derivative-s-1-main}.

{\it Step 3.} We will now prove that $\partial_s\bar{\Phi}_{i}=\bar{\Phi}^{(1)}_i$. Since
$D_{\tau'} \Phi(s)\to \partial_s\Phi(s)$ as $\tau'\to0$ by Proposition \ref{prop:differentiable-1}, it follows that letting $\tau'\to0$ and taking $k=0$ in \eqref{eq-estimate-difference-R} produces
\begin{equation}
|D_\tau u (s)-\partial_s u (s) | + \Big(\frac{\rho}{\sigma}\Big)^{-3-\alpha } |D_\tau v (s)-\partial_s v (s) |
\le  C |\tau| \quad \mbox{in }B_{\mathbf{r}}\setminus\Gamma.
\end{equation}
Sending $y\to p_i^0$ then yields
\begin{equation}
|D_\tau \bar{u}(s)-\bar{u}_{i}^{(1)}(s) |
+ (\sin\theta_i)^{-3-\alpha }|D_\tau \bar{v}(s)-\overline{v}_{i}^{(1)}(s)|
\le  C |\tau| \quad \mbox{on }\mathbb S^2 \setminus\{N,S\}.
\end{equation}
Finally, by $\tau\to0$ we obtain $\partial_s\bar{\Phi}_{i}=\bar{\Phi}^{(1)}_i$.
\end{proof}


Lastly, we will establish the smooth dependence of $\Phi(s)$ on $s$.

\begin{thm}\label{thm:smooth-dep}
Let $\Phi=\Phi(s)$ be given by \eqref{eq:definition-u-s} and $\bar{\Phi}_i(s)=(\bar{u}_{i}(s),\bar{v}_{i}(s))$
be the map satisfying \eqref{eq:HKWX-main-1}, for any $s\in [-\varepsilon_0, \varepsilon_0]$ and any puncture $p_i^0$.
Then for $k=0,1,2,3$, the maps $\nabla^k_g\Phi(s)$ and
$\nabla^k_{\mathbb S^2}\bar{\Phi}_i(s)$ are smooth in $s$, except on the axis.
Moreover for these $k$, any $j\ge 1$, and any $\alpha \in (0,1)$ we have
\begin{align}\label{eq:partial-s-est-j}
|\nabla_g ^k \partial_s^ju (s) |(y) + \Big(\frac{\rho}{\sigma}\Big)^{k-3-\alpha } |\nabla_g ^k \partial_s^jv (s) |(y)
&\le C \sigma ^{-k}   (1+|y|)^{-1} \quad\text{in }\mathbb R^3 \setminus\Gamma,\\
\label{eq:partial-s-est-sphere-j}
|\nabla_{\mathbb S^2}^k \partial_s^j\bar{u}_{i}(s) |+ (\sin\theta_i)^{k-3-\alpha } |\nabla_{\mathbb S^2}^k \partial_s^j\bar{v}_{i}(s) |
&\le C  \quad\text{on }\mathbb S^2\setminus\{N,S\},
\end{align}
and for $l+k\leq 3$ and $0<r_i\le \delta_0$ it holds that
\begin{align}\label{eq-estimate-derivative-s-j}
\begin{split}
&\sup_{\mathbb{S}^2  \setminus\{N,S\}}\big(|(r_i\partial_{r_i})^l \nabla_{\mathbb{S}^2}^k [\pa_s^j u (s,r_i,\theta_i)-\partial_s^j\bar{u}_{i}(s,\theta_i)]|\\
&\qquad+e^{(3+\alpha-k)u (s)}|(r_i \partial_{r_i})^l \nabla_{\mathbb{S}^2}^k [\pa_s^j v (s,r_i,\theta_i)-\partial_s^j\bar{v}_{i}(s,\theta_i)]|\big)
\le C r_i^{\bar\beta},
\end{split}
\end{align}
where $C$ and $\bar\beta$ are positive constants independent of $s$, and $r_i=|y-p_i^0|$ with $\sin\theta_i=\tfrac{\rho(y)}{r_i}$.
\end{thm}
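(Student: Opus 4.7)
\textbf{Proof proposal for Theorem \ref{thm:smooth-dep}.} The plan is to proceed by induction on $j\geq 1$, with the base case $j=1$ already established by Propositions \ref{prop:differentiable-1} and \ref{prop:limit-derivative-1}. Suppose the conclusions \eqref{eq:partial-s-est-j}, \eqref{eq:partial-s-est-sphere-j}, and \eqref{eq-estimate-derivative-s-j} hold for all orders $j'<j$. Formally differentiating the harmonic map equation $\mathcal{F}(s,\Phi(s))=0$ exactly $j$ times in $s$ via the chain rule produces a linear equation of the form
\begin{equation}
L_s \partial_s^j \Phi(s) = \mathcal{P}_j(s; \Phi, \partial_s\Phi, \dots, \partial_s^{j-1}\Phi),
\end{equation}
where $\mathcal{P}_j$ is a polynomial in lower-order $s$-derivatives of $\Phi$ with coefficients involving $\partial_s^\ell g$ and derivatives of $\Phi$. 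Because $\partial_s^\ell g$ is supported in $B_{2\delta_0}\setminus B_{\delta_0}$ around each puncture, the contribution from metric variations is a compactly supported smooth term. The remaining contributions come from the nonlinearity and, by the inductive hypothesis together with the Leibniz rule, satisfy the pointwise bound $|\mathcal{P}_{j,1}|+e^{2u}|\mathcal{P}_{j,2}|\le C\sigma^{-2}(1+|y|)^{-1}$ in $\mathbb{R}^3\setminus\Gamma$, along with an analogous expansion $\sigma^{2}\mathcal{P}_j=\bar{\mathcal{P}}_{j,i}(\theta_i)+O(r_i^{\bar\beta})$ near each puncture $p_i^0$, with the appropriate angular weights for the second component.

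First I would establish the existence of $\partial_s^j\Phi$ together with the uniform estimate \eqref{eq:partial-s-est-j} by repeating the difference-quotient argument from Proposition \ref{prop:differentiable-1}, but now applied to $\partial_s^{j-1}\Phi$. The function $w(s,\tau,\tau')=D_\tau\partial_s^{j-1}\Phi(s)-D_{\tau'}\partial_s^{j-1}\Phi(s)$ satisfies an equation of the form $L_s w=f(s,\tau,\tau')$, where $f$ is obtained by subtracting the equations for $D_\tau\partial_s^{j-1}\Phi$ and $D_{\tau'}\partial_s^{j-1}\Phi$ and reorganizing the difference of the operators $L_{s+\tau}-L_s$ as a lower-order error. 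Using the inductive estimates and the Lipschitz dependence already obtained in Proposition \ref{prop:lip} to control the error from $L_{s+\tau}-L_s$, one checks that $f$ obeys $|f_1|+e^{2u}|f_2|\le C|\tau-\tau'|\sigma^{-2}(1+|y|)^{-2}$. Then Lemma \ref{lem:linear-energy} gives $\|w\|_{\mathcal{H}}\le C|\tau-\tau'|$, the Sobolev-type inequality \eqref{eq:sobolev-type} transfers this to an $L^6$-bound on $|w_1|+e^{2u}|w_2|$, and finally Proposition \ref{prop:linear-boundness} together with \cite[Proposition 3.9]{HKWX} upgrades it to the pointwise bound $|\nabla_g^k w_1|+(\rho/\sigma)^{k-3-\alpha}|\nabla_g^k w_2|\le C\sigma^{-k}|\tau-\tau'|$. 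The resulting Cauchy estimates as $\tau,\tau'\to 0$ guarantee the existence of $\partial_s^j\Phi$ with the bound \eqref{eq:partial-s-est-j}.

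Next I would apply Proposition \ref{prop:linear-asymptotic} to the equation $L_s\partial_s^j\Phi=\mathcal{P}_j$ inside each ball $B_{\delta_0}(p_i^0)$, which—given the verified structure of $\mathcal{P}_j$—yields a function $\bar\Phi_i^{(j)}(s)=(\bar u_i^{(j)}(s),\bar v_i^{(j)}(s))$ on $\mathbb{S}^2\setminus\{N,S\}$ satisfying the tangent equation $\mathcal{T}\bar\Phi_i^{(j)}=\bar{\mathcal{P}}_{j,i}$ together with the decay \eqref{eq-estimate-derivative-s-j} (with $\partial_s^j\bar\Phi_i$ temporarily replaced by $\bar\Phi_i^{(j)}$). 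To identify $\bar\Phi_i^{(j)}=\partial_s^j\bar\Phi_i$, I would pass to the limit $y\to p_i^0$ in the Lipschitz-in-$\tau$ bound for $D_\tau\partial_s^{j-1}\Phi-\bar\Phi_i^{(j)}$, exactly as in Step 3 of the proof of Proposition \ref{prop:limit-derivative-1}; this forces $\bar\Phi_i^{(j)}$ to coincide with the $s$-derivative of $\partial_s^{j-1}\bar\Phi_i$, completing the induction along with the sphere estimate \eqref{eq:partial-s-est-sphere-j}. Finally, the higher-$(l+k)$ derivative estimates in \eqref{eq-estimate-derivative-s-j} follow by differentiating the equation $L_s\partial_s^j\Phi=\mathcal{P}_j$ in the spatial variables and applying the higher-regularity version of Proposition \ref{prop:linear-asymptotic} (see the proof of \cite[Proposition 3.8]{HKWX}).

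The main technical obstacle is the combinatorial bookkeeping required to verify that the right-hand side $\mathcal{P}_j$ at each order of differentiation actually satisfies the structural hypotheses needed to invoke Propositions \ref{prop:linear-asymptotic} and \ref{prop:linear-boundness}—that is, that it admits a tangent function $\bar{\mathcal{P}}_{j,i}$ on $\mathbb{S}^2$ with the correct angular weight $(\sin\theta_i)^{2+\alpha}$ on the $v$-component, and that the remainder decays like $r_i^{\bar\beta-2}$. This requires a careful tracking of the weights $(\rho/\sigma)^{k-3-\alpha}$ through the Leibniz expansion of products of $\partial_s^{j'}u$, $\partial_s^{j'}v$ and their $\nabla_g$ derivatives, ensuring that the inductive expansion at the puncture is preserved under multiplication. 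Once this structural verification is in place, the functional-analytic and ODE arguments of Sections \ref{sec:linear-hm} and \ref{sec:diff-dp} transport directly to each inductive step.
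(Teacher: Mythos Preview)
Your proposal is correct and follows essentially the same approach as the paper's proof: induction on $j$ with base case $j=1$ from Propositions \ref{prop:differentiable-1} and \ref{prop:limit-derivative-1}, then at each step repeating the Lipschitz/difference-quotient argument for $\partial_s^{j-1}\Phi$, deriving the linear equation $L_s\partial_s^j\Phi=\mathcal{P}_j$, invoking Proposition \ref{prop:linear-asymptotic} near each puncture to produce $\bar\Phi_i^{(j)}$, and identifying $\bar\Phi_i^{(j)}=\partial_s^j\bar\Phi_i$ via the limit argument of Step 3 in Proposition \ref{prop:limit-derivative-1}. The paper spells out the $j=2$ case in five labeled steps (Lipschitz for $\partial_s\Phi$, differentiability, sphere differentiability, existence of $\bar\Phi_i^{(2)}$, identification) and then says ``the general case may be treated by induction''; your write-up just packages these steps into the inductive template directly and flags the bookkeeping on $\mathcal{P}_j$ that the paper leaves implicit.
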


\begin{proof} 
By Propositions \ref{prop:differentiable-1} and \ref{prop:limit-derivative-1}
we have that $\nabla^k_g\Phi(s)$ and
$\nabla^k_{\mathbb S^2}\bar{\Phi}_i(s)$ are differentiable in $s$,
and that \eqref{eq:partial-s-est-j}-\eqref{eq-estimate-derivative-s-j} hold for $j=1$.
We now prove that $\nabla^k_g\Phi(s)$ and
$\nabla^k_{\mathbb S^2}\bar{\Phi}_i(s)$ are twice differentiable in $s$,
and that \eqref{eq:partial-s-est-j}-\eqref{eq-estimate-derivative-s-j} hold for $j=2$.
The proof consists of several steps. Recall that $\partial_s\Phi$ satisfies \eqref{eq-equation-derivative-1}.

{\it Step 1.} We claim that for $k=0,1,2,3$ any $s_1, s_2\in  [-\varepsilon_0, \varepsilon_0]$,
and any $\alpha \in (0,1)$ it holds that
\begin{align}
\begin{split}
&|\nabla_g ^k [\partial_su (s_1)-\partial_su (s_2)] |(y) + \Big(\frac{\rho}{\sigma}\Big)^{k-3-\alpha } |\nabla_g ^k [\partial_sv (s_1)-\partial_sv (s_2)]  |(y)\\
&\qquad
\le C \sigma ^{-k}   (1+|y|)^{-1}  |s_1-s_2|\quad\text{in }\mathbb{R}^3 \setminus\Gamma, 
\end{split}
\end{align}
where $C$ is a positive constant independent of $s_1$ and $s_2$. The proof is similar to that of Proposition \ref{prop:lip}, with $\Phi$ there replaced by $\partial_s\Phi$.

{\it Step 2.} We claim that $\nabla^k_g\partial_s\Phi(s)$ is differentiable
in $s$ for $k=0,1,2,3$, and that \eqref{eq:partial-s-est-j} holds for $j=2$.
The proof is similar to that of Proposition \ref{prop:differentiable-1}, again with $\Phi$ replaced by $\partial_s\Phi$.

{\it Step 3.} We claim that $\nabla^k_{\mathbb S^2}\partial_s\bar{\Phi}_i(s)$ is differentiable
in $s$ for $k=0,1,2,3$, and that \eqref{eq:partial-s-est-sphere-j} holds for $j=2$.
The proof is similar to Step 1 in the proof of Proposition \ref{prop:limit-derivative-1}.

{\it Step 4.} We will show that there exists a map $\bar{\Phi}^{(2)}_i(s)=(\bar{u}^{(2)}_{i}(s), \bar{v}^{(2)}_{i}(s))$ with both component functions in
$C^{3,\alpha}(\mathbb{S}^2)$ for any $\alpha\in(0,1)$, while the second satisfies $\bar{v}^{(2)}_{i}(s)=0$ at $N$ and $S$,
such that for $l+k\leq 3$ and $0<r_i\le \delta_0$ it holds that
\begin{align}\label{eq-estimate-derivative-s-2-main}
\begin{split}
&\sup_{\mathbb{S}^2 \setminus\{N,S\}}\big(|(r_i\partial_{r_i})^l \nabla_{\mathbb{S}^2}^k [\pa^2_s u (s,r_i,\theta_i)-\bar{u}^{(2)}_{i}(s,\theta_i)]|\\
&\qquad+e^{(3+\alpha-k)u (s)}|(r_i\partial_{r_i})^l \nabla_{\mathbb{S}^2}^k [\pa^2_s v (s,r_i,\theta_i)-\bar{v}^{(2)}_{i}(s,\theta_i)]|\big)
\le C r_i^{\bar\beta},
\end{split}
\end{align}
where $C$ and $\bar{\beta}$ are independent of $s$. To this end differentiate \eqref{eq-equation-derivative-1} with respect to $s$, or
differentiate \eqref{eq:perturbation} with respect to $s$ twice to obtain
\begin{equation}\label{eq-equation-derivative-2}
L_{s} \pa^2_s \Phi=\mathcal{P}_{2},
\end{equation}
where $\mathcal{P}_{2}=(\mathcal{P}_{2,1}, \mathcal{P}_{2,2})$ with
\begin{align}
\begin{split}
\mathcal{P}_{2,1}&= \pa_s \mathcal{P}_{1,1} - \pa_s g^{ab} \pa_{ab }\pa_s u -\pa_s (\pa_a g^{ab} +g^{ab} \pa_a \ln \sqrt{\det g} ) \pa_b \pa_s u  \\ &
 \qquad +8 \pa_s (e^{4u} |\nabla_{g}  v|^2)  \pa_s u   +4 \pa_s (e^{4u} \nabla_{g} v  \cdot \nabla_{g} )  \pa_s v ,\\
\mathcal{P}_{2,2}&=\pa_s  \mathcal{P}_{1,2}- \pa_s g^{ab} \pa_{ab }\pa_s v -\pa_s (\pa_a g^{ab}
+g^{ab} \pa_a \ln \sqrt{\det g} ) \pa_b \pa_s v \\
& \qquad
   -4 \pa_s (   \nabla_{g} u \cdot \nabla_{g} )  \pa_s v   -4 \pa_s (   \nabla_{g} v \cdot \nabla_{g} )  \pa_s u .
\end{split}
\end{align}
We may then proceed similarly to Step 2 in the proof of Proposition \ref{prop:limit-derivative-1} to obtain the desired conclusion.

{\it Step 5.} We claim that $\partial_s^2\bar{\Phi}_{i}=\bar{\Phi}^{(2)}_i$. The proof is similar to Step 3 in the proof of Proposition \ref{prop:limit-derivative-1}.
This completes the proof for $j=2$. 

The general case may be treated by induction.
\end{proof}

\begin{proof}[Proof of Theorem \ref{thm:smoothness}] Assuming all components of $\mathbf{z}$ are allowed to vary, the same conclusions as above remain valid. In particular the map $B_{\varepsilon}^N(\mathbf{z}_0)\rightarrow C_{loc}^{3,\varsigma}(\mathbb{R}^3 \setminus \cup_{i=1}^N \{p_{i}^0\},\mathbb{R}^2)$ given by 
$\mathbf{z}\mapsto (U_{\mathbf{z}}\circ\Pi_{\mathbf{z}}, v_{\mathbf{z}}\circ\Pi_{\mathbf{z}})$, and the map $B_{\varepsilon}^N(\mathbf{z}_0)\rightarrow C_{loc}^{3,\varsigma}(\mathbb{S}^2 ,\mathbb{R}^2)$ given by $\mathbf{z}\mapsto (\bar{U}_{\mathbf{z},i}, \bar{v}_{\mathbf{z},i})$ for each $i=1,\dots,N$, are 
smooth in $s$ for some small $\va>0$. Since $\Pi_{\mathbf{z}}$ is a diffeomorphism, the first conclusion of Theorem \ref{thm:smoothness} holds. 

We will now consider estimates for the derivatives with respect to $\mathbf{z}$. Observe that for each $j=1,\dots,N$, the chain rule along with $\Pi_{\mathbf{z}}(y)=x$ produces
\begin{align}
\begin{split}
 \partial_{z_j}  (\Phi_{\mathbf{z}}\circ \Pi_{\mathbf{z}}(y))&=\partial_{z_j}  \Phi_{\mathbf{z}} (x)+ \nabla\Phi_\mathbf{z}(x) \cdot  \partial_{z_j}  \Pi_{\mathbf{z}}(y)\\&
 = \partial_{z_j}  \Phi_{\mathbf{z}} (x) +\pa_{x_3} \Phi_\mathbf{z}(x) \bar \chi(y_1,y_2,y_3-z_{0,j}).
\end{split}
\end{align}
Let $\zeta_j(x):=\bar \chi(\Pi_{\mathbf{z}}^{-1}(x)-z_{0,j}e_3)$ and note that this function is supported in $B_{2\delta_0}(p_j^0)$, and set $\tilde \sigma(x)=\sigma(\Pi_{\mathbf{z}}^{-1}(x))$.
It follows from Theorem \ref{thm:smooth-dep} that for $k=0,1,2,3$ and any $\alpha\in(0,1)$ we have
\begin{equation}
|\nabla^k( \partial_{z_j} u_{\mathbf{z}} + \pa_{x_3} u_{\mathbf{z}} \zeta_j ) |(x) + \Big(\frac{\rho}{\tilde \sigma}\Big)^{k-3-\alpha } |\nabla ^k ( \partial_{z_j} v_{\mathbf{z}} + \pa_{x_3} v_{\mathbf{z}} \zeta_j )|(x)
\le C \tilde \sigma ^{-k}   (1+|x|)^{-1} 
\end{equation}
in $\mathbb R^3\setminus\Gamma$, 
as well as 
\begin{equation}
|\nabla_{\mathbb S^2}^k \partial_{z_j}\bar{u}_{\mathbf{z}, i} |+ (\sin\theta_i)^{k-3-\alpha } |\nabla_{\mathbb S^2}^k \partial_{z_j}\bar{v}_{\mathbf{z},i} |
\le C  \quad\text{on }\mathbb S^2 \setminus\{N,S\}, 
\end{equation}
and for $l+k\leq 3$ and $0<r_i\leq \delta_0$ it holds that 
\begin{align}
\begin{split}
&\sup_{\mathbb{S}^2 \setminus\{N,S\}}\big(|(r_i \partial_{r_i})^l \nabla_{\mathbb{S}^2}^k [( \partial_{z_j} u_{\mathbf{z}} + \pa_{x_3} u_{\mathbf{z}} \zeta_j ) -\partial_{z_j}\bar{u}_{\mathbf{z}, i}]|\\
&\qquad+e^{(3+\alpha-k)u_{\mathbf{z}}}|(r_i \partial_{r_i})^l \nabla_{\mathbb{S}^2}^k [( \partial_{z_j} v_{\mathbf{z}} + \partial_{x_3} v_{\mathbf{z}} \zeta_j ) -\partial_{z_j}\bar{v}_{\mathbf{z},i}]|\big)
\le C r_i ^{\bar\beta},
\end{split}
\end{align}
for some constants $C$ and $\bar\beta>0$.
\end{proof} 

\begin{proof}[Proof of Corollary \ref{cor:linest}] By a direct computation
 \begin{equation}
 (\dot U_t,\dot v_t)=\sum_{j=1}^N \pa_{z_j}\left(U_{\mathbf{z}},v_{\mathbf{z}}\right) \dot z_j. 
 \end{equation}
In each ball $B_{\va/2}(p_i)$  we have the following expansion by \eqref{eq:thm3.1-2} of Theorem \ref{thm:smoothness}, namely 
\begin{equation}
 \pa_{z_j}\left(U_{\mathbf{z}},v_{\mathbf{z}}\right)  =- \pmb{\delta}_{ij}\left(\pa_{x_3} U_{\mathbf{z}},\pa_{x_3} v_{\mathbf{z}}\right)+\pa_{z_j} \left(\bar U_{\mathbf{z},i}, \bar v_{\mathbf{z},i}\right) +O(r_i^{\bar \beta}) .
\end{equation}
Next observe that
\begin{equation}
\pa_{z_j} \left(\bar U_{\mathbf{z},i}, \bar v_{\mathbf{z},i}\right)\dot z_j =\pa_t \left(\bar U_{\mathbf{z}(t),i}, \bar v_{\mathbf{z}(t),i}\right)
=\pa_{b_i}\left(\bar U_{\mathbf{z},i}, \bar v_{\mathbf{z},i}\right)\dot b_i,
\end{equation}
and by \cite[Theorems 2.1 and 2.2]{HKWX} (see \eqref{3.1}-\eqref{3.3}) it holds that
\begin{align}
\begin{split}
\left(\pa_{x_3} U_{\mathbf{z}},\pa_{x_3} v_{\mathbf{z}}\right)&=\left(\tfrac{x_3-z_i}{r_i^2},0\right)+\pa_{\theta_i} \left(\bar U_{\mathbf{z},i}, \bar v_{\mathbf{z},i}\right) \partial_{x_3} \theta_{i} +O(r_{i}^{\beta_i-1})\\&
=\left(\tfrac{x_3-z_i}{r_i^2},0\right)-\frac{\rho}{r_i^2}\pa_{\theta_i} \left(\bar U_{\mathbf{z},i}, \bar v_{\mathbf{z},i}\right)  +O(r_{i}^{\beta_i-1}).
\end{split}
\end{align}
Therefore
\begin{equation}
 \left(\dot U_t,\dot v_t \right) = \left[-\left(\tfrac{x_3-z_i}{r_i^2},0\right)+\frac{\rho}{r_i^2}\pa_{\theta_i} (\bar U_{\mathbf{z},i}, \bar v_{\mathbf{z},i})\right]\dot z_i+\pa_{b_i}\left(\bar U_{\mathbf{z},i}, \bar v_{\mathbf{z},i}\right)\dot b_i+O(r_{i}^{\beta_i-1}). 
\end{equation}
Regarding $\dot v_t$, the remainder $O(r_{i}^{\beta_i-1})$ can be refined to $O(r_{i}^{\beta_i-1} (\sin\theta_i)^{3})$ as we have the weighted estimates. Hence \eqref{abcd} is proved. The remaining two estimates of this corollary may be established similarly, and we omit the details.
\end{proof}

\appendix

\section{Miscellaneous Computations}

\begin{lem} \label{lem:r1-r2}
Let $r_1$, $r_2$ be the respective Euclidean distances to the (Cartesian) points $(0,0,\mp2\eta)\in\mathbb{R}^3$ where $\eta>0$, and set
$D=\{(\rho,z,\phi)\in\mathbb{R}^3\mid z>0, \> r_2>\eta\}$. Then within $D$ we have
\begin{equation}    \label{eq:r1-r2}
\frac{r_1}{r_2}\leq 5.
\end{equation}
\end{lem}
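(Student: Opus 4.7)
The plan is to reduce the bound to a one-line consequence of the triangle inequality, using only the value of the Euclidean distance between the two reference points and the hypothesis $r_2>\eta$.

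First I would observe that the two reference points $(0,0,-2\eta)$ and $(0,0,2\eta)$ are separated by Euclidean distance $4\eta$. The triangle inequality in $\mathbb{R}^3$ then yields, at every point of $\mathbb{R}^3$, the universal bound
\begin{equation}
r_1 \;\leq\; r_2 + 4\eta.
\end{equation}
Restricting to $D$, the defining condition $r_2>\eta$ gives $4\eta<4r_2$, and hence
\begin{equation}
r_1 \;<\; r_2 + 4r_2 \;=\; 5r_2,
\end{equation}
which is \eqref{eq:r1-r2}. Note that the only hypothesis actually used is $r_2>\eta$; the restriction $z>0$ plays no role in producing the upper bound $5$ (it only ensures, via $r_1^2-r_2^2=8\eta z$, that $r_1\geq r_2$, so that the ratio $r_1/r_2$ lies in $[1,5)$ throughout $D$).

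There is no real obstacle; the constant $5$ is in fact sharp, since at the axis point $(0,0,3\eta)\in\overline{D}$ on the boundary sphere $r_2=\eta$ one computes $r_1=5\eta$ and $r_2=\eta$, giving ratio exactly $5$. Thus the statement of the lemma is the best possible bound of this form.
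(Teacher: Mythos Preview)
Your proof is correct and genuinely simpler than the paper's. The paper argues by writing $f(\rho,z)=r_1^2/r_2^2=1+8\eta z/r_2^2$, observing that $f$ is decreasing in $\rho$, and then reducing to a one-variable analysis along the projected boundary $\tilde\partial D$ (split into three pieces), where the maximum is located at the axis point $z=3\eta$. You bypass all of this with a single application of the triangle inequality $r_1\le r_2+4\eta$ combined with the hypothesis $r_2>\eta$. Your approach has the additional advantage of making transparent that the condition $z>0$ is not used for the upper bound, and your sharpness remark at $(0,0,3\eta)$ recovers exactly the extremal point the paper finds through its boundary analysis.
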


\begin{proof}
On $D$, consider the function
\begin{equation}
f(\rho,z)=\frac{r_1^2}{r_2^2}=\frac{\rho^2+(z-2\eta)^2+8\eta z}{r_2^2}=1+\frac{8\eta z}{r_2^2}.
\end{equation}
Clearly $f$ is a decreasing function of $\rho$, and hence $f(\rho,z)\leq f(\bar\rho,z)$ for $(\bar\rho,z) \in \tilde{\partial} D$ where $\tilde{\partial} D$ is the left-hand portion of the projected boundary of $D$ within the $\rho z$-half plane. This left-hand portion of the projected boundary consists of three parts within the half plane:
$\text{I}=\{ \rho=0, \> 0\leq z\leq \eta\}$, $\text{II}=\{r_2=\eta, \> \eta\leq z\leq 3\eta\}$, and $\text{III}=\{\rho=0, \> 3\eta\leq z<\infty\}$. As a function of $z$, we find that $f$ is increasing on $\text{I}\cup\text{II}$ and decreasing on $\text{III}$. Therefore, its maximum value on the closure of $D$ is achieved at the $z$-axis when $z=3\eta$, and~\eqref{eq:r1-r2} follows.
\end{proof}

\begin{cor} \label{cor:r1-r2}
Let $r_1$, $r_2$ be the respective Euclidean distances to two distinct points $p_1,p_2\in\R^3$, and set $4\eta=|p_1-p_2|$ to be the distance between them. Then
\begin{equation}
\frac15 \leq \frac{r_1}{r_2} \leq 5
\end{equation}
outside of the two balls of radius $\eta$ centered at $p_1$ and $p_2$.
\end{cor}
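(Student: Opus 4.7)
The plan is to reduce to the setup of Lemma \ref{lem:r1-r2} via a rigid motion of $\R^3$, and then exploit the reflective symmetry of the configuration across the perpendicular bisector of $\overline{p_1p_2}$ to obtain both bounds simultaneously. After applying a suitable isometry, I can assume without loss of generality that $p_1=(0,0,-2\eta)$ and $p_2=(0,0,2\eta)$, so that $r_1,r_2$ become the Euclidean distances used in Lemma \ref{lem:r1-r2}. Being outside the two closed balls of radius $\eta$ centered at $p_1,p_2$ automatically gives $r_1>\eta$ and $r_2>\eta$.

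Next I would split the exterior region into the two half-spaces $\{z\ge 0\}$ and $\{z\le 0\}$. On $\{z\ge 0\}$ the hypotheses of Lemma \ref{lem:r1-r2} are satisfied (with the boundary case $z=0$ absorbed by continuity of $r_1/r_2$), and the lemma yields $r_1/r_2\le 5$; moreover in this half-space one has $r_2\le r_1$ since $p_2$ lies strictly above the bisecting plane $\{z=0\}$ and $p_1$ lies symmetrically below, so $r_1/r_2\ge 1\ge 1/5$. On $\{z\le 0\}$ the roles of $p_1$ and $p_2$ are interchanged by the reflection $z\mapsto -z$, which is an isometry of $\R^3$ swapping the two punctures; applying Lemma \ref{lem:r1-r2} to this reflected configuration produces $r_2/r_1\le 5$, equivalently $r_1/r_2\ge 1/5$, while $r_1/r_2\le 1\le 5$ follows from $r_1\le r_2$ in this half-space.

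The only mild subtlety is that Lemma \ref{lem:r1-r2} is stated on the open domain $D=\{z>0,\,r_2>\eta\}$, whereas the corollary concerns the closed exterior of the two $\eta$-balls and in particular allows $z=0$. Since $r_1/r_2$ extends continuously to $\overline{D}\cap\{r_2\ge \eta\}$ and the strict inequality $r_1/r_2\le 5$ propagates to the closure, this presents no real obstacle. Combining the two half-space estimates then yields the required double inequality on the full complement of the two balls.
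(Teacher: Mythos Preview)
Your proof is correct and follows essentially the same approach as the paper: reduce by a rigid motion to the symmetric configuration of Lemma~\ref{lem:r1-r2}, apply the lemma in the upper half-space to get $r_1/r_2\le 5$ together with the elementary observation $r_2\le r_1$ there, and then invoke the reflection $z\mapsto -z$ to handle the lower half-space. Your remark on extending from $z>0$ to $z\ge 0$ by continuity is a nice touch that the paper leaves implicit.
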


\begin{proof}
Without loss of generality it may be assumed that the points $p_1$ and $p_2$ are located on the $z$-axis, and that the origin is located at the midpoint between them. Lemma~\ref{lem:r1-r2} then states that $r_1/r_2\leq 5$ in the upper half-space $z>0$ outside $B_{\eta}(p_2)$, and hence $r_2/r_1\geq 1/5$ there. Also clearly $r_2<r_1$ in the upper half-space, so that $r_1/r_2\geq1$ there. By symmetry we obtain  $r_1/r_2\geq 1/5$ in the lower half-space outside $B_\eta(p_1)$, and also $r_1/r_2\leq 1$ there. Combining these inequalities yields the desired result.
\end{proof}

\begin{lem} \label{lem:r0-ri}
Let $\delta,\eta>0$ and $p_0,p_1,p_2 \in \mathbb{R}^3$ be such that $B_\eta(p_i)\subset B_{\delta/4}(p_0)$, $i=1,2$ with $|p_1-p_2|=4\eta$ and $p_0=(p_1+p_2)/2$. Set $r_i$ to be the Euclidean distance to $p_i$, $i=0,1,2$. Then for any $\lambda\in[0,1]$ we have
\begin{equation}\label{eq:r0-ri}
|\lambda \ln r_1 + (1-\lambda)\ln r_2 - \ln r_0| \leq \ln 2,\quad\quad\quad
|\nabla\ln r_i| \leq \frac{2}{r_0},
\end{equation}
outside $B_{\delta/2}(p_0)$.
\end{lem}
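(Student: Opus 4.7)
The entire statement reduces to the elementary observation that, outside $B_{\delta/2}(p_0)$, the three distances $r_0, r_1, r_2$ are comparable with an explicit universal ratio, so the plan is to first extract a quantitative comparison and then derive both inequalities from it.

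First, I would translate the containment hypothesis $B_\eta(p_i)\subset B_{\delta/4}(p_0)$ together with $|p_i-p_0|=2\eta$ into the size relation $\eta\le \delta/12$. Consequently, for any $x\notin B_{\delta/2}(p_0)$ one has $r_0=|x-p_0|\ge \delta/2\ge 6\eta$, and the triangle inequality yields
\begin{equation}
|r_i-r_0|\le |p_i-p_0|=2\eta\le \tfrac{1}{3}r_0,\qquad i=1,2.
\end{equation}
In particular $r_i/r_0\in [2/3,4/3]$, which will be the workhorse estimate for both parts.

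For the first inequality I would write
\begin{equation}
\lambda\ln r_1+(1-\lambda)\ln r_2-\ln r_0=\lambda\ln(r_1/r_0)+(1-\lambda)\ln(r_2/r_0),
\end{equation}
and then apply the triangle inequality in absolute value together with the bound $|\ln(r_i/r_0)|\le\max\{\ln(3/2),\ln(4/3)\}=\ln(3/2)\le \ln 2$. The convex combination structure makes this step immediate once the comparison $r_i/r_0\in[2/3,4/3]$ is in place.

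For the gradient bound I would simply use $|\nabla\ln r_i|=1/r_i$ and observe that $r_i\ge r_0-2\eta\ge (2/3)r_0\ge r_0/2$, so $1/r_i\le 2/r_0$; the case $i=0$ is handled the same way (trivially). There is no genuine obstacle here: the only thing to be careful about is tracking the numeric constants so as to ensure the clean bounds $\ln 2$ and $2/r_0$, which is why I would make the $\eta\le\delta/12$ reduction at the outset rather than carrying $\delta$ and $\eta$ independently through the estimates.
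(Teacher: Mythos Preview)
Your proposal is correct and follows essentially the same approach as the paper: both arguments use the triangle inequality to show $r_i/r_0$ lies in a bounded interval outside $B_{\delta/2}(p_0)$, then deduce the logarithmic bound via the convex-combination decomposition and the gradient bound via $|\nabla\ln r_i|=1/r_i$. Your version extracts the sharper constraint $\eta\le\delta/12$ up front and obtains the tighter range $r_i/r_0\in[2/3,4/3]$ (giving $|\ln(r_i/r_0)|\le\ln(3/2)$), whereas the paper uses the weaker $2\eta<\delta/4$ to get $r_i/r_0\in[1/2,3/2]$ and hits $\ln 2$ exactly; but the structure is identical.
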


\begin{proof}
Let $i=1,2$. By the triangle inequality 
\begin{equation}
r_0\leq r_i+2\eta<r_i+\frac\delta4,
\end{equation}
and thus dividing by $r_i$ produces
\begin{equation}\label{goqanoigoiq}
\frac{r_0}{r_i}\leq 1+\frac{\delta/4}{r_i}\leq 2
\end{equation}
outside $B_{\delta/2}(p_0)$, since in this region $r_i\geq\delta/4$. Similarly
\begin{equation}
r_i\leq r_0 + 2\eta< r_0 + \frac{\delta}{4},
\end{equation}
and hence
\begin{equation}
\frac{r_i}{r_0} \leq 1 + \frac{\delta/4}{r_0}\leq \frac{3}{2}
\end{equation}
outside $B_{\delta/2}(p_0)$.
Therefore in this region
\begin{equation}
|\ln r_i - \ln r_0| \leq \ln 2,
\end{equation}
and~\eqref{eq:r0-ri} follows by the triangle inequality after rewriting $\ln r_0$ as a convex combination 
\begin{equation}
|\lambda \ln r_1 + (1-\lambda)\ln r_2 - \ln r_0|\leq \lambda|\ln r_1 -\ln r_0|
+(1-\lambda)|\ln r_2 -\ln r_0|\leq \ln 2.
\end{equation}

Lastly, for the gradient bound observe that \eqref{goqanoigoiq} implies 
$1/r_i\leq 2/r_0$ outside $B_{\delta/2}(p_0)$. The desired estimate now follows.
\end{proof}

\begin{lem}\label{aoijfoainoinhk}
Let $\Phi_i=(u_i,v_i)$, $i=1,2$ be multi-extreme Kerr harmonic maps with the same potential constants on $\Gamma_a^+=[a,\infty)\subset z$-axis, for $a>0$ sufficiently large so that there are no punctures on this axis interval. Then $d_{\mathbb{H}^2}(\Phi_1,\Phi_2)\rightarrow 0$ as $r\rightarrow \infty$ with $0\leq\theta\leq\pi/2$.
\end{lem}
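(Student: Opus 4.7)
The proof rests on the asymptotic expansions \eqref{asyminfin} from \cite[Theorem 2.3]{HKWX} together with the distance formula in hyperbolic space. First I would apply those expansions to each $\Phi_i$, producing constants $c_i$, $A_i$, $B_i$ with
\[
u_i = -\ln\rho + \frac{c_i}{r} + O(r^{-2}),\qquad v_i = A_i\cos\theta(3-\cos^2\theta) + B_i + O\!\left(r^{-1}(\sin\theta)^{3+\varsigma}\right)
\]
as $r\to\infty$. In particular $u_1-u_2 = O(r^{-1})$ uniformly in $\theta$, so the first term in the hyperbolic distance formula
\[
\cosh\!\bigl(2\,d_{\mathbb{H}^2}(\Phi_1,\Phi_2)\bigr) = \cosh\!\bigl(2(u_1-u_2)\bigr) + 2\,e^{2(u_1+u_2)}(v_1-v_2)^2
\]
already contributes $\cosh(2(u_1-u_2))-1 = O(r^{-2})$.

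The delicate issue is the second term, since $e^{2(u_1+u_2)} = \rho^{-4}(1+O(r^{-1})) = r^{-4}\sin^{-4}\theta\cdot(1+O(r^{-1}))$ blows up along the upper axis. To control this I would use the hypothesis that the potential constants on $\Gamma_a^+$ agree. Letting $\theta\to 0$ in the expansion of $v_i$ yields the value on the rod $\Gamma_a^+$, namely $2A_i + B_i$; the matching of potential constants therefore forces
\[
2A_1 + B_1 = 2A_2 + B_2,
\]
equivalently $B_1 - B_2 = -2(A_1 - A_2)$. Substituting this into the expansion for $v_1-v_2$ gives
\[
v_1 - v_2 = (A_1-A_2)\bigl[\cos\theta(3-\cos^2\theta) - 2\bigr] + O\!\left(r^{-1}(\sin\theta)^{3+\varsigma}\right).
\]

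The crucial algebraic step is then the factorization
\[
\cos\theta(3-\cos^2\theta) - 2 = -(1-\cos\theta)^2(\cos\theta+2),
\]
combined with the elementary bound $(1-\cos\theta)^2 = 4\sin^4(\theta/2) \leq \sin^4\theta$ on $[0,\pi/2]$ (using $\cos(\theta/2)\geq 1/\sqrt{2}$). This gives $|v_1-v_2| \leq C\sin^4\theta + O(r^{-1}(\sin\theta)^{3+\varsigma})$ on the upper hemisphere. Consequently
\[
2\,e^{2(u_1+u_2)}(v_1-v_2)^2 \leq C\,r^{-4}\sin^{-4}\theta\cdot\bigl(\sin^4\theta + r^{-1}(\sin\theta)^{3+\varsigma}\bigr)^2 = O(r^{-4})
\]
uniformly for $\theta\in[0,\pi/2]$. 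Combining this with the $O(r^{-2})$ contribution from $u_1-u_2$ yields $\cosh(2d_{\mathbb{H}^2}(\Phi_1,\Phi_2))-1 = O(r^{-2})$, hence $d_{\mathbb{H}^2}(\Phi_1,\Phi_2) = O(r^{-1}) \to 0$.

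The main (and essentially only) obstacle is recognizing the quadratic vanishing of $\cos\theta(3-\cos^2\theta)-2$ at $\theta = 0$, which is precisely what is needed to absorb the $\rho^{-4}$ blow-up of the target metric's conformal factor near the upper axis. Without the matching of potential constants on $\Gamma_a^+$ the leading term of $v_1-v_2$ would contain a nonzero constant, producing a divergent contribution from $e^{2(u_1+u_2)}$ along the north pole axis; the hypothesis ensures the cancellation that propagates the smallness of the difference uniformly up to the axis.
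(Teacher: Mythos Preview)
Your proof is correct and follows essentially the same approach as the paper: both use the hyperbolic distance formula together with the asymptotic expansions \eqref{asyminfin} from \cite[Theorem 2.3]{HKWX}, invoking the vanishing of the constant term in the $u_i$ expansion (cf.\ Lemma \ref{lem:exterior-uniform}) and the matching of potential constants on $\Gamma_a^+$ to force $v_1-v_2$ to vanish to sufficiently high order at $\theta=0$. The paper simply records $v_1-v_2=O(\sin^3\theta)$ without further comment, whereas you supply the explicit factorization $\cos\theta(3-\cos^2\theta)-2=-(1-\cos\theta)^2(\cos\theta+2)$ that makes this transparent; otherwise the arguments are the same.
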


\begin{proof}
Recall the hyperbolic distance formula \cite[Lemma 3]{Wei90} given by
\begin{equation}
\cosh\left(2d_{\mathbb{H}^2}(\Phi_1,\Phi_2)\right)=\cosh\left(2(u_1 -u_2)\right)+2e^{2(u_1 +u_2)}(v_1 -v_2)^2.
\end{equation}
Observe that the expansions of \cite[Theorem 2.3]{HKWX}, on the compliment of ball $B_a(0)$ with $0\leq \theta\leq\pi/2$, imply that
\begin{equation}\label{fpaioifnh}
u_1 - u_2 =O(r^{-1}), \qquad v_1 -v_2 =O(\sin^{3} \theta),
\end{equation}
as $r\rightarrow\infty$. Here we have utilized the fact that the constant term in the expansion of $u_i$ vanishes, see Lemma \ref{lem:exterior-uniform}. Therefore
\begin{equation}
\cosh\left(2d_{\mathbb{H}^2}(\Phi_1,\Phi_2)\right)=1+O(r^{-1})
\end{equation}
in this region, and the desired result follows.
\end{proof}

\begin{rem}\label{foanfoinhghhh}
We note that an analogous result holds for $\Gamma_a^{-}=(-\infty,-a]$ and $\pi/2\leq\theta\leq\pi$, using the same asymptotics \eqref{fpaioifnh}. Similarly, the result is valid for $0<\theta_1\leq\theta\leq\theta_2<\pi$ without hypotheses concerning potential constants, where we only need to use the first inequality in \eqref{fpaioifnh} and boundedness of $v_1$, $v_2$.
\end{rem}


\begin{thebibliography}{99}








\bibitem{Bartnik} R. Bartnik, \textit{The mass of an asymptotically flat manifold}, Comm. Pure Appl. Math., \textbf{39} (1986), no. 5, 661--693.



\bibitem{Bray} H. Bray, \textit{Proof of the Riemannian Penrose inequality using the positive mass theorem}, J. Differential. Geom., \textbf{59} (2001), no.2, 177--267.

\bibitem{Brill} D. Brill, \textit{On the positive definite mass of the Bondi-Weber-Wheeler time-symmetric gravitational waves}, Ann. Phys., \textbf{7} (1959), 466--483.

\bibitem{BrydenKhuriSokolowsky} E. Bryden, M. Khuri, and B. Sokolowsky, \textit{The positive mass theorem with angular momentum and charge for manifolds with boundary}, J. Math. Phys., \textbf{60} (2019), no. 5, 052501, 10 pp.







 \bibitem{Chrusciel} P. Chru\'{s}ciel, \textit{Boundary conditions at spatial infinity from a Hamiltonian point of view}, Topological Properties and Global Structure of Space-Time (P. Bergmann and V. de Sabbata, eds.), Plenum Press, New York, 1986, pp. 49--59.

\bibitem{Chrusciel1} P. Chru\'{s}ciel, \textit{Mass and angular-momentum inequalities for axi-symmetric initial data sets. I. Positivity of Mass}, Ann. Phys., \textbf{323} (2008), 2566--2590.


\bibitem{ChruscielCosta} P. Chru\'{s}ciel, and J. Costa, \textit{Mass, angular-momentum and charge inequalities for axisymmetric initial data}, Classical Quantum Gravity, \textbf{26} (2009), no. 23, 235013.


\bibitem{CLW} P. Chru\'sciel, Y.-Y. Li, and G. Weinstein, \textit{Mass and angular-momentum inequalities for axi-symmetric initial data sets, II. Angular momentum}, Ann. Physics, \textbf{323} (2008), no. 10, 2591--2613.


\bibitem{Costa} J. Costa, \textit{Proof of a Dain inequality with charge}, J. Phys. A, \textbf{43} (2010), no. 28, 285202.

\bibitem{Dain} S. Dain, \textit{Proof of the angular momentum-mass inequality for
axisymmetric black holes}, J. Differential Geom., \textbf{79} (2008), 33--67.

\bibitem{DainGabachClement} S. Dain, and M. Gabach-Clement, \textit{Geometrical inequalities bounding angular momentum and charges in general relativity}, Living Rev. Relativ., (2018) \textbf{21}, no. 5.



\bibitem{HKWX} Q.~Han, M.~Khuri, G.~Weinstein, and J.~Xiong, \textit{Asymptotic analysis of harmonic maps with prescribed singularities}, preprint, arXiv:2212.14826.




\bibitem{HuangSchoenWang} L.-H. Huang, R. Schoen, M.-T. Wang, \textit{Specifying angular momentum and center of mass for vacuum initial data sets}, Comm. Math. Phys., \textbf{306} (2011), no. 3, 785--803.

\bibitem{HuiskenIlmanen} G. Huisken, and T. Ilmanen, \textit{The inverse mean curvature flow and the Riemannian Penrose inequality}, J. Differential Geom., \textbf{59} (2001), no. 3, 353--437.

\bibitem{JaraczKhuriSokolowsky} J. Jaracz, M. Khuri, and B. Sokolowsky, in preparation.


\bibitem{KKRW} V. Kakkat, M. Khuri, J. Rainone, and G. Weinstein, \textit{The geometry and topology of stationary multi-axisymmetric vacuum black holes in higher dimensions}, Pacific J. Math., \textbf{322} (2023), no. 1, 59--97.

\bibitem{KhuriWeinstein} M. Khuri, and G. Weinstein, \textit{The positive mass theorem for multiple rotating charged black holes}, Calc. Var. Partial Differential Equations, \textbf{55} (2016), no. 2, 1--29.


\bibitem{Klainerman} S. Klainerman, \textit{Mathematical challenges of general relativity}, Rendiconti di Matematica, Serie VII, \textbf{27} (2007), 105--122.



\bibitem{LT} Y.-Y. Li, and G. Tian, \textit{Regularity of harmonic maps with prescribed singularities},
Comm. Math. Phys., \textbf{149} (1992), no. 1, 1--30.




\bibitem{Mars} M. Mars, \textit{Present status of the Penrose inequality},
Classical Quantum Gravity, \textbf{26} (2009), no. 19, 193001, 59 pp.





\bibitem{Penrose} R. Penrose, \textit{Gravitational collapse - the
role of general relativity}, Riv. del Nuovo Cim.
(numero speziale), \textbf{1} (1969), 252--276.

\bibitem{Penrose1} R. Penrose, \textit{Naked singularities}, Ann. N. Y. Acad. Sci., \textbf{224} (1973), 125--134.

\bibitem{Penrose2} R. Penrose, \textit{Some unsolved problems in classical general relativity}, Seminar on Differential Geometry, Ann. Math. Study, \textbf{102} (1982), 631--668.



\bibitem{ScYa}  R. Schoen, and S.-T. Yau, \textit{Proof of the
positive mass theorem}, Comm. Math. Phys., \textbf{65} (1979), 45--76.

\bibitem{SchoenZhou} R. Schoen, and X. Zhou, \textit{Convexity of reduced energy and mass angular momentum inequalities}, Ann. Henri Poincar\'{e}, \textbf{14} (2013), 1747--1773.




\bibitem{Sokolowsky} B. Sokolowsky, \textit{Extensions of the mass angular momentum inequality in mathematical relativity}, Stony Brook University, Dissertation, ProQuest LLC, 2019, 68 pp.

\bibitem{Wei90} G. Weinstein, \textit{On rotating black holes in equilibrium in general relativity},
Comm. Pure Appl. Math., \textbf{43} (1990), 903--948.



\bibitem{WeiDuke} G. Weinstein, \textit{On the Dirichlet problem for harmonic maps with prescribed singularities}, Duke Math. J., \textbf{77} (1995), no. 1, 135--165.

\bibitem{WeinsteinHadamard} G. Weinstein, \textit{Harmonic maps with prescribed singularities into Hadamard manifolds}, Math. Res. Lett., \textbf{3} (1996), no. 6, 835--844.

\bibitem{WeinsteinGR} G.~Weinstein, \textit{Harmonic maps with prescribed singularities and applications in general relativity}, Advanced Studies in Pure Mathematics, \textbf{85} (2020), 479--489.


\bibitem{W}  E. Witten, \textit{A simple proof of the positive
energy theorem}, Comm. Math. Phys., \textbf{80} (1981), 381--402.

\end{thebibliography}
\end{document}